

\documentclass[preprint,11pt]{imsart} 

\usepackage{amsthm, amsfonts, amssymb, amsmath, enumerate, natbib, bbm,paralist,enumitem,xr}
\usepackage{mathtools}
\mathtoolsset{showonlyrefs}
\newtheorem{thm}{Theorem}[section]
\newtheorem{lem}{Lemma}[section]
\newtheorem{lems}{Lemma}[subsection]
\newtheorem{cor}{Corollary}[section]
\newtheorem{prop}{Proposition}[section]
\newtheorem{defi}{Definition}[section]

\newcounter{myalgctr}

\newenvironment{rem}{
   \vskip1mm\indent
   \refstepcounter{myalgctr}
   \textbf{Remark \themyalgctr}
   }{\hfill$\diamond$\par}  
\numberwithin{myalgctr}{section}
\providecommand{\norm}[1]{\left\lVert#1\right\rVert}

\DeclareMathOperator*{\argmin}{arg\,min}

\DeclareRobustCommand{\rchi}{{\mathpalette\irchi\relax}}
\newcommand{\irchi}[2]{\raisebox{\depth}{$#1\chi$}}
\newcommand{\E}{\mathbb{E}}
 \textwidth = 6.0 in \textheight = 8.5 in \oddsidemargin = 0.3 in
 \evensidemargin = 0.3 in \topmargin = 0.0 in \headheight = 0.0 in
\headsep = 0.2 in \parskip = 0.0 in \parindent = 0.2 in


\begin{document}
 \begin{frontmatter}
\date{} 
\title{High-dimensional CLT: Improvements, Non-uniform Extensions and Large Deviations
}
\runtitle{High-dimensional CLT}
\begin{aug}
  \author{\fnms{Arun Kumar} \snm{Kuchibhotla}\ead[label=e1]{arunku@upenn.edu, somabha@upenn.edu, dban@upenn.edu}},
  \author{\fnms{Somabha} \snm{Mukherjee}\ead[label=e2]{somabha@upenn.edu}},
  \and
  \author{\fnms{Debapratim}  \snm{Banerjee}\ead[label=e3]{dban@upenn.edu}}

  \runauthor{Kuchibhotla, Mukherjee and Banerjee}

  \affiliation{University of Pennsylvania}

  \address{Department of Statistics,The Wharton School\\ University of Pennsylvania \\ 3730 Walnut Street,\\ Jon M. Huntsman Hall\\ Philadelphia, PA 19104, USA.\\ \printead{e1}}

\end{aug}
\begin{abstract}
Central limit theorems (CLTs) for high-dimensional random vectors with dimension possibly growing with the sample size have received a lot of attention in the recent times. \cite{CCK17} proved a Berry--Esseen type result for high-dimensional averages for the class of hyperrectangles and they proved that the rate of convergence can be upper bounded by $n^{-1/6}$ upto a polynomial factor of $\log p$ (where $n$ represents the sample size and $p$ denotes the dimension). Convergence to zero of the bound requires $\log^7p = o(n)$. We improve upon their result which only requires $\log^4p = o(n)$ (in the best case). This improvement is made possible by a sharper dimension-free anti-concentration inequality for Gaussian process on a compact metric space. In addition, we prove two non-uniform variants of the high-dimensional CLT based on the large deviation and non-uniform CLT results for random variables in a Banach space by Bentkus, Ra{\v c}kauskas, and Paulauskas. We apply our results in the context of post-selection inference in linear regression and of  empirical processes. 
\end{abstract}

\begin{keyword}
\kwd{Orlicz norms; Nonuniform CLT; Cram{\'e}r type large deviation; Anti-concentration}
\end{keyword}
\end{frontmatter}


\section{Introduction}
In modern statistical applications like high dimensional estimation and multiple hypothesis testing problems, the dimension of the data is often much larger than the sample size. As can be expected from the classical asymptotic theory, the central limit theorem plays a pivotal role for inference. In this paper, we prove three variants of the high-dimensional central limit theorem. The setting we use is as follows. Consider independent mean zero random vectors $X_{1},\ldots, X_{n}\in\mathbb{R}^p$ with covariance matrices $\Sigma_i := \mathbb{E}[X_iX_i^{\top}]\in\mathbb{R}^{p\times p}$. Here $p$ is allowed to be larger than $n$. Define the scaled average 
\begin{equation}\label{eq:ScaledAverageDef}
S_{n}:= \frac{X_{1} + \cdots + X_{n}}{\sqrt{n}}\in\mathbb{R}^p.
\end{equation} 
Let $Y_i$ (for $1\le i\le n$) represent a multivariate Gaussian random vector with mean zero and variance-covariance matrix $\Sigma_i$. Define the corresponding scaled average as 
\[
U_{n,0} := \frac{Y_1 + \cdots + Y_n}{\sqrt{n}}\in\mathbb{R}^p.
\]
The problem of central limit theorem is the comparison of the probabilities $\mathbb{P}(S_n\in A)$ and $\mathbb{P}(U_{n,0}\in A)$ for $A\subseteq \mathbb{R}^p$. When $p$ is fixed (or only grows at most sublinearly in $n$) which we refer to as multivariate setting, classical results show the rate $O(p^{7/4}/n^{1/2})$; see~\cite{MR643968}, \cite{sazonov1982accuracy} and \cite{MR2144310}. The case where $p$ is allowed to grow faster than $n$, which we refer to as high-dimensional setting, has received significant interest in the recent times. The series of papers \cite{CCK13, CCK15, CCK17} have studied this problem extensively under general conditions on the random vectors when the sets $A$ are sparsely convex sets and in particular hyperrectangles. The main result of \cite{CCK17} bounds the difference $\left|\mathbb{P}\left[ S_{n} \in A  \right]- \mathbb{P}\left[ U_{n,0} \in A  \right]\right|$ uniformly over $A\in\mathcal{A}^{re}$ as a function of $n$ and $p$. Here $\mathcal{A}^{re}$ is the class of all hyperrectangles. Proposition 2.1 of \citet{CCK17} implies that
\begin{equation}\label{eq:CCKResult}
\sup_{A \in \mathcal{A}^{re}}\,\left|\mathbb{P}\left( S_{n} \in A  \right)- \mathbb{P}\left( U_{n,0} \in A  \right)\right| \le C\left(\frac{\log ^{7}(pn)}{n} \right)^{{1/6}},
\end{equation}
under certain exponential tail assumption and a constant $C$ depending on the distribution of the random vectors $X_{i},\, 1\le i\le n$. In their earlier papers, \cite{CCK13, CCK15} a special sub-class of sets $\mathcal{A}^{m} \subset \mathcal{A}^{re}$ were considered, where $\mathcal{A}^{m}$ is the class of all sets $A$ of the form $A= \{ x \in \mathbb{R}^{p} : x(j) \le a \text{ for all }  1 \le j\le p \}$. Here and throughout, we use the notation $x(j)$ for a vector $x\in\mathbb{R}^p$ to represent the $j$-th coordinate of~$x$. 

From the bound~\eqref{eq:CCKResult} we require $\log^7(pn) = o(n)$ for the difference of probabilities to converge to zero uniformly. One of the lingering questions in high-dimensional CLT is the ``correct'' exponent of $\log(pn)$ that needs to be $o(n)$ for convergence to zero. By proving a dimension-free anti-concentration inequality and comparing the proof techniques from~\cite{CCK17},~\cite{MR1015294},~\cite{bentkus2000accuracy}, we reduce the requirement to $\mu^6\log^4(ep) = o(n)$ when the sets $A$ are restricted to $l_{\infty}$ balls in $\mathbb{R}^p$. Here $\mu$ represents the median of $\|U_{n,0}\|_{\infty}$. In the best case where $\mu = O(1)$ the requirement becomes $\log^4(ep) = o(n)$ and in the worst case where $\mu = O(\sqrt{\log(ep)})$ the requirement becomes $\log^7(ep) = o(n)$ coinciding with the requirement from~\eqref{eq:CCKResult}. 

Since the dependence on the sample size, $n^{-1/6}$, in bound~\eqref{eq:CCKResult} is larger than dependence $n^{-1/2}$ that appears in multivariate Berry--Esseen bounds, the result~\eqref{eq:CCKResult} does not provide useful information when the probability $\mathbb{P}(U_{n,0}\in A)$ is smaller. This leads naturally to the question of non-uniform version of~\eqref{eq:CCKResult}. In particular, an interesting question is to find quantitative upper bound on 
\begin{equation}\label{eq:RatioLargeDeviation}
 \left|\frac{\mathbb{P}\left( S_{n} \in A^{c} \right)}{\mathbb{P}\left( U_{n,0} \in A^{c} \right)} - 1\right|,
\end{equation}
as a function of $p$ and $n$. In this paper we consider a special class of sets of the form 
\begin{equation}\label{eq:SetsA}
A= \{ x \in \mathbb{R}^{p} : -a \le x(j) \le a \text{ for all }  1\le j \le p \},
\end{equation}
and find an upper bound on~\eqref{eq:RatioLargeDeviation}. Note that sets of the form~\eqref{eq:SetsA} are $l_{\infty}$ balls. Another variant of non-uniform CLT is to consider how the difference $\left|\mathbb{P}\left( \norm{S_n}_{\infty} \le r \right)- \mathbb{P} \left( \norm{U_{n,0}}_{\infty} \le r \right)  \right|$ scales with $r$ for $r \ge 0$. To this end we prove upper bounds on 
\begin{equation}\label{eq:nonuniformintro}
\sup_{r\ge 0}\, r^{m} \left|  \mathbb{P}\left(\norm{S_{n}}_{\infty} \le r \right) - \mathbb{P}\left( \norm{Y}_{\infty} \le r \right)\right|,\quad\mbox{for}\quad m\ge 0.
\end{equation}

The first problem~\eqref{eq:RatioLargeDeviation} is well-studied in the classical multivariate setting under the name ``Cram{\'e}r-type large deviation''. We refer to the encyclopedic work~\cite{MR1171883} for a review of the extensive literature on Cram{\'e}r-type large deviation for sums of independent random variables along with extensions to multivariate random vectors. The classical result for univariate ($p = 1$) iid random variables is of the form
\begin{equation}\label{eq:LargeDeviationClassical}
\frac{\mathbb{P}(S_n > r)}{\mathbb{P}(U_{n,0} > r)} = \exp\left(\frac{Cr^3}{6\Sigma_1^{3/2}n^{1/2}}\right)\left[1 + C\left(\frac{r + 1}{\sqrt{n}}\right)\right],
\end{equation}
for $0 \le r = O(n^{1/6})$ and $C$ a constant depending on the distribution of $X_1$. See Theorem 5.23 (and Section 5.8) of~\cite{MR1353441} for details. The second problem~\eqref{eq:nonuniformintro} is usually referred to as a ``non-uniform CLT''. A result of this kind is also useful in proving convergence of moments. The classical result for univariate iid random variables of type~\eqref{eq:nonuniformintro} is given by
\begin{equation}\label{eq:NonUniformClassical}
|\mathbb{P}(S_n \le r) - \mathbb{P}(U_{n,0} \le r)| \le C(m)(1 + |r|)^{-m}\left(\frac{\mathbb{E}[|X_1|^3]}{\Sigma_1^{3/2}n^{1/2}} + \frac{\mathbb{E}[|X_1|^m]}{\Sigma_1^{m/2}n^{(m-2)/2}}\right),
\end{equation}
for all $r\in\mathbb{R}, m\ge 3$ and for some constant $C(m)$ depending only on $m$. See Theorem 5.15 and (Section 5.5) of~\cite{MR1353441} for details and other results in this direction. Also, see~\cite{MR643968} for multivariate setting. The classical results~\eqref{eq:LargeDeviationClassical} and~\eqref{eq:NonUniformClassical} provide rates that scale like $n^{-1/2}$ (as a function of $n$) in the non-uniform versions of CLT as does the classical Berry--Esseen bound. In lines with the Berry--Esseen type result~\eqref{eq:CCKResult} in high-dimensional setting, we derive rates in large deviation and non-uniform CLT with a scaling of order $n^{-1/6}$ as a function of the sample size $n$.
 
It is well-known \citep{MR823198} that the rate $n^{-1/6}$ is optimal in the central limit theorem for Banach spaces and the space $(\mathbb{R}^p, \norm{\cdot}_{\infty})$ with $p$ diverging behaves like an infinite-dimensional space. In this respect it is of particular interest to look back at the rich literature on the CLTs for Banach space valued random variables. These old and well-known large deviation and non-uniform CLTs for Banach space play a central role in the derivation of ours presented here. The basic setting for these results is as follows: Suppose $X_{1},\ldots , X_{n}$ are i.i.d. random variables taking values in a Banach space $B$ such that $\E[X_{1}]=0$ and $Y$ is a mean zero $B$-valued Gaussian random variable with same the covariance (operator) as $X_{1}$. The problem as before is the study of closeness of the distributions of $\norm{Y}$ and $\norm{S_{n}}$ where $\norm{\cdot}$ is a Banach space norm. Several results on this problem are available in \citet{bentkus2000accuracy}, \citet{paulauskas2012approximation}. {For a historical account of these results, see~\citet[p.142]{paulauskas2012approximation}}. The papers \citet{bentkus1987}, \cite{bentkus1990} and \cite{MR1162240} are of particular interest to us since they provide bounds on~\eqref{eq:RatioLargeDeviation} and~\eqref{eq:nonuniformintro} for Banach space valued random variables. In  \citet{bentkus1987} and \cite{bentkus1990} the problem of the convergence of ratio $\mathbb{P}\left(\norm{S_{n}} > r \right)/ \mathbb{P}\left(\norm{Y} > r\right)$ to $1$ was considered and it was proved that 
\[
\mathbb{P}\left(\norm{S_{n}} > r\right) = \left( 1 + \theta M_{2}(r+1) n^{-{1}/{6}} \right)\mathbb{P}\left(\norm{Y} > r\right),
\]
for $0\le r \le -1+ M_{1}n^{{1}/{6}}$ where $|\theta|\le1$, $M_{1}$ and $M_{2}$ are constants depending on the distribution of $X_{1}$. The non-uniform version of central limit theorem is also available for the Banach spaces from \citet{MR1162240}. Understanding the implication of these results for the high-dimensional case is one contribution of our paper. 
\subsection{Our contributions}\label{subsec:Contributions}
As described in the introduction, we study uniform and non-uniform variants of high dimensional central limit theorem. In the process we prove a sharper version of anti-concentration inequality for centered Gaussians. We assume that $X_{1},\ldots, X_{n} \in \mathbb{R}^{p}$ are independent random vectors with mean $0$ and covariance matrices $\Sigma_i = \mathbb{E}[X_iX_i^{\top}]\in\mathbb{R}^{p\times p}$. Let $Y_1, \ldots, Y_n$ be centered Gaussian random vectors in $\mathbb{R}^{p}$ satisfying $\mathbb{E}[Y_iY_i^{\top}] = \Sigma_i$ for $1\le i\le n$. Our results are intended for the case where $\log(p)$ grows sublinearly in the sample size $n$, although we do not explicitly make an assumption that $\log(p)$ grows with $n$. Define for $m\ge0$
\[
\Delta_n^{(m)} := \sup_{r\ge0}\,r^m\left|\mathbb{P}(\|S_{n}\|_{\infty} \le r) - \mathbb{P}(\|U_{n,0}\|_{\infty} \le r)\right|.
\]
\begin{enumerate}[label=(\roman*)]
\item One of the main ingredients of central limit theorems (both uniform and non-uniform versions) is an anti-concentration inequality which bounds $\mathbb{P}(r - \varepsilon \le \|U_{n,0}\|_{\infty} \le r + \varepsilon)$ over all $r \ge 0$ and $\varepsilon > 0$. We prove that for any $\varepsilon > 0$, $m \ge 0$
\begin{align*}
\sup_{r \ge 0}\;r^m\,\mathbb{P}(r - \varepsilon \le \|U_{n,0}\| \le r + \varepsilon) ~&\le~ \Phi_{AC,m}\varepsilon,
\end{align*}
where $\Phi_{AC,m} = O(\mu^{m+1})$ (assuming $\sigma_{\max}$ and $\sigma_{\min}^{-1}$ are of constant order) and $\mu$ denotes the median of $\|U_{n,0}\|$. Quantities $\sigma_{\max}^2$ and $\sigma_{\min}^2$ are given by the maximum and minimum variances of $U_{n,0}(j), 1\le j\le p$. This provides a refinement of Lemma 3.1 of \cite{bentkus1990} with exact constants. \cite{CCK17} (based on the result of~\cite{Naz03}) prove the above result for $m = 0$ case with $\Phi_{AC,0} = C\sqrt{\log(ep)}$ and since $\mu$ is at most of the order $\sqrt{\log(ep)}$, our result is sharper.
\item We compare the modern proof technique of~\cite{CCK17} and the classical proofs from the Banach space CLT literature~\cite{paulauskas2012approximation},~\cite{bentkus2000accuracy}; see Section~\ref{sec:Outline} for details. Based on this we improve upon the proof of~\cite{CCK17} to get better rates in high-dimensional CLT.  
\item If $X_i$ are sub-Weibull of order $\alpha$, i.e., $\|X_i(j)\|_{\psi_{\alpha}} \le K_p < \infty$ for all $1\le i\le n$, $1\le j\le p$, then
\[
\Delta_n^{(0)} \le \Theta\Phi_{AC,0}\left(\frac{\log^4(ep)}{n}\right)^{1/6} + \Theta_{\alpha}\Phi_{AC,0}K_p\frac{(\log(ep))^{1+1/\alpha}(\log n)^{1/\alpha}}{n^{1/2}}.
\]
for some constant $\Theta$ depending on the distribution of $X_1, \ldots, X_n$ that can be bounded in terms of $K_p$ and $\Theta_{\alpha}$ only depends on $\alpha$. Proposition 2.1 of~\cite{CCK17} for $\alpha = 1$ proves that $\Delta_n^{(0)} \le C(\log^7(pn)/n)^{1/6}$ which requires $\log(pn) = o(n^{1/7})$. In contrast if $\Phi_{AC,0} = O(1)$ then our result only requires $\log(ep) = o(n^{1/4})$ which is the weakest requirement till date.\label{item:ii} 
\item Under the same assumptions in~\ref{item:ii}, we have for $m\ge 0$
\[
\Delta_n^{(m)} \le \Theta\Phi_{AC,m}\left(\frac{\log^4(epn)}{n}\right)^{1/6} + \begin{cases}
0, &\mbox{if } \alpha > 1,\\
\Theta_{\alpha,m}((\log(epn))^{5/4 + 3/\alpha}/n)^{(m+1)/3},&\mbox{if }\alpha\in(0, 1].
\end{cases}
\]
More generally, for any function $\phi(\cdot)$ satisfying $\phi(x + y) \le \phi(x)\phi(y)$ our methods can be used to obtain bounds for
\[
\Delta_n^{(\phi)} := \sup_{r\ge 0}\,\phi(r)\left|\mathbb{P}(\|S_n\| \le r) - \mathbb{P}(\|U_{n,0}\| \le r)\right|.
\]
These are analogues to the results of~\cite{MR1162240}.
\item Finally, we derive a Cram{\'e}r-type large deviation in the high dimensional CLT setting. We assume that $X_1, \ldots, X_n$ are independent and identically distributed and that $\E\left[ \exp\left\{ H \norm{X_1}_{\infty} \right\} \right]< \infty$ for some $H > 0$. Under these assumptions, we prove that
\[
\mathbb{P}\left(\norm{S_{n}}_{\infty} > r \right)= (1+ 2M_{1}(r+1)n^{-1/6}) \mathbb{P}\left(\norm{Y}_{\infty} > r \right)
\]
for any $r\le -1+M_{2}n^{1/6} $.
Here $M_{1}$ and $M_{2}$ are constants depending on the distributions of $X_{1}$ and $Y$  which can be bounded by polynomials of $\log p$, under certain tail assumptions on the coordinates of $X_1$. The proof is motivated by the techniques of \citet{bentkus1987} and is modified for the high dimensional set up. The constants $M_{1}$ and $M_{2}$ are also made explicit in Theorem \ref{thm:largedeviation}. 
\end{enumerate} 
\subsection{Organization of the paper}
The paper is organized as follows. In Section \ref{sec:pre}, we define some useful notation. Section \ref{sec:result} is dedicated for our main results. In this section, we state the anti-concentration inequalities and prove refined uniform as well as non-uniform central limit theorems. In Section \ref{subsec:Application} we present an application of our results to post-selection inference where the anti-concentration constant $\Phi_{AC,0}$ can be of order much smaller than $\sqrt{\log(ep)}$ and to bounding the expectation of suprema of empirical processes over a (possibly infinite) weak VC-major function class. In Section~\ref{sec:LargeDeviation}, we prove a Cram{\'e}r-type large deviation based on the results of~\cite{bentkus1987}. In Section \ref{sec:Outline}, we present an outline of our proof of CLTs with detailed discussion on differences of proofs from other works. Finally, we conclude with a summary and future directions in Section~\ref{sec:Conclusions}. Proofs of all the results are given in the supplementary material.
\section{Preliminaries}\label{sec:pre}
\subsection{Notation and Setting}\label{subsec:Notations}
As discussed earlier, we shall consider independent random vectors $X_{1},\ldots, X_{n} \in \mathbb{R}^{p}$ with mean zero and covariance matrices $\Sigma_i, 1\le i\le n$. Let $Y_1, \ldots, Y_n \in \mathbb{R}^{p}$ denote a Gaussian random vectors with mean and covariance matching that of $X_i, 1\le i\le n$. The $l_{\infty}$ norm on $\mathbb{R}^p$ is denoted by $\norm{\cdot}$. By writing $a\lesssim b$, we mean that for some constant $C$, $a \le Cb$.
We also use the following notation throughout the paper. 
\begin{equation}\label{eq:DefDeltam}
\begin{split}
S_n &:= n^{-1/2}\left(X_1 + X_2 + \ldots + X_n\right),\\
U_{n,k} &:= n^{-1/2}(X_1 + \ldots + X_k + Y_{k+1} + \ldots + Y_n)\quad\mbox{for}\quad 0\le k\le n.
\end{split}
\end{equation}
Note that $U_{n,n} = S_n$ and hence proving the closeness (in distribution) of $U_{n,k}$ and $U_{n,0}$ for all $k$ ensures closeness of $S_n$ and $U_{n,0}$. In this regard, define for $m\ge0$
\begin{equation}\label{def:deltanm}
\delta_{n,m} := \sup_{r\ge 0}\max_{1\le k\le n}\,r^m|\mathbb{P}(\|U_{n,k}\| \le r) - \mathbb{P}(\|U_{n,0}\| \le r)|.
\end{equation}
Finally define for $1\le i\le n$ the signed measure $\zeta_i$ by
\[
\zeta_i(A) := \mathbb{P}(X_i\in A) - \mathbb{P}(Y_i\in A)\quad\mbox{for}\quad A\subseteq\mathbb{R}^p.
\]
Based on this signed measure, $|\zeta_i|$ denotes the variation of measure $\zeta_i$. It is clear that
\begin{equation}\label{eq:zetazero}
\int d\zeta_i(x) = \int x(j)d\zeta_i(x) = \int x(j)x(k)d\zeta_i(x) = 0\quad\mbox{for}\quad 1\le i\le n, 1\le j,k\le p.
\end{equation}
Define the ``weak'' third pseudo-moment as
\begin{equation}\label{eq:ThirdPseudoMoment}
L_n ~:=~ \frac{1}{n}\sum_{i=1}^n \max_{1\le j\le p}\int |x(j)|^3|\zeta_i|(dx), 
\end{equation}
and the truncated ``strong'' second pseudo-moment as
\begin{equation}\label{eq:SecondPsuedoMoment}
M_n(\phi) ~:=~ \frac{1}{n}\sum_{i=1}^n \int \|x\|^2\mathbbm{1}\{\|x\| \ge n^{1/2}\phi/\log(ep)\}|\zeta_i|(dx)\quad\mbox{for}\quad \phi > 0.
\end{equation}
These are called pseudo-moments since they are defined with respect to the variation measure and becomes zero if the distributions of $X_i$'s and $Y_i$'s are the same. Most of the results in classical multivariate setting (of~\cite{sazonov1982accuracy}) and in Banach spaces (of~\cite{MR1162240}) are derived in terms of pseudo-moments. We will present our results also in terms of the pseudo-moments defined above.  

Quantity $M_n(\phi)$ defined above is close to the one defined in~\cite{CCK17} except that they have used marginal third moment instead of second pseudo-moment. This subtlety allows us to derive better rates when random vectors only have $(2+\tau)$-moments. 

Further, set
$\mu_i := \mbox{median}(\|Y_i\|)$ and $\sigma_i^2 := \max_{1\le j\le p}\,\mbox{Var}(Y_i(j))$ for $1\le i\le n$.
Define the weighted ``weak'' third moment as
\begin{equation}\label{eq:ThirdMoment}
\bar{L}_{n,0} := \frac{1}{n}\sum_{i=1}^n (\mu_i + \sigma_i)\max_{1\le j\le p}\int |x(j)|^3|\zeta_i|(dx),
\end{equation}
and for $m > 0$,
\begin{align*}
\bar{L}_{n,m} &:= \frac{1}{n}\sum_{i=1}^n \left[\mu_i^{m + 1} + \sigma_i^{m+1}((m + 1)/e)^{(m+1)/2}\right]\max_{1\le j\le p}\int |x(j)|^3|\zeta_i|(dx).
\end{align*}
It is clear that $\bar{L}_{n,0}$ is at most of order $\sqrt{\log(ep)}$ and $\bar{L}_{n,m}$ is at most of order $(\sqrt{\log(ep)})^{m+1}$ (assuming $\sigma_{i}$'s are all of order 1). More precisely, we have
\begin{equation}\label{eq:BarLnmBound}
\bar{L}_{n,m} ~\le~ \Theta_mL_n\left(\max_{1\le i\le n}\sigma_{i}\right)^{m+1}(\log(ep))^{(m+1)/2},\quad\mbox{for all}\quad m\ge 0.
\end{equation}
Here $\Theta_m$ is a constant depending only on $m\ge0$ scaling like $m^{m/2}.$
\subsection{Tail of Gaussian Processes and Anti-concentration Inequality}
In this section, we prove various inequalities regarding the distribution function of the maximum of a Gaussian process on a compact metric space. These inequalities will lead to the sharper versions of anti-concentration inequalities and is crucial for the large deviation result derived later. The following result is a refinement of Lemma 3.1 in \citet{bentkus1990} where implicit constants were used. In the proof, we make use of a result from~\cite{GINE76}. 
\begin{thm}\label{thm:density}
Let $Y$ be a sample continuous centered Gaussian process on a compact metric space $S$ such that $\sigma_{\min}^2 \le \mathbb{E}\left[Y^2(s)\right] \le \sigma_{\max}^2$ for all $s\in S$. Let $\mu$ denote the median of $\norm{Y}$. Then the following are true:
\begin{enumerate}
\item For all $r\ge 0$,
\[
\mathbb{P}\left(\norm{Y} > r\right) \ge \frac{1}{6}\exp\left(-\frac{r^2}{\sigma_{\max}^2}\right).
\]
\item For all $r, \varepsilon \ge 0$, 
\begin{equation}\label{eq:denbddI}
\mathbb{P}\left(\norm{Y} > r - \varepsilon\right) \le 20\exp\left(\Phi_4(r + 1)\varepsilon\right)\mathbb{P}\left(\norm{Y} > r\right),
\end{equation}
where $\Phi_4$ is given by
\[
\Phi_4 := 1+ \frac{56(\mu + 1.5\sigma_{\max})(\mu + 4.1\sigma_{\max})}{\sigma_{\max}^2\sigma_{\min}^2}+ \frac{32\pi\left(2.6 \sigma_{\min}+ \mu\right)^{2} \left(\sigma_{\min}^{2} + 32 \sigma_{\min} \mu + 12 \mu^{2}\right)}{\sigma_{\min}^{6}}.
\]
\item
 For all $r\ge 0$ and $\varepsilon > 0$,
\begin{equation}\label{eq:ToProve}
\begin{split}
\mathbb{P}\left(r - \varepsilon \le \norm{Y} \le r + \varepsilon\right)\le \Phi_{2}\varepsilon(r + 1)\mathbb{P}\left(\norm{Y} > r - \varepsilon\right),
\end{split}
\end{equation}
where
\[
\Phi_{2}:= \max\left\{\frac{51(\mu + 4.1\sigma_{\max})}{\sigma_{\min}^2}, \frac{32\pi(\mu + 2.6\sigma_{\min})^2}{\sigma_{\min}^4}\right\}
\]
\end{enumerate}
\end{thm}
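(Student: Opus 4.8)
The plan is to prove the three assertions in sequence, the first feeding into the other two, following the line of argument behind Lemma~3.1 of \cite{bentkus1990} but keeping track of every constant. For the lower bound, let $s_0\in S$ be a point at which $s\mapsto\E[Y^2(s)]$ (continuous on the compact $S$) attains its maximum, so $\E[Y^2(s_0)]=\sigma_{\max}^2$. Since $\norm{Y}\ge|Y(s_0)|$ pointwise, $\mathbb{P}(\norm{Y}>r)\ge\mathbb{P}(|Y(s_0)|>r)=2\bar\Phi(r/\sigma_{\max})$, so it suffices to verify the elementary one‑dimensional inequality $2\bar\Phi(x)\ge\tfrac16 e^{-x^2}$ for $x\ge0$; this holds with room to spare, from standard Gaussian tail bounds (e.g.\ $\bar\Phi(x)\ge\tfrac12-x/\sqrt{2\pi}$ for small $x$, and $\bar\Phi(x)\ge 2\phi(x)/(x+\sqrt{x^2+4})$ in general) applied over a couple of sub‑intervals.

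For the remaining two parts the common device is that
\[
G(r):=\Phi^{-1}\!\big(\mathbb{P}(\norm{Y}\le r)\big)
\]
is a \emph{concave}, nondecreasing function of $r\ge0$ with $G(\mu)=0$. Concavity follows from Ehrhard's inequality applied to the dilates $r\mathcal{B}$ of the convex symmetric unit ball $\mathcal{B}$ of $\norm{\cdot}$, using that $r_1\mathcal{B}$, $r_2\mathcal{B}$ and $(\lambda r_1+(1-\lambda)r_2)\mathcal{B}$ are Minkowski‑linear in $(r_1,r_2)$; the infinite‑dimensional case comes by approximating $\norm{Y}$ by maxima over finite subsets of $S$ and passing to a monotone limit. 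Writing $\bar F(r):=\mathbb{P}(\norm{Y}>r)=\bar\Phi(G(r))$ and letting $f=\bar F'$ be the density of $\norm{Y}$ (which exists since the law of the supremum of a Gaussian process is absolutely continuous, and is controlled quantitatively via a result of \cite{GINE76} on such suprema), one has $f(r)=\phi(G(r))G'(r)$, hence the hazard rate
\[
\frac{f(r)}{\bar F(r)}=\frac{\phi(G(r))}{\bar\Phi(G(r))}\,G'(r)\le\big(\max(G(r),0)+1\big)\,G'(r),
\]
using the Mills‑ratio bound $\phi(x)/\bar\Phi(x)\le x+1$ for $x\ge0$. The two remaining inputs are: (a) $G(r)\le\Phi^{-1}\!\big(1-\tfrac16 e^{-r^2/\sigma_{\max}^2}\big)$, which by Part~1 and $\bar\Phi(x)\le\tfrac12 e^{-x^2/2}$ gives $G(r)\lesssim r/\sigma_{\max}+1$; and (b) an explicit bound on $G'=f/\phi(G)$ near and above the median, obtained from the density estimate of \cite{GINE76} together with the anti‑concentration of $\norm{Y}$.

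For Part~2, since $-\log\bar F$ is the integral of the hazard rate, $\log\!\big(\bar F(r-\varepsilon)/\bar F(r)\big)=\int_{r-\varepsilon}^{r}f(t)/\bar F(t)\,dt$; splitting the interval at $\mu$ (and disposing of $r\le\mu$ trivially, since there $\bar F\ge\tfrac12$ forces the ratio to be at most $2$) and substituting the bounds on $G$ and $G'$ yields $\le\Phi_4(r+1)\varepsilon$ after collecting constants, the prefactor $20$ absorbing the boundary contributions. For Part~3 no integration is needed: $\mathbb{P}(r-\varepsilon\le\norm{Y}\le r+\varepsilon)=\int_{r-\varepsilon}^{r+\varepsilon}f(t)\,dt\le 2\varepsilon\sup_{t\in[r-\varepsilon,r+\varepsilon]}f(t)$, and for $t$ in that range $f(t)=\phi(G(t))G'(t)\le\phi(G(r-\varepsilon))G'(r-\varepsilon)$ by monotonicity of $\phi$ on $[0,\infty)$ and of $G'$; writing $\phi(G(r-\varepsilon))=\frac{\phi(G(r-\varepsilon))}{\bar\Phi(G(r-\varepsilon))}\bar F(r-\varepsilon)$ and applying the same bounds gives $\mathbb{P}(r-\varepsilon\le\norm{Y}\le r+\varepsilon)\le\Phi_2(r+1)\varepsilon\,\bar F(r-\varepsilon)$.

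The main obstacle is not the skeleton above but the two quantitative ingredients and the constant bookkeeping. First, the density/hazard‑rate bound must be made fully explicit: this requires a careful, constant‑tracking version of the argument behind the smoothness result of \cite{GINE76} — morally, controlling the density of $|Y(s)|$ near the almost‑sure argmax of $\norm{Y}$ uniformly in $s$ via Gaussian comparison — together with the tail lower bound of Part~1, and it is here (and in the behaviour of $G'$ just above the median) that the $\mu$‑dependent factors in $\Phi_4$ and $\Phi_2$ originate. Second, one must treat the regimes where $r-\varepsilon$ or $r$ lie below the median: there $G<0$, the tangent‑line consequences of concavity must be applied from a shifted base point, and the appearance of $r+1$ rather than $r$, as well as the exact numerical constants ($20$, $51$, $32\pi$, $56$, $4.1$, \dots), gets pinned down in this casework; the displayed forms of $\Phi_4$ and $\Phi_2$ then emerge by assembling these estimates.
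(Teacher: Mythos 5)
Your skeleton rests on the same three pillars as the paper's own proof: the Part~1 lower bound via the coordinate of maximal variance plus a Mills-ratio computation, concavity of $q(r):=\Phi^{-1}\big(\mathbb{P}(\norm{Y}\le r)\big)$ obtained from Ehrhard's inequality, and the Gin\'e--Zinn anti-concentration bound at the median, which is what controls $q'(\mu)$ (the paper's $d_1$). Where you genuinely differ is the packaging of Parts~2 and~3: you integrate the hazard rate $f/\bar F=\big(\phi(q)/\bar\Phi(q)\big)q'$ over $[\max(r-\varepsilon,\mu),r]$, using the Mills-ratio bound $\phi(x)/\bar\Phi(x)\le x+1$, the concavity bound $q'(t)\le q'(\mu)$ above the median, and the Part~1 bound $q(t)\lesssim t/\sigma_{\max}+1$; the paper instead writes $r-\varepsilon$ as a convex combination of $\mu$ and $r$, compares $\bar\Phi(q(r-\varepsilon))$ with $\bar\Phi(\beta^{-1}q(r-\varepsilon))$, and disposes of the degenerate regimes through five explicit conditions. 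Your route is sound in outline and arguably cleaner (splitting at the median replaces most of the casework, and a prefactor $2$ rather than $20$ would suffice).

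However, the theorem is an explicit-constant statement, and the two items you defer are exactly where its content lies. First, the quantitative engine --- the explicit band bound for $\norm{Y}$ at a general level $\lambda$, i.e.\ the Gin\'e--Zinn lemma with constant $K(\lambda)=2\sigma_{\min}^{-1}(2.6+\lambda/\sigma_{\min})$ (Lemma S.1.1 of the paper, proved there by normalizing $(Y(s)-\lambda)/\sigma(s)$ to a unit-variance process and integrating an explicit density bound, not by a comparison ``near the argmax'') --- is only gestured at; without some such explicit lemma you cannot bound $q'(\mu)$ or write down $\Phi_2$ or $\Phi_4$ at all. Second, the constants your route produces are not the displayed ones: in Part~3, bounding $f(t)\le\big(q(r-\varepsilon)+1\big)q'(\mu)\,\bar F(r-\varepsilon)$ with $q(r-\varepsilon)\lesssim r/\sigma_{\max}+1$ yields a factor of order $q'(\mu)\,(1+\sigma_{\max}^{-1})$, linear in $d_1$, whereas the stated $\Phi_2$ contains the quadratic term $32\pi(\mu+2.6\sigma_{\min})^2/\sigma_{\min}^4\asymp (q'(\mu))^2$, and $\Phi_4$ contains a cubic-in-$d_1$ term arising from the paper's quantile-ratio computation; so the claim that ``the displayed forms of $\Phi_4$ and $\Phi_2$ then emerge'' is not accurate --- at best you prove the inequalities with constants of a different (possibly smaller) shape, a variant of the theorem rather than the statement verbatim. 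Finally, your Part~3 endpoint bound $\phi(q(t))\le\phi(q(r-\varepsilon))$ requires $q(r-\varepsilon)\ge0$, i.e.\ $r-\varepsilon\ge\mu$; you acknowledge the sub-median regime and the remaining numerical bookkeeping, but both are left undone, and they constitute the bulk of the paper's actual proof.
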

Observe that the set $\{1,\ldots, p\}$ is compact and discrete. So the result can be used in the high dimension case.
Theorem \ref{thm:density} is dimension-free and the dependence on the ``complexity" of $S$ appears only through the median of $\norm{Y}$. The following anti-concentration inequalities for $\norm{Y}$ can be derived as immediate corollaries of Theorem \ref{thm:density}.
\begin{thm}\label{thm:AntiConcentration}
Fix $m\ge 0$. Under the assumptions of Theorem \ref{thm:density}, we have for all $\varepsilon \ge 0$
\begin{equation}\label{eq:AntiConcentration}
\begin{split}
\sup_{r \ge 0}\;r^m\,\mathbb{P}(r - \varepsilon \le \|Y\| \le r + \varepsilon) ~&\le~ \Phi_{AC,m}\varepsilon,
\end{split}
\end{equation}
where
\begin{align*}
\Phi_{AC,m} &:= \Theta_m\frac{(\mu + \sigma_{\max})^{m+1}\sigma_{\max}^2 + (1 + \sigma_{\max})^2\sigma_{\max}^{m+2}}{\sigma_{\min}^4},
\end{align*}
with $\Theta_m$ representing a constant that depends only on $m$.
\end{thm}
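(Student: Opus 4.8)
The plan is to obtain \eqref{eq:AntiConcentration} as a corollary of Theorem~\ref{thm:density}. The only ingredient available that yields a bound linear in $\varepsilon$ is part~(3) of that theorem, so I would start, for every $r\ge0$ and $\varepsilon>0$, from
\[
r^m\,\mathbb{P}\!\left(r-\varepsilon\le\norm{Y}\le r+\varepsilon\right)\;\le\;\Phi_2\,\varepsilon\cdot r^m(r+1)\,\mathbb{P}\!\left(\norm{Y}>(r-\varepsilon)_+\right),
\]
and then estimate the remaining factor $\Psi(\varepsilon):=\sup_{r\ge0}r^m(r+1)\,\mathbb{P}(\norm{Y}>(r-\varepsilon)_+)$ by a quantity free of $\varepsilon$ on the range where the inequality is substantive (that is, $\varepsilon$ of at most constant order; for larger $\varepsilon$ one uses the trivial $\mathbb{P}(\cdot)\le1$). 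The final step is to multiply $\Psi$ by $\Phi_2$ and reorganise into the stated closed form of $\Phi_{AC,m}$.

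To control $\Psi(\varepsilon)$ I would split the half-line at a threshold $R:=\mu+\varepsilon+c_0\,\sigma_{\max}\sqrt{m+1}$. On $\{r>R\}$ one has $r-\varepsilon\ge\mu$, and standard Gaussian concentration for suprema of sample-continuous centered Gaussian processes (Borell's inequality in its median form, using $\sup_s\mathbb{E}[Y^2(s)]\le\sigma_{\max}^2$) gives $\mathbb{P}(\norm{Y}>r-\varepsilon)\le\exp\!\big(-(r-\varepsilon-\mu)^2/(2\sigma_{\max}^2)\big)$. Substituting $t=r-\varepsilon-\mu\ge0$ turns the tail part of $\Psi$ into the elementary optimisation $\sup_{t\ge0}(t+\mu+\varepsilon+1)^{m+1}e^{-t^2/(2\sigma_{\max}^2)}$, whose maximiser obeys $t+\mu+\varepsilon+1\le\mu+\varepsilon+1+\sigma_{\max}\sqrt{m+1}$, so (for $\varepsilon\le1$) the tail contribution is at most $\Theta_m\big((\mu+\sigma_{\max})^{m+1}+\sigma_{\max}^{m+1}\big)$, the constant $\Theta_m\sim m^{m/2}$ absorbing the binomial and $(m+1)^{(m+1)/2}$ factors. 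On $\{r\le R\}$ the polynomial $r^m(r+1)$ is at most $R^{m+1}$, of the same order; the delicate point here is that one cannot afford the trivial bound $\mathbb{P}(\norm{Y}>r-\varepsilon)\le1$ together with the $(r+1)$ factor coming from part~(3), since that would cost a spurious power of $\mu$ in the ``bulk'' range $r\asymp\mu$ and give $\Phi_{AC,0}=O(\mu^3)$ rather than the announced $O(\mu)$, so the sharp small-ball/density information behind Theorem~\ref{thm:density} must be exploited in this range.

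Assembling the two ranges yields $\Psi(\varepsilon)\lesssim\Theta_m\big((\mu+\sigma_{\max})^{m+1}+\sigma_{\max}^{m+1}\big)$ for $\varepsilon$ of constant order. Multiplying by $\Phi_2$ — which I would bound above by the sum of its two defining terms and then place over the common denominator $\sigma_{\min}^4$ using $\sigma_{\min}\le\sigma_{\max}$ — produces precisely the two numerator contributions $(\mu+\sigma_{\max})^{m+1}\sigma_{\max}^2$ and $(1+\sigma_{\max})^2\sigma_{\max}^{m+2}$ of $\Phi_{AC,m}$, with all $m$-dependent constants swept into $\Theta_m$. A check at $m=0$ should reproduce $\Phi_{AC,0}=O(\mu)$ when $\sigma_{\max},\sigma_{\min}^{-1}$ are of constant order, i.e.\ the refinement over the $\sqrt{\log(ep)}$ anti-concentration of \cite{CCK17} asserted in the text.

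I expect the genuine obstacle to be the constant book-keeping that keeps $\Phi_{AC,m}$ sharp rather than any single analytic step: one has to route the ``bulk'' range $r\asymp\mu$ through the sharp form of Theorem~\ref{thm:density} (not through trivial bounds) and then verify that splicing the bulk and tail estimates still leaves a bound that is genuinely linear in $\varepsilon$ with the precise prefactor. The remaining pieces — the Gaussian tail optimisation $\sup_{t\ge0}(t+a)^{m+1}e^{-t^2/(2\sigma_{\max}^2)}$, the binomial expansion of $R^{m+1}$, the reduction of $\Phi_2$ to the common denominator $\sigma_{\min}^4$, and the treatment of large $\varepsilon$ — are routine.
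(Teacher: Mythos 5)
There is a genuine gap, and it sits exactly where you flag the ``delicate point'' and then defer it as book-keeping: your scheme runs \emph{everything} through part~(3) of Theorem~\ref{thm:density}, but in the bulk $r\asymp\mu$ that inequality can only give
\[
r^m\,\mathbb{P}\bigl(r-\varepsilon\le\|Y\|\le r+\varepsilon\bigr)\;\le\;\Phi_2\,\varepsilon\,r^m(r+1)\,\mathbb{P}\bigl(\|Y\|>r-\varepsilon\bigr)\;\asymp\;\varepsilon\,\frac{(\mu+\sigma_{\max})^{m+3}}{\sigma_{\min}^4},
\]
since near $r=\mu$ the probability $\mathbb{P}(\|Y\|>r-\varepsilon)$ is of constant order and $\Phi_2\asymp(\mu+\sigma_{\max})^2/\sigma_{\min}^4$. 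That is off from the claimed $\Phi_{AC,m}$ by a factor $((\mu+\sigma_{\max})/\sigma_{\max})^2$, and no rearrangement of part~(3) removes it; saying that ``the sharp small-ball/density information behind Theorem~\ref{thm:density} must be exploited'' is naming the missing ingredient, not supplying it. The paper closes this range with a \emph{separate} tool: the Gin\'e-type anti-concentration lemma (Lemma S.1.1 in the supplement, Lemma 2.5 of Gin\'e), which for $\varepsilon\le\sigma_{\min}/2$ gives $\mathbb{P}(r-\varepsilon\le\|Y\|\le r+\varepsilon)\le 4\varepsilon(2.6\sigma_{\min}+r)/\sigma_{\min}^2$ directly, i.e.\ a density-type bound growing only linearly in $r$ with no $\Phi_2$ prefactor; applied on $r\le 3(\mu+\sigma_{\max})$ it yields the first term of $\Phi_{AC,m}$. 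This lemma is not part of the statement of Theorem~\ref{thm:density} and is not derivable from part~(3), so your proof as proposed cannot be completed without importing it (or an equivalent).

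A second, related problem is your tail threshold $R=\mu+\varepsilon+c_0\sigma_{\max}\sqrt{m+1}$. For the assembly $\Phi_2\cdot(\text{tail sup})$ to produce $(1+\sigma_{\max})^2\sigma_{\max}^{m+2}/\sigma_{\min}^4$ rather than $(\mu+\sigma_{\max})^{m+3}/\sigma_{\min}^4$, the Borell factor must contribute $\exp(-c(\mu+\sigma_{\max})^2/\sigma_{\max}^2)$, which in turn absorbs the $(\mu+\cdot)^2$ hidden in $\Phi_2$ and the $\mu$-powers in $r^m(r+1)$. That requires the tail region to start at a constant \emph{multiple} of $\mu+\sigma_{\max}$ (the paper takes $r>3(\mu+\sigma_{\max})$ and splits $\exp(-r^2/18\sigma_{\max}^2)\exp(-9(\mu+\sigma_{\max})^2/32\sigma_{\max}^2)$); with your additive threshold the exponential is only a constant near $r\approx R$, the absorption fails, and even your large-$r$ estimate lands on the wrong power of $\mu$. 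So both halves of the argument need the paper's structure — Gin\'e lemma on $r\le 3(\mu+\sigma_{\max})$, part~(3) plus Borell beyond it — and neither is a matter of sweeping constants into $\Theta_m$.
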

The constant $\Phi_{AC,m}$ obtained in Theorem~\ref{thm:AntiConcentration}, for any $m\ge 0$, is expected to be optimal since if the coordinates of $Y$ are independent then the discussion following Corollary 2.7 of~\cite{GINE76} and Example 2 of~\cite{CCK13} imply that the density of $\|Y\|$ at points of order $\sqrt{\log p}$ is lower bounded in rate by $\sqrt{\log p}$. Hence in this case for any $m\ge 0$ as $\varepsilon \to 0$ the rate is lower bounded by $\mu^{m+1}$.
\begin{rem}{ (Comparison with~\cite{CCK15})} Theorem 3 of \cite{CCK15} implies the anti-concentration for $\norm{Y}$ and provides a dimension-free bound depending on the median of $\norm{Y}$ only under the additional assumption of $\sigma_{\max} = \sigma_{\min}$. For the general case, their bound has an additional term of $\log(1/\varepsilon)$ which makes their bound weaker than that in Theorem~\ref{thm:AntiConcentration}. In terms of the proof technique, we note that the techniques of both works are the same for $r \le 3(\mu + \sigma_{\max})$. For the case $r \ge 3(\mu + \sigma_{\max})$, \cite{CCK15} apply the Gaussian concentration inequality that leads to an extra $\log(1/\varepsilon)$ factor while we use inequality~\eqref{eq:ToProve} that leads to the sharper version. Theorem~\ref{thm:AntiConcentration} is the first result on dimension-free anti-concentration inequality for $\norm{Y}$ and hence our result readily applies to Gaussian processes on (infinite dimensional) compact metric spaces. The results of \cite{Naz03} imply an anti-concentration inequality that explicitly depends on the dimension as $\sqrt{\log p}$. Nazarov's result as proved in~\cite{chernozhukov2017detailed} cannot lead to a rate better than $\sqrt{\log(p)}$ since the covariance structure of $Y$ is completely ignored in the proof.
\end{rem}
\subsection{Smooth Approximation of the Maximum}\label{subsec:Approximation}
Starting point of almost any Berry-Esseen type result is a smoothing inequality. In our scenario, we need a smooth approximation of $\mathbbm{1}\{\|S_n\| \le r\}$ where, recall, $\|\cdot\|$ denotes the $l_{\infty}$-norm. Theorem 1 of~\cite{bentkus1990smooth} provides an infinitely differentiable (in $S_n$) approximation of this indicator with sharp bounds on the derivatives. However to get better rates of convergence in the central limit theorem a certain stability property of the derivatives is needed. This property is exactly the reason why~\cite{CCK17} were able to get better rates than those implied by Banach space CLTs; see~\cite{banerjee2018cram} for implications of Banach space CLTs. The smooth approximation (taken from~\cite{CCK13}), along with the its properties, is summarized in the following result. Define the ``softmax'' function as
\[
F_{\beta}(z) := \frac{1}{\beta}\log\sum_{j=1}^{2p} \exp\left(\beta z(j)\right)\quad\mbox{for}\quad z\in\mathbb{R}^{2p}.
\]
Also define $g_0(t) := 30\mathbbm{1}\{0 \le t \le 1\}\int_t^1 s^2(1 - s)^2ds$.
\begin{lem}\label{lem:SmoothApproximation}
Fix $r\ge0, \varepsilon > 0$ and set $\beta = 2\log(2p)/\varepsilon$. Define the function $\varphi:\mathbb{R}^p\to \mathbb{R}$ as
\[
\varphi(x) = \varphi_{r,\varepsilon}(x) = g_0\left(\frac{2(F_{\beta}(z_x - r\mathbf{1}_{2p}) - \varepsilon/2)}{\varepsilon}\right),
\]
where $z_x = (x^{\top}:-x^{\top})^{\top}$ and $\mathbf{1}_{2p}$ is the vector of $1$'s of dimension $2p$. This function $\varphi(\cdot)$ satisfies the following properties.
\begin{enumerate}
  \item It ``approximates'' the indicator of the $l_{\infty}$-ball, that is,
  \[
  \varphi(x) = \begin{cases}1, &\mbox{if }\|x\| \le r,\\
  0, &\mbox{if }\|x\| > r + \varepsilon,\end{cases}\,\mbox{ or equivalently }\,\mathbbm{1}\{\|x\| \le r\} \le \varphi(x) \le \mathbbm{1}\{\|x\| \le r + \varepsilon\}.
  \]
  \item There exists functions $D_{j}(\cdot), D_{jk}(\cdot)$ and $D_{jkl}(\cdot)$ for $1\le j,k,l\le p$ as well as constant $C_0 > 0$ such that
  \[
  |\partial_j\varphi(x)| \le D_j(x),\quad |\partial_{jk}\varphi(x)| \le D_{jk}(x),\quad |\partial_{jkl}\varphi(x)| \le D_{jkl}(x),
  \]
  where $\partial_{j}, \partial_{jk}, \partial_{jkl}$ denote the partial derivatives of $\varphi$ with respect to the indices in subscript and for all $x\in\mathbb{R}^p$,
  \[
  \sum_{j=1}^p D_{j}(x) \le C_0\varepsilon^{-1},\quad\sum_{j,k=1}^p D_{jk}(x) \le C_0\log(ep)\varepsilon^{-2},\quad \sum_{j,k,l=1}^p D_{jkl}(x) \le C_0\log^2(ep)\varepsilon^{-3}.
  \]
  \item The functions $D_j, D_{jk}, D_{jkl}$ also satisfy a ratio stability property: there exists universal constant $\mathfrak{C} > 0$ such that for all $x,w\in\mathbb{R}^p$,
  \begin{equation}\label{eq:StabilityProp}
  e^{-{\mathfrak{C}\log(ep)\|w\|_{\infty}}/{\varepsilon}} \le \frac{D_{j}(x + w)}{D_j(x)},\;\frac{D_{jk}(x + w)}{D_{jk}(x)},\;\frac{D_{jkl}(x + w)}{D_{jkl}(x)} \le e^{{\mathfrak{C}\log(ep)\|w\|_{\infty}}/{\varepsilon}}.
  \end{equation}
\end{enumerate}
\end{lem}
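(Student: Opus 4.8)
The statement is the smooth‑approximation lemma used in \cite{CCK13} (parts (1)--(2)) and in \cite{CCK17} (part (3)); I would reprove it by unwinding the chain rule and the elementary calculus of the softmax. Throughout put $w_x := z_x - r\mathbf{1}_{2p}\in\mathbb{R}^{2p}$ and let $\pi_l(z) := e^{\beta z(l)}/\sum_{m=1}^{2p}e^{\beta z(m)}$, $1\le l\le 2p$, be the softmax weights, so that $\sum_l\pi_l(z)\equiv 1$ and $\partial_{z(l)}F_\beta(z)=\pi_l(z)$. For part~(1) I would use the standard sandwich $\max_{1\le l\le 2p}z(l)\le F_\beta(z)\le \max_{1\le l\le 2p}z(l)+\beta^{-1}\log(2p)$; since $\max_l(z_x)(l)=\|x\|$ and $\beta^{-1}\log(2p)=\varepsilon/2$ by the choice of $\beta$, this gives $\|x\|-r\le F_\beta(w_x)\le \|x\|-r+\varepsilon/2$. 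Hence the argument $t(x):=2\varepsilon^{-1}(F_\beta(w_x)-\varepsilon/2)$ of $g_0$ is $\le 0$ when $\|x\|\le r$ and $>1$ when $\|x\|>r+\varepsilon$, and part~(1) follows because $g_0$ decreases from $g_0(0)=30\int_0^1 s^2(1-s)^2\,ds=1$ on $(-\infty,0]$ to $g_0(1)=0$ on $[1,\infty)$ with $0\le g_0\le 1$ throughout.

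For part~(2) I would write $\varphi=g_0\circ t$ and record, with $a_j:=\pi_j(w_x)+\pi_{p+j}(w_x)$ and $b_j:=\pi_j(w_x)-\pi_{p+j}(w_x)$ (so $|b_j|\le a_j$ and $\sum_j a_j\le 2$), the standard identities obtained by differentiating $F_\beta(w_x)$ in $x$:
\[
\partial_j F_\beta(w_x)=b_j,\qquad \partial_{jk}F_\beta(w_x)=\beta\bigl(a_j\delta_{jk}-b_jb_k\bigr),
\]
and a third‑order expression that is a polynomial in $\beta$ of degree $2$ whose coefficients are signed products of at most three of the quantities $a_\cdot,b_\cdot$ carrying the appropriate Kronecker deltas. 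Plugging into the chain rule, and using that $g_0',g_0'',g_0'''$ are uniformly bounded (as $g_0$ is a fixed degree‑$5$ polynomial on $[0,1]$ and constant outside), the partials $\partial_j\varphi,\partial_{jk}\varphi,\partial_{jkl}\varphi$ are dominated pointwise by functions $D_j,D_{jk},D_{jkl}$ that I would \emph{define} purely in terms of the $a_j$'s and of $\varepsilon,\beta$, of the schematic form $D_j\propto \varepsilon^{-1}a_j$, $D_{jk}\propto \varepsilon^{-2}(a_ja_k+(\varepsilon\beta)a_j\delta_{jk})$, and $D_{jkl}\propto \varepsilon^{-3}\bigl((\varepsilon\beta)^2 a_ja_ka_l+(\varepsilon\beta)a_j a_k(\delta_{jl}+\cdots)+a_j\delta_{jk}\delta_{jl}+\cdots\bigr)$, for a large enough universal constant. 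Summing over the indices and using $\sum_l\pi_l\equiv1$ (so $\sum_j a_j\le 2$, $\sum_{j,k}a_ja_k\le 4$, $\sum_{j,k,l}a_ja_ka_l\le 8$, and $\sum_{j}a_j\delta_{jk}$ summed further in $k$ is $\le2$) together with $\varepsilon\beta=2\log(2p)\le 2\log(ep)$ yields exactly $\sum_j D_j\le C_0\varepsilon^{-1}$, $\sum_{jk}D_{jk}\le C_0\log(ep)\varepsilon^{-2}$, $\sum_{jkl}D_{jkl}\le C_0\log^2(ep)\varepsilon^{-3}$.

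For part~(3) the point is that $D_j,D_{jk},D_{jkl}$ were built from the softmax weights alone (I deliberately replaced $|g_0^{(m)}(t(x))|$ by its sup norm rather than keeping the indicator $\mathbbm{1}\{t(x)\in[0,1]\}$), so it suffices to use the log‑Lipschitz estimate
\[
\frac{\pi_l(z')}{\pi_l(z)}=e^{\beta(z'(l)-z(l))}\cdot\frac{\sum_m e^{\beta z(m)}}{\sum_m e^{\beta z'(m)}}\in\bigl[e^{-2\beta\|z-z'\|_\infty},\,e^{2\beta\|z-z'\|_\infty}\bigr].
\]
Since $z_{x+w}-z_x=(w,-w)$ has sup‑norm $\|w\|_\infty$, each $\pi_l(w_{x+w})/\pi_l(w_x)$ lies in $[e^{-2\beta\|w\|_\infty},e^{2\beta\|w\|_\infty}]$, hence so does the ratio of any $a_j$, hence the ratio of $D_j$ has exponent $\le 2\beta\|w\|_\infty$, that of $D_{jk}$ (quadratic in the $a$'s) has exponent $\le 4\beta\|w\|_\infty$, and that of $D_{jkl}$ has exponent $\le 6\beta\|w\|_\infty$; since $2\beta=4\log(2p)/\varepsilon\le 4\log(ep)/\varepsilon$, all three are bounded by $\mathfrak{C}\log(ep)\|w\|_\infty/\varepsilon$ for a universal $\mathfrak{C}$ (e.g.\ $\mathfrak{C}=24$), which is \eqref{eq:StabilityProp}. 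The only genuinely delicate step is the one in part~(2): choosing the three dominating functions so that they \emph{simultaneously} dominate the partials on all of $\mathbb{R}^p$, have the claimed sums, and are log‑Lipschitz with the stated modulus. The tension is that the naive pointwise bounds carry the factor $\mathbbm{1}\{t(x)\in[0,1]\}$, which would wreck the ratio‑stability; discarding it in favour of the uniform bounds on $g_0',g_0'',g_0'''$ resolves this, after which everything reduces to $\sum_l\pi_l\equiv1$ and the displayed log‑Lipschitz inequality, the rest being routine differentiation and constant‑chasing.
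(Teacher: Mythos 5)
Your proposal is correct and takes essentially the same route as the paper: part (1) is the identical softmax sandwich argument with $\log(2p)/\beta=\varepsilon/2$, and for parts (2)--(3) the paper itself only says the bounds ``can be easily verified by direct calculation'' (citing \citet[Appendix A]{CCK13}), which is exactly the chain-rule/softmax-weight computation you carry out, including the key point of defining $D_j,D_{jk},D_{jkl}$ from the weights alone (discarding the indicator of $t(x)\in[0,1]$) so that the log-Lipschitz bound on the softmax weights yields the stability property \eqref{eq:StabilityProp}. No gaps beyond the constant-chasing you already indicate.
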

The smooth approximation result above is the bottleneck in attaining faster rates in CLT than those presented in the present paper. The $\log^4(ep)$ dependence in the uniform and non-uniform CLTs presented in Subsection~\ref{subsec:Contributions} comes only from the $\log(ep)$ factors in the bounds of derivatives and stability property of smooth approximation.

The approximating functions in Lemma~\ref{lem:SmoothApproximation} are constructed to work for any high-dimensional distribution; the difference between $F_{\beta}(z)$ and $\max_j z(j)$ is at most $\log(2p)/\beta$ over all vectors $z$ and this can be smaller if $z$ comes from a specific distribution. This universality can be seen clearly in the construction of~\cite{bentkus1990smooth} who defines the $\varepsilon$ approximation of $\|\cdot\|$ based on
$f_{\varepsilon}(x) = \mathbb{E}\left[\|x + \varepsilon \eta\|\right],$
where $\eta \sim N_p(0, I_p)$. However one can replace $\eta$ by other random vectors taking into account the dependence of $U_{n,0}$. A specific choice that we \emph{conjecture} works is
\[
f_{\varepsilon}(x) = \mathbb{E}[\|x + \varepsilon U_{n,0}\|].
\]
Since $\|\cdot\|$ is Lipschitz, we get that $|\|x\| - f_{\varepsilon}(x)| \le \varepsilon\mathbb{E}[\|U_{n,0}\|] = \varepsilon O(\mu)$. Since $U_{n,0}$ and $S_n$ share the same dependence structure, we only need to bound the derivatives of $f_{\varepsilon}$ at $U_{n,j}$ which would lead to better rates in CLT using the proofs here; see Section~\ref{sec:Outline} for details. 
\section{Main Results}\label{sec:result}
We are now ready to state the main results of this paper. The proofs of all the results in this section are given in the supplementary material. The sketches of these proofs are presented, for readers' convenience, in Section~\ref{sec:Outline}. Recall the notation $\delta_{n,m}$ from ~\eqref{def:deltanm} in Section~\ref{subsec:Notations}. Also recall $M_n(\cdot)$ and quantity $\bar{L}_{n,0}$ are defined in~\eqref{eq:SecondPsuedoMoment} and~\eqref{eq:ThirdMoment}, respectively. The quantity $L_n$ (in~\eqref{eq:ThirdPseudoMoment}) denotes the ``weak'' third pseudo-moment and if $L_n = 0$ then $\delta_{n,0} = \delta_{n,m} = 0$ for any $m \ge 0$. For this reason, we assume $L_n > 0$. Let $\Phi_{AC,m}$ denote the anti-concentration constant in Theorem~\ref{thm:AntiConcentration} for random vector $U_{n,0}$.
\subsection{Uniform CLT}\label{sec:UniformCLT}
\begin{thm}\label{thm:UniformCLT}
For independent random vectors $X_1, \ldots, X_n\in\mathbb{R}^p$, we have
\begin{align*}
\delta_{n,0} ~&\le~ 4\Phi_{AC,0}\varepsilon_n + \frac{2C_0\log(ep)M_n(\varepsilon_n)}{\varepsilon_n^{2}} + \frac{\log^{1/3}(ep)\bar{L}_{n,0}}{n^{1/3}L_n^{4/3}(2e^{5\mathfrak{C}}C_0)^{1/3}},
\end{align*}
where $\varepsilon_n = (2e^{2\mathfrak{C}}C_0\log^2(ep)L_n)^{1/3}/n^{1/6}$.
\end{thm}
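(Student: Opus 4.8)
The plan is to use Lindeberg's swapping argument on the telescoping sum over the hybrid vectors $U_{n,k}$, with the smooth approximation $\varphi = \varphi_{r,\varepsilon}$ of $\mathbbm{1}\{\|\cdot\| \le r\}$ from Lemma~\ref{lem:SmoothApproximation}. First I would fix $r \ge 0$ and $\varepsilon > 0$ (to be optimized to $\varepsilon_n$ at the end), and bound
\[
\mathbb{P}(\|U_{n,k}\| \le r) - \mathbb{P}(\|U_{n,0}\| \le r) \le \mathbb{E}[\varphi(U_{n,k})] - \mathbb{E}[\varphi(U_{n,0})] + \mathbb{P}(r \le \|U_{n,0}\| \le r + \varepsilon),
\]
with the matching lower bound obtained by shifting $r$ to $r - \varepsilon$; the two error terms coming from the indicator sandwich are each controlled by the anti-concentration inequality~\eqref{eq:AntiConcentration} of Theorem~\ref{thm:AntiConcentration}, which is exactly where the factor $4\Phi_{AC,0}\varepsilon_n$ is generated (accounting for the two one-sided bounds and possibly a crude union with $r$ versus $r-\varepsilon$).

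Next I would write $\mathbb{E}[\varphi(U_{n,k})] - \mathbb{E}[\varphi(U_{n,0})]$ as a telescoping sum over $i = 1,\dots,k$ of the one-at-a-time replacements of $Y_i$ by $X_i$. For each term, let $W_i := n^{-1/2}(X_1 + \cdots + X_{i-1} + Y_{i+1} + \cdots + Y_n)$ be the common part and Taylor-expand $\varphi(W_i + X_i/\sqrt n)$ and $\varphi(W_i + Y_i/\sqrt n)$ to third order. The zeroth, first and second order terms cancel in expectation because $X_i$ and $Y_i$ have matching first and second moments — this is precisely~\eqref{eq:zetazero} — leaving a third-order remainder integrated against the signed measure $\zeta_i$. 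The remainder is bounded using $|\partial_{jkl}\varphi| \le D_{jkl}$; the key point, and the reason one beats the naive Banach-space rate, is the ratio-stability property~\eqref{eq:StabilityProp}: $D_{jkl}$ evaluated at $W_i + tX_i/\sqrt n$ (for $t\in[0,1]$ along the Taylor path) is comparable, up to $e^{\mathfrak{C}\log(ep)\|X_i\|/(\sqrt n\,\varepsilon)}$, to $D_{jkl}(W_i)$, so one can pull the derivative factor out and use $\sum_{j,k,l} D_{jkl}(W_i) \le C_0\log^2(ep)\varepsilon^{-3}$. To keep the exponential stability factor under control I would split the integration against $|\zeta_i|$ at $\|x\| = n^{1/2}\varepsilon/(2\mathfrak{C}\log(ep))$ (roughly): on the small-$\|x\|$ part the factor $e^{\mathfrak{C}\log(ep)\|x\|/(\sqrt n \varepsilon)}$ is at most a constant like $e^{?}$, giving a contribution of order $\log^2(ep)\varepsilon^{-3} n^{-3/2}\sum_i \int \|x\|^3 |\zeta_i|(dx) \lesssim \log^2(ep)\varepsilon^{-3}n^{-1/2}L_n$; on the large-$\|x\|$ part one uses instead the second-derivative bound (or a direct $|\varphi|\le 1$ bound) together with $\|x\|^2$, producing the truncated second pseudo-moment term $\log(ep)\varepsilon^{-2}M_n(\varepsilon)$ after noting the truncation level matches the definition~\eqref{eq:SecondPsuedoMoment} up to constants.

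Collecting the three contributions gives a bound of the shape $4\Phi_{AC,0}\varepsilon + C_0\log(ep)\varepsilon^{-2}M_n(\varepsilon) + (\text{const})\,e^{5\mathfrak{C}}C_0\log^2(ep)\varepsilon^{-3}n^{-1/2}L_n$, uniformly in $r$ and $k$, hence a bound on $\delta_{n,0}$. The final step is to optimize the first and third terms in $\varepsilon$: balancing $\Phi_{AC,0}\varepsilon$ against $\log^2(ep)L_n\varepsilon^{-3}n^{-1/2}$ would give $\varepsilon \sim (\log^2(ep)L_n/n^{1/2})^{1/4}$, but the statement's choice $\varepsilon_n = (2e^{2\mathfrak{C}}C_0\log^2(ep)L_n)^{1/3}n^{-1/6}$ instead balances the third term against something of order $n^{-1/2}$ after plugging back in, yielding the stated residual $\log^{1/3}(ep)\bar L_{n,0}/(n^{1/3}L_n^{4/3}(2e^{5\mathfrak C}C_0)^{1/3})$; I would simply substitute $\varepsilon = \varepsilon_n$ and simplify, keeping the $M_n(\varepsilon_n)$ term as is since it is left in pseudo-moment form. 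The main obstacle is the bookkeeping in the Taylor remainder: correctly exploiting~\eqref{eq:StabilityProp} so that the derivative bounds get evaluated at a fixed point (independent of $X_i, Y_i$) while simultaneously splitting the $|\zeta_i|$-integral at the right truncation radius so that the small part yields a bounded stability constant ($e^{5\mathfrak C}$ or similar) and the large part collapses exactly into $M_n(\varepsilon_n)$ — getting the constants and the truncation threshold to line up with the definitions in~\eqref{eq:SecondPsuedoMoment} and~\eqref{eq:ThirdMoment} is the delicate part, everything else is routine Lindeberg swapping.
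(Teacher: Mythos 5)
There is a genuine gap: your bound on the small-$\|x\|$ part of the Taylor remainder throws away the localization, and with it the key self-referencing step that produces the $n^{-1/6}$ rate. After applying the stability property~\eqref{eq:StabilityProp} you bound the third-order term by $C_0\log^2(ep)\varepsilon^{-3}n^{-1/2}L_n$ outright. But with the stated choice $\varepsilon_n = (2e^{2\mathfrak{C}}C_0\log^2(ep)L_n)^{1/3}n^{-1/6}$ this quantity equals $1/(2e^{2\mathfrak{C}})$, a constant — so "substituting $\varepsilon=\varepsilon_n$ and simplifying" gives a bound of order one, not the claimed rate; as you yourself noticed, optimizing your bound can only balance $\Phi_{AC,0}\varepsilon$ against $\varepsilon^{-3}n^{-1/2}$, i.e.\ the Banach-space $n^{-1/8}$ rate. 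The paper's proof keeps the indicator $\mathbbm{1}\{|\,\|W_{n,j}\|-r| \le 2\varepsilon\}$ inside the third-order remainder, splits according to $\{Y_j\in\mathcal{E}\}$ versus $\{Y_j\in\mathcal{E}^c\}$ with $\mathcal{E}=\{\|x\|\le n^{1/2}\varepsilon/\log(ep)\}$, uses stability once more to convert the event into $\{r-3\varepsilon\le\|U_{n,j-1}\|\le r+3\varepsilon\}$ (note $U_{n,j-1}$ has all $n$ summands), and bounds that probability by $3\Phi_{AC,0}\varepsilon + 2\delta_{n,0}$. The constant-order factor $e^{2\mathfrak{C}}C_3\varepsilon_n^{-3}L_n n^{-1/2}=\tfrac12$ then multiplies $[\Phi_{AC,0}\varepsilon_n+\delta_{n,0}]$, and the resulting inequality $\delta_{n,0}\le\tfrac12\delta_{n,0}+\cdots$ is solved for $\delta_{n,0}$. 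Without this bootstrap your decomposition cannot yield the theorem.

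Two further consequences of the same omission. First, the third term of the theorem, involving $\bar{L}_{n,0}$ (hence the medians $\mu_i$ and scales $\sigma_i$ of $\|Y_i\|$), arises precisely from the $\{Y_j\in\mathcal{E}^c\}$ piece: one bounds $\mathbb{P}(\|Y_j\|>n^{1/2}\varepsilon_n/\log(ep))\le 2(\mu_j+\sigma_j)\log^{1/3}(ep)/(n^{1/3}(2e^{2\mathfrak{C}}C_0L_n)^{1/3})$ via Gaussian concentration; your proposal never produces an event on $\|Y_j\|$, so this term is unexplained. Second, to land on the weak pseudo-moment $L_n$ rather than $\nu_3^3=\sum_i n^{-1}\int\|x\|^3|\zeta_i|(dx)$ you need the H\"older step $\int|x(j_1)x(j_2)x(j_3)|\,|\zeta_i|(dx)\le\max_{j_1}\int|x(j_1)|^3|\zeta_i|(dx)$ before summing the derivative bounds; your bound $\sum_i\int\|x\|^3|\zeta_i|(dx)\lesssim nL_n$ is false in general. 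The smoothing, Lindeberg swap, moment cancellation via~\eqref{eq:zetazero}, the truncation at $n^{1/2}\varepsilon/\log(ep)$, and the $M_n(\varepsilon)$ term on $\mathcal{E}^c$ are all in line with the paper.
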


Theorem~\ref{thm:UniformCLT} is qualitatively the same as Theorem 2.1 of~\cite{CCK17} for $l_{\infty}$-balls. More importantly, note that if $\Phi_{AC,0} \asymp \sqrt{\log(ep)}$ then Theorem~\ref{thm:UniformCLT} has a dominating term of order $(\log^7(ep)/n)^{1/6}$ and hence the result above is as good as Theorem 2.1 of~\cite{CCK17}. The last term in the bound of $\delta_{n,0}$ is of lower order compared to the first term. From inequality~\eqref{eq:BarLnmBound}, we obtain that
\[
\frac{\log^{1/3}(ep)\bar{L}_{n,0}}{n^{1/3}L_n^{4/3}(2e^{5\mathfrak{C}}C_0)^{1/3}} ~=~ O\left(\frac{(\log(ep))^{5/6}}{(nL_n)^{1/3}}\right) = O\left(\frac{(\log(ep))^{5/2}}{nL_n}\right)^{1/3},
\]
which is dominated by the first term which is at least of order $(\log^4(ep)/n)^{1/6}$. Further the quantity $M_n(\varepsilon_n)$ (defined in~\eqref{eq:SecondPsuedoMoment}) is exactly what appears in the classic Lindeberg condition. 
\paragraph{Rates under $(2 + \tau)$-moments:} 
Recall that $M_n(\phi)$ only involves truncated second moment instead of third moment used in~\cite{CCK17}. This subtle difference allows for deriving better rates when $\|X_i\|$ have $(2 + \tau)$-moments. In this case the choice of $\varepsilon_n$ in Theorem~\ref{thm:UniformCLT} is not the right choice since $M_n(\varepsilon_n)/\varepsilon_n^2$ does not converge to zero; we need to choose $\varepsilon_n$ larger. Even though our results involve third ``moments'' $L_n, \bar{L}_{n,0}$, from the proof (in particular~\eqref{eq:SomeToAllIntegral} in the supplement) it can be seen that the integral in the definition of $L_n, \bar{L}_{n,0}$ can be changed to integral over a truncated set; see Appendix~\ref{app:TauLe1} and Step E in Section~\ref{sec:Outline} for details. Very recently~\cite{sun2019gaussian} considered CLT under purely $(2 + \tau)$-moments using Lindeberg method but no explicit rates were provided. In the following we provide details for $\tau \ge 1$ and the calculations for $\tau < 1$ are provided in Appendix~\ref{app:TauLe1}. Set
\[
\nu_{2 + \tau}^{2 + \tau} := \frac{1}{n}\sum_{i=1}^n \int \|x\|^{2 + \tau}|\zeta_i|(dx)\quad\Rightarrow\quad M_n(\varepsilon) \le \frac{\nu_{2+\tau}^{2 + \tau}}{(n^{1/2}\varepsilon/\log(ep))^{\tau}} = \nu_{2+\tau}^{2 + \tau}\left(\frac{\log(ep)}{n^{1/2}\varepsilon}\right)^{\tau}.
\]
The proof of Theorem~\ref{thm:UniformCLT} actually proves a bound that holds for all $\varepsilon > 0$ and we now choose $\varepsilon = \varepsilon_n := \max\{(2e^{2\mathfrak{C}}C_0\log^2(ep)L_n)^{1/3}/n^{1/6},\,r_n\nu_{2+\tau}((\log(ep))^{1 + \tau}/n^{\tau/2})^{1/(2 + \tau)}\}$ for some $r_n \ge 1$. This choice implies that
\[
\frac{e^{2\mathfrak{C}}C_0\log^2(ep)L_n}{n^{1/2}\varepsilon_n^{3}} \le \frac{1}{2}\quad\mbox{and}\quad \frac{C_0\log(ep)M_n(\varepsilon_n)}{\varepsilon_n^{2}} \le \frac{C_0}{r_n^{2 + \tau}}.
\] 
Following the proof of Theorem~\ref{thm:UniformCLT}, for any $r_n\ge1$, we get
\[
\delta_{n,0} ~\lesssim~ \frac{1}{r_n^{2 + \tau}}
+ \Phi_{AC,0}\frac{(\log^2(ep)L_n)^{1/3}}{n^{1/6}} + \Phi_{AC,0}\frac{r_n\nu_{2+\tau}(\log(ep))^{(1 + \tau)/(2 + \tau)}}{n^{\tau/2(2 + \tau)}}.
\]
Here $a\wedge b = \min\{a,b\}$ and $a\vee b = \max\{a,b\}$. Minimizing over $r_n \ge 1$, we get
\begin{equation}\label{eq:Thm31Implication}
\delta_{n,0} \lesssim 
\begin{cases}
\left[\Phi_{AC,0}L_{n,\tau}^{1/(2+\tau)} + \left(\Phi_{AC,0}\nu_{2+\tau}\right)^{(2+\tau)/(3+\tau)}\right]\frac{(\log(ep))^{(\tau+1)/(\tau+2)}}{n^{\tau/(6+2\tau)}}, &\mbox{if }\tau < 1,\\
&\vspace{-0.2in}\\
\Phi_{AC,0}\frac{(\log^2(ep)L_n)^{1/3}}{n^{1/6}} + (\Phi_{AC,0}\nu_{2+\tau})^{(2 + \tau)/(3 + \tau)}\frac{(\log(ep))^{(\tau+1)/(\tau+3)}}{n^{\tau/(6+2\tau)}}, &\mbox{if }\tau \ge 1,
\end{cases}
\end{equation}
where $L_{n,\tau} = n^{-1}\sum_{j=1}^n \max_{1\le j_1\le p}\int |x(j_1)|^{2+\tau}|\zeta_j|(dx)$. As mentioned before the proof for $\tau < 1$ is given in Appendix~\ref{app:TauLe1}. The dependence on sample size obtained above is better than the one obtained in Proposition 2.1 of~\cite{CCK17} for all $\tau > 0$. In particular for $\tau = 1$, we got the dependence $n^{-1/8}$ whereas~\cite{CCK17} obtained $n^{-1/9}$ on the sample size. 
We note here that following the proof of Theorem 2.1 of~\cite{bentkus2000accuracy} it is possible to get $n^{-1/6}$ dependence on $n$ whenever $\tau \ge 1$ as shown in the following proposition. This is similar to the case of multivariate CLT where the rate of convergence scales as $n^{-1/2}$ whenever the random vectors have more than three moments; i.e., after some number of moments the rate stabilizes. The following result is proved based on Theorem 2.1 of~\cite{bentkus2000accuracy} and does not use the stability property~\eqref{eq:StabilityProp}.
\begin{prop}\label{prop:BentkusProof}
For i.i.d. random vectors $X_1,\ldots,X_n$,
\[
\delta_{n,0} \le \frac{1}{2^{n-1}} + 8\Phi_{AC,0}\nu_3\frac{\left(C_0\log^2(ep)\right)^{1/3}}{n^{1/6}}~.
\]
\end{prop}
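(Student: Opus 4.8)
The plan is to run the Lindeberg interpolation argument for the smoothed indicator of the $\ell_\infty$-ball, but to control the third-order remainder by combining the \emph{support} of the derivatives of the smoothing function with the dimension-free anti-concentration inequality of Theorem~\ref{thm:AntiConcentration}, rather than with the ratio-stability property~\eqref{eq:StabilityProp}. This is exactly the mechanism behind Theorem~2.1 of~\cite{bentkus2000accuracy}, which I would specialise to $B=(\mathbb{R}^p,\|\cdot\|)$ with test function $f=\varphi_{r,\varepsilon}$. First, fix $r\ge0$, $\varepsilon>0$ and $1\le k\le n$, take $\varphi_{r,\varepsilon}$ from Lemma~\ref{lem:SmoothApproximation}, and use $\mathbbm{1}\{\|x\|\le r\}\le\varphi_{r,\varepsilon}(x)\le\mathbbm{1}\{\|x\|\le r+\varepsilon\}$ together with the companion function $\varphi_{r-\varepsilon,\varepsilon}$ to obtain the sandwich
\[
\big|\mathbb{P}(\|U_{n,k}\|\le r)-\mathbb{P}(\|U_{n,0}\|\le r)\big|\ \le\ \max_{s\in\{r-\varepsilon,\,r\}}\big|\mathbb{E}\varphi_{s,\varepsilon}(U_{n,k})-\mathbb{E}\varphi_{s,\varepsilon}(U_{n,0})\big|\ +\ 2\Phi_{AC,0}\varepsilon,
\]
where the last term bounds $\mathbb{P}(r<\|U_{n,0}\|\le r+\varepsilon)$ and its shift via Theorem~\ref{thm:AntiConcentration} with $m=0$. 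So it suffices to bound the smoothed discrepancy uniformly in $r$ and $k$, and to optimise over $\varepsilon$ at the end.

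Next I would telescope. With $W_i:=n^{-1/2}(X_1+\dots+X_{i-1}+Y_{i+1}+\dots+Y_n)$, independent of $(X_i,Y_i)$,
\[
\mathbb{E}\varphi(U_{n,k})-\mathbb{E}\varphi(U_{n,0})=\sum_{i=1}^{k}\Big(\mathbb{E}\varphi\big(W_i+n^{-1/2}X_i\big)-\mathbb{E}\varphi\big(W_i+n^{-1/2}Y_i\big)\Big),
\]
and a second-order Taylor expansion of each summand around $W_i$, using $\mathbb{E}X_i=\mathbb{E}Y_i=0$, $\mathbb{E}[X_iX_i^{\top}]=\mathbb{E}[Y_iY_i^{\top}]=\Sigma_i$ and the independence of $W_i$, cancels the zeroth-, first- and second-order contributions between the $X_i$- and $Y_i$-parts. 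What is left is a sum of third-order remainders; the key structural input — read off the softmax construction — is that $\varphi_{r,\varepsilon}$ is constant (hence has vanishing derivatives) outside the shell $\{x:\ r\le\|x\|\le r+\varepsilon\}$, while $\sum_{j,k,l}|\partial_{jkl}\varphi(x)|\le C_0\log^2(ep)\varepsilon^{-3}$. Since $\nabla^3\varphi$ is evaluated at points $W_i+\vartheta n^{-1/2}X_i$ with $\vartheta\in[0,1]$, it is nonzero only when $\|W_i\|$ lies within $n^{-1/2}\|X_i\|$ of that shell; using $|\int g\,d\zeta_i|\le\int|g|\,d|\zeta_i|$ and the independence of $W_i$ this yields
\[
\Big|\mathbb{E}\varphi\big(W_i+n^{-1/2}X_i\big)-\mathbb{E}\varphi\big(W_i+n^{-1/2}Y_i\big)\Big|\ \le\ \frac{C_0\log^2(ep)}{6\,\varepsilon^3 n^{3/2}}\int \|x\|^{3}\,\mathbb{P}_{W_i}\!\Big(\|W_i\|\in\big[r-\tfrac{\|x\|}{\sqrt n},\,r+\varepsilon+\tfrac{\|x\|}{\sqrt n}\big]\Big)\,|\zeta_i|(dx).
\]

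Then I would estimate the anti-concentration probability of the mixed sum $W_i$. Since $W_i$ is not Gaussian, I would compare it to the all-Gaussian sum $G_i:=n^{-1/2}\sum_{j\ne i}Y_j$, whose anti-concentration constant is comparable to $\Phi_{AC,0}$ by Theorem~\ref{thm:AntiConcentration} (note $G_i\stackrel{d}{=}\sqrt{(n-1)/n}\,U_{n,0}$ in the i.i.d.\ case), at the cost of a further Lindeberg-type discrepancy which for i.i.d.\ vectors is again a quantity of the form $\delta_{n-1,0}$. Splitting the integral at $\|x\|=\sqrt n\,\varepsilon$ so that the enlarged shell has width $\lesssim\varepsilon$ on the main part, and summing over $i$, one arrives at a recursive inequality of the schematic shape
\[
\delta_{n,0}\ \lesssim\ \Phi_{AC,0}\,\varepsilon\ +\ \frac{\Phi_{AC,0}\,C_0\log^2(ep)\,\nu_3^{3}}{\varepsilon^{2}\,n^{1/2}}\ +\ \tfrac12\,\delta_{n-1,0},
\]
the factor $\tfrac12$ coming out of the balancing choice of $\varepsilon$. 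Iterating this over the sample size with the trivial base case $\delta_{1,0}\le1$ produces the geometric residue $2^{-(n-1)}$, and choosing $\varepsilon=\varepsilon_n$ of order $(C_0\log^2(ep))^{1/3}\nu_3\,n^{-1/6}$ to balance the first two terms and tracking constants gives the stated bound $\delta_{n,0}\le 2^{-(n-1)}+8\Phi_{AC,0}\nu_3(C_0\log^2(ep))^{1/3}n^{-1/6}$; this iteration over the sample size is precisely what one imports from Theorem~2.1 of~\cite{bentkus2000accuracy}.

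The part I expect to be genuinely delicate is the middle step: converting the third-order remainder, in which $\nabla^3\varphi$ is evaluated at the $X_i$-dependent point $W_i+\vartheta n^{-1/2}X_i$, into the clean product of the pseudo-moment factor $\nu_3^3/n^{1/2}$ and an anti-concentration probability, \emph{without} generating a moment of $\|X_i\|$ beyond the third — the $n^{-1/2}\|X_i\|$ enlargement of the support shell threatens a $\nu_4$-type term on the tail event $\{\|X_i\|>\sqrt n\,\varepsilon\}$, and controlling it (equivalently, establishing anti-concentration of the non-Gaussian $W_i$ uniformly in $n$, which is what forces the recursion and the $2^{-(n-1)}$ residue) is the technical heart of Theorem~2.1 of~\cite{bentkus2000accuracy}. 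By comparison, the smoothing reduction, the cancellation of the low-order Taylor terms and the final optimisation in $\varepsilon$ are routine.
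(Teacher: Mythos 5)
Your proposal follows essentially the same route as the paper's proof: smoothing with $\varphi_{r,\varepsilon},\varphi_{r-\varepsilon,\varepsilon}$ plus Theorem~\ref{thm:AntiConcentration}, Lindeberg telescoping, splitting the $\zeta$-integral at $\|x\|=\varepsilon\sqrt{n}$, bounding the anti-concentration of $W_{n,j}$ by that of the Gaussian sum (via $\delta_{n-1,0}$, using $W_{n,j}\stackrel{d}{=}\sqrt{(n-1)/n}\,U_{n-1,j-1}$), and solving the resulting recursion $\delta_{n,0}\le\tfrac12\delta_{n-1,0}+A_n$ with $\varepsilon\asymp\nu_3\bigl(C_0\log^2(ep)/\sqrt{n}\bigr)^{1/3}$ and the base case $\delta_{1,0}\le1$. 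The delicate tail step you flag is closed in the paper without any fourth moment by switching to the second-order remainder bound $|\mathrm{Rem}_n(W_{n,j},x)|\le C_2\varepsilon^{-2}\|x\|^2/n$ on $\{\|x\|>\varepsilon\sqrt{n}\}$, where $\|x\|^2\le\|x\|^3/(\varepsilon\sqrt{n})$ and $\varepsilon+n^{-1/2}\|x\|\le 2n^{-1/2}\|x\|$ keep every term at the level of $\nu_3$.
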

It is clear that the rate obtained in Proposition~\ref{prop:BentkusProof} is sharper than the one obtained in~\eqref{eq:Thm31Implication} for $1 \le \tau \le 1.5$. Note that for $\tau = 1.5$, $\tau/(6 + 2\tau) = 1/6$ and~\eqref{eq:Thm31Implication} leads to $n^{-1/6}$ rate. A notable difference between the rates is that while Theorem~\ref{thm:UniformCLT} decouples the main rate $\Phi_{AC,0}(\log^2(ep))^{1/3}/n^{1/6}$ and the term depending on moments $\nu_{2+\tau}$ (but somehow sub-optimally), Proposition~\ref{prop:BentkusProof} puts these two terms together which implies that Proposition~\ref{prop:BentkusProof} does not lead to the ``right'' requirement between $\log(p)$ and $n$ in case random vectors have exponential tails. This discussion raises an important point: What is the ``right way'' of proving rates in the high-dimensional CLT? The following result combines the proof techniques of Theorem~\ref{thm:UniformCLT} and Proposition~\ref{prop:BentkusProof} in the ``right way'' so that the rate scales like $n^{-1/6}$ for all $\tau \ge 1$ (as Proposition~\ref{prop:BentkusProof}) and becomes the same as~\eqref{eq:Thm31Implication} for $\tau\to\infty$.
\begin{thm}\label{thm:OptimalCLT} For i.i.d. random vectors $X_1,\ldots,X_n$, and for any $\tau \ge 1$,
\[
\delta_{n,0} ~\lesssim~ \frac{1}{2^{n}} + \Phi_{AC,0}\left(\frac{L_n^2\log^4(ep)}{n}\right)^{1/6} + \Phi_{AC,0}\nu_{2+\tau}\frac{(\log(ep))^{(\tau+1)/(\tau+2)}}{n^{\tau/(4 + 2\tau)}},
\]
where $\lesssim$ hides only universal constants.
\end{thm}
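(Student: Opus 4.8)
The plan is to re-run the Lindeberg replacement argument behind Theorem~\ref{thm:UniformCLT}, but to split each third--order Taylor remainder into a \emph{bulk} part and a \emph{heavy-tail} part, to estimate the bulk part exactly as in Theorem~\ref{thm:UniformCLT} (this produces the $L_n$ term) and the heavy-tail part as in the proof of Proposition~\ref{prop:BentkusProof} (this produces the $\nu_{2+\tau}$ term together with the $2^{-n}$), and finally to optimize the smoothing scale $\varepsilon$. Fix $r\ge0$, $\varepsilon>0$, let $\varphi=\varphi_{r,\varepsilon}$ be the smooth approximation of Lemma~\ref{lem:SmoothApproximation} and set $W_k:=U_{n,k-1}-n^{-1/2}Y_k=n^{-1/2}(X_1+\dots+X_{k-1}+Y_{k+1}+\dots+Y_n)$ (independent of $X_k$) and $\psi(z):=\mathbb{E}[\varphi(W_k+z)]$. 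From $\mathbbm{1}\{\|x\|\le r\}\le\varphi(x)\le\mathbbm{1}\{\|x\|\le r+\varepsilon\}$ and Theorem~\ref{thm:AntiConcentration} for $\|U_{n,0}\|$, $\delta_{n,0}$ is controlled, up to an additive $O(\Phi_{AC,0}\varepsilon)$, by $|\sum_{k=1}^n(\mathbb{E}\varphi(U_{n,k})-\mathbb{E}\varphi(U_{n,k-1}))|$; each summand equals $\mathbb{E}\psi(n^{-1/2}X_k)-\mathbb{E}\psi(n^{-1/2}Y_k)$, and by the vanishing of the first three $\zeta_k$-moments~\eqref{eq:zetazero} the order $\le2$ terms cancel, leaving
\[
R_k \;=\; \frac{1}{2n^{3/2}}\int_0^1(1-\theta)^2\!\int\sum_{j,l,m=1}^{p}\partial_{jlm}\psi\!\left(\tfrac{\theta x}{\sqrt n}\right)x(j)x(l)x(m)\,d\zeta_k(x)\,d\theta ,
\]
with $\sum_k R_k=\mathbb{E}[\varphi(S_n)]-\mathbb{E}[\varphi(U_{n,0})]$ and $|\partial_{jlm}\psi(w)|\le\mathbb{E}[D_{jlm}(W_k+w)]$ as in Lemma~\ref{lem:SmoothApproximation}.

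I would then split the inner integral as $\int_{\|x\|\le t_n}+\int_{\|x\|>t_n}$ at the level $t_n:=\sqrt n\,\varepsilon/\log(ep)$. On $\{\|x\|\le t_n\}$ the shift satisfies $\|\theta x/\sqrt n\|\le\varepsilon/\log(ep)$, which is small enough for the ratio-stability bound~\eqref{eq:StabilityProp} to replace the argument $\theta x/\sqrt n$ of $\partial_{jlm}\psi$ by $0$ (up to the factor $e^{\mathfrak C}$); combining this with $\sum_{jlm}D_{jlm}\le C_0\log^2(ep)\varepsilon^{-3}$, the fact that $|\partial_{jlm}\varphi|$ vanishes outside $\{r\le\|x\|\le r+\varepsilon\}$, the independence of $W_k$ and $X_k$, the comparison $\|W_k\|\leftrightarrow\|U_{n,0}\|$ (which costs $\delta_{n,0}$ plus a harmless Gaussian smoothing term) and Theorem~\ref{thm:AntiConcentration}, the bulk part contributes, after summing over $k$, at most a constant times $\frac{\log^2(ep)L_n}{\sqrt n\,\varepsilon^{3}}\big(\Phi_{AC,0}\varepsilon+\delta_{n,0}\big)$ plus lower-order terms of the kind already present in Theorem~\ref{thm:UniformCLT} (bounded via~\eqref{eq:BarLnmBound}). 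This is precisely the estimate underlying Theorem~\ref{thm:UniformCLT}; imposing $\varepsilon\ge\varepsilon_0:=(2e^{2\mathfrak C}C_0\log^2(ep)L_n)^{1/3}n^{-1/6}$ makes the $\delta_{n,0}$-coefficient at most $\tfrac12$, so it can be absorbed into the left-hand side, and the $\Phi_{AC,0}\varepsilon$ piece is harmless.

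On the heavy-tail piece $\{\|x\|>t_n\}$ ratio stability is useless, and here I would argue as in the proof of Proposition~\ref{prop:BentkusProof} (using the i.i.d.\ assumption): that argument bounds a third--order remainder of the above form directly by a constant times $\Phi_{AC,0}\,\log^2(ep)\,(n^{1/2}\varepsilon^{2})^{-1}$ times the truncated third pseudo-moment $\frac1n\sum_{k=1}^n\int_{\|x\|>t_n}\|x\|^3|\zeta_k|(dx)$, i.e.\ it trades one factor $\varepsilon^{-1}$ (relative to the crude $\varepsilon^{-3}$ bound) against a single anti-concentration factor $\Phi_{AC,0}\varepsilon$, and, as in Proposition~\ref{prop:BentkusProof}, it is responsible for the $2^{-n}$ term. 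Since $\tau\ge1$ gives $\|x\|^3\mathbbm{1}\{\|x\|>t_n\}\le t_n^{1-\tau}\|x\|^{2+\tau}$, that truncated pseudo-moment is at most $t_n^{1-\tau}\nu_{2+\tau}^{2+\tau}=(\sqrt n\,\varepsilon/\log(ep))^{1-\tau}\nu_{2+\tau}^{2+\tau}$, so the heavy-tail part contributes at most a constant times
\[
2^{-n}+\Phi_{AC,0}\,\frac{(\log(ep))^{\tau+1}\,\nu_{2+\tau}^{2+\tau}}{n^{\tau/2}\,\varepsilon^{\tau+1}} .
\]

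Putting the bulk and tail estimates together, for every $\varepsilon\ge\varepsilon_0$ one obtains $\delta_{n,0}\lesssim 2^{-n}+\Phi_{AC,0}\varepsilon+\Phi_{AC,0}(\log(ep))^{\tau+1}\nu_{2+\tau}^{2+\tau}n^{-\tau/2}\varepsilon^{-(\tau+1)}$. Taking $\varepsilon=\max\{\varepsilon_0,\varepsilon_1\}$ with $\varepsilon_1:=\big((\log(ep))^{\tau+1}\nu_{2+\tau}^{2+\tau}n^{-\tau/2}\big)^{1/(\tau+2)}$: when $\varepsilon_0\ge\varepsilon_1$ the last term is $\le\Phi_{AC,0}\varepsilon_0\asymp\Phi_{AC,0}(L_n^2\log^4(ep)/n)^{1/6}$, and when $\varepsilon_1>\varepsilon_0$ the last two terms are $\asymp\Phi_{AC,0}\varepsilon_1=\Phi_{AC,0}\nu_{2+\tau}(\log(ep))^{(\tau+1)/(\tau+2)}n^{-\tau/(4+2\tau)}$; either way $\delta_{n,0}$ is bounded by a universal constant times $2^{-n}+\Phi_{AC,0}(L_n^2\log^4(ep)/n)^{1/6}+\Phi_{AC,0}\nu_{2+\tau}(\log(ep))^{(\tau+1)/(\tau+2)}n^{-\tau/(4+2\tau)}$, which is the assertion. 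The step I expect to be the main obstacle is the heavy-tail piece: one has to verify that the Bentkus-type estimate behind Proposition~\ref{prop:BentkusProof} still goes through when it is applied only to the truncated remainder $\int_{\|x\|>t_n}$, so that the two mechanisms — stability-based for the bulk, Bentkus-type for the tail — can be run side by side within the same telescoping sum without cross terms, and that all remaining lower-order contributions (the $\|W_k\|\leftrightarrow\|U_{n,0}\|$ comparison, the Gaussian smoothing errors, and the $\bar{L}_{n,0}$-type terms) are genuinely of smaller order for the chosen $\varepsilon$.
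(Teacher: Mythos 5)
Your overall architecture is very close to the paper's: a Lindeberg telescoping, a split of the remainder at $t_n=\sqrt n\,\varepsilon/\log(ep)$, the truncation $\|x\|^3\mathbbm{1}\{\|x\|>t_n\}\le t_n^{1-\tau}\|x\|^{2+\tau}$ on the tail to produce the $\nu_{2+\tau}$ term, a recursion in $\delta_{n-1,0}$ solved \`a la Lemma~\ref{lem:RecursiveIneq} (which is where $2^{-n}$ comes from), and the choice $\varepsilon=\max\{\varepsilon_0,\varepsilon_1\}$; your tail estimate and the final optimization reproduce the stated exponents correctly, and a mixed inequality of the form $\delta_{n,0}\le\tfrac12\delta_{n,0}+G(\varepsilon)\delta_{n-1,0}+\Upsilon(\varepsilon)$ is indeed solvable once $2G(\varepsilon)$ is made small, so the hybrid recursion is not in itself a problem.

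The genuine gap is in your bulk term. You propose to estimate it ``exactly as in Theorem~\ref{thm:UniformCLT}'', i.e.\ via the stability property together with the re-insertion of $Y_k$ (your ``$\|W_k\|\leftrightarrow\|U_{n,0}\|$ comparison costing $\delta_{n,0}$ plus a harmless Gaussian smoothing term''). That re-insertion only works on the event $\{Y_k\in\mathcal E\}$, and the complementary event produces the additional contribution $\tfrac{C_3\varepsilon^{-3}L_n e^{\mathfrak C}}{n^{1/2}}\max_k\mathbb{P}(\|Y_k\|>t_n)$, which after quantifying the Gaussian tail is exactly the $\bar L_{n,0}$-type term of Theorem~\ref{thm:UniformCLT}, of order $(\log(ep))^{5/6}\sigma_{\max}/(nL_n)^{1/3}$. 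This term is absent from the statement of Theorem~\ref{thm:OptimalCLT} and is \emph{not} universally dominated by $\Phi_{AC,0}(L_n^2\log^4(ep)/n)^{1/6}$ with only universal constants (e.g.\ it blows up relative to it when $L_n$ is small while $\sigma_{\max}\asymp1$), so the step you flag as needing verification in fact fails in general. The paper avoids this by never splitting on $Y_k$ in the bulk: after using stability only to remove the shift $tx/\sqrt n$, it bounds $\sum_{j_1j_2j_3}D_{j_1j_2j_3}(W_{n,k})\mathbbm{1}\{|\|W_{n,k}\|-r|\le2\varepsilon\}\le C_3\varepsilon^{-3}\,\mathbb{P}(r-2\varepsilon\le\|W_{n,k}\|\le r+2\varepsilon)$ and controls this probability, like the tail, by $2t_n\Phi_{AC,0}\varepsilon+2\delta_{n-1,0}$ using $W_{n,k}\overset{d}{=}\sqrt{1-1/n}\,U_{n-1,k-1}$ and the fact that under the i.i.d.\ (equal-covariance) assumption the anti-concentration constant does not change with the sample size; the entire bound is then a single recursion $\delta_{n,0}\le F(\varepsilon)\delta_{n-1,0}+\Upsilon(\varepsilon)$. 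If you replace your bulk estimate by this recursion-based one (keeping your tail argument and $\varepsilon$-optimization unchanged), your proof matches the paper's and yields the theorem as stated.
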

The assumption of i.i.d. random vectors in Proposition~\ref{prop:BentkusProof} and Theorem~\ref{thm:OptimalCLT} can be relaxed to observations $X_1,\ldots,X_n$ having the same covariance matrix. It is clear that Theorem~\ref{thm:OptimalCLT} leads to better rates than~\eqref{eq:Thm31Implication} for all $\tau \ge 1$ since $\tau/(6+2\tau) < \tau/(4+2\tau)$. 
\subsection{Non-uniform CLT}\label{sec:NonUniformCLT}
As an extension of Theorem~\ref{thm:UniformCLT}, we have the following non-uniform version of CLT. Since the statement is exact with explicit constants, it is cumbersome. 
\begin{thm}\label{thm:NonUniformPolyCLT}
For independent $\mathbb{R}^p$ random vectors $X_1,\ldots,X_n$, for any $m, r_{n,m} \in (0,\infty)$, 
\begin{align*}
\delta_{n,m} &\le 2^{2m^2/3 + 8m/3 +1}\varepsilon_{n}^{m+1}\Phi_{AC,0} + (2^{2m/3+1} + 2)\Phi_{AC,m}\varepsilon_n + {2C_0 \log(ep)r_{n,m}^m \varepsilon_n^{-2}M_n(2^{2m/3}\varepsilon_n)}\\ 
&\quad+ \frac{r_{n,m}^m\bar{L}_{n,m}}{L_n 2^{m}e^{\mathfrak{C}}}\left(\frac{\log(ep)}{n(2^{2m+1}e^{2\mathfrak{C}}C_0L_n)}\right)^{(m+1)/3} + \sup_{r\ge r_{n,m}}\,\max_{0\le k\le n} r^m\mathbb{P}(\|U_{n,k}\| \ge r). 
\end{align*}
Here $\varepsilon_n$ is same as the one defined in Theorem~\ref{thm:UniformCLT}. 
\end{thm}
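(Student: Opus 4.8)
The plan is to extend the proof of Theorem~\ref{thm:UniformCLT} by splitting the range of $r$ at the threshold $r_{n,m}$ and carrying the weight $r^m$ through the smoothing and Lindeberg steps. For the range $r \ge r_{n,m}$ the bound is immediate: since $\mathbbm{1}\{\|\cdot\|\le r\}$ and its complement sum to one, for every $k$ one has $r^m\left|\mathbb{P}(\|U_{n,k}\|\le r) - \mathbb{P}(\|U_{n,0}\|\le r)\right| = r^m\left|\mathbb{P}(\|U_{n,k}\| > r) - \mathbb{P}(\|U_{n,0}\| > r)\right| \le \max_{0\le k'\le n} r^m\mathbb{P}(\|U_{n,k'}\|\ge r)$, and taking the supremum over $r\ge r_{n,m}$ produces exactly the last term of the claimed bound. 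It then remains to control the supremum over $0 < r < r_{n,m}$ and $1\le k\le n$.

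On that range I would run the argument of Theorem~\ref{thm:UniformCLT} with smoothing width $\varepsilon = 2^{2m/3}\varepsilon_n$. Using Lemma~\ref{lem:SmoothApproximation}, set $\varphi^+ = \varphi_{r,\varepsilon}$ and $\varphi^- = \varphi_{r-\varepsilon,\varepsilon}$, so that $\varphi^-(x)\le\mathbbm{1}\{\|x\|\le r\}\le\varphi^+(x)$ and $\varphi^+-\varphi^-$ is supported on the shell $\{r-\varepsilon < \|x\|\le r+\varepsilon\}$. Then
\[
\mathbb{P}(\|U_{n,k}\|\le r) - \mathbb{P}(\|U_{n,0}\|\le r) \le \left(\mathbb{E}\varphi^+(U_{n,k}) - \mathbb{E}\varphi^+(U_{n,0})\right) + \left(\mathbb{E}\varphi^+(U_{n,0}) - \mathbb{E}\varphi^-(U_{n,0})\right),
\]
together with the symmetric inequality in the other direction. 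The second (Gaussian anti-concentration) bracket is at most $\mathbb{P}(r-\varepsilon\le\|U_{n,0}\|\le r+\varepsilon)$, so after multiplying by $r^m$ and invoking Theorem~\ref{thm:AntiConcentration} (with exponent $m$) it contributes the term $(2^{2m/3+1}+2)\Phi_{AC,m}\varepsilon_n$, where the power of $2$ is the rescaling of $\varepsilon$ and the $+2$ absorbs the two directions and the $\varepsilon$-shift incurred when passing from $U_{n,k}$ to $U_{n,0}$.

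The first (Lindeberg) bracket is the main work, and is handled exactly as in the proof of Theorem~\ref{thm:UniformCLT} with $r^m$ carried along. Telescoping, $\mathbb{E}\varphi^+(U_{n,k}) - \mathbb{E}\varphi^+(U_{n,0}) = \sum_{i=1}^k\mathbb{E}[\varphi^+(W_i + n^{-1/2}X_i) - \varphi^+(W_i + n^{-1/2}Y_i)]$ with $W_i = n^{-1/2}(X_1+\cdots+X_{i-1}+Y_{i+1}+\cdots+Y_n)$ independent of $(X_i,Y_i)$; Taylor-expanding each summand to third order around $W_i$, the zeroth-, first-, and second-order terms cancel in expectation by~\eqref{eq:zetazero}, leaving third-order remainders controlled via $|\partial_{jkl}\varphi^+|\le D_{jkl}$, the bound $\sum_{jkl}D_{jkl}\le C_0\log^2(ep)\varepsilon^{-3}$, and the ratio-stability property~\eqref{eq:StabilityProp} (which transfers the evaluation point from $W_i+\theta n^{-1/2}(\cdot)$ to $W_i$ at the cost of $\exp(\mathfrak{C}\log(ep)n^{-1/2}\|\cdot\|/\varepsilon)$). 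Splitting each perturbation at $\|\cdot\| = n^{1/2}\varepsilon/\log(ep)$, on the large event the stability factor is uncontrolled, so one bounds the integrand crudely, the weight by $r^m< r_{n,m}^m$, and what remains by the truncated second pseudo-moment, producing the $2C_0\log(ep)r_{n,m}^m\varepsilon_n^{-2}M_n(2^{2m/3}\varepsilon_n)$ term; on the small event the stability factor is $\le e^{\mathfrak{C}}$, and the coordinate-wise cancellation against $\zeta_i$ produces the main piece together with a stability-correction piece, where throughout one writes $r^m\lesssim\|x\|^m+\varepsilon^m$ on the support of the derivative bounds, handling the $\|x\|^m$ part with the weighted anti-concentration estimates of Theorem~\ref{thm:density}/Theorem~\ref{thm:AntiConcentration} (this yields the $\Phi_{AC,m}$ contribution and the weights $\mu_i^{m+1}+\sigma_i^{m+1}((m+1)/e)^{(m+1)/2}$ defining $\bar L_{n,m}$, hence the fourth term) and the $\varepsilon^m$ part against the case-$m{=}0$ anti-concentration (yielding the first term $2^{2m^2/3+8m/3+1}\varepsilon_n^{m+1}\Phi_{AC,0}$, the constant coming from $\varepsilon^{m+1}=2^{2m(m+1)/3}\varepsilon_n^{m+1}$ and from repeated use of $(a+b)^m\le 2^{m-1}(a^m+b^m)$ on the shell). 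Summing over $i\le k\le n$ and using that the choice of $\varepsilon_n$ makes $e^{2\mathfrak{C}}C_0\log^2(ep)L_n/(n^{1/2}\varepsilon_n^3)\le\tfrac12$, so that stability corrections are absorbed, would give the stated inequality.

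The hard part is propagating the weight $r^m$ through the Lindeberg step: because $r^m$ is not translation-invariant, Theorem~\ref{thm:UniformCLT} cannot simply be quoted. The observation that makes it work is that every third-order remainder term is supported on a shell of width $\asymp\varepsilon$ around $\|x\|=r$ (because $D_{jkl}$ is), so on that support $r\asymp\|x\|$, and the sharp, dimension-free weighted anti-concentration inequality of Theorem~\ref{thm:AntiConcentration} is exactly what converts $\|x\|^m\sum_{jkl}D_{jkl}(x)$ into the $\mu_i$-, $\sigma_i$-weighted pseudo-moments of $\bar L_{n,m}$, with the leftover $\varepsilon^m$ piece producing $\varepsilon_n^{m+1}\Phi_{AC,0}$. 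A secondary technical nuisance is that $\varphi_{r,\varepsilon}$ is only $C^2$ with Lipschitz second derivatives, not $C^3$, so the third-order expansion must be read through the a.e./Lipschitz bounds encoded in $D_{jkl}$ rather than through a genuine third derivative.
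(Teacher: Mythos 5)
Your skeleton (split the range of $r$ at $r_{n,m}$, dispose of the tail range trivially, and run a weighted smoothing--Lindeberg argument below $r_{n,m}$) is the same as the paper's, and your treatment of $r\ge r_{n,m}$ and of the $M_n$ term (large-$\|x\|$ event, weight bounded by $r_{n,m}^m$) is fine. The gap is in how you propagate the weight $r^m$ through the main third-order Lindeberg piece. The shell probabilities that appear there are for the non-Gaussian hybrids $W_{n,j}$ (equivalently $U_{n,j-1}$ after the stability transfer), so Theorem~\ref{thm:AntiConcentration} cannot be applied to them directly, with exponent $m$ or $0$; writing $r^m\lesssim\|x\|^m+\varepsilon^m$ on the shell and invoking ``weighted anti-concentration'' either leaves $r^m\,\mathbb{P}(r-3\varepsilon\le\|U_{n,j-1}\|\le r+3\varepsilon)$ untouched or forces you to compare $U_{n,j-1}$ with $U_{n,0}$, and that comparison reintroduces $\delta_{n,m}$ itself through $\delta_{n,m}\left[(r-3\varepsilon)^{-m}+(r+3\varepsilon)^{-m}\right]$. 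The paper's proof is organized around exactly this self-referencing inequality: the comparison costs a factor $2\cdot 4^m r^{-m}$, which is only legitimate when $r\ge 4\varepsilon$, and it is the reason the smoothing width must be enlarged to $\varepsilon_{n,m}=2^{2m/3}\varepsilon_n$, so that the coefficient of $\delta_{n,m}$ equals $4^m\, e^{2\mathfrak{C}}C_0\log^2(ep)L_n/(n^{1/2}\varepsilon_{n,m}^3)=1/2$ and the inequality can be solved for $\delta_{n,m}$. Your proposal never sets up or solves this fixed-point step (your remark that $e^{2\mathfrak{C}}C_0\log^2(ep)L_n/(n^{1/2}\varepsilon_n^{3})\le 1/2$ ``absorbs stability corrections'' uses the unenlarged width and conflates the $e^{\mathfrak{C}}$ stability factor with the $\delta_{n,m}$ feedback), and it has no separate treatment of the region $r<4\varepsilon_{n,m}$, which the paper handles by monotonicity of the distribution functions plus exponent-$0$ anti-concentration and which is the actual source of the first term, $2^{2m+1}\varepsilon_{n,m}^{m+1}\Phi_{AC,0}=2^{2m^2/3+8m/3+1}\varepsilon_n^{m+1}\Phi_{AC,0}$; your alternative mechanism for that term would instead generate an extra $\varepsilon_n^m\delta_{n,0}$-type contribution that is not in the statement.

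A secondary but telling misattribution: the weights $\mu_i^{m+1}+\sigma_i^{m+1}((m+1)/e)^{(m+1)/2}$ in $\bar L_{n,m}$ do not arise from converting $\|x\|^m\sum_{j_1j_2j_3}D_{j_1j_2j_3}$ via weighted anti-concentration. They come from the event $Y_j\notin\mathcal{E}$ on which the stability transfer to $U_{n,j-1}$ fails: one bounds the Gaussian tail $\mathbb{P}\left(\|Y_j\|>n^{1/2}\varepsilon_{n,m}/\log(ep)\right)$ by an $(m+1)$-th power of $(\mu_j+\sigma_j)$-type quantities, while the weight $r^m$ in that term is simply bounded by $r_{n,m}^m$ on the range $r<r_{n,m}$ --- which is precisely why the fourth term of the statement carries the explicit factor $r_{n,m}^m$ that your mechanism would not produce. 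Without the small-$r$ case and the solved self-referencing inequality at the $m$-weighted level, the sketch does not yield the stated bound.
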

Theorem~\ref{thm:NonUniformPolyCLT} does not get the correct dependence on the sample size $n$ when random vectors $X_1,\ldots,X_n$ only have $(2+\tau)$-moments as the calculation for Theorem~\ref{thm:UniformCLT} show. We now present an improvement for i.i.d. random vectors which can be seen as a non-uniform extension of Theorem~\ref{thm:OptimalCLT}.
\begin{thm}\label{thm:OptimalNonUniformCLT}
Fix $m\ge 1$ and $\tau \ge m$. If $X_1,\ldots,X_n, n\ge 1$ are i.i.d. random vectors, then
\[
\delta_{n,m} \le 2^{m/2}\left(\frac{\nu_m}{2^{n/2}}\right)^m + 2^{2 + 2m}\varepsilon_n^{m} + 2.4\Phi_{AC,m}\varepsilon_n,
\]
where
\[
\varepsilon_n := \max\left\{\frac{(2^{2 + 3m/2}C_0e^{\mathfrak{C}}L_n\log^2(ep))^{1/3}}{n^{1/6}},\,\frac{\nu_{2+\tau}\left(2^{3+5m/2}C_0\log^{\tau+1}(ep)\right)^{1/(2+\tau)}}{n^{\tau/(4 + 2\tau)}}\right\}.
\]
\end{thm}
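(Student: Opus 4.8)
The plan is to re-run the proof of Theorem~\ref{thm:OptimalCLT} --- a Lindeberg/Bentkus-type swap along $U_{n,0},U_{n,1},\dots,U_{n,n}$ combined with a truncation at level $n^{1/2}\varepsilon_n/\log(ep)$, in the spirit of Proposition~\ref{prop:BentkusProof} --- but carrying the weight $r^m$ through every estimate, so that each appeal to an unweighted anti-concentration bound is replaced by the $r^m$-weighted one of Theorem~\ref{thm:AntiConcentration} (constant $\Phi_{AC,m}$). This is cleaner than optimizing over $r_{n,m}$ in Theorem~\ref{thm:NonUniformPolyCLT}, which loses the correct power of $n$ when only $(2+\tau)$ moments are available. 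Fixing $m\ge 1$ and $\tau\ge m$, I would split the supremum in \eqref{def:deltanm} at $r = c_m\varepsilon_n$ with $c_m := 2^{(2+2m)/m}$. For $r \le c_m\varepsilon_n$, the trivial bound $|\mathbb{P}(\|U_{n,k}\|\le r)-\mathbb{P}(\|U_{n,0}\|\le r)| \le 1$ yields a contribution $\le (c_m\varepsilon_n)^m = 2^{2+2m}\varepsilon_n^m$, the second displayed term.

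The range $r \ge c_m\varepsilon_n$ (where $r - \varepsilon_n > 0$, so the two-sided smoothing is available) carries the work. Using the smooth approximation of Lemma~\ref{lem:SmoothApproximation} to sandwich $\mathbb{P}(\|U_{n,k}\|\le r)$ between $\mathbb{E}[\varphi_{r,\varepsilon_n}(U_{n,k})]$ and $\mathbb{E}[\varphi_{r-\varepsilon_n,\varepsilon_n}(U_{n,k})]$, and likewise for $U_{n,0}$, reduces $r^m|\mathbb{P}(\|U_{n,k}\|\le r)-\mathbb{P}(\|U_{n,0}\|\le r)|$ to: the anti-concentration term $r^m\,\mathbb{P}(r-\varepsilon_n\le\|U_{n,0}\|\le r+\varepsilon_n)\le\Phi_{AC,m}\varepsilon_n$ from Theorem~\ref{thm:AntiConcentration}, which (with the shift-induced slack) accounts for the $2.4\,\Phi_{AC,m}\varepsilon_n$ term; plus $r^m|\mathbb{E}\varphi(U_{n,k})-\mathbb{E}\varphi(U_{n,0})|$ for $\varphi\in\{\varphi_{r,\varepsilon_n},\varphi_{r-\varepsilon_n,\varepsilon_n}\}$. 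For the latter, telescope $\mathbb{E}\varphi(U_{n,k})-\mathbb{E}\varphi(U_{n,0}) = \sum_{i=1}^k\mathbb{E}[\varphi(U_{n,i})-\varphi(U_{n,i-1})]$, Taylor-expand $\varphi$ to third order about $U_{n,i-1}$ in the increment $(X_i-Y_i)/\sqrt n$, kill the order $\le 2$ terms by the moment matching \eqref{eq:zetazero}, and bound the third-order remainder using $\sum_{j,k,l}D_{jkl}\le C_0\log^2(ep)\varepsilon_n^{-3}$. Split the increment: its $X_i$-part, truncated at $n^{1/2}\varepsilon_n/\log(ep)$, keeps the derivative at the shifted point within a universal factor $e^{\mathfrak{C}}$ of $D_{jkl}(U_{n,i-1})$ by the ratio-stability property \eqref{eq:StabilityProp}, with truncated remainder governed by $L_n$; its over-threshold part is governed by $M_n(\cdot) \le \nu_{2+\tau}^{2+\tau}(\log(ep)/(n^{1/2}\varepsilon_n))^{\tau}$; its Gaussian part $Y_i/\sqrt n$ is controlled directly, its exponential moments being uniformly small. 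The weight is reattached by noting $D_{jkl}$ is supported where $\|\cdot\| \approx r$, so $r^m\,\mathbb{E}\sum_{j,k,l}D_{jkl}(U_{n,i-1})$ is bounded by $C_0\log^2(ep)\varepsilon_n^{-3}$ times $r^m\,\mathbb{P}(r-\varepsilon_n\le\|U_{n,i-1}\|\le r+\varepsilon_n)$, and by the definition of $\delta_{n,m}$ this probability is within $2\delta_{n,m}$ of its Gaussian analogue $r^m\,\mathbb{P}(r-\varepsilon_n\le\|U_{n,0}\|\le r+\varepsilon_n) \le \Phi_{AC,m}\varepsilon_n$. Summing the $n$ steps (each carrying a $1/n^{3/2}$ and a third pseudomoment) and choosing $\varepsilon_n$ so that $L_nC_0\log^2(ep)/(n^{1/2}\varepsilon_n^{3})$ and the $M_n$-prefactor are both at most a small constant, one gets a self-bounding inequality $\delta_{n,m}\le(\text{displayed terms})+\kappa\,\delta_{n,m}$ with $\kappa<1$, solved for $\delta_{n,m}$; the two entries of the $\max$ defining $\varepsilon_n$ are precisely the levels needed to kill the $L_n$-prefactor and the $M_n$-prefactor, $\tau\ge m$ makes $r^m$ dominated by the decay the $(2+\tau)$-moment supplies, and the $2^{m/2}(\nu_m/2^{n/2})^m$ residual is the weighted version of the $2^{-(n-1)}$ term of Proposition~\ref{prop:BentkusProof}, produced by the i.i.d.\ step.

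I expect the main obstacle to be controlling the weighted third-order remainder. The derivative is evaluated at the random point $U_{n,i-1}+\theta(X_i-Y_i)/\sqrt n$, whose deviation from the Gaussian $U_{n,0}$ is itself a partial sum with only $(2+\tau)$ moments, so \eqref{eq:StabilityProp} cannot be applied to the full shift; the resolution is to use stability only on the truncated per-step increment and to transfer $\mathbb{E}[D_{jkl}(U_{n,i-1})]$-type quantities back to the Gaussian $U_{n,0}$ through $\delta_{n,m}$ itself, not pointwise --- which is exactly where working with the truncated second pseudomoment $M_n$ instead of a third-moment quantity is essential, and why closing the induction with $\kappa<1$ is delicate. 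What remains is bookkeeping: the two-sided smoothing, the step count, and tracking the explicit constants $2^{2+2m}$, $2.4$, and the constants inside $\varepsilon_n$.
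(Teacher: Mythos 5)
Your high-level ingredients (smoothing, Lindeberg telescope, truncation, stability, $\tau\ge m$ to absorb the weight) are the right ones, but the closing mechanism you describe is not the one that yields this theorem, and as described it does not close. You propose a self-bounding inequality $\delta_{n,m}\le(\text{terms})+\kappa\,\delta_{n,m}$, i.e.\ transferring $r^m\mathbb{P}(r-c\varepsilon\le\|U_{n,j-1}\|\le r+c\varepsilon)$ back to the Gaussian through $\delta_{n,m}$ itself; that is the mechanism of Theorems~\ref{thm:UniformCLT} and~\ref{thm:NonUniformPolyCLT}, it nowhere uses the i.i.d.\ hypothesis, and it cannot produce the $2^{m/2}(\nu_m/2^{n/2})^m$ term you claim as a residual. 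In the paper's proof the probability $\mathbb{P}(r-2\varepsilon\le\|W_{n,j}\|\le r+2\varepsilon)$ is transferred to $\delta_{n-1,m}$ using $W_{n,j}\overset{d}{=}\sqrt{(n-1)/n}\,U_{n-1,j-1}$ (this is exactly where i.i.d.\ enters, so that the rescaled limit and its anti-concentration constant do not change with $n$), giving a recursion $\delta_{n,m}\le 2^{-1-m/2}\delta_{n-1,m}+a_n$ that is solved by the recursion lemma with base case $\delta_{1,m}\le\nu_m^m$; the geometric term is precisely this base-case contribution, not a ``weighted version'' of anything produced inside a single self-bounding step.

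The second, related gap is the handling of the weight on the unbounded region. With only a two-region split (truncation at $n^{1/2}\varepsilon_n/\log(ep)$ and ``over-threshold governed by $M_n$''), the over-threshold contribution enters your bound as roughly $r^m\,C_2\varepsilon_n^{-2}M_n(\varepsilon_n)$, which is unbounded in $r$; moreover, if you instead try to keep the indicator $\{\,|\|W_{n,j}\|-r|\le\varepsilon+n^{-1/2}\|x\|\,\}$ and transfer it through a $\delta$, the transfer factor is $(r-\varepsilon-n^{-1/2}\|x\|)^{-m}$, which blows up when $n^{-1/2}\|x\|$ is comparable to $r$. Avoiding this is why Theorem~\ref{thm:NonUniformPolyCLT} needs the extra radius $r_{n,m}$ and a tail term, and why the paper's proof of the present theorem splits $\mathcal{E}^c$ once more at $\|x\|\le rn^{1/2}/2$: on the middle region $r-\varepsilon-n^{-1/2}\|x\|\ge r/4$, so the transfer to $\delta_{n-1,m}$ costs only a factor $2^{2m+1}$, while on $\|x\|>rn^{1/2}/2$ the weight is absorbed via $r^m\le(2n^{-1/2}\|x\|)^{\,m}$ against the $(2+\tau)$-moment, using $\tau\ge m$ (this is the only place that hypothesis is needed). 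You gesture at the right idea with ``$\tau\ge m$ makes $r^m$ dominated by the decay the $(2+\tau)$-moment supplies,'' but without the third split and without the recursion in $n$ the argument as written does not produce the stated bound.
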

Theorem~\ref{thm:OptimalNonUniformCLT} always leads to $n^{-1/6}$ dependence on the sample size unlike Theorem~\ref{thm:NonUniformPolyCLT}. 
\paragraph{Comments on the Proof Technique.} The proofs of uniform and non-uniform CLTs in Banach space literature are based on Lindeberg method and smooth approximation; see, for example,~\cite{MR1162240} and~\cite{bentkus2000accuracy}. Motivated by the proof technique of~\cite{CCK17} who introduced the stability property, we combine Lindeberg method with a minor twist from the proof of~\cite{CCK17} to prove all the above results; see Section~\ref{sec:Outline} for a detailed outline of how our proofs differ from the ones in the above mentioned references.

Recently,~\citet[Proposition 2.3]{koike2019high} proved for sub-Gaussian random variables a rate of $(\log^6(ep)/n)^{1/6}$ for $\delta_{n,0}$ which could also be weaker than our result depending on $\Phi_{AC,0}$. It maybe possible that the methods in that paper could sharpen our result. We refrained from using their proof techniques for simplicity.
\subsection{Corollaries for Sub-Weibull Random Vectors}\label{sec:CorollariesWeibull}
In this section, we provide simplified results under the assumption that the coordinates of random vectors $X_1, \ldots, X_n$ are sub-Weibull. For a simplification of uniform CLTs, we choose a suitable $\tau$ in~\eqref{eq:Thm31Implication} and Theorem~\ref{thm:OptimalCLT}. For a simplification of Theorem~\ref{thm:NonUniformPolyCLT}, we simplify $M_n(\varepsilon_n)$ and $\bar{L}_{n,m}$ under sub-Weibull tails. Similar simplification can be done from Theorem~\ref{thm:UniformCLT} but this leads to sub-optimal dependence on $\log(ep)$ in the second order term. 

Recall that $M_n(\varepsilon_n)$ and $\bar{L}_{n,m}$ are defined in terms of the variation measures $\zeta_j$ which we bound using the sum of the measures for simplicity. To state the simplified results, we introduce Orlicz norms.
\begin{defi}
Let $X$ be a real-valued random variable and $\psi : [0,\infty) \mapsto [0,\infty)$ be a non-decreasing function with $\psi(0) = 0$. Then, we define
$$\norm{X}_\psi = \inf \left\{c > 0 :~\mathbb{E}\psi\left({|X|}/{c}\right) \leq 1\right\},$$ where the infimum over the empty set is taken to be $\infty$.
\end{defi}
Usually the definition of Orlicz ``norm'', $\|\cdot\|_{\psi}$, includes convexity assumption of $\psi$ which we did not include since we also work with non-convex $\psi$ below. It follows from Jensen's inequality, that when $\psi$ is a non-decreasing, convex function, $\norm{\cdot}_\psi$ is a norm on the set of random variables $X$ for which $\norm{X}_\psi < \infty$. The commonly used Orlicz norms are derived from $\psi_\alpha(x) := \exp(x^\alpha) - 1,$ for $\alpha \geq 1$, which are obviously increasing and convex. For $0 < \alpha < 1$, $\psi_\alpha$ is not convex, and $\norm{X}_{\psi_\alpha}$ is not a norm, but a quasinorm.
A random variable $X$ is called sub-exponential if $\norm{X}_{\psi_1} < \infty$, and a random variable $X$  is called sub-Gaussian if $\norm{X}_{\psi_2} < \infty$. 

Recall that $X(j)$ represents the $j$-th coordinate of $X\in\mathbb{R}^p$ and set
\[
\sigma_{\max}^2 := \max_{1\le j \le p} \mbox{Var}(U_{n,0}(j)),\quad\mbox{and}\quad\sigma_{\min}^2 := \min_{1\le j \le p} \mbox{Var}(U_{n,0}(j)).
\]
Throughout the following corollaries, $\Theta$ stands for a universal constant that does not depend on $p, n$ or any of the other distributional properties. $\Theta$ with subscripts (such as $\Theta_{\alpha}$) represents constants that only depend on those subscripts.
\begin{cor}[Uniform CLT]\label{cor:UniformCLT}
Suppose there exists a constant $1\le K_p < \infty$ such that
\begin{equation}\label{eq:OrliczNorm}
\max_{1\le i\le n}\max_{1\le j\le p}\,\|X_i(j)\|_{\psi_{\alpha}} ~\le~ K_p,
\end{equation}
for some $0 < \alpha \le 2$. 
If $\log(n^{3/2}/(K_p\Theta_{\alpha}\Phi_{AC,0}(\log(ep))^{2+1/\alpha})) \ge 4$ then
\begin{equation}\label{eq:BadRateImplication}
\delta_{n,0} \le \Theta\Phi_{AC,0}\frac{(L_n\log^2(ep))^{1/3}}{n^{1/6}} + \Theta_{\alpha}K_p\Phi_{AC,0}\frac{(\log(ep))^{1 + 1/\alpha}}{n^{1/2}}\log^{1/\alpha}\left(\frac{\Theta_{\alpha}\Phi_{AC,0}^{-1}n^{3/2}}{(\log(ep))^{2+1/\alpha}}\right).
	\end{equation}
Instead if $\alpha\log(n/\log(ep)) \ge 3$ and $X_1,\ldots,X_n$ share the same covariance matrix, then
\begin{equation}\label{eq:TheoremOptImplicate}
\delta_{n,0} \le \Theta\Phi_{AC,0}\frac{(L_n\log^2(ep))^{1/3}}{n^{1/6}} + \Theta_{\alpha}K_p\Phi_{AC,0}\frac{(\log(ep))^{1 + 1/\alpha}}{n^{1/2}}\log^{1/\alpha}\left(\frac{n}{\log(ep)}\right).
\end{equation}
\end{cor}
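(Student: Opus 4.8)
\medskip
\noindent\textbf{Proof sketch.} The plan is to feed the sub-Weibull hypothesis~\eqref{eq:OrliczNorm} into the master bounds already available --- Theorem~\ref{thm:UniformCLT} for~\eqref{eq:BadRateImplication} and Theorem~\ref{thm:OptimalCLT} for~\eqref{eq:TheoremOptImplicate} --- after reducing every pseudo-moment appearing there to an ordinary moment of $\|X_i\|$ or $\|Y_i\|$ via $|\zeta_i|\le\mathbb{P}_{X_i}+\mathbb{P}_{Y_i}$. The only probabilistic inputs I would need are the standard consequences of a $\psi_\alpha$-bound: if $\|Z\|_{\psi_\alpha}\le\kappa$, then $\mathbb{P}(|Z|>t)\le 2e^{-(t/\kappa)^\alpha}$, $\mathbb{E}|Z|^q\le\Theta_\alpha\kappa^q(q/\alpha)^{q/\alpha}$, and $\mathbb{E}[Z^2\mathbbm{1}\{|Z|>t\}]\le\Theta_\alpha\kappa^2(1+(t/\kappa)^2)e^{-(t/\kappa)^\alpha}$ for $t\ge\kappa$; together with a maximal inequality for Orlicz (quasi-)norms, $\bigl\|\max_{1\le j\le p}|Z_j|\bigr\|_{\psi_\alpha}\le\Theta_\alpha(\log(ep))^{1/\alpha}\max_{1\le j\le p}\|Z_j\|_{\psi_\alpha}$. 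Applied to $Z_j=X_i(j)$, this yields $\kappa_p:=\bigl\|\,\|X_i\|\,\bigr\|_{\psi_\alpha}\le\Theta_\alpha K_p(\log(ep))^{1/\alpha}$; and since $\alpha\le2$ the Gaussian coordinates obey an even stronger ($\psi_2$) bound with $\sigma_i\le\Theta K_p$ in place of $K_p$, so $\|Y_i\|$ contributes strictly less than $\|X_i\|$ in every truncated-moment estimate below and can be absorbed into constants. In particular $L_n$ in~\eqref{eq:ThirdPseudoMoment} is $O_\alpha(K_p^3)$, and by~\eqref{eq:BarLnmBound} the last term of Theorem~\ref{thm:UniformCLT} is of strictly smaller order than its first term --- exactly as noted after that theorem --- so it disappears into $\Theta$; throughout, $\Phi_{AC,0}$ is the anti-concentration constant of Theorem~\ref{thm:AntiConcentration}.

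For~\eqref{eq:BadRateImplication} I would invoke the remark after Theorem~\ref{thm:UniformCLT} that its bound holds for \emph{every} $\varepsilon\ge\varepsilon_{n,1}:=(2e^{2\mathfrak{C}}C_0\log^2(ep)L_n)^{1/3}n^{-1/6}$, in which case
\[
\delta_{n,0}\ \lesssim\ \Phi_{AC,0}\varepsilon+\frac{\log(ep)\,M_n(\varepsilon)}{\varepsilon^2}+(\text{lower order}),
\]
and then take $\varepsilon=\varepsilon_n:=\max\{\varepsilon_{n,1},\varepsilon_{n,2}\}$, where $\varepsilon_{n,2}:=\Theta_\alpha K_p(\log(ep))^{1+1/\alpha}n^{-1/2}(\log Q_n)^{1/\alpha}$ and $Q_n:=\Theta_\alpha\Phi_{AC,0}^{-1}n^{3/2}/(\log(ep))^{2+1/\alpha}$. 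Since $\Phi_{AC,0}\varepsilon_n\le\Phi_{AC,0}\varepsilon_{n,1}+\Phi_{AC,0}\varepsilon_{n,2}$, these two summands are exactly the two terms on the right of~\eqref{eq:BadRateImplication}, so it only remains to bound $\log(ep)M_n(\varepsilon_n)/\varepsilon_n^2$ by $\Phi_{AC,0}\varepsilon_{n,2}$. For this, note that the truncation threshold inside $M_n(\varepsilon_n)$ is $n^{1/2}\varepsilon_n/\log(ep)\ge\kappa_p\cdot\Theta_\alpha(\log Q_n)^{1/\alpha}$, so the truncated-second-moment estimate gives $M_n(\varepsilon_n)\le\kappa_p^2(1+(\log Q_n)^{2/\alpha})Q_n^{-c}$ with $c$ as large as one pleases (by enlarging the internal $\Theta_\alpha$); dividing by $\varepsilon_n^2\ge\varepsilon_{n,2}^2$ and multiplying by $\log(ep)$, a short computation collapses everything to a constant multiple of $\Phi_{AC,0}\varepsilon_{n,2}/Q_n^{\,c-1}$, which is at most $\Phi_{AC,0}\varepsilon_{n,2}$ because $\log Q_n\ge4$ by hypothesis. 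This calibration --- of the internal constants against the side condition $\log Q_n\ge4$, so that the super-polynomial decay of the sub-Weibull tail beats the polynomial-in-$\log$ overhead coming from $(q/\alpha)^{q/\alpha}$ --- is the one genuinely delicate point.

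For~\eqref{eq:TheoremOptImplicate} --- where the hypothesis grants only that the $X_i$ share a covariance matrix, which is all Theorem~\ref{thm:OptimalCLT} requires --- I would start from its bound
\[
\delta_{n,0}\ \lesssim\ 2^{-n}+\Phi_{AC,0}\Bigl(\frac{L_n^2\log^4(ep)}{n}\Bigr)^{1/6}+\Phi_{AC,0}\,\nu_{2+\tau}\,\frac{(\log(ep))^{(\tau+1)/(\tau+2)}}{n^{\tau/(4+2\tau)}},\qquad\tau\ge1,
\]
bound $\nu_{2+\tau}^{2+\tau}\le\tfrac1n\sum_{i=1}^n\int\|x\|^{2+\tau}|\zeta_i|(dx)\le\Theta_\alpha\kappa_p^{2+\tau}((2+\tau)/\alpha)^{(2+\tau)/\alpha}$ to get $\nu_{2+\tau}\le\Theta_\alpha K_p(\log(ep))^{1/\alpha}((2+\tau)/\alpha)^{1/\alpha}$, use $(\log(ep))^{(\tau+1)/(\tau+2)}\le\log(ep)$ and $\tau/(4+2\tau)=\tfrac12-\tfrac1{2+\tau}$ (so the denominator is $n^{1/2}/n^{1/(2+\tau)}$), and then choose $\tau=\alpha\log(n/\log(ep))$. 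The assumption $\alpha\log(n/\log(ep))\ge3$ makes $\tau\ge3\ge1$ (so Theorem~\ref{thm:OptimalCLT} applies) and $(2+\tau)/\alpha=\Theta_\alpha\log(n/\log(ep))$, turning the moment factor into $\Theta_\alpha\log^{1/\alpha}(n/\log(ep))$; in the regime the bound is informative --- $\log(ep)$ polynomially smaller than $n$ --- one has $n^{1/(2+\tau)}=O_\alpha(1)$, so the last term collapses to $\Theta_\alpha K_p\Phi_{AC,0}(\log(ep))^{1+1/\alpha}n^{-1/2}\log^{1/\alpha}(n/\log(ep))$, the second term of~\eqref{eq:TheoremOptImplicate}, while rewriting $(L_n^2\log^4(ep)/n)^{1/6}=(L_n\log^2(ep))^{1/3}n^{-1/6}$ and absorbing $2^{-n}$ gives the first. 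In both halves the crux is the same tuning step: choose the free parameter ($\varepsilon_n$, resp.\ $\tau$) large enough to pull the heavy contribution --- the truncation error $M_n(\cdot)$, resp.\ the $\nu_{2+\tau}$-term --- down to the $n^{-1/2}$ scale, then check that the stated logarithmic side conditions are just strong enough for the polynomial-in-$\log$ cost of the sub-Weibull moments to be swallowed.
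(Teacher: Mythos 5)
Your proposal is essentially correct and, for the most part, follows the paper's own route. For \eqref{eq:TheoremOptImplicate} you do exactly what the paper does: plug the sub-Weibull moment bound $\nu_{2+\tau}\le\Theta_\alpha K_p(2+\tau)^{1/\alpha}(\log(ep))^{1/\alpha}$ into Theorem~\ref{thm:OptimalCLT} and take $\tau$ of order $\alpha\log(n/\log(ep))$. For \eqref{eq:BadRateImplication} the paper simply cites the $(2+\tau)$-moment consequence \eqref{eq:Thm31Implication} of Theorem~\ref{thm:UniformCLT} and sets $\tau+3=\log\bigl(n^{3/2}/(K_p\Theta_\alpha\Phi_{AC,0}(\log(ep))^{2+1/\alpha})\bigr)$, whereas you re-enter the proof of Theorem~\ref{thm:UniformCLT}, keep the truncated second moment $M_n(\varepsilon)$, and kill it directly with the sub-Weibull tail at $\varepsilon_n=\max\{\varepsilon_{n,1},\varepsilon_{n,2}\}$; these are equivalent reparametrizations of the same tuning idea (a logarithmically large $\tau$ versus a $\log^{1/\alpha}$-inflated $\varepsilon$), and your calibration of $M_n(\varepsilon_n)$ against $\Phi_{AC,0}\varepsilon_{n,2}$ goes through.

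The one step you should repair is in the second half: after choosing $\tau+2\asymp\alpha\log(n/\log(ep))$ you bound $(\log(ep))^{(\tau+1)/(\tau+2)}\le\log(ep)$ and then separately argue that $n^{1/(2+\tau)}=O_\alpha(1)$ ``in the regime $\log(ep)$ is polynomially smaller than $n$''. That regime restriction is not among the hypotheses (only $\alpha\log(n/\log(ep))\ge3$ is assumed), and if $\log(ep)$ is close to $n$ the factor $n^{1/(2+\tau)}$ need not be bounded. The fix is to keep the two factors together: writing $n^{\tau/(4+2\tau)}=n^{1/2}\,n^{-1/(\tau+2)}$ and retaining the discarded $(\log(ep))^{-1/(\tau+2)}$, the offending factor is exactly
\begin{equation}
\left(\frac{n}{\log(ep)}\right)^{1/(\tau+2)}=e^{1/\alpha}
\end{equation}
under the choice $\tau+2=\alpha\log(n/\log(ep))$, a constant depending only on $\alpha$, so no additional assumption is needed and the bound \eqref{eq:TheoremOptImplicate} follows under the stated hypotheses alone.
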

\begin{proof} 
Note that assumption~\eqref{eq:OrliczNorm} implies that $\nu_{2+\tau} \le K_p\Theta_{\alpha}(2+\tau)^{1/\alpha}(\log(ep))^{1/\alpha}$ for all $\tau \ge 1$. Then~\eqref{eq:BadRateImplication} follows from~\eqref{eq:Thm31Implication} by taking $\tau+3 = \log(n^{3/2}/(K_p\Theta_{\alpha}\Phi_{AC,0}(\log(ep))^{2+1/\alpha}))$ which is possible for $\tau \ge 1$ since the right hand side is assumed to be larger than $4$. Further~\eqref{eq:TheoremOptImplicate} follows from Theorem~\ref{thm:OptimalCLT} by taking $\tau + 2 = \alpha\log(n/\log(ep))$ (which is possible for $\tau\ge1$ since $\alpha\log(n/\log(ep)) \ge 3$).
\end{proof} 
Simplifying a little further, the bounds on $\delta_{n,0}$ can be written as
\[
\delta_{n,0} ~\lesssim~ \Phi_{AC,0}\frac{(L_n\log^2(ep))^{1/3}}{n^{1/6}} + K_p\Phi_{AC,0}\frac{(\log(ep))^{1 + 1/\alpha}(\log n)^{1/\alpha}}{n^{1/2}}.
\]
The following corollary is obtained by controlling $M_n(\varepsilon_n)$ and $\bar{L}_{n,m}$ in Theorem~\ref{thm:NonUniformPolyCLT} for sub-Weibull random vectors. By choosing an appropriate of $\tau$ in Theorem~\ref{thm:OptimalNonUniformCLT} we get a much simpler form under the assumption that $X_1,\ldots,X_n$ share the same covariance structure.
\begin{cor}[Non-uniform CLT]\label{cor:NonUniformPolyCLT}
Fix $m\ge 0$. If $n ~\ge~ \Theta K_p^3L_n^{-1}(2e\log(ep))^{1 + 3/\alpha}$, then we have (i) for $1 < \alpha \le 2$,
\begin{align*}
\delta_{n,m} &\le \Theta_m\Phi_{AC,m}\left(\frac{L_n^2\log^4(ep)}{n}\right)^{1/6} + \Theta_{\alpha,m}\left(\frac{K_p^{(2m + 1)\alpha}\log^4(epn)}{nL_n^{(m+1)\alpha/3}}\right)^{1/(\alpha - 1)}\\ &\qquad+ \Theta_mK_p^m\sigma_{\max}^{m+1}\left(\frac{\log^4(epn)}{nL_n}\right)^{(m+1)/3} + \Theta_m\frac{K_p^{m+2}}{\sigma_{\max}^2n^{2/3}},
\end{align*}
and (ii) for $0 < \alpha \le 1$,
\begin{align*}
\delta_{n,m} &\le \Theta_m\Phi_{AC,m}\left(\frac{L_n^2\log^4(ep)}{n}\right)^{1/6} + \Theta_{\alpha,m}\frac{K_p^{3 + m}}{L_n}\left(\frac{K_p^3(\log(epn))^{5/4 + 3/\alpha}}{nL_n}\right)^{12/\alpha + 2m}\\ &\qquad+ \Theta_mK_p^m\sigma_{\max}^{m+1}\left(\frac{(\log(epn))^{1 + 3/\alpha}}{nL_n}\right)^{(m+1)/3} + \Theta_m\frac{K_p^{m+2}}{\sigma_{\max}^2n^{2/3}}.
\end{align*}
Instead if $\alpha\log(2^{3+5m/2}n/\log(ep)) \ge m+2$ and $X_1,\ldots,X_n$ share the same covariance matrix, then
\begin{equation}\label{eq:NonUniformOptimal}
\delta_{n,m} \le 2^{m/2}\left(\frac{\nu_m}{2^{n/2}}\right)^m + 2^{2+2m}\varepsilon_n^m + 2.4\Phi_{AC,m}\varepsilon_n,
\end{equation}
where
\[
\varepsilon_n := \max\left\{\frac{(2^{2+3m/2}C_0e^{\mathfrak{C}}L_n\log^2(ep))^{1/3}}{n^{1/6}}, \Theta_{\alpha,m}(e\alpha)^{1/\alpha}\frac{(\log(ep))^{1 + 1/\alpha}}{n^{1/2}}\log^{1/\alpha}\left(\frac{2C_0n}{\log(ep)}\right)\right\}.
\]
\end{cor}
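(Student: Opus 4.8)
The plan is to derive all three displayed bounds by specializing the non-uniform central limit theorems already proved, namely Theorem~\ref{thm:NonUniformPolyCLT} for parts~(i)--(ii) and Theorem~\ref{thm:OptimalNonUniformCLT} for~\eqref{eq:NonUniformOptimal} (with $m\ge1$). Thus the only real work is to convert the coordinatewise Orlicz hypothesis~\eqref{eq:OrliczNorm} into bounds on the pseudo-moments $M_n(\cdot)$, $\bar L_{n,m}$, $\nu_{2+\tau}$ entering those theorems --- and, for parts~(i)--(ii) only, on the tail term $\sup_{r\ge r_{n,m}}\max_{0\le k\le n}r^m\mathbb P(\|U_{n,k}\|\ge r)$ in Theorem~\ref{thm:NonUniformPolyCLT} --- and then to choose the free parameters ($\tau$, respectively $r_{n,m}$) optimally.

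The first step is to pass from coordinatewise to norm-level control. From $\|X_i(j)\|_{\psi_\alpha}\le K_p$ we get $\mathrm{Var}(X_i(j))\le\Theta_\alpha K_p^2$, hence $\sigma_{\max}\le\Theta_\alpha K_p$, and the standard maximal inequality for $\psi_\alpha$-norms gives $\|\,\|X_i\|\,\|_{\psi_\alpha}\le\Theta_\alpha K_p(\log(ep))^{1/\alpha}$, with the same bound for the Gaussian marginals $Y_i$. Combining $|\zeta_i|\le\mathbb P_{X_i}+\mathbb P_{Y_i}$ with the Orlicz moment bound $\mathbb E|Z|^q\le\Theta_\alpha\Gamma(q/\alpha+1)\|Z\|_{\psi_\alpha}^q$ then yields, for every $\tau\ge0$, $\nu_{2+\tau}\le\Theta_\alpha(2+\tau)^{1/\alpha}(\log(ep))^{1/\alpha}K_p$; via~\eqref{eq:BarLnmBound} it yields $\bar L_{n,m}\le\Theta_mL_n\sigma_{\max}^{m+1}(\log(ep))^{(m+1)/2}$; and it yields the tail bound $\mathbb P(\|X_i\|\ge t)\vee\mathbb P(\|Y_i\|\ge t)\le2\exp\big(-(t/(\Theta_\alpha K_p(\log(ep))^{1/\alpha}))^\alpha\big)$, so that for any level $T$ the truncated second pseudo-moment $\frac1n\sum_i\int\|x\|^2\mathbbm{1}\{\|x\|\ge T\}|\zeta_i|(dx)$ is at most $\Theta_\alpha T^2\exp\big(-(T/(\Theta_\alpha K_p(\log(ep))^{1/\alpha}))^\alpha\big)$.

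For~\eqref{eq:NonUniformOptimal} this is essentially all one needs: substitute the bound on $\nu_{2+\tau}$ into Theorem~\ref{thm:OptimalNonUniformCLT} and choose $\tau$ to minimize the second term of its $\varepsilon_n$. Since $(\log^{\tau+1}(ep))^{1/(2+\tau)}=(\log(ep))^{(\tau+1)/(\tau+2)}$ and $n^{-\tau/(4+2\tau)}$ is decreasing in $\tau$, the minimizer is at scale $\tau+2\asymp\alpha\log(2C_0n/\log(ep))$, which is admissible --- i.e.\ satisfies $\tau\ge m$ --- precisely under the stated condition $\alpha\log(2^{3+5m/2}n/\log(ep))\ge m+2$; substituting it back reproduces the factor $\Theta_{\alpha,m}(e\alpha)^{1/\alpha}(\log(ep))^{1+1/\alpha}\log^{1/\alpha}(2C_0n/\log(ep))$ in $\varepsilon_n$, while the first term of $\varepsilon_n$ and the $\nu_m/2^{n/2}$ term carry over verbatim.

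For parts~(i)--(ii) I would feed the three estimates of the second paragraph into the five-term bound of Theorem~\ref{thm:NonUniformPolyCLT}. The sample-size hypothesis $n\ge\Theta K_p^3L_n^{-1}(2e\log(ep))^{1+3/\alpha}$ forces the first term in that theorem's $\varepsilon_n$ to dominate, so $\varepsilon_n\asymp(L_n\log^2(ep))^{1/3}n^{-1/6}$ and its first two terms give the leading $\Phi_{AC,m}(L_n^2\log^4(ep)/n)^{1/6}$ (the $\varepsilon_n^{m+1}\Phi_{AC,0}$ term being of the same or lower order once $\varepsilon_n\le1$); the truncation level $T_n\asymp n^{1/2}\varepsilon_n/\log(ep)$ is, again by the hypothesis, a large multiple of $K_p(\log(epn))^{1/\alpha}$, so the truncated-moment estimate controls the $M_n$-term (up to the residual $\Theta_mK_p^{m+2}/(\sigma_{\max}^2n^{2/3})$ coming from its crude part), and the $\bar L_{n,m}$-bound turns the fourth term into one of order $r_{n,m}^m\sigma_{\max}^{m+1}(\log^4(epn)/(nL_n))^{(m+1)/3}$. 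The remaining term $\sup_{r\ge r_{n,m}}\max_k r^m\mathbb P(\|U_{n,k}\|\ge r)$ is the main obstacle: $U_{n,k}=n^{-1/2}(X_1+\cdots+X_k+Y_{k+1}+\cdots+Y_n)$ is a sum of $n$ independent mean-zero coordinatewise-sub-Weibull$(\alpha)$ vectors, so one needs a Bernstein--Orlicz concentration inequality for such sums (of Kuchibhotla--Chakrabortty type), uniform in $k$ and unionized over the $p$ coordinates, giving a Gaussian-type bound $\mathbb P(\|U_{n,k}\|\ge r)\lesssim p\exp(-r^2/(\Theta\sigma_{\max}^2))$ in the bulk and a sub-Weibull-type bound $\lesssim p\exp(-(r\sqrt n/(\Theta K_p))^\alpha)$ in the far tail; for $r_{n,m}$ above the bulk scale $\sigma_{\max}\sqrt{\log(ep)}$ the map $r\mapsto r^m\mathbb P(\|U_{n,k}\|\ge r)$ decreases, so its supremum is at $r_{n,m}$, which one then picks to balance against the $M_n$- and $\bar L_{n,m}$-terms. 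The case split $1<\alpha\le2$ versus $0<\alpha\le1$ mirrors the two tail regimes --- for $\alpha>1$ a Gaussian bulk plus a separate sub-Weibull tail (the sum lying outside the $\psi_\alpha$-class of the summands), and the resulting optimization yields the exponent $1/(\alpha-1)$ in~(i); for $\alpha\le1$ the class is closed under convolution and the far tail governs, yielding $12/\alpha+2m$ in~(ii) --- and collecting the leading term, the dominated $M_n$-term, the $\bar L_{n,m}$-term, the residual $K_p^{m+2}/(\sigma_{\max}^2n^{2/3})$ term and the optimized tail term yields the two displayed bounds; everything besides this last balancing and the uniform-in-$k$ tail inequality for $U_{n,k}$ is routine Orlicz bookkeeping.
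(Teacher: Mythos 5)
Your treatment of \eqref{eq:NonUniformOptimal} matches the paper: bound $\nu_{2+\tau}\le\Theta_\alpha(2+\tau)^{1/\alpha}K_p(\log(ep))^{1/\alpha}$ and take $2+\tau\asymp\alpha\log(2^{3+5m/2}C_0n/\log(ep))$ in Theorem~\ref{thm:OptimalNonUniformCLT}, with the stated condition guaranteeing $\tau\ge m$. Likewise your handling of the residual term $\sup_{r\ge r_{n,m}}\max_k r^m\mathbb P(\|U_{n,k}\|\ge r)$ via a Kuchibhotla--Chakrabortty type concentration bound for sums of coordinatewise sub-Weibull vectors is exactly what the paper does (it cites Theorem~3.4 of kuchibhotla2018moving and takes $r_{n,m}\asymp\sigma_{\max}\sqrt{\log(epn)}+K_pn^{-1/2}(\log 2n)^{1/\alpha}(\log(epn))^{1/\alpha^*}$); this term then contributes only $\Theta_mK_p^{m+2}/(\sigma_{\max}^2n^{2/3})$, in \emph{both} cases (i) and (ii).

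The gap is in how you propose to produce the $\alpha$-dependent terms with exponents $1/(\alpha-1)$ and $12/\alpha+2m$. You attribute them to an optimization of the tail term over $r_{n,m}$ and to distinct tail regimes of $U_{n,k}$ (with the residual $n^{-2/3}$ term coming from the ``crude part'' of $M_n$); this is exactly swapped relative to where these terms actually arise. In the paper they come from the truncated second pseudo-moment term $2C_0\log(ep)r_{n,m}^m\varepsilon_n^{-2}M_n(2^{2m/3}\varepsilon_n)$ of Theorem~\ref{thm:NonUniformPolyCLT}, bounded by a \emph{polynomial} Markov/Orlicz inequality with a free moment parameter: $M_n(\varepsilon_n)\le\Theta_{\alpha,q}K_p^{2+3q}(\log(ep))^{(2+3q)/\alpha+q}(nL_n)^{-q}$ for every $q\ge0$, after which one chooses $q=(2+(m+1)\alpha)/(3(\alpha-1))$ when $1<\alpha\le2$ and $q-1/3=12/\alpha+2m$ when $0<\alpha\le1$. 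Your proposed exponential truncation bound $M_n(T_n)\lesssim T_n^2\exp\bigl(-(T_n/(\Theta_\alpha K_p(\log ep)^{1/\alpha}))^\alpha\bigr)$ does not suffice under the corollary's hypothesis $n\ge\Theta K_p^3L_n^{-1}(2e\log(ep))^{1+3/\alpha}$: at that threshold the exponent is only $\Theta(1)$, and since $\varepsilon_n^{-2}T_n^2\asymp n/\log^2(ep)$ the $M_n$-term would be of order $r_{n,m}^mn/\log(ep)$ rather than the claimed polynomially small quantity. Meanwhile the tail term beyond the concentration scale decays too fast to generate any $\alpha$-dependent polynomial rate, so ``balancing $r_{n,m}$ against the $M_n$- and $\bar L_{n,m}$-terms'' cannot recover the displayed exponents (and the heuristic that for $\alpha>1$ the sum leaves the $\psi_\alpha$ class while for $\alpha\le1$ the class is closed under convolution is also backwards). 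To repair the argument, replace the exponential estimate by the free-$q$ moment bound on $M_n$ and carry out the optimization in $q$; the case split in $\alpha$ is then driven entirely by that choice of $q$.
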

\noindent Inequality~\eqref{eq:NonUniformOptimal} follows by taking $2 + \tau = \alpha\log(2^{3+5m/2}C_0n/\log(ep))$ in $\varepsilon_n$ of Theorem~\ref{thm:OptimalNonUniformCLT}.
\begin{rem}\,(Even more simplified rates)
The bounds in Corollary~\ref{cor:NonUniformPolyCLT} are finite sample and show explicit dependence on $L_n, K_p, \sigma_{\max}$ and other distributionals constants. If $\max\{K_p, \sigma_{\max}, L_n^{-1}\} = O(1)$ then the bounds in Corollary~\ref{cor:NonUniformPolyCLT} can simply be written as
\[
\delta_{n,m} ~\lesssim~ \Phi_{AC,m}\frac{(L_n\log^2(ep))^{1/3}}{n^{1/6}} + \begin{cases} 0,&\mbox{if }\alpha > 1,\\
((\log(ep))^{5/4 + 3/\alpha}/n)^{(m+1)/3},&\mbox{if }\alpha\in(0, 1].\end{cases}
\]
The implication from~\eqref{eq:NonUniformOptimal} can be simply written as
\[
\delta_{n,m} \lesssim \left(\frac{\nu_m}{2^{n/2}}\right)^m + \Phi_{AC,m}\frac{(L_n\log^2(ep))^{1/3}}{n^{1/6}} + \Phi_{AC,m}\frac{(\log(ep))^{1 + 1/\alpha}(\log n)^{1/\alpha}}{n^{1/2}}.
\]
\end{rem}
\begin{rem}{ (Convergence of Moments)}\label{Rem:MomentConvergence}
Theorems~\ref{thm:UniformCLT} and~\ref{thm:NonUniformPolyCLT} are useful in proving convergence of $m$-th moment of $\norm{S_{n}}$ to that $\norm{U_{n,0}}$ at an $n^{-1/6}$ rate (up to factors depending on $\log(ep)$). To prove an explicit bound, note that for $m \ge 1$:
\begin{align*}
\left|\E\left[ \norm{S_{n}}^{m} \right]- \E\left[ \norm{U_{n,0}}^{m} \right]\right| &= \left|\int_{0}^{\infty} m r^{m-1} \left(\mathbb{P}\left(\norm{S_{n}} \ge r \right)- \mathbb{P}\left( \norm{U_{n,0}} \ge r\right)\right)dr \right|\nonumber\\
& \le m\int_{0}^{1} \delta_{n,0} dr + \int_{1}^{\infty} \frac{m}{r^{1+\beta}} \delta_{n,m+\beta} dr
\le m\delta_{n,0}(r) + \frac{m}{\beta} \delta_{n,m+\beta},
\end{align*}
for any $\beta \ge 0$. The second term in the right hand side above can be bounded using Theorem~\ref{thm:NonUniformPolyCLT} or~\ref{thm:OptimalNonUniformCLT}. Taking the main terms in the bounds for $\delta_{n,0}, \delta_{n,m+\beta}$ and using $\Phi_{AC,m}\lesssim \mu^{m+1}$ from Theorem~\ref{thm:AntiConcentration}, we can write the bound on moment difference as
\[
\left|\E\left[ \norm{S_{n}}^{m} \right]- \E\left[ \norm{U_{n,0}}^{m} \right]\right| \lesssim m\mu\frac{(L_n\log^2(ep))^{1/3}}{n^{1/6}} + \frac{m}{\beta}\mu^{m+\beta+1}\frac{(L_n\log^2(ep))^{1/3}}{n^{1/6}}.
\] 
Note that we assumed here $\sigma_{\min}^{-1} = O(1)$. Choosing $\beta = 1/\log(\mu)$, we get
\begin{align*}
\left|\E\left[ \norm{S_{n}}^{m} \right]- \E\left[ \norm{U_{n,0}}^{m} \right]\right| &\lesssim m\mu\frac{(L_n\log^2(ep))^{1/3}}{n^{1/6}} + {m}\log(\mu)\mu^{m+1}\frac{(L_n\log^2(ep))^{1/3}}{n^{1/6}}.
\end{align*}
Since the median and $(\mathbb{E}[\|U_{n,0}\|^{\ell}])^{1/\ell}, \ell \ge 1$ are of the same order, we get that for $m\ge1$
\[
\mathbb{E}\left[\|S_n\|^m\right] ~=~ \left(1 + O\left(\frac{\mu\log(\mu)(L_n\log^2(ep))^{1/3}}{n^{1/6}}\right)\right)\mathbb{E}\left[\|U_{n,0}\|^m\right]. 
\]
Hence our results imply that the moments of $\|S_n\|$ match the moments of $\|U_{n,0}\|$ up to a lower order term if $\mu\log(\mu)(L_n\log^2(ep))^{1/3} = o(n^{1/6})$.
\end{rem}
\section{Applications}\label{subsec:Application}
In this section, we present two applications. The first is to post-selection inference and shows the impact of dimension-free anti-concentration constant. The second is to maximal inequalities for empirical processes that shows the importance of non-uniform CLT.  
\subsection{Many Approximate Means and Post-selection Inference}\label{subsec:PoSI}
In this section, we use the ``many approximate means'' (MAM) framework of~\cite{belloni2018high} for post-selection inference (PoSI). In this PoSI problem, we show scenarios where $\Phi_{AC,0}$ (or $\mu$) grows almost \emph{like} a constant. 

The MAM framework is as follows. Suppose we have a parameter $\theta_0 = (\theta_0(1), \ldots, \theta_0(p))^{\top}$ and an estimator $\hat{\theta} = (\hat{\theta}(1), \ldots, \hat{\theta}(p))^{\top}$ of parameter $\theta_0$ that has an approximate linear form:
\begin{equation}\label{eq:InfluExpansion}
n^{1/2}(\hat{\theta} - \theta_0) ~=~ \frac{1}{\sqrt{n}}\sum_{i=1}^n \psi(Z_i) + R_n,
\end{equation}
where $\psi(\cdot) = (\psi_1(\cdot), \ldots, \psi_p(\cdot))^{\top}\in\mathbb{R}^p$ and $R_n = (R_n(1), \ldots, R_n(p))^{\top}\in\mathbb{R}^p$. Here $Z_1, \ldots, Z_n$ are independent random variables based on which $\hat{\theta}$ is constructed. The function $\psi_j(\cdot), 1\le j\le p$ represents the influence function for estimator $\hat{\theta}(j)$. An estimator $\hat{\theta}$ satisfying~\eqref{eq:InfluExpansion} is called asymptotically linear and most $M$-estimators (commonly used) satisfy this assumption. Based on the linear approximation~\eqref{eq:InfluExpansion} and the anti-concentration result Theorem~\ref{thm:AntiConcentration}, we have the following result. Define
\[
\bar{\Delta}_n(r) := \left|\mathbb{P}(\sqrt{n}\,\|\hat{\theta} - \theta_0\| \le r) - \mathbb{P}(\|Y^{\psi}\| \le r)\right|,
\]
where
$Y^{\psi} ~\sim~ N_p\left(0, n^{-1}\sum_{i=1}^n \mathbb{E}[\psi(Z_i)\psi^{\top}(Z_i)]\right).$
Let $\Phi_{AC,m}^{\psi}$ denote the anti-concentration constant from Theorem~\ref{thm:AntiConcentration} for $Y^{\psi}$. Set
$S_n^{\psi} := n^{-1/2}\sum_{i=1}^n \psi(Z_i).$
\begin{prop}\label{prop:MAMNonUniform}
For any $\delta > 0$ and $m\ge 0$, we have
\[
r^m\bar{\Delta}_n(r) \le 2\left[(5/4)^{m}\Delta_{n,m}^{\psi} + 5^m\Phi_{AC,0}^{\psi}\delta^{m+1}\right] + \Phi_{AC,m}^{\psi}\delta + r^m\mathbb{P}(\|R_n\| > \delta),
\]
where
$\Delta_{n,m}^{\psi} := \sup_{r\ge0}\,r^m|\mathbb{P}(\|S_n^{\psi}\| \le r) - \mathbb{P}(\|Y^{\psi}\| \le r)|.$
\end{prop}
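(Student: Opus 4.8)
The plan is to move the problem in three short steps: from $\sqrt n(\hat\theta-\theta_0)$ to its linear part $S_n^\psi$ (losing only $\mathbb{P}(\|R_n\|>\delta)$), from $S_n^\psi$ to the Gaussian $Y^\psi$ (losing $\Delta_{n,m}^\psi$), and finally absorbing the $\pm\delta$ distortion created along the way by an anti-concentration correction supplied by Theorem~\ref{thm:AntiConcentration}. Here $\delta>0$ is a free parameter to be optimized afterwards in applications.

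First I would sandwich. On the event $\{\|R_n\|\le\delta\}$, the linear representation~\eqref{eq:InfluExpansion} and the triangle inequality for the $l_\infty$-norm give $\big|\,\sqrt n\,\|\hat\theta-\theta_0\|-\|S_n^\psi\|\,\big|\le\delta$, so that for every $r\ge0$
\[
\mathbb{P}(\|S_n^\psi\|\le r-\delta)-\mathbb{P}(\|R_n\|>\delta)\;\le\;\mathbb{P}(\sqrt n\,\|\hat\theta-\theta_0\|\le r)\;\le\;\mathbb{P}(\|S_n^\psi\|\le r+\delta)+\mathbb{P}(\|R_n\|>\delta).
\]
Next I would replace $S_n^\psi$ by $Y^\psi$ using the pointwise bound $|\mathbb{P}(\|S_n^\psi\|\le s)-\mathbb{P}(\|Y^\psi\|\le s)|\le s^{-m}\Delta_{n,m}^\psi$, which holds for all $s>0$ by the definition of $\Delta_{n,m}^\psi$, applied at $s=r\pm\delta$. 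Finally I would remove the shift via anti-concentration: $\mathbb{P}(\|Y^\psi\|\le r+\delta)-\mathbb{P}(\|Y^\psi\|\le r-\delta)=\mathbb{P}(r-\delta<\|Y^\psi\|\le r+\delta)$, and Theorem~\ref{thm:AntiConcentration} applied to $Y^\psi$ gives $r^m\,\mathbb{P}(r-\delta\le\|Y^\psi\|\le r+\delta)\le\Phi_{AC,m}^\psi\,\delta$.

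The one genuinely delicate point is carrying the weight $r^m$ through the shifts $r\mapsto r\pm\delta$, since $r^m/(r-\delta)^m$ is unbounded as $r\downarrow\delta$; I would handle this by a case split at $r\asymp\delta$. When $r\ge 5\delta$, one has $r^m(r-\delta)^{-m}\le(5/4)^m$ and $r^m(r+\delta)^{-m}\le 1$, so combining the three facts above (in both the upper and the lower sandwich direction) directly yields a bound of the form $(5/4)^m\Delta_{n,m}^\psi+\Phi_{AC,m}^\psi\,\delta+r^m\mathbb{P}(\|R_n\|>\delta)$. When $r<5\delta$, the lower sandwich is essentially vacuous, so I would instead use the crude splitting $r^m\bar\Delta_n(r)\le r^m\mathbb{P}(\sqrt n\,\|\hat\theta-\theta_0\|\le r)+r^m\mathbb{P}(\|Y^\psi\|\le r)$, bound $\mathbb{P}(\|Y^\psi\|\le t)\le\Phi_{AC,0}^\psi\,t$ by the $m=0$ case of Theorem~\ref{thm:AntiConcentration}, and insert the Gaussian replacement at $s=r+\delta$; since $r<5\delta$ this contributes a term of order $5^m\Phi_{AC,0}^\psi\,\delta^{m+1}$ (the extra power of $\delta$, rather than $\delta$ alone, is exactly why this term carries $\Phi_{AC,0}^\psi$ and not $\Phi_{AC,m}^\psi$). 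Collecting the two regimes and the constants — the overall factor $2$ coming from combining the estimates of the two sandwich directions — gives the stated inequality.

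In summary, the argument is a three-line chain (sandwich by $S_n^\psi$; swap to $Y^\psi$ through $\Delta_{n,m}^\psi$; correct the $\pm\delta$ shift by anti-concentration), and the main obstacle is purely the $r^m$-bookkeeping near $r\asymp\delta$ — arranging the case split so that the small-$r$ regime still produces the sharp $\delta^{m+1}$ scaling instead of a weaker $\delta$.
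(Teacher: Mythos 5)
Your proposal follows essentially the same route as the paper: sandwich $\sqrt{n}\,\|\hat\theta-\theta_0\|$ between $\|S_n^\psi\|\pm\delta$ on the event $\{\|R_n\|\le\delta\}$, swap $S_n^\psi$ for $Y^\psi$ through $\Delta_{n,m}^\psi$ at the shifted radii, absorb the $\pm\delta$ shift by Theorem~\ref{thm:AntiConcentration}, and split cases at $r\asymp 5\delta$ to control the weight $r^m$; this is exactly the paper's argument and the large-$r$ regime matches it line for line. The only deviation is your small-$r$ treatment: bounding $r^m\mathbb{P}(\|Y^\psi\|\le r)$ and $r^m\mathbb{P}(\|S_n^\psi\|\le r+\delta)$ outright via $\mathbb{P}(\|Y^\psi\|\le t)\lesssim\Phi_{AC,0}^\psi t$ discards the cancellation between the two CDFs and yields a constant of roughly $5\cdot 5^m$ to $6\cdot 5^m$ in front of $\Phi_{AC,0}^\psi\delta^{m+1}$, rather than the $2\cdot 5^m$ stated in the proposition (it still gives the correct $\delta^{m+1}$ scaling, which is the substantive point). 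To recover the exact constants, do what the paper does for $r-\delta\le 4\delta$: keep the difference $|\mathbb{P}(\|S_n^\psi\|\le r-\delta)-\mathbb{P}(\|Y^\psi\|\le r-\delta)|$, move its argument up to $4\delta$ at an anti-concentration cost of $2\Phi_{AC,0}^\psi\delta$ (the interval $[r-\delta,4\delta]\subseteq[0,4\delta]$ has half-length $2\delta$), then bound the difference at $4\delta$ by $\Delta_{n,m}^\psi/(4\delta)^m$ and use $r^m/(4\delta)^m\le(5/4)^m$ and $r^m\cdot 2\Phi_{AC,0}^\psi\delta\le 2\cdot 5^m\Phi_{AC,0}^\psi\delta^{m+1}$.
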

This result is a slight generalization of Theorem 2.1 of~\cite{belloni2018high} for $l_{\infty}$-balls. To derive a proper non-uniform CLT result from Proposition~\ref{prop:MAMNonUniform}, one needs to choose $\delta$ depending on $r$ and apriori bound moments of $\|R_n\|$. For the case $m = 0$ (which we focus on from now), the \emph{proof} of Proposition~\ref{prop:MAMNonUniform} implies
\[
\bar{\Delta}_n(r) \le \Delta_{n,m}^{\psi} + \Phi_{AC,0}^{\psi}\delta + \mathbb{P}(\|R_n\| > \delta).
\]
Hence $\bar{\Delta}_n(r)$ converges to zero if $\|R_n\| = o_p(r_n)$ such that $r_n\Phi_{AC,0} = o(1)$ as $n, p \to \infty$.

We now describe the framework of post-selection inference. Suppose $Z_i := (X_i^{\top}, Y_i)^{\top}\in\mathbb{R}^d\times\mathbb{R}$ for $1\le i\le n$ represent regression data with $d$-dimensional covariates $X_i$. For any subset $M\subseteq\{1,2,\ldots,d\}$, let $X_{i,M}$ denote a subvector of $X_i$ with indices in $M$. Based on a loss function $\ell(\cdot, \cdot)$ define the regression ``slope'' estimator $\hat{\beta}_M$ as
\[
\hat{\beta}_M := \argmin_{\theta\in\mathbb{R}^{|M|}}\,\frac{1}{n}\sum_{i=1}^n \ell(X_{i,M}^{\top}\theta, Y_i).
\]
The target for the estimator $\hat{\beta}_M$ is given by
$\beta_M := \argmin_{\theta\in\mathbb{R}^{|M|}}\,n^{-1}\sum_{i=1}^n \mathbb{E}[\ell(X_{i,M}^{\top}\theta, Y_i)].$
Some examples of loss functions are related to linear regression $\ell(u, v) = (u - v)^2/2$, logistic regression $\ell(u, v) = uv - \log(1 + e^u)$, Poisson regression $\ell(u, v) = uv - \exp(u)$. 

As is often done in practical data analysis, suppose we choose a model $\hat{M}$ based on the data $\{Z_i:\,1\le i\le n\}$. The problem of post-selection inference refers to the statistical inference for the (random) target $\beta_{\hat{M}}$. In particular, PoSI problem refers to construction of confidence regions $\{\hat{\mathcal{R}}_M:\,M\subseteq\{1,2,\ldots,d\}\}$ (depending on $\alpha\in[0, 1]$) such that
\begin{equation}\label{eq:PoSI}
\liminf_{n\to\infty}\,\mathbb{P}\left(\beta_{\hat{M}}\in\hat{\mathcal{R}}_{\hat{M}}\right)~\ge~ 1 - \alpha,
\end{equation}
holds for \underline{any} randomly selected model $\hat{M}$. It was proved in~\citet[Theorem 3.1]{kuchibhotla2018valid} that the PoSI problem~\eqref{eq:PoSI} is equivalent to the simultaneous inference problem
\begin{equation}\label{eq:Simultaneous}
\liminf_{n\to\infty}\,\mathbb{P}\left(\bigcap_{M}\,\left\{\beta_M \in \hat{\mathcal{R}}_M\right\}\right) ~\ge~ 1 - \alpha,
\end{equation}
where the intersection is taken over all $M\subseteq\{1,2,\ldots,d\}$. A straightforward construction of such simultaneous confidence regions can be based on finding quantiles of the statistic
\begin{equation}\label{eq:maxt}
\max_{M}\max_{1 \le j\le |M|}\,\left|\frac{\sqrt{n}(\hat{\beta}_{M}(j) - \beta_M(j))}{\hat{\sigma}_M(j)}\right|,
\end{equation}
where $v(j)$, for a vector $v$, represents the $j$-th coordinate of $v$ and $\hat{\sigma}_M(j)$ represents an estimator of the standard deviation of $\sqrt{n}(\hat{\beta}_M(j) - \beta_M(j))$. In order to apply Proposition~\ref{prop:MAMNonUniform} for finding quantiles of the statistic~\eqref{eq:maxt}, we need to prove a linear approximation result such as~\eqref{eq:InfluExpansion}. In case of linear regression ($\ell(u, v) = (u - v)^2/2$), it was proved in~\cite{kuchibhotla2018model} that for any $1\le k\le d$,
\[
\max_{|M|\le k}\,\left\|\hat{\beta}_M - \beta_M + \frac{1}{n}\sum_{i=1}^n \Omega_M^{-1}X_{i,M}(Y_i - X_{i,M}^{\top}\beta_M)\right\|_2 = O_p\left(\frac{k\log(ed/k)}{n}\right),
\]
where $\Omega_M := n^{-1}\sum_{i=1}^n \mathbb{E}[X_{i,M}X_{i,M}^{\top}].$ Extensions for general $\ell$ are available in that paper. This result is proved under certain tail assumptions on the observations and holds both for random and fixed covariates. For linear regression with fixed covariates, we have
\[
\hat{\beta}_M - \beta_M = \left(\frac{1}{n}\sum_{i=1}^n x_{i,M}x_{i,M}^{\top}\right)^{-1}\frac{1}{n}\sum_{i=1}^n x_{i,M}(Y_i - \mathbb{E}[Y_i]) = \frac{1}{n}\sum_{i=1}^n \Omega_M^{-1}x_{i,M}(Y_i - \mathbb{E}[Y_i]),
\]
where we write $x_i$ to note fixed covariates and the variance of $n^{1/2}(\hat{\beta}_M - \beta_M)$ is given by
\begin{equation}\label{eq:VarianceM}
\mbox{Var}\left(\sqrt{n}(\hat{\beta}_M - \beta_M)\right) ~:=~ \frac{1}{n}\sum_{i=1}^n \left(\Omega_M^{-1}x_{i,M}\right)\left(\Omega_M^{-1}x_{i,M}\right)^{\top}\mbox{Var}(Y_i).
\end{equation}
Set
$\psi_{j,M}(x_i, Y_i) = {(Y_i - \mathbb{E}[Y_i])(\Omega_M^{-1}x_{i,M})(j)}/{\sigma_M(j)},$
with $\sigma_M(j)$ representing the $j$-th diagonal element of the variance matrix~\eqref{eq:VarianceM}. Define
\[
{\Delta}_{\texttt{PoSI}} := \sup_{r\ge0}\,\left|\mathbb{P}\Bigg(\max_{\substack{|M|\le k,\\1\le j\le |M|}}\,\left|\frac{1}{\sqrt{n}}\sum_{i=1}^n \psi_{j,M}(x_i, Y_i)\right| \le r\Bigg) - \mathbb{P}\Bigg(\max_{\substack{|M|\le k,\\1\le j\le |M|}}\,|G_{j,M}| \le r\Bigg)\right|,
\]
where $(G_{j,M})_{j,M}$ has a multivariate normal distribution such that
\[
\mbox{Cov}(G_{j,M}, G_{j', M'}) = \mbox{Cov}\left(\frac{1}{\sqrt{n}}\sum_{i=1}^n \psi_{j,M}(x_i, Y_i), \frac{1}{\sqrt{n}}\sum_{i=1}^n \psi_{j',M'}(x_i, Y_i)\right).
\]
By taking the maximum over $|M|\le k$ in $\Delta_{\texttt{PoSI}}$, we are restricting the final selected model to have cardinality at most $k$.
Without loss of generality, we can take
$G_{j,M} = n^{-1/2}\sum_{i=1}^n {g_i(\Omega_M^{-1}x_{i,M})(j)}/{\sigma_M(j)}$, for $g_i\overset{ind}{\sim} N(0, \mbox{Var}(Y_i))$.
To bound $\Delta_{\texttt{PoSI}}$ from our results, we need the median $\mu_{\texttt{PoSI}}$:
\[
\mu_{\texttt{PoSI}} := \mbox{median}\left(\max_{|M|\le k}\max_{1\le j\le |M|}|G_{j,M}|\right).
\] 
Note that the Gaussian vector $(G_{j,M})_{j,M}$ has dimension $p\asymp (ed/k)^k$ and hence $\mu_{\texttt{PoSI}} = O(\sqrt{k\log(ed/k)})$. Theorem~\ref{thm:UniformCLT} implies that
$\Delta_{\texttt{PoSI}} = O(\mu_{\texttt{PoSI}}\left({k^4\log^4(ed/k)}/{n}\right)^{1/6})$. Using the pessimistic bound on $\mu_{\texttt{PoSI}}$, we get that
\begin{equation}\label{eq:ChernPoSI}
\Delta_{\texttt{PoSI}} = O(1)\left({k^7\log^7(ed/k)}/{n}\right)^{1/6},
\end{equation}
and this requires $k\log(ed/k) = o(n^{1/7}).$ In words, this means that the selected model cardinality is at most $o(n^{1/7})$. Using Theorem~\ref{thm:UniformCLT}, we can get a better requirement on $k$ if we have better bounds for~$\mu_{\texttt{PoSI}}$. It was proved in~\citet[Proposition 5.5]{berk2013valid} that if the covariates are orthogonal, that is, 
\begin{equation}\label{eq:Orthogonal}\textstyle
n^{-1}\sum_{i=1}^n x_ix_i^{\top} = I_{d},
\end{equation}
then $\mu_{\texttt{PoSI}} = O(\sqrt{\log d})$ for any $1\le k\le d$. This result was further improved by~\citet[Theorem 3.3]{bachoc2018post} in that if there exists a $\kappa\in[0, 1)$ such that for all $|M| \le k$
\begin{equation}\label{eq:RIP}\textstyle
(1 - \kappa)\|\theta\|^2_2 \le n^{-1}\sum_{i=1}^n (x_{i,M}^{\top}\theta)^2 \le (1 + \kappa)\|\theta\|_2^2\quad\mbox{for all}\quad \theta\in\mathbb{R}^{|M|},
\end{equation}
holds then 
$\mu_{\texttt{PoSI}} \le \sqrt{2\log(2d)} + C(\kappa)\kappa\sqrt{2k\log(6d/k)},$
for a function $C(\cdot)$ satisfying $C(\delta) \to 1$ as $\delta \to 0$. Condition~\eqref{eq:RIP} is called the restricted isometry property (RIP) and is (trivially) satisfied with $\kappa = 0$ if the covariates are orthogonal~\eqref{eq:Orthogonal}. Hence if $\kappa\sqrt{k}$ converges to zero then also $\mu_{\texttt{PoSI}} = O(\sqrt{\log d})$. 
Hence under the RIP condition~\eqref{eq:RIP} with $\kappa\sqrt{k}\to0$, we get from Theorem~\ref{thm:UniformCLT} that
$\Delta_{\texttt{PoSI}} ~=~ O(\sqrt{\log(ed)}\left({k^4\log^4(ed/k)}/{n}\right)^{1/6}).$
This result only requires $k(\log(ed))^{7/4} = o(n^{1/4})$ which is much weaker than that implied by~\eqref{eq:ChernPoSI}. To get a better perspective take $k = d$ and for this case, we obtain
\begin{equation}\label{eq:RIPPoSI}
\Delta_{\texttt{PoSI}} ~=~ O(1)\left({d^4\log^3(d)}/{n}\right)^{1/6},
\end{equation}
which converges to zero if $d^4 = o(n/\log^3n).$ On the other hand, the Berry-Esseen bound (\cite{MR2144310}) for the linear regression estimator on the full model $M_{\texttt{full}} = \{1,2,\ldots,d\}$ (no simultaneity involved) requires $d^{3.5} = o(n)$ which is close to the requirement from~\eqref{eq:RIPPoSI}.
\subsection{Maximal Inequalities for Empirical Processes}\label{subsec:MaximalIneq}
In this section we consider application of our non-uniform CLT result for the case of suprema of empirical processes. Suppose $(\xi_i, X_i), 1\le i\le n$ are independent random variables in the measurable space $\mathbb{R}\times\rchi$ and let $\mathcal{F}$ be a class of functions from $\rchi$ to $\mathbb{R}$ such that $\mathbb{E}[\xi_i|X_i] = 0$ for $1\le i\le n$. We consider the problem of bounding the moments of
\[\textstyle
Z_n(\mathcal{F}) ~:=~ \sup_{f\in\mathcal{F}}\,\left|n^{-1/2}\sum_{i=1}^n \xi_if(X_i)\right|.
\]
Maximal inequalities that bound $\mathbb{E}[Z_n(\mathcal{F})]$ are of great importance in the study of $M$-estimators and empirical risk minimizers; see~\cite{baraud2016bounding}. In the study of least squares estimator obtained by minimizing the quadratic loss over $f\in\mathcal{F}$ the rate is essentially determined by the behavior of $\mathbb{E}[Z_n(\mathcal{F}')]$ over subsets $\mathcal{F}'$ of $\mathcal{F}$; see~\citet[Lemma 3.1]{van2017concentration}. Rates of convergence when $\xi_i$ only have finite number of moments has received some interest recently. Even though $\mathcal{F}$ is usually countable and infinite, our results can be used in case of (weak) VC-major classes when $\xi_i$ only have 3 moments. The calculation in this section could be used to prove rates for multiple isotonic regression when the errors are dependent on covariates and only have three moments; see~\citet[page 24]{han2019global}.

A class $\mathcal{F}$ is {\bf weak VC-major} with dimension $d\ge1$ if $d$ is the smallest integer $k\ge1$ such that for all $u\in\mathbb{R}$, the class
$\mathcal{C}_u(\mathcal{F}) := \{\{x\in\rchi:\,f(x) > u\}:f\in\mathcal{F}\}$
is a VC-class of subsets of $\rchi$ with dimension not larger than $k$; see~\citet[Section 2.6]{VdvW96} for details on VC classes. It can be proved (\citet[Eq. (2.5)]{baraud2016bounding}) that for a weak VC-major class of dimension $d$, $\log|\mathcal{E}_u(x_1,\ldots,x_n)| \le d\log(2en/d)$ where
$\mathcal{E}_u(x_1,\ldots,x_n) := \{\{i=1,\ldots,n:x_i\in C\}:C\in\mathcal{C}_u(\mathcal{F})\}.$
If $f(x)\in[0,1]$ for all $x\in\rchi$, $f\in\mathcal{F}$ and $\mathcal{F}$ is a weak VC-major class of dimension $d$, then~\citet[Section 3.3]{baraud2016bounding} shows
\[
Z_n(\mathcal{F}) ~\le~ \int_0^1 \sup_{C\in\mathcal{C}_u(\mathcal{F})}\left|\frac{1}{\sqrt{n}}\sum_{i=1}^n \xi_i\mathbbm{1}\{X_i\in C\}\right|du = \int_0^1 \sup_{a\in\mathcal{A}_u}\left|\frac{1}{\sqrt{n}}\sum_{i=1}^n \xi_ia_i\right|du, 
\]
where $\mathcal{A}_u = \{a = (\mathbbm{1}\{X_1\in C\},\ldots,\mathbbm{1}\{X_n\in C\})\in\{0, 1\}^n:C\in\mathcal{C}_u(\mathcal{F})\}$.
From the assumption of weak VC-major class in $\mathcal{F}$, the supremum over $C\in\mathcal{C}_u(\mathcal{F})$ is actually a finite maximum over $a\in\mathcal{A}_u$ which allows us to apply our uniform and non-uniform CLTs (by conditioning on $X_1,\ldots,X_n$ so that $\mathcal{A}_u$ can be treated non-random). To state the final bound, note that after conditioning on $X_1,\ldots,X_n$ the ``limiting'' Gaussian vector can be taken to as $(G_a)_{a\in\mathcal{A}_u}$ where $G_a := n^{-1/2}\sum_{i=1}^n g_i\sigma(X_i)a_i$, for $\sigma^2(X_i) := \mathbb{E}[\xi^2_i|X_i]$ and $g_i\overset{iid}{\sim} N(0, 1).$ Set $\mathcal{X}_n = \{X_1,\ldots,X_n\}.$ If $\sup_x\mathbb{E}[\xi_i^3|X_i = x] < \infty$, then Remark~\ref{Rem:MomentConvergence} implies that
\[
\mathbb{E}_{|\mathcal{X}_n}\left[\sup_{a\in\mathcal{A}_u}\left|\frac{1}{\sqrt{n}}\sum_{i=1}^n \xi_ia_i\right|\right] = \mu_u + O\left(1\right)\mu_u^2\log(\mu_u)\frac{(L_{n,u}d^2\log^2(en/d))^{1/3}}{n^{1/6}},
\]
where $\mu_u := \mathbb{E}_{|\mathcal{X}_n}\left[\sup_{a\in\mathcal{A}_u}\left|n^{-1/2}\sum_{i=1}^n g_i\sigma(X_i)a_i\right|\right]$. Here $\mathbb{E}_{|\mathcal{X}_n}$ represents the expectation conditional on $\mathcal{X}_n$. Taking expectations with respect to $X_1,\ldots,X_n$, we get if $d = O(1)$,
\[
\mathbb{E}\left[Z_n(\mathcal{F})\right] ~\le~ (1 + o(1))\int_0^1 \mathbb{E}\left[\sup_{C\in\mathcal{C}_u(\mathcal{F})}\left|\frac{1}{\sqrt{n}} \sum_{i=1}^n g_i\sigma(X_i)\mathbbm{1}\{X_i\in C\}\right|\right]du.
\]
Note that the expectation on the right hand side is the Gaussian mean width of sets $\mathcal{A}_u$. The right hand side can be bounded using several existing results; see, e.g.,~\citet[Chapters 2,10]{talagrand2014upper},~\citet[Theorem 1]{han2019}.  
\section{Cram{\'e}r-type Large Deviation}\label{sec:LargeDeviation}
In this section, we prove a Cram{\'e}r-type large deviation result for $\norm{S_n}$. A version of this result appeared in \citet[Theorem 1]{bentkus1987} for the case of Banach space valued random variables. In the following result, we make the dependences on distributional constants precise. For this result, we assume that the observations $X_1, \ldots, X_n$ are i.i.d. and write the variation measure as $\zeta$ instead of $\zeta_i$. Set $C_2 = C_0\log(ep), C_3 = C_0\log^2(ep)$. We assume the following condition for the result: there exists $H \in (0, \infty)$ such that $\int \exp(H\|x\|)|\zeta|(dx) \le 4.$ We also need the following quantities. Set $B := 2(1 + \sigma_{\max}^{-2})H^{-1}$, and
\begin{align*}
\tilde{\Pi}_n &:= 4 + 12\Phi_{AC,0}{(8C_3L_ne^{\mathfrak{C}})^{{1}/{3}}} + \frac{20\Phi_{AC,0}\log(ep)\log(8C_0n)}{Hn^{1/3}} + \frac{5.1\log(ep)}{C_0n^{5/6}},\\
\Pi &:= \max\left\{\tilde{\Pi}, (132\Phi_2)^{4/3}, \frac{19C_3L_ne^{\mathfrak{C}}}{\Phi_4}, ({37C_3L_ne^{\mathfrak{C}}})^{{4}/{7}}, \left(\frac{24C_2}{\Phi_4^5H^2}\right)^{4/11}\right\},\\
M &:= \max\left\{2\Pi,\left(112\Phi_2 + 83C_3L_n\right)^{\frac{4}{3}}, \frac{(48C_2)^{10/23}}{(\Phi_4^{32}H^{20})^{1/23}}, 36\left(C_3L_n\Phi_2\right)^{\frac{2}{3}}, \frac{(124C_3L_n)^2}{(\mu+1)^{17/8}n^{5/16}}\right\}.
\end{align*}
The quantity $M$ determines how big the ratio~\eqref{eq:RatioLargeDeviation} is relative to $(r+1)n^{-1/6}$.
\begin{thm}(Cram{\'e}r-type large deviation)\label{thm:largedeviation}
Under the setting above, we have for $n\ge4,$
$$\left|\frac{\mathbb{P}\left(\norm{S_n} > r\right)}{\mathbb{P}\left(\norm{Y} > r\right)} - 1\right| \leq 1.02M(r+1)n^{-{1}/{6}},$$
for all $r$ such that $(r+1)n^{-1/6} \le \mathfrak{B}_0\exp\left(-3M^{1/4}(\mu+1)^{-17/16}\log(en)n^{-5/32}\right)$, where
\begin{align*}
\mathfrak{B}_0 &:= \min\left\{\frac{1}{3(\Phi_4^4\Pi)^{1/3}}, \frac{1}{4(\Phi_4^4M)^{4/15}}, \frac{(\log(en))^{-1/3}}{2(\Phi_4B)^{1/3}}, \frac{\Pi^{1/9}(\log(en))^{-4/9}}{2(B\log(ep))^{4/9}}, \frac{M^{1/8}(\log(en))^{-1/2}}{(6\mathfrak{C}B\log(ep))^{1/2}}\right\}.
\end{align*}
\end{thm}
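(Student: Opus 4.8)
The plan is to run the smoothing-plus-Lindeberg scheme of \citet{bentkus1987}, but with his Banach-space inputs replaced by the high-dimensional tools assembled above: the sharpened Gaussian-process tail estimates of Theorem~\ref{thm:density}(1)--(3), the anti-concentration bound of Theorem~\ref{thm:AntiConcentration}, the stable smooth approximation of Lemma~\ref{lem:SmoothApproximation}, and the uniform bound of Theorem~\ref{thm:UniformCLT}. I would first dispose of the ``small $r$'' regime, say $r\le c_0(\mu+1)$ for a universal $c_0$: there $\mathbb{P}(\|Y\|>r)$ is bounded below by a universal constant by Theorem~\ref{thm:density}(1), so the claimed multiplicative estimate is immediate from the additive bound $\delta_{n,0}\lesssim\tilde\Pi_n n^{-1/6}$ coming from Theorem~\ref{thm:UniformCLT}, as long as $(r+1)n^{-1/6}$ stays below the indicated threshold. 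From here on assume $r$ is large.

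For large $r$, fix $\varepsilon=\Theta(\varepsilon_n)$; this choice makes the Lindeberg remainder, once normalised by $\mathbb{P}(\|Y\|>r)$, of order $(r+1)n^{-1/6}$, and keeps the Gaussian shift factor $e^{\Phi_4(r+1)\varepsilon}$ of \eqref{eq:denbddI} of order $O(1)$, which is exactly what forces the admissible range $(r+1)=O(n^{1/6})$ encoded by $\mathfrak{B}_0$. Take $\varphi=\varphi_{r,\varepsilon}$ from Lemma~\ref{lem:SmoothApproximation}, so that $1-\varphi$ is sandwiched between $\mathbbm{1}\{\|x\|>r+\varepsilon\}$ and $\mathbbm{1}\{\|x\|>r\}$; hence $\mathbb{P}(\|S_n\|>r+\varepsilon)\le\mathbb{E}[1-\varphi(S_n)]\le\mathbb{P}(\|S_n\|>r)$ and likewise for $Y$, so it suffices to bound $\bigl|\mathbb{E}[\varphi(S_n)]-\mathbb{E}[\varphi(U_{n,0})]\bigr|$ relative to $\mathbb{P}(\|Y\|>r)$: the smoothing gap is absorbed by the density-type inequality \eqref{eq:ToProve}, $\mathbb{P}(r\le\|Y\|\le r+\varepsilon)\le\Phi_2\varepsilon(r+1)\mathbb{P}(\|Y\|>r-\varepsilon)$, which is of the target order. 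Telescoping over single-coordinate replacements with $V_k:=n^{-1/2}(X_1+\dots+X_{k-1}+Y_{k+1}+\dots+Y_n)$ independent of $(X_k,Y_k)$,
\[
\bigl|\mathbb{E}[\varphi(S_n)]-\mathbb{E}[\varphi(U_{n,0})]\bigr|\;\le\;\sum_{k=1}^n\bigl|\mathbb{E}[\varphi(V_k+X_k/\sqrt n)]-\mathbb{E}[\varphi(V_k+Y_k/\sqrt n)]\bigr|,
\]
and a third-order Taylor expansion around $V_k$ combined with the moment-matching identities \eqref{eq:zetazero} kills the zeroth-, first- and second-order contributions, leaving only third-order remainders.

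Each remainder is then controlled with the derivative bounds and the ratio-stability property \eqref{eq:StabilityProp} of Lemma~\ref{lem:SmoothApproximation}: split the $X_k$-integral at the level $n^{1/2}\varepsilon/\log(ep)$; on the bounded part \eqref{eq:StabilityProp} replaces $\partial_{jkl}\varphi(V_k+\theta x/\sqrt n)$ by at most $e^{\mathfrak{C}}D_{jkl}(V_k)$, while on the tail part the standing hypothesis $\int e^{H\|x\|}|\zeta|(dx)\le 4$ (through $B=2(1+\sigma_{\max}^{-2})H^{-1}$) makes the truncated integral $\int_{\|x\|\ge n^{1/2}\varepsilon/\log(ep)}\|x\|^2 e^{\mathfrak{C}\log(ep)\|x\|/(\sqrt n\varepsilon)}|\zeta|(dx)$ summable. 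The decisive structural fact is that the $D_{jkl}$ are supported on the shell $\{r\le\|x\|\le r+\varepsilon\}$ (inherited from the support of $g_0'$), so $\sum_{jkl}D_{jkl}(V_k)\le C_3\varepsilon^{-3}\mathbbm{1}\{r\le\|V_k\|\le r+\varepsilon\}$ and the remainders sum to a bound of order $C_3L_n n^{-1/2}\varepsilon^{-3}e^{\mathfrak{C}}\max_{0\le k\le n}\mathbb{P}(r-c\varepsilon\le\|U_{n,k}\|\le r+\varepsilon)$ plus the exponential-tail term. The argument then closes by a self-bounding (fixed-point) step over the whole family $\{\mathbb{P}(\|U_{n,k}\|>r)/\mathbb{P}(\|Y\|>r)-1\}_{k,r}$: using \eqref{eq:ToProve} one passes from the shell probability to $\Phi_2\varepsilon(r+1)\,\mathbb{P}(\|U_{n,0}\|>r-c\varepsilon)$ plus a correction proportional to the same family, then \eqref{eq:denbddI} replaces $\mathbb{P}(\|U_{n,0}\|>r-c\varepsilon)$ by $20e^{\Phi_4(r+1)c\varepsilon}\mathbb{P}(\|Y\|>r)$; solving the resulting inequality for the ratio, substituting $\varepsilon=\Theta(\varepsilon_n)$, bounding $|\zeta|$ by $\mathbb{P}(X_1\in\cdot)+\mathbb{P}(Y_1\in\cdot)$ where convenient, and bookkeeping every constant produces $\tilde\Pi_n$, $\Pi$, $M$, the factor $1.02$ (the slack $1/(1-o(1))$ from the fixed point), and the admissible range described by $\mathfrak{B}_0$.

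The main obstacle, and the step demanding the most care, is exactly this passage from the intermediate Lindeberg vectors $U_{n,k}$ back to the Gaussian $U_{n,0}$: the shell probabilities $\mathbb{P}(r-c\varepsilon\le\|U_{n,k}\|\le r+\varepsilon)$ cannot be fed directly into Theorem~\ref{thm:density}, so they must be re-expressed through $U_{n,0}$ at the price of CLT-type corrections that reintroduce the unknown ratio, and one must then verify that the accumulated multiplicative slack is strictly below $1$ precisely on the range cut out by $\mathfrak{B}_0$. A secondary difficulty is calibrating the truncation threshold $n^{1/2}\varepsilon/\log(ep)$ simultaneously against the stability exponent $\mathfrak{C}\log(ep)\|x\|/(\sqrt n\varepsilon)$ and the exponential moment $H$, so that the stability factor stays $O(1)$ while the tail integral remains summable; this is what pins down $B$, and the $\log(en)$- and power-of-$n$ corrections appearing in $M$ and $\mathfrak{B}_0$ come from the crude but safe preliminary tail control of the intermediate sums needed to launch the iteration. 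Carrying all the explicit constants through these steps is the remaining bulk of the work.
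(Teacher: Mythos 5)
Your overall architecture (smoothing via Lemma~\ref{lem:SmoothApproximation}, Lindeberg replacement with moment matching, truncation plus the exponential moment, and the Gaussian inputs \eqref{eq:denbddI}--\eqref{eq:ToProve}) matches the paper, and your small-$r$ reduction is essentially Lemma~\ref{lem:rSmallMu}. The gap is in how you close the argument. The shell probabilities produced by the Lindeberg step are for $W_{n,j}\overset{d}{=}\sqrt{(n-1)/n}\,U_{n-1,j}$, i.e.\ intermediate sums at sample size $n-1$ after an outward rescaling, and the shell's upper endpoint exceeds $r$; so the ``self-bounding inequality over the whole family'' you propose, at a radius $r$ near the top of the admissible range, refers to ratio deviations at radii (and sample sizes) that lie \emph{outside} the set over which your supremum $\rho$ is taken, and the fixed point does not close. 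Moreover, if you refuse to exploit the shell structure and bound the intermediate probability by the full tail (the only move available without an a priori ratio bound), the balance $\varepsilon(r+1)\asymp\varepsilon^{-3}n^{-1/2}$ forces the rate $(r+1)^{3/4}n^{-1/8}$, not $(r+1)n^{-1/6}$. The paper resolves exactly this by a two-tier induction: first an induction on the sample size using only full-tail bounds, giving the preliminary rate $\Pi\,T_{n,r}^{1/4}$ (Lemma~\ref{lem:sixtpt}); then a refined induction over $s=0,\dots,\lfloor\log(en)\rfloor$ at sample sizes $n-l+s$, with radius-dependent $\varepsilon_{s,M}=M^{1/4}T_{n,r}^{\alpha(s)}/(r+1)$, exponents $\alpha(s)=(1-4^{-s-1})/3$, and geometrically shrinking admissible ranges $I_{s,M}$ (the $\mathfrak{B}_s$), which is what legitimizes feeding the previous-stage bound into the shell and bootstraps the exponent to $1/3$ (Lemma~\ref{sixtph}). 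Your fixed $\varepsilon=\Theta(\varepsilon_n)$, independent of $r$, cannot reproduce this cascade, and your explanation of the factor $1.02$ is also off: it comes from $T_{n,r}^{\alpha(l)}\le 1.02\,T_{n,r}^{1/3}$ at the last stage, not from a fixed-point slack.

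A second, quantitative problem is your truncation level. You split at $\norm{x}\le n^{1/2}\varepsilon/\log(ep)$, tied to the stability exponent, and argue the tail integral is ``summable''; but summable in absolute terms is not enough, since the whole bound must be small \emph{relative to} $\mathbb{P}(\norm{Y}>r)\ge\tfrac16 e^{-r^2/\sigma_{\max}^2}$, which for $r\asymp n^{1/6}$ is itself of size $e^{-cn^{1/3}}$. With your $r$-independent threshold the tail term is only $e^{-cHn^{1/3}/\mathrm{polylog}}$, which need not beat the Gaussian tail uniformly over the claimed range once $H$ is small or $\log(ep)$ large. The paper instead truncates at $\norm{x}\le n^{1/2}f_n(r)$ with $f_n(r)=B(r^2+\log(en))n^{-1/2}$ and $B$ calibrated to $H$ and the constant $\Phi_1$ in the Gaussian lower bound, so that Markov's inequality yields a tail contribution of the form $Cn^{-(1+\Phi_1)}\varepsilon^{-2}\mathbb{P}(\norm{Y}>r)$ (see Lemma~\ref{lem:DeltankIntoUnminus1k} and \eqref{eq:remainderlarge}), i.e.\ automatically a negligible multiple of the target probability; the stability factor is then harmless because $f_n(r)\log(ep)/\varepsilon\le 1$ on the admissible range. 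Both repairs — the radius-dependent truncation and the double induction with shrinking ranges — are where the constants $\Pi$, $M$, and $\mathfrak{B}_0$ in the statement actually come from.
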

The following corollary is obtained for sub-Weibull random vectors using $\Phi_2 \le \Theta\mu^2 \le \Theta\log(ep)$ and $\Phi_4 \le \Theta\mu^4 \le \Theta\log^2(ep)$. For examples where $\mu$ is of smaller order than $\sqrt{\log(ep)}$ (as in post-selection inference example; Section~\ref{subsec:PoSI}) better rates follow easily. It is noteworthy that the rates in the corollary does not depend on $\alpha\in[1,2]$.
\begin{cor}\label{cor:Theorem31}
Suppose that assumption~\eqref{eq:OrliczNorm} holds
for some $1 \leq \alpha \leq 2$.
Then, there exist positive constants $\Theta_1,\Theta_2\in(0,\infty)$ depending on $K_p, {\sigma}_{\min}$ and ${\sigma}_{\max}$, such that 
$$\left|{\mathbb{P}\left(\norm{S_n} > r\right)}/{\mathbb{P}\left(\norm{Y} > r\right)} - 1\right| \leq \Theta_1 (\log(ep))^{8/3}(r+1)n^{-1/6},$$
for all $n \geq (\log(ep))^{64/15}(\log(en))^{32/5}(\mu+1)^{-34/5}$ and $(r+1)n^{-1/6} \leq \Theta_2(\log(ep + n))^{-28/9}$.
\end{cor}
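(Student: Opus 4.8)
The claim is obtained by feeding the sub-Weibull bounds of~\eqref{eq:OrliczNorm} into Theorem~\ref{thm:largedeviation} and then simplifying the compound constants $\Pi$, $M$ and $\mathfrak{B}_0$. The first step is to verify the hypothesis $\int\exp(H\|x\|)\,|\zeta|(dx)\le 4$ with an explicit $H$. Since $1\le\alpha\le2$ the function $\psi_\alpha$ is convex, so the usual maximal inequality for Orlicz norms gives $\|\max_{1\le j\le p}|X_1(j)|\|_{\psi_\alpha}\lesssim K_p(\log(2p))^{1/\alpha}\le K_p\log(ep)$ (here we use $\alpha\ge1$), while $\|\max_{j}|Y_1(j)|\|_{\psi_2}\lesssim\sigma_{\max}\sqrt{\log(ep)}$ by the Gaussian maximal inequality. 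Hence there is a universal $c_0>0$ with $\mathbb{E}[\exp(H\|X_1\|)]\le2$ and $\mathbb{E}[\exp(H\|Y_1\|)]\le2$ for $H=c_0/(K_p\log(ep))$ (we may assume $K_p\gtrsim\sigma_{\max}$), and since $|\zeta|(\mathbb{R}^p)\le2$ the hypothesis follows. Thus $H^{-1}\lesssim K_p\log(ep)$, so $B=2(1+\sigma_{\max}^{-2})H^{-1}\lesssim K_p\log(ep)$, and of course $C_2=C_0\log(ep)$, $C_3=C_0\log^2(ep)$.

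The second step collects the remaining ingredients and substitutes them into $\Pi$ and $M$. From $\mathbb{E}[\psi_\alpha(|X_i(j)|/K_p)]\le1$ one gets $\int|x(j)|^3\,|\zeta_i|(dx)\lesssim K_p^3+\sigma_{\max}^3$, so $L_n$ (positive by the standing assumption) is bounded by a constant depending only on $K_p,\sigma_{\max}$; the Borell--TIS inequality and $\sigma_i\le\sigma_{\max}$ give $\mu\le\Theta\sigma_{\max}\sqrt{\log(ep)}$, and Theorems~\ref{thm:density} and~\ref{thm:AntiConcentration} (with $\sigma_{\min}^{-1}=O(1)$) then yield $\Phi_2\lesssim\mu^2\lesssim\log(ep)$, $1\le\Phi_4\lesssim\mu^4\lesssim(\log(ep))^2$ and $\Phi_{AC,0}\lesssim\mu\lesssim\sqrt{\log(ep)}$. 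Plugging these in, the last four entries of $\Pi$ are $\lesssim(\log(ep))^2$ (using $\Phi_4\ge1$ to kill negative powers of $\Phi_4$), and $\tilde\Pi_n$ contributes $12\Phi_{AC,0}(8C_3L_ne^{\mathfrak{C}})^{1/3}\asymp(\log(ep))^{7/6}$ together with $n$-dependent remainders, so $\Pi\lesssim(\log(ep))^2$. For $M$ the entry $(112\Phi_2+83C_3L_n)^{4/3}$ is of order $(\log(ep))^{8/3}$ and dominates $2\Pi$, $36(C_3L_n\Phi_2)^{2/3}\asymp(\log(ep))^2$ and $(48C_2)^{10/23}/(\Phi_4^{32}H^{20})^{1/23}\lesssim(\log(ep))^{30/23}$, while the last entry obeys $(124C_3L_n)^2/((\mu+1)^{17/8}n^{5/16})\lesssim(\log(ep))^4/((\mu+1)^{17/8}n^{5/16})\le(\log(ep))^{8/3}$ exactly when $n\ge(\log(ep))^{64/15}(\mu+1)^{-34/5}$, which is implied by the hypothesis. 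Hence $M\lesssim(\log(ep))^{8/3}$ with constant depending only on $K_p,\sigma_{\min},\sigma_{\max}$, and Theorem~\ref{thm:largedeviation} gives $|\mathbb{P}(\|S_n\|>r)/\mathbb{P}(\|Y\|>r)-1|\le1.02M(r+1)n^{-1/6}\le\Theta_1(\log(ep))^{8/3}(r+1)n^{-1/6}$.

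The third step translates the admissible range of $r$. Since $M^{1/4}\lesssim(\log(ep))^{2/3}$, the exponent obeys $3M^{1/4}(\mu+1)^{-17/16}\log(en)\,n^{-5/32}\lesssim(\log(ep))^{2/3}(\mu+1)^{-17/16}(\log(en))\,n^{-5/32}$, and requiring the right side to be $O(1)$ is, upon raising to the power $32/5$, exactly the hypothesis $n\ge(\log(ep))^{64/15}(\log(en))^{32/5}(\mu+1)^{-34/5}$; so $\exp(-3M^{1/4}(\mu+1)^{-17/16}\log(en)n^{-5/32})\gtrsim1$. In $\mathfrak{B}_0$ the binding minimand is the first: $\Phi_4^4\Pi\lesssim(\log(ep))^8(132\Phi_2)^{4/3}\lesssim(\log(ep))^{28/3}$, whence $1/(3(\Phi_4^4\Pi)^{1/3})\gtrsim(\log(ep))^{-28/9}$, while the other four minimands, after bundling the $\log(ep)$ and $\log(en)$ factors into $\log(ep+n)$, carry strictly smaller exponents. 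Therefore $\mathfrak{B}_0\exp(-3M^{1/4}\cdots)\gtrsim(\log(ep+n))^{-28/9}$, so the admissibility condition $(r+1)n^{-1/6}\le\mathfrak{B}_0\exp(\cdots)$ of Theorem~\ref{thm:largedeviation} is guaranteed whenever $(r+1)n^{-1/6}\le\Theta_2(\log(ep+n))^{-28/9}$ for a suitable $\Theta_2=\Theta_2(K_p,\sigma_{\min},\sigma_{\max})$, completing the argument.

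The only real difficulty is bookkeeping: propagating the worst cases $\mu\asymp\sqrt{\log(ep)}$, $\Phi_2\asymp\log(ep)$, $\Phi_4\asymp(\log(ep))^2$, $H^{-1}\asymp K_p\log(ep)$ through the nested maxima defining $\Pi$, $M$ and $\mathfrak{B}_0$, and checking that the single sample-size hypothesis is simultaneously strong enough (a) to dominate the lone $n$-dependent entry of $M$ and (b) to render the exponential correction in the admissible range of $r$ a harmless constant — these two requirements coincide up to the $(\log(en))^{32/5}$ factor, which is why the hypothesis takes the form it does. It is precisely the worst case $\Phi_4\asymp(\log(ep))^2$ that produces both the exponent $8/3$ in the rate (through $(112\Phi_2+83C_3L_n)^{4/3}$) and the exponent $28/9$ in the range of $r$ (through $(\Phi_4^4\Pi)^{1/3}$); one must also verify that the favorable direction $\Phi_4\to1$ never worsens a bound, which holds because every negative power of $\Phi_4$ appearing in $\Pi$, $M$ and $\mathfrak{B}_0$ is at most $1$.
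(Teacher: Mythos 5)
Your proposal is correct and follows essentially the same route as the paper's own proof: verify the exponential-moment hypothesis of Theorem~\ref{thm:largedeviation} with $H^{-1}\lesssim K_p(\log(ep))^{1/\alpha}$, substitute $\mu\lesssim\sqrt{\log(ep)}$, $\Phi_2\lesssim\log(ep)$, $\Phi_4\lesssim\log^2(ep)$, $L_n=O(1)$ into $\tilde\Pi_n,\Pi,M,\mathfrak{B}_0$, use the stated sample-size condition both to absorb the lone $n$-dependent entry of $M$ and to make the exponential correction a constant, and read off the exponents $8/3$ and $28/9$.

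One bookkeeping point deserves flagging. In your second step you only establish $\Pi\lesssim(\log(ep))^2$ (because with just $\Phi_4\ge1$ the entry $19C_3L_ne^{\mathfrak{C}}/\Phi_4$ can be as large as $C_3L_n\asymp\log^2(ep)$), yet in the third step you write $\Phi_4^4\Pi\lesssim(\log(ep))^8(132\Phi_2)^{4/3}$, which tacitly uses the sharper $\Pi\lesssim(\log(ep))^{4/3}$; with only $\Pi\lesssim(\log(ep))^2$ the first minimand of $\mathfrak{B}_0$ would give merely $(\log(ep))^{-10/3}$, short of the claimed $28/9$. The gap is real but easily repaired in either of two ways: (a) as the paper does, observe that Theorem~\ref{thm:largedeviation} holds for any constants satisfying the inequalities~\eqref{eq:3p7}, so one may take $\Phi_2=\Theta\log(ep)$ and $\Phi_4=\Theta\log^2(ep)$ exactly, after which $19C_3L_n/\Phi_4=O(1)$ and $\Pi\lesssim(\log(ep))^{4/3}$; or (b) bound $\Phi_4^4$ times each entry of $\Pi$ separately, noting that the entries exceeding $(\log(ep))^{4/3}$ carry negative powers of $\Phi_4$ (e.g.\ $\Phi_4^4\cdot19C_3L_n/\Phi_4\lesssim(\log(ep))^{8}$, $\Phi_4^4\tilde\Pi_n\lesssim(\log(ep))^{55/6}$, $\Phi_4^4(37C_3L_n)^{4/7}\lesssim(\log(ep))^{64/7}$), so that $\Phi_4^4\Pi\lesssim(\log(ep))^{28/3}$ holds anyway. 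With either patch your argument matches the paper's conclusion, including the identification of the first minimand of $\mathfrak{B}_0$ as binding and the equivalence of the sample-size hypothesis with boundedness of the exponential factor.
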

The corollary requires $n$ to be slightly larger than $\log^4(ep)$ which becomes relaxed if $\mu \ge \Theta\sqrt{\log(ep)}$. Note that $n\ge \log^4(ep)$ is required for the bound on $\delta_{n,0}$ to be less than 1.
\section{Outline of the Proofs}\label{sec:Outline}
The following represents the main steps in our proofs of uniform and non-uniform CLTs. 
The outline for the large deviation result is given at the end of this section. 
We also detail the differences between the proofs of~\cite{CCK17} and~\cite{MR1162240},~\cite{bentkus2000accuracy}. We define and control for $r\ge0$,
\[
\Delta_{n,k}(r) := |\mathbb{P}(\|U_{n,k}\| \le r) - \mathbb{P}(\|U_{n,0}\| \le r)|.
\]
Recall that $\delta_{n,0} = \sup_{r\ge0}\max_{1\le k\le n}\Delta_{n,k}(r)$ and $\delta_{n,m} = \sup_{r\ge0}\max_{1\le k\le n}r^m\Delta_{n,k}(r)$.

\noindent {\bf Step A: Smoothing Inequality.} The first step in all Berry-Esseen type results is a smoothing inequality which replaces the probabilities by expectations of smooth approximations of indicators. By Lemma~5.1.1 of~\cite{paulauskas2012approximation}, we have
\begin{equation}\label{eq:Smoothing}
\Delta_{n,k}(r) \le \max_{\ell = 1,2}|\mathbb{E}\left[\varphi_{\ell}(U_{n,k}) - \varphi_{\ell}(U_{n,0})\right]| + \mathbb{P}(r - \varepsilon \le \|U_{n,0}\| \le r + \varepsilon),
\end{equation}
where $\varphi_1(x) := \varphi_{r,\varepsilon}(x)$ and $\varphi_2(x) := \varphi_{r-\varepsilon,\varepsilon}(x)$ are given by Lemma~\ref{lem:SmoothApproximation}. The second term in~\eqref{eq:Smoothing} is controlled by Theorem~\ref{thm:AntiConcentration} for anti-concentration. 

\vspace{0.1in}

\noindent {\bf Step B: Lindeberg Replacement.} Write $\varphi(\cdot)$ for both $\varphi_1(\cdot)$ and $\varphi_2(\cdot)$ and fix any $k\ge1$. To bound the first term in~\eqref{eq:Smoothing}, we use Lindeberg method.
\begin{align*}
|\mathbb{E}[\varphi(U_{n,k}) - \varphi(U_{n,0})]| &\le \sum_{j=1}^k |\mathbb{E}[\varphi(U_{n,j-1}) - \varphi(U_{n,j})]|
= \sum_{j=1}^k \left|\int \mathbb{E}[\varphi(W_{n,j} + n^{-1/2}x)]\zeta_j(dx)\right|,
\end{align*}  
where $W_{n,j} = n^{-1/2}(X_1+\ldots+X_{j-1}+Y_{j+1}+\ldots+Y_n)$. The last equality follows from $U_{n,j} = W_{n,j} + n^{-1/2}X_j$ and $U_{n,j-1} = W_{n,j} + n^{-1/2}Y_j$. Using the Taylor expansion of $\varphi$ (possible because of smoothness) and $\mathbb{E}[X_j] = \mathbb{E}[Y_j] = 0$, $\mathbb{E}[X_jX_j^{\top}] = \mathbb{E}[Y_jY_j^{\top}]$, we get
\begin{align*}
\left|\int \mathbb{E}[\varphi(W_{n,j} + n^{-1/2}x)]\zeta_j(dx)\right| &= \left|\int \mathbb{E}[\mbox{Rem}_n(W_{n,j}, x)]\zeta_j(dx)\right|\\
&\le \int \mathbb{E}[|\mbox{Rem}_n(W_{n,j}, x)|]|\zeta_j|(dx) ~=:~ I_j,
\end{align*}
where
\[
\mbox{Rem}_n(y, x) = \varphi(y + xn^{-1/2}) - \varphi(y) - \frac{1}{\sqrt{n}}\sum_{j=1}^p x(j)\partial_j\varphi(y) - \frac{1}{2n}\sum_{j,k=1}^p x(j)x(k)\partial_{jk}\varphi(y). 
\]
\noindent {\bf Step C: Splitting the Integral.} Based on Step B, it remains to bound the integral of $\mbox{Rem}_{n}(W_{n,j},x)$ with respect to $\zeta_j$. Using the mean value theorem and the bound on derivatives in Lemma~\ref{lem:SmoothApproximation}, we have
\begin{equation}\label{eq:RemainderBound}
|\mbox{Rem}_{n}(W_{n,j},x)| \le \min\left\{\frac{C_0\log^2(ep)\varepsilon^{-3}\|x\|^3}{6n^{3/2}}, \frac{C_0\log(ep)\varepsilon^{-2}\|x\|^2}{n}\right\}.
\end{equation}
Since the support of $\varphi_1(\cdot), \varphi_2(\cdot)$ is contained in $\{x: \|x\| \in [r-\varepsilon, r+\varepsilon]\}$, the bound above on the remainder can be multiplied by
$\mathbbm{1}\{|\|W_{n,j}\| - r| \le \varepsilon + n^{-1/2}\|x\|\}.$
Noting that the change point (where the minimum changes from first to second term) in the minimum is at order $n^{1/2}\varepsilon/\log(ep)$, we split the integral of remainder into two parts. Set $\mathcal{E} := \{x\in\mathbb{R}^p:\,\|x\| \le n^{1/2}\varepsilon/\log(ep)\}$ and this yields
\[
I_j = \int_{\mathcal{E}} \mathbb{E}[|\mbox{Rem}_n(W_{n,j}, x)|]|\zeta_j|(dx) + \int_{\mathcal{E}^c} \mathbb{E}[|\mbox{Rem}_n(W_{n,j}, x)|]|\zeta_j|(dx) ~=:~ I_j^{(1)} + I_j^{(2)}.
\]
It is intuitively clear that $I_j^{(1)}$ contributes to the main rate term in the bound and $I_j^{(2)}$ would be a second order term since it is an integral over a tail end of the distribution. 

In the classical proof of~\cite{bentkus2000accuracy}, the splitting of integral is done at $\|x\| \le n^{1/2}\varepsilon$ and in the modern proof of~\cite{CCK17} the splitting is done as above.

\vspace{0.1in}

\noindent {\bf Step D: Controlling $I_j^{(2)}$.} \cite{CCK17} bound $I_j^{(2)}$ as
\[
I_j^{(2)} \le \int_{\mathcal{E}^c} \frac{C_0\log^2(ep)\varepsilon^{-3}\|x\|^3}{6n^{3/2}}|\zeta_j|(dx).
\]
This is sub-optimal since from~\eqref{eq:RemainderBound} the second term in the minimum is a better upper bound. Hence in Theorem~\ref{thm:UniformCLT}, we bound $I_j^{(2)}$ using
\begin{equation}\label{eq:OurIj2}
I_j^{(2)} \le \int_{\mathcal{E}^c} \frac{C_0\log(ep)\varepsilon^{-2}\|x\|^2}{n}|\zeta_j|(dx).
\end{equation}
Summing over $1\le j\le n$ leads to $M_n(\varepsilon)$ in Theorem~\ref{thm:UniformCLT} and leads to better rates under $(2+\tau)$-moments of $\|X_j\|$.

In the classical proof of~\cite{bentkus2000accuracy}, $I_j^{(2)}$ is bounded using
\[
I_j^{(2)} \le \int_{\mathcal{E}^c} \frac{C_0\log(ep)\varepsilon^{-2}\|x\|^2}{n}\mathbb{P}(r - \varepsilon - n^{-1/2}\|x\| \le \|W_{n,j}\| \le r + \varepsilon + n^{-1/2}\|x\|)|\zeta_j|(dx).
\]
Note that this bound is better than~\eqref{eq:OurIj2} since the probability is bounded by $1$. Since $W_{n,j}$ is related to $U_{n-1,j-1}\sqrt{1 - 1/n}$, we can use the definition of $\delta_{n-1,0}$ (or $\delta_{n-1,m}$ for non-uniform CLT) to obtain
\begin{align*}
&\mathbb{P}(r - \varepsilon - n^{-1/2}\|x\| \le \|W_{n,j}\| \le r + \varepsilon + n^{-1/2}\|x\|)\\
&\quad\le \mathbb{P}(\sqrt{n/(n-1)}(r - \varepsilon - n^{-1/2}\|x\|) \le \|U_{n,0}\| \le \sqrt{n/(n-1)}(r + \varepsilon + n^{-1/2}\|x\|)) + 2\delta_{n-1,0}\\
&\quad\le \Phi_{AC,0}(\varepsilon + n^{-1/2}\|x\|)\sqrt{n/(n-1)} + 2\delta_{n-1,0}.
\end{align*}
Here the last inequality follows from Theorem~\ref{thm:AntiConcentration}. This is used in the proof of Theorem~\ref{thm:UniformCLT} to get the ``right'' dependence on $\tau$ for the finite moment case. To make the recursion work, we need the anti-concentration inequality to not change with sample size and for this reason i.i.d. assumption is introduced in Proposition~\ref{prop:BentkusProof} and Theorem~\ref{thm:OptimalCLT}. In case of non-uniform CLT we need to split $\mathcal{E}^c$ further in order to avoid dividing by zero when using $\delta_{n-1,m}$.

\vspace{0.1in}

\noindent {\bf Step E: Controlling $I_j^{(1)}$.} The simpler approach in bound $I_j^{(1)}$ is in the classical proof of~\cite{bentkus2000accuracy} that uses
\[
I_j^{(1)} \le \int_{\mathcal{E}} \frac{C_0\log^2(ep)\varepsilon^{-3}\|x\|^3}{6n^{3/2}}\mathbb{P}(r - \varepsilon - n^{-1/2}\|x\| \le \|W_{n,j}\| \le r + \varepsilon + n^{-1/2}\|x\|)|\zeta_j|(dx).
\]
Since $\|x\| \le n^{1/2}\varepsilon/\log(ep)$ for $x\in\mathcal{E}$, the probability can be bounded by $\mathbb{P}(r - 2\varepsilon \le \|W_{n,j}\| \le r + 2\varepsilon)$ (which does not involve $x$ anymore). Now as in Step D, we can relate this probability to $\delta_{n-1,0}$ and $\Phi_{AC,0}\varepsilon$. This results in $\int \|x\|^3|\zeta_j|(dx) = \nu_3^3$ in the final bound in Proposition~\ref{prop:BentkusProof} and leads to a sub-optimal rate in case $X_j$ have exponential tails.

A better way to control $I_j^{(1)}$ from~\cite{CCK17} uses
\begin{align*}
&\left|\mbox{Rem}_n(W_{n,j},x)\right|\\ 
&= \frac{1}{2}\left|\sum_{j_1,j_2,j_3 = 1}^p \frac{x(j_1)x(j_2)x(j_3)}{n^{3/2}}\int_0^1 (1 - t)^2\partial_{j_1j_2j_3}(W_{n,j} + tn^{-1/2}x)dt\right|\\
&\le \sum_{j_1,j_2,j_3 = 1}^p \frac{|x(j_1)x(j_2)x(j_3)|}{2n^{3/2}}\int\displaylimits_0^1 (1 - t)^2D_{j_1j_2j_3}\left(W_{n,j} + \frac{tx}{\sqrt{n}}\right)\mathbbm{1}\left\{|\|W_{n,j}\| - r| \le \varepsilon + \frac{\|x\|}{\sqrt{n}}\right\}dt
\end{align*}
By stability property~\eqref{eq:StabilityProp} of $D_{j_1j_2j_3}$ along with $\|x\|/\sqrt{n} \le \varepsilon/\log(ep)$ for $x\in\mathcal{E}$, we get
\[
e^{-\mathfrak{C}}D_{j_1j_2j_3}(W_{n,j}) ~\le~ D_{j_1j_2j_3}(W_{n,j} + txn^{-1/2}) ~\le~ e^{\mathfrak{C}}D_{j_1j_2j_3}(W_{n,j}).  
\]
This implies
\[
I_j^{(1)} \le \frac{e^{\mathfrak{C}}}{2n^{3/2}}\sum_{j_1,j_2,j_3=1}^p \int_{\mathcal{E}} |x(j_1)x(j_2)x(j_3)||\zeta_j|(dx)\mathbb{E}[D_{j_1j_2j_3}(W_{n,j})\mathbbm{1}\{|\|W_{n,j}\| - r| \le 2\varepsilon\}].
\]
By H{\"o}lder's inequality,
\[
\int_{\mathcal{E}} |x(j_1)x(j_2)x(j_3)||\zeta_j|(dx) \le \max_{1\le j_1\le p} \int_{\mathcal{E}} |x(j_1)|^3|\zeta_j|(dx).
\]
If $X_j$ has $(2 + \tau)$-moments for some $\tau \ge 1$, then the integral over $\mathcal{E}$ can be replaced by integral over $\mathbb{R}^p$ which leads to $L_n$ (when summed over $1\le j\le n$). In case $X_j$ has only $(2 + \tau)$-moments for some $\tau < 1$, then using $\mathcal{E} = \{\|x\| \le n^{1/2}\varepsilon/\log(ep)\}$, we can write
\[
\int_{\mathcal{E}} |x(j_1)|^3|\zeta_j|(dx) \le \left(\frac{n^{1/2}\varepsilon}{\log(ep)}\right)^{1 - \tau}\int |x(j_1)|^{2+\tau}|\zeta_j|(dx),
\] 
which would lead to a version of $L_n$ only involving ``weak'' $(2+\tau)$-moment rather than the ``weak'' third moment. Getting back to $I_j^{(1)}$, the above discussion leads to 
\begin{equation}\label{eq:Ij1BoundPrelim}
I_j^{(1)} \le \frac{e^{\mathfrak{C}}}{2n^{3/2}}\max_{1\le j_1\le p}\int_{\mathcal{E}} |x(j_1)|^3|\zeta_j|(dx)\sum_{j_1,j_2,j_3 = 1}^p \mathbb{E}[D_{j_1j_2j_3}(W_{n,j})\mathbbm{1}\{|\|W_{n,j}\| - r| \le 2\varepsilon\}].
\end{equation}
There are two ways to bound the right hand side. First, the way used in the proof of~\cite{CCK17} is to split the indicator inside the expectation by adding $Y_j\in\mathcal{E}$ and $Y_j\in\mathcal{E}^c$:
\begin{align*}
\mathbb{E}\left[D_{j_1j_2j_3}(W_{n,j})\mathbbm{1}\{|\|W_{n,j}\| - r| \le 2\varepsilon\}\right] &= \mathbb{E}\left[D_{j_1j_2j_3}(W_{n,j})\mathbbm{1}\{|\|W_{n,j}\| - r| \le 2\varepsilon, Y_j\in\mathcal{E}\}\right]\\
&\qquad+ \mathbb{E}\left[D_{j_1j_2j_3}(W_{n,j})\mathbbm{1}\{|\|W_{n,j}\| - r| \le 2\varepsilon, Y_j\in\mathcal{E}^c\}\right].
\end{align*}
The first term on the right hand side can be linked using the stability property~\eqref{eq:StabilityProp} to $\mathbb{E}[D_{j_1j_2j_3}(U_{n,j-1})\mathbbm{1}\{|\|U_{n,j-1}\| - r| \le 3\varepsilon\}]$. Further using the bound on derivative this can bounded by $C_3\varepsilon^{-3}\mathbb{P}(r - 3\varepsilon \le \|U_{n,j-1}\| \le r + 3\varepsilon)$ that can linked to $\delta_{n,0} + \Phi_{AC,0}\varepsilon$. The second term is, anyways, small since it involves $Y_j\in\mathcal{E}^c$ which is a small probability event since $Y_j$ has Gaussian tails.

The calculations after~\eqref{eq:Ij1BoundPrelim} are used in the proof of~\cite{CCK17} to get a bound in terms of $\delta_{n,0}$ and solve the inequality for $\delta_{n,0}$. This is what we followed in the proof of Theorem~\ref{thm:UniformCLT} but using the ``right'' bound on $I_j^{(2)}$.

For the proof of Theorem~\ref{thm:OptimalCLT}, we proceed from~\eqref{eq:Ij1BoundPrelim} by using
\[
\sum_{j_1,j_2,j_3 = 1}^p \mathbb{E}[D_{j_1j_2j_3}(W_{n,j})\mathbbm{1}\{|\|W_{n,j}\| - r| \le 2\varepsilon\}] \le C_3\varepsilon^{-3}\mathbb{P}(r - 2\varepsilon \le \|W_{n,j}\| \le r + 2\varepsilon),
\] 
where the right hand side can be bounded in terms of $\delta_{n-1,0}$ and anti-concentration term as shown above in the control of $I_j^{(2)}.$

Combining the bounds and solving recursions (when needed) completes the proofs of all three versions of uniform CLT. By appropriate (minor) modifications mentioned in the outline above, the two versions of non-uniform CLT also follow.
\paragraph{Sketch of the proof of Theorem \ref{thm:largedeviation}.} The proof of Theorem \ref{thm:largedeviation} relies upon Lindeberg method and a refined induction argument. 
The starting point for the proof is Step B of the outline. To control $I_j$ we split it into two parts depending on whether $\{\|x\| \le n^{1/2}f_n(r)\}$ or not for some function $f_n(\cdot)$ of $r$. The case when $\norm{x} \ge n^{1/2}f_n(r)$ the integral is bounded using $\int \exp(H\|x\|)|\zeta|(dx) \le 4$ and Markov's inequality by:
\begin{equation}\label{eq:remainderlarge}
\frac{8\beta C_2\varepsilon^{-2}}{nH^2}\exp(-Hn^{1/2}f_n(r)/2)
\le \frac{8C_2\varepsilon^{-2}\beta}{n^{2+\Phi_1}H^2\Phi_0}\mathbb{P}(\|Y\| > r).
\end{equation}
 The integral corresponding to the case when $\norm{x}$ is smaller than $n^{1/2}f_n(r)$ is bounded by 
\begin{equation}\label{eq:remainderII}
\frac{C_3\varepsilon^{-3}e^{\mathfrak{C}f_n(r)\log(ep)/\varepsilon}L_n}{n^{3/2}} \mathbb{P}\left(\bar{a}_n(r) \le \|W_{n,j}\| \le \bar{b}_n(r)\right),
\end{equation}
for suitably chosen $\bar{a}_{n}(r)$ and $\bar{b}_{n}(r)$.
Since $W_{n,j}\stackrel{d}{=} \left((n-1)/n\right)^{1/2}U_{n-1,j}$, we get 
\begin{align}
\Delta_{n}(r) &\le \mathbb{P}(r - \varepsilon \le \|Y\| \le r + \varepsilon) + \frac{8C_2\varepsilon^{-2}\beta}{\Phi_0H^2n^{1+\Phi_1}}\mathbb{P}(\|Y\| > r)\label{eq:lemmaA21}\\
&\qquad+ \frac{C_3\varepsilon^{-3}L_ne^{\mathfrak{C}f_n(r)\log(ep)/\varepsilon}}{6n^{1/2}}\max_{0\le k\le n}\mathbb{P}(a_n(r) \le \|U_{n-1,k}\| \le b_n(r))
\end{align}
for some $a_n(r)$ and $b_n(r)$.
Our next task is to bound the right hand side of \eqref{eq:lemmaA21} in term of $\mathbb{P}\left[\norm{Y} >q r  \right]$. 
 We now inductively use a bound on $\mathbb{P}\left[\|U_{n-1,j}\|\ge a_{n}(r)  \right]$ in terms of $\mathbb{P}\left[ \norm{Y} >q r \right]$ to get a bound for \eqref{eq:lemmaA21}. Finally bounding $\mathbb{P}\left[r-\varepsilon \le \norm{Y} \le r+\varepsilon  \right]$ in terms of $\mathbb{P}\left[ \norm{Y} >q r \right]$ using Theorem \ref{thm:density} and summing this with the bounds for \eqref{eq:remainderlarge}, \eqref{eq:remainderII} we obtain the following result. For any $1\le k \le n$,
$|\mathbb{P}\left( \norm{U_{n,k}}> r \right)/\mathbb{P}\left(\norm{Y}> r\right) - 1| \le \Pi T_{n,r}^{1/4}$,
for all $r \in \mathbb{R}$ satisfying
\[
T_{n,r} ~\le~ \min\left\{\frac{1}{16\Phi_4^4\Pi}, \frac{1}{2\Phi_4B\log(en)}, \frac{\Pi^{1/3}}{(B\log(en)\log(ep))^{4/3}}\right\}, 
\]  
where $T_{n,r}=(r+1)^3n^{-{1/2}}$. One can find the details in Lemma~\ref{lem:sixtpt}.
Since $T_{n,r}^{1/4}= (r+1)^{3/4}n^{-1/8}$, Lemma \ref{lem:sixtpt} gives a large deviation with rate $n^{-1/8}$ (for all $n\ge1$). However this rate can be modified to rate $n^{-1/6}$ by a more refined induction argument. This is done in detail in the proof of Theorem~\ref{thm:largedeviation}.
\section{Summary and Future Directions}\label{sec:Conclusions}
In this paper, we proved non-uniform central limit theorems and large deviations for scaled averages of independent high-dimensional random vectors based on dimension-free anti-concentration inequalities. We further illustrated the usefulness of these results in the context of post-selection inference for linear regression and in bounding the expectation of suprema of empirical processes. All the proofs are based on Lindeberg method which was an integral tool in Banach space CLTs. Using the stability property introduced in~\cite{CCK17}, we obtained refinements for uniform as well as non-uniform CLTs. It should be mentioned here that we credit~\cite{bentkus1987},~\cite{bentkus1990} for the proof of Theorem~\ref{thm:largedeviation}. In comparison to~\cite{CCK17}, we mention that our setting is restrictive in the sense that we consider $l_{\infty}$ balls while \cite{CCK17} consider general sparsely convex sets. Once an anti-concentration result such as Theorem~\ref{thm:AntiConcentration}, it is fairly easy to follow our proofs to extend the results which we hope to pursue in the future along with the discussion of ``best'' anti-concentration inequalities.

In this work, we have presented the results in the simpler setting with independent random vectors. Lindeberg method is well-known for its robustness to independence assumptions and hence extensions to the case of dependent random vectors (in particular martingales) form an interesting future direction. Apart from the application in PoSI, other areas of interest in terms of applications are bootstrap and high-dimensional vectors with a specified group structure. Our results can be used to obtain CLTs and large deviations to empirical processes which forms an interesting direction. See \cite{MR1115160} and \cite{MR3262461} for some results.
\appendix
\section{Uniform CLT: for $(2+\tau)$-moments with $\tau < 1$}\label{app:TauLe1}
Following the outline in (Step E of) Section~\ref{sec:Outline} for $\tau \le 1$, we get for any $\varepsilon > 0$:
\begin{align*}
\delta_{n,0} &\le \frac{e^{2\mathfrak{C}}C_3\varepsilon^{-3}L_{n,\tau}}{2n^{3/2}}\left(\frac{n^{1/2}\varepsilon}{\log(ep)}\right)^{1 - \tau}\left[2\Phi_{AC,0}\varepsilon + \delta_{n,0}\right] + \frac{C_2M_n(\varepsilon)}{\varepsilon^{2}} + \Phi_{AC,0\varepsilon}\\
&\quad+ \frac{e^{\mathfrak{C}}C_3\varepsilon^{-3}}{2n^{3/2}}\left(\frac{n^{1/2}\varepsilon}{\log(ep)}\right)^{1 - \tau}\sum_{j=1}^n \max_{1\le j_1\le p}\int |x(j_1)|^{2+\tau}|\zeta_j|(dx)\mathbb{P}(\|Y_j\| > n^{1/2}\varepsilon/\log(ep)),
\end{align*}
where $L_{n,\tau} = n^{-1}\sum_{j=1}^n \max_{1\le j_1\le p}\int |x(j_1)|^{2+\tau}|\zeta_j|(dx)$. It is also clear that
\[
M_n(\varepsilon) \le \nu_{2+\tau}^{2+\tau}\left(\frac{\log(ep)}{n^{1/2}\varepsilon}\right)^{2+\tau}.
\]
Set $\varepsilon = \varepsilon_n$ so that
\[
\frac{e^{2\mathfrak{C}}C_3L_{n,\tau}}{2n^{3/2}}\left(\frac{n^{1/2}\varepsilon}{\log(ep)}\right)^{1-\tau} \le \frac{1}{2}\quad\mbox{and}\quad \frac{C_2\nu_{2+\tau}^{2+\tau}\log^{\tau}(ep)}{n^{\tau/2}\varepsilon^{2+\tau}} \le \frac{1}{r_n^{2+\tau}},
\]
for some $r_n\ge1$. This implies that
\[
\delta_{n,0} \lesssim \Phi_{AC,0}\varepsilon + \frac{1}{r_n^{2+\tau}} \lesssim \Phi_{AC,0}\frac{(L_{n,\tau}\log^2(ep))^{1/(2+\tau)}}{n^{1/2}(\log(ep))^{(1-\tau)/(2+\tau)}} + r_n\Phi_{AC,0}\nu_{2+\tau}\frac{(\log(ep))^{(\tau+1)/(\tau+2)}}{n^{\tau/(4+2\tau)}} + \frac{1}{r_n^{2+\tau}}.
\]
Choosing the ``optimal'' order of $r_n$ by equating the last two terms, we get
\begin{align*}
\delta_{n,0}~&\lesssim~ \left[\frac{\Phi_{AC,0}L_{n,\tau}^{1/(2+\tau)}}{n^{1/2}} + \frac{\left(\Phi_{AC,0}\nu_{2+\tau}\right)^{(\tau+2)/(\tau+3)}}{n^{\tau/(6+2\tau)}}\right](\log(ep))^{(\tau+1)/(\tau+2)}\\
~&\lesssim~ \left[\Phi_{AC,0}L_{n,\tau}^{1/(2+\tau)} + \left(\Phi_{AC,0}\nu_{2+\tau}\right)^{(\tau+2)/(\tau+3)}\right]\frac{(\log(ep))^{(\tau+1)/(\tau+2)}}{n^{\tau/(6+2\tau)}}.
\end{align*}
The last inequality follows since $n^{\tau/(6+2\tau)} \le n^{1/8} \le n^{1/2}$.
\noeqref{eq:zetazero}
\section*{Acknowledgment}
We would like to thank Prof. Jian Ding for comments that led to an improved presentation. 
\bibliographystyle{apalike}
\bibliography{HDCLT}
\newpage
\begin{center}
\Large\textbf{Supplementary Material}
\end{center}
\setcounter{section}{0}
\renewcommand{\thesection}{S.\arabic{section}}
\section{Proof of Theorem \ref{thm:density} and Theorem~\ref{thm:AntiConcentration}}\label{appsec:AntiConcentration}
\subsection{A preliminary lemma}
We start with a lemma about anti-concentration. A similar version of this lemma can be found in \cite{GINE76}.
\begin{lem}[Lemma 2.5 of \cite{GINE76}]\label{lem:AntiConc2}
Let $Z$ be a centered sample continuous Gaussian process on a compact metric space $S$ such that $\mathbb{E}\left[Z^2(s)\right] \ge \sigma^2 > 0$ for every $s\in S$. 
Then, for $\varepsilon \le \sigma/2$ and $\lambda >0$, 
\[
\mathbb{P}\left(\lambda - \varepsilon \le \sup_{s\in S}\, \left|Z(s) \right| \le \lambda + \varepsilon\right) \le 2\varepsilon {K}(\lambda).
\]
 where
\begin{equation}\label{eq:BoundGine}
K(\lambda):= 2\sigma^{-1}(2.6 + \lambda/\sigma).
\end{equation}
\end{lem}
\begin{proof}
We shall actually prove that 
\begin{equation}\label{eq:bddgine}
\mathbb{P}\left(\lambda - \varepsilon \le \sup_{s\in S}\, Z(s)  \le \lambda + \varepsilon\right) \le \varepsilon {K}(\lambda).
\end{equation}
After proving \eqref{eq:bddgine} the result follows since 
\[
\left\{ \lambda - \varepsilon \le \sup_{s\in S}\, \left|Z(s) \right| \le \lambda + \varepsilon \right\} \subseteq \left\{ \lambda - \varepsilon \le \sup_{s\in S}\, Z(s)  \le \lambda + \varepsilon \right\} \cup \left\{ \lambda - \varepsilon \le \sup_{s\in S}\, -Z(s)  \le \lambda + \varepsilon\right\}.
\]
Now observe that 
\begin{equation}
\begin{split}
\mathbb{P}\left(\lambda - \varepsilon \le \sup_{s\in S}\, Z(s)  \le \lambda + \varepsilon\right)&= \mathbb{P}\left( -\varepsilon \le \sup_{s\in S}\, \left(Z(s)-\lambda\right)  \le \varepsilon \right)\\
&= \mathbb{P}\left( \cup_{s\in S}\left\{ -\varepsilon \le \left(Z(s)-\lambda\right)\right\} \cap \left(\cap_{s \in S} \left\{\left(Z(s)-\lambda\right)  \le \varepsilon \right\}\right)  \right)  \\
& \le \mathbb{P}\left( \bigcup_{s \in S} \left\{-\frac{\varepsilon}{\sigma} \le \frac{Z(s)-\lambda}{\sigma(s)}\right\}\cap \bigcap_{s \in S}\left\{\frac{Z(s)-\lambda}{\sigma(s)} \le \frac{\varepsilon}{\sigma}  \right\} \right)\\
& = \mathbb{P}\left( -\frac{\varepsilon}{\sigma} \le \sup_{s \in S} \frac{Z(s)-\lambda}{\sigma(s)} \le \frac{\varepsilon}{\sigma} \right)\\
&= \mathbb{P}\left( \frac{\lambda-\varepsilon }{\sigma} \le \sup_{s \in S} \frac{Z(s)-\lambda}{\sigma(s)}+\frac{\lambda}{\sigma} \le \frac{\varepsilon+ \lambda}{\sigma} \right)
\end{split}
\end{equation}
Observe that the process $\left( \left( Z(s)-\lambda \right)/\sigma(s) + \lambda/\sigma \right)$ has non negative mean and variance identical to $1$. So from the arguments from \citet{GINE76}, we have 
\begin{equation}
\begin{split}
\mathbb{P}\left(\lambda - \varepsilon \le \sup_{s\in S}\, Z(s)  \le \lambda + \varepsilon\right) \le \int_{\sigma^{-1}(\lambda-\varepsilon)}^{\sigma^{-1}(\lambda +\varepsilon)} f(x) dx 
\end{split}
\end{equation}
where 
\[
f(x) \le \left\{  
\begin{array}{ll}
2.6, & \text{if} \quad x \le 1,\\
x+ x^{-1}, & \text{otherwiswe}. 
\end{array}
\right.
\]
Now consider the following two cases $(1)~ \lambda \le \sigma/2$, $(2)~ \sigma/2 \le \lambda$. In case $(1)$, $\sigma^{-1}(\lambda +\varepsilon) \le 1$ so, using $f(x)\le 2.6$, we have 
\[
\int_{\sigma^{-1}(\lambda-\varepsilon)}^{\sigma^{-1}(\lambda +\varepsilon)} f(x) dx \le 5.2 \sigma^{-1}\varepsilon.
\]
In case $(2)$, we have $f(x) \le 2.6 +x $ and so, 
\[
\int_{\sigma^{-1}(\lambda-\varepsilon)}^{\sigma^{-1}(\lambda +\varepsilon)} f(x) dx \le 5.2 \sigma^{-1} \varepsilon + \frac{1}{2\sigma^2}\left[(\lambda+\varepsilon)^{2}-\left( \lambda -\varepsilon \right)^2  \right]= 5.2 \sigma^{-1}\varepsilon + \frac{2\lambda \varepsilon}{\sigma^2} =\varepsilon K(\lambda).
\]
This completes the proof.
\end{proof}
\subsection{Proof of Theorem \ref{thm:density}}

\textbf{Proof of part (1):}
Let $t_0\in S$ be the index such that 
\[
\sigma_{\max}^2 = \mbox{Var}(Y(t_0)) = \sup_{t\in S}\mbox{Var}\left(Y(t)\right).
\]
It is clear that
\[
\mathbb{P}\left(\norm{Y} >q r\right) \ge \mathbb{P}\left(|Y(t_0)| > r\right).
\]
Since $Y(t_0)\sim N(0, \sigma_{\max}^2)$, by Mill's ratio~\citep{MR0005558} we get that:
\[
\mathbb{P}\left(|Y(t_0)| > r\right) = \mathbb{P}\left(\frac{|Y(t_0)|}{\sigma_{\max}} \ge \frac{r}{\sigma_{\max}}\right) \ge \frac{(r/\sigma_{\max})}{1 + (r/\sigma_{\max})^2}\frac{1}{\sqrt{2\pi}}\exp\left(-\frac{r^2}{2\sigma_{\max}^2}\right).
\]
If $r \ge \sigma_{\max}$, then we claim that
\begin{equation}\label{auxlem2pqmills}
\frac{(r/\sigma_{\max})}{1 + (r/\sigma_{\max})^2}\frac{1}{\sqrt{2\pi}}\exp\left(-\frac{r^2}{2\sigma_{\max}^2}\right) \ge \frac{1}{2\sqrt{2\pi}}\exp\left(-\frac{r^2}{\sigma_{\max}^2}\right).
\end{equation}
Since $r/\sigma_{\max} \geq 1$, we have:
\[
\exp\left(\frac{r^2}{2\sigma_{\max}^2}\right) \geq 1 + \frac{r^2}{2\sigma_{\max}^2} \geq \frac{1 + (r/\sigma_{\max})^2}{2r/\sigma_{\max}}~,
\]
proving~\eqref{auxlem2pqmills}.
Thus, for $r \ge \sigma_{\max}$,
\[
\mathbb{P}\left(|Y(t_0)| > r\right) \ge \frac{1}{2\sqrt{2\pi}}\exp\left(-\frac{r^2}{\sigma_{\max}^2}\right).
\]
If $r \le \sigma_{\max}$, then
\begin{equation}
\mathbb{P}\left(|Y(t_0)| > r\right) \ge \mathbb{P}\left(|Y(t_0)| \ge \sigma_{\max}\right) = 2Q(-1)
> \frac{1}{4} \geq \frac{1}{4}\exp\left(-\frac{r^2}{\sigma_{\max}^2}\right)
\end{equation}
where $Q(\cdot)$ is the distribution function of a standard Gaussian random variable.
Therefore, for all $r \ge 0$,
\[
\mathbb{P}\left(\norm{Y} >q r\right) \ge \min\left\{\frac{1}{4}~,~\frac{1}{2\sqrt{2\pi}}\right\}\exp\left(-\frac{r^2}{\sigma_{\max}^2}\right) = \frac{1}{2\sqrt{2\pi}}\exp\left(-\frac{r^2}{\sigma_{\max}^2}\right).
\]
Since $1/(2\sqrt{2\pi}) > 1/6$, the result follows.\qed
\\
\textbf{Proof of part $(2)$:}
Let $Q(x)= \mathbb{P}\left[ Z\le x \right]$ where $Z$ is a standard Gaussian random variable and we define $\Psi(x)=1-Q(x)$. Ehrhard's inequality implies that for all convex Borel sets $A, C \subset B$,
\begin{equation}\label{eq:Ehrhard}
Q^{-1}\left[ \mathbb{P}\left[ Y \in \alpha A+ \beta C \right] \right] \ge \alpha Q^{-1} \left[ \mathbb{P}\left[ Y \in A \right] \right] + \beta Q^{-1}\left[ \mathbb{P}\left[ Y \in C \right] \right]
\end{equation}
when $\alpha, \beta\ge 0$ and $\alpha + \beta =1$. See, for example, \citet[Theorem 3.1]{MR1957087} and \citet[Section 3]{MR2030108}. Denote $q(r):= Q^{-1}\left[\mathbb{P}\left[  \norm{Y} \le r \right]\right]$. We now show that $q(\cdot)$ is a concave function.
Fix $\alpha, \beta >0$ such that $\alpha + \beta =1$ and any $r,s\in\mathbb{R}^+$. From the definition of $q(\cdot)$ 
\[
q(\alpha r + \beta s)= Q^{-1}\left[ \mathbb{P}\left[  \norm{Y} \le \alpha r + \beta s \right] \right].
\]
Observe that if $x \in \alpha B_{ \norm{\cdot}}(0,r) + \beta B_{\norm{\cdot}}(0,s)$, then $ \norm{x}  \le \alpha r + \beta s$ from triangle inequality. Hence 
\[
q(\alpha r + \beta s) \ge Q^{-1}\left[ \mathbb{P}\left[ Y \in \alpha B_{ \norm{\cdot}}(0,r) + \beta B_{ \norm{\cdot} }(0,s) \right] \right].
\]
Now the concavity of $q(\cdot)$ follows from \eqref{eq:Ehrhard}. Let $\mu$ be the median of $ \norm{Y} $ or equivalently $q(\mu)=0$. Set $d_1 = 4\sigma_{\min}^{-1}(2.6 + \mu\sigma_{\min}^{-1})$. From Lemma~\ref{lem:AntiConc2}, we get that,
\begin{equation}\label{eq:AntiConc2}
\mathbb{P}\left(\mu - \delta \le \norm{Y} \le \mu + \delta\right) \le d_1\delta,\quad\mbox{for all $\delta \le \sigma_{\min}/2$.}
\end{equation}
In order to prove \eqref{eq:denbddI} we at first prove that it is enough to assume the following five conditions: 
\begin{center}
\begin{inparaenum}[(i)]
\item $\varepsilon \le {r}/{4}$.
\item 
$
\mu+ \varepsilon \le r- \varepsilon.
$
\item
$\mu \le r-\varepsilon$.
\item  
$q(r - \varepsilon) \ge 1.$
\item 
$\mu + 1/{(3d_1)} \le r.$
\end{inparaenum}
\end{center}

From the proof of part $(1)$, we have $\mathbb{P}\left[  \norm{Y}  > r\right] \ge (2\sqrt{2\pi })^{-1}\exp\left(-{r^2}/{\sigma_{\max}^2}\right)$ for all $r>0$. Here $\sigma_{\max}$ is the maximum variance of the coordinates. Now we verify the conditions one by one.
\\
\textbf{Condition $(i)$:} Suppose $\varepsilon > {r}/{4}$. Then
\begin{equation}\label{eq:Condition1Fail}
\begin{split}
\mathbb{P}\left[  \norm{Y} >q r-\varepsilon \right]\le 1 &\le {2\sqrt{2\pi }}\exp\left(\frac{r^2}{\sigma_{\max}^2}\right) \mathbb{P}\left[ \norm{Y} >q r \right]\\&\le {2\sqrt{2\pi }} \exp\left( \frac{4 r\varepsilon}{\sigma_{\max}^2} \right) \mathbb{P}\left[ \norm{Y} >q r \right].
\end{split}
\end{equation}
\textbf{Condition $(ii)$:} Suppose $\varepsilon \le {r}/{4}$ and $\mu+ \varepsilon > r- \varepsilon$. We divide this case in the following two sub cases. Observe that in this case $r\le 2\mu$. \\
\textbf{Sub case (a):} Here we assume $\varepsilon\ge {1}/{(6d_1)}$. In this case, we have 
\begin{equation}
\begin{split}
&\varepsilon r \ge \frac{r}{6d_1} \ge \frac{r^2}{12d_1\mu} \quad\Rightarrow\quad \frac{r^2}{\sigma_{\max}^2} \le {\left(\frac{12d_1\mu}{\sigma_{\max}^2}\right)}\varepsilon r \le c_{2}\varepsilon(r+1),
\end{split}
\end{equation} 
for $c_2 = 12d_1\mu\sigma_{\max}^{-2}$. So,
\begin{equation}\label{eq:Condition2aFail}
\begin{split}
\mathbb{P}\left(  \norm{Y} >q r-\varepsilon \right)\le 1 &\le {2\sqrt{2\pi }}\exp\left(\frac{r^2}{\sigma_{\max}^2}\right) \mathbb{P}\left(  \norm{Y} >q r \right)\\
&\le {2\sqrt{2\pi }}\exp\left( c_{2}\varepsilon(r+1) \right) \mathbb{P}\left(  \norm{Y} >q r \right).
\end{split}
\end{equation}
\textbf{Sub case (b):} Here we assume $\varepsilon < 1/{(6d_1)}$. As a consequence, we have 
\[
r \le \mu+ 2 \varepsilon \le \mu + \frac{1}{3d_1}.
\]
Now noting that $2\sigma_{\min}^{-1} \le 3d_1$, we get from~\eqref{eq:AntiConc2} that 
\[
\mathbb{P}\left[ \mu\le  \norm{Y} \le \mu+  \frac{1}{3d_1} \right]\le \frac{1}{3}.
\]
As a consequence, 
\begin{align*}
\mathbb{P}\left[  \norm{Y}  > r\right] &= 1 - \mathbb{P}\left(\norm{Y} \le r\right)\\ &\ge 1 - \mathbb{P}\left(\norm{Y} \le \mu\right) - \mathbb{P}\left(\mu \le \norm{Y} \le \mu + 1/(3d_1)\right)\ge 1 - \frac{1}{2} - \frac{1}{3} = \frac{1}{6}.
\end{align*}
So 
\begin{equation}\label{eq:COndition2bFail}
\mathbb{P}\left[ \norm{Y}  > r-\varepsilon\right]~\le~ 1 ~\le~ 6 \mathbb{P}\left[  \norm{Y} >q r \right] ~\le~ 6\exp\left(\varepsilon(r+1)\right)\mathbb{P}\left(\norm{Y}> r\right).
\end{equation}
\textbf{Condition $(iii)$:} If $(iii)$ fails then $(ii)$ fails which is covered above.\\
\textbf{Condition $(iv)$:} If $(iv)$ fails then, $q(r-\varepsilon)\le 1$. Let $\tilde{r}:=q^{-1}(1)$. We have $r \le \tilde{r} +\varepsilon$ and using $\varepsilon \le r/4$, we get $r\le 4\tilde{r}/3$. From Lemma 3.1 of \cite{MR2814399}, we get
\[
\mathbb{P}\left(\norm{Y} >q \mu + \sigma_{\max}\Psi^{-1}(\Psi(1)/2)\right) \le \Psi(1) = \mathbb{P}\left(\norm{Y} >q \tilde{r}\right).
\]
The last equality above follows from the definition of $\tilde{r}$. Thus, $\tilde{r} \le \mu + 1.5\sigma_{\max}.$ We now divide the case in two sub cases. \\
\textbf{Sub case (a):} Here we assume $\varepsilon \ge 1/(10\tilde{d})$, where $\tilde{d} := 4\sigma_{\min}^{-1}(2.6 + \tilde{r}\sigma_{\min}^{-1})$. In this case we have 
\[
\frac{r^{2}}{\sigma_{\max}^{2}} \le \frac{r}{\sigma_{\max}^2}\left(\frac{4\tilde{r}}{3}\right) \le r\varepsilon\left(\frac{14\tilde{r}\tilde{d}}{\sigma_{\max}^2}\right) \le c_3\varepsilon(r + 1),
\]
for $c_3 := 14\sigma_{\max}^{-2}\tilde{r}\tilde{d}$. So 
\begin{equation}\label{eq:Condition4aFail}
\begin{split}
\mathbb{P}\left[  \norm{Y} >q r-\varepsilon \right]\le 1 &\le {2\sqrt{2\pi }}\exp\left(\frac{r^2}{\sigma_{\max}^2}\right)\mathbb{P}\left[  \norm{Y} >q r \right]\\&\le {2\sqrt{2\pi }}\exp\left( c_{3}\varepsilon(r+1) \right) \mathbb{P}\left[  \norm{Y} >q r \right].
\end{split}
\end{equation}
\textbf{Sub case (b):} If $\varepsilon \le 1/(10\tilde{d})$, then we  have 
\[
\mathbb{P}\left[ \norm{Y} >q r \right] \ge 1 - \mathbb{P}\left(\norm{Y} \le \tilde{r}\right) - \mathbb{P}\left(\tilde{r} \le \norm{Y} \le \tilde{r} + 1/(10\tilde{d})\right) \ge \Psi(1)-0.1 \ge 0.05.
\]
So 
\begin{equation}\label{eq:Condition4bFail}
\mathbb{P}\left[ \norm{Y} >q r-\varepsilon \right] \le 1 \le 20 \mathbb{P}\left[ \norm{Y} >q r \right] \le 20\exp\left(r\varepsilon\right)\mathbb{P}\left(\norm{Y} >q r\right).
\end{equation}
\textbf{Condition $(v)$:} If condition $(v)$ fails, then
\[
r \le \mu + 1/(3d_1).
\]
Since $2\sigma_{\min}^{-1} \le 3d_1$, we get $1/(3d_1) \le \sigma_{\min}/2$ and so, by the bound~\eqref{eq:AntiConc2}, we get
\begin{align*}
\mathbb{P}\left(\norm{Y} \le r\right) &~\le~ \mathbb{P}\left(\norm{Y} \le \mu + \frac{1}{3d_1}\right)\\
&~=~ \mathbb{P}\left(\norm{Y} \le \mu\right) + \mathbb{P}\left(\mu \le \norm{Y} \le \mu + \frac{1}{3d_1}\right)\\
&~=~ \frac{1}{2} + \mathbb{P}\left(\mu \le \norm{Y} \le \mu + \frac{1}{3d_1}\right) \le \frac{1}{2} + \frac{1}{3}.
\end{align*}
Thus, $6\mathbb{P}\left(\norm{Y} >q r\right) \ge 1$ and so,
\begin{equation}\label{eq:Condition5Fail}
\mathbb{P}\left(\norm{Y} >q r - \varepsilon\right) \le 1 \le 
6\mathbb{P}\left( \norm{Y} >q r \right) \le 6\exp\left(r\varepsilon\right)\mathbb{P}\left(\norm{Y} >q r\right).
\end{equation}
Combining inequalities~\eqref{eq:Condition1Fail}, \eqref{eq:Condition2aFail}, \eqref{eq:COndition2bFail}, \eqref{eq:Condition4aFail}, \eqref{eq:Condition4bFail} and~\eqref{eq:Condition5Fail}, we get
\begin{equation}\label{eq:ConditionaFail}
\mathbb{P}\left(\norm{Y} >q r - \varepsilon\right) \le 20\exp\left(\Phi_4\varepsilon(r + 1)\right)\mathbb{P}\left(\norm{Y} >q r\right),
\end{equation}
where
\[
\Phi_4 := \max\left\{1, \frac{56(\mu + 1.5\sigma_{\max})(\mu + 4.1\sigma_{\max})}{\sigma_{\max}^2\sigma_{\min}^2},\frac{4}{\sigma_{\max}^2}\right\}
\]
Now that the result is proved if one of conditions $(i)-(v)$ fail, we proceed to proving the result under all the conditions $(i)-(v)$. Take $\alpha= {\varepsilon}{(r-\mu)^{-1}}$ and $\beta = 1-\alpha$. From condition $(iii)$ we have $0\le\alpha\le 1$. Observe that $\alpha \mu + \beta r = r-\varepsilon$. Since $q$ is concave, we have 
\begin{equation*}
\begin{split}
&q(r-\varepsilon)= q(\alpha \mu + \beta r) \ge \alpha q(\mu) + \beta q(r) = \beta q(r)\quad\Rightarrow\quad q(r) \le \beta^{-1}q(r - \varepsilon).
\end{split}
\end{equation*}
Using this inequality along with the definition of $q(\cdot)$, we get that
\begin{equation}\label{eq:prepsbddI}
\begin{split}
\mathbb{P}\left(\norm{Y} >q r - \varepsilon\right) &= \Psi\left(q(r - \varepsilon)\right)\\ &= \frac{\mathbb{P}\left(\norm{Y} >q r\right)}{\Psi(q(r))}\Psi(q(r - \varepsilon)) \le \frac{\Psi(q(r - \varepsilon))}{\Psi(\beta^{-1}q((r - \varepsilon)))}\mathbb{P}\left(\norm{Y} >q r\right).
\end{split}
\end{equation}
Note from Mill's ratio \citep{MR0005558} that for $t \ge 1,$
\begin{equation}\label{eq:mill'sratio}
\frac{1}{2t}\frac{1}{\sqrt{2\pi}}\exp\left(-\frac{t^2}{2}\right) \le \frac{t}{1+t^{2}}\frac{1}{\sqrt{2\pi}} \exp\left( -\frac{t^2}{2} \right) ~\le~ \Psi(t) ~\le~ \frac{1}{t} \frac{1}{\sqrt{2\pi}} \exp\left( -\frac{t^2}{2} \right)
\end{equation}
From condition $(iv)$, $q(r-\varepsilon)\ge 1$ and so, $\beta^{-1}q(r - \varepsilon) \ge 1$. As a consequence 
\begin{equation}\label{eq:prepsbddII}
\begin{split}
\frac{\Psi(q(r-\varepsilon))}{ \Psi\left( \beta^{-1}q(r-\varepsilon) \right)} & \le \frac{2}{\beta}\frac{ \exp\left( -q^2(r-\varepsilon)/2 \right)}{\exp\left( -\beta^{-2}{q(r-\varepsilon)^2}/{2} \right)} = \frac{2}{\beta}\exp\left( \frac{q^2(r-\varepsilon)}{2} \left( \frac{1}{\beta^2}-1 \right) \right).
\end{split}
\end{equation}
Now we prove that $\beta \ge {1}/{2}$. We have 
\[
\beta - \frac{1}{2} = \frac{1}{2}- \frac{\varepsilon}{r-\mu}=\frac{r-\mu-2\varepsilon}{2(r-\mu)} \ge 0
\]
from condition $(ii)$ and $(iii)$. Plugging this in \eqref{eq:prepsbddII} we have 
\begin{equation}\label{eq:RatioBound}
\frac{\Psi(q(r-\varepsilon))}{ \Psi\left( \beta^{-1}{q(r-\varepsilon)} \right)} \le 4 \exp\left( \frac{q^2(r-\varepsilon)}{2} \left( \frac{1}{\beta^2}-1 \right) \right).
\end{equation}
So it is enough to bound
\[
\exp\left( \frac{q^2(r-\varepsilon)}{2} \left( \frac{1}{\beta^2}-1 \right) \right).
\]
From condition $(iii)$ we know $r-\varepsilon\ge \mu$. Since $q$ is a concave function, $\mu>0$ and $q(\mu)=0$, we have 
\begin{equation}\label{eq:ConcaveQBound}
q(r-\varepsilon)\le q'(\mu) (r-\varepsilon)\le q'(\mu)r.
\end{equation}
Since $Q(q(r)) = \mathbb{P}\left(\norm{Y} \le r\right),$ we get that
\[
Q'(q(r))q'(r) = \frac{d}{dr}\mathbb{P}\left(\norm{Y} \le r\right)
\]
Substituting $r = \mu$ in this equation, we get
\[
\frac{1}{\sqrt{2\pi}}q'(\mu) = \lim_{\delta\to 0}\,\frac{1}{2\delta}\mathbb{P}\left(\mu - \delta \le \norm{Y} \le \mu + \delta\right) \le \frac{d_1}{2},
\]
where the last inequality follows from inequality~\eqref{eq:AntiConc2}. This implies that $q'(\mu) \le \sqrt{2\pi}d_1/2$. Thus using~\eqref{eq:ConcaveQBound}, we obtain
\begin{equation}\label{eq:prepsbddIII}
\begin{split}
{q^2(r-\varepsilon)} \left( \frac{1}{\beta^2}-1 \right)&\le \frac{1}{2}\pi d_1^2r^{2}\left(\frac{1-\beta^{2}}{\beta^{2}}\right)\le 2\pi d_1^2r^{2}\left(1-\beta^{2}\right)\\
&= 2\pi d_1^2r^{2}\left( 1-1 -\frac{\varepsilon^{2}}{(r-\mu)^{2}}+ 2\frac{\varepsilon}{r-\mu} \right) \le  4\pi d_1^2r\varepsilon\left(\frac{r}{r-\mu}\right).
\end{split}
\end{equation}
Note from condition $(v)$ that
\begin{equation}
\begin{split}
\frac{r}{r-\mu}= 1+ \frac{\mu}{r-\mu}\le 1+ 3d_1\mu.
\end{split}
\end{equation}
Thus,
\[
\frac{q^2(r-\varepsilon)}{2} \left( \frac{1}{\beta^2}-1 \right) \le 2\pi d_1^2(1 + 3d_1\mu)\varepsilon(r + 1).
\]
Plugging this in~\eqref{eq:RatioBound}, we have 
\[
\mathbb{P}\left[ \norm{Y} >q r-\varepsilon \right] \le 4 \exp\left( 2\pi d_1^2(1 + 3d_1\mu)(r+1)\varepsilon \right) \mathbb{P}\left[ \norm{Y} >q r \right].
\]
Combining this with inequality~\eqref{eq:ConditionaFail} that holds if one of conditions $(i)-(v)$ fail, we get
\[
\mathbb{P}\left(\norm{Y} >q r - \varepsilon\right) \le 20\exp\left(\Phi_4(r + 1)\varepsilon\right)\mathbb{P}\left(\norm{Y} >q r\right),
\]
where $\Phi_4$ is redefined as
\[
\Phi_4 := \max\left\{1,\, \frac{56(\mu + 1.5\sigma_{\max})(\mu + 4.1\sigma_{\max})}{\sigma_{\max}^2\sigma_{\min}^2},\, 2\pi d_1^2(1 + 3d_1\mu), \frac{4}{\sigma_{\max}^{2}}\right\}.
\]
Since
\[
1 + 3d_1\mu = 1+ 12 \sigma_{\min}^{-1}(2.6 + \mu \sigma_{\min}^{-1})\mu \le 1 + 32 \sigma_{\min}^{-1} \mu + 12 \mu^{2} \sigma_{\min}^{-2},
\]
the result follows. \qed

\noindent 
\textbf{Proof of part $(3)$:}
We follow the notation from the proof of part $(2)$ and consider two cases:
\begin{center}
\begin{inparaenum}[(i)]
\item $q({r - \varepsilon}) \le 1$, \quad and\quad
\item 
$
q(r - \varepsilon) \ge 1.
$
\end{inparaenum}
\end{center}
Under case $(i)$, $\mathbb{P}(\norm{Y} >q r - \varepsilon) \ge \Psi(1)$ and $r - \varepsilon \le \tilde{r} \le \mu + 1.5\sigma_{\max}$. Here $\tilde{r} = q^{-1}(1)$. Recall the function $K(\cdot)$ from Lemma~\ref{lem:AntiConc2}. Also, from Lemma~\ref{lem:AntiConc2}, it follows that for $\varepsilon \le \sigma_{\min}/4$,
\begin{align*}
\mathbb{P}\left(r - \varepsilon \le \norm{Y} \le r + \varepsilon\right) &\le \mathbb{P}\left((r - \varepsilon) - 2\varepsilon \le \norm{Y} \le (r - \varepsilon) + 2\varepsilon\right)\\ &\le K(r - \varepsilon)4\varepsilon \le 4K(\mu + 1.5\sigma_{\max})\varepsilon(r + 1)\\ &\le 4K(\mu + 1.5\sigma_{\max})\varepsilon(r + 1)\frac{\mathbb{P}\left(\norm{Y} >q r - \varepsilon\right)}{\Psi(1)}.
\end{align*}
Thus, for $r$ satisfying $q(r - \varepsilon) \le 1$ and $\varepsilon \le \sigma_{\min}/4$,
\begin{equation}\label{eq:CaseIa}
\mathbb{P}\left(r - \varepsilon \le \norm{Y} \le r + \varepsilon\right) \le \frac{4K(\mu + 1.5\sigma_{\max})}{\Psi(1)}\varepsilon(r + 1)\mathbb{P}\left(\norm{Y} >q r - \varepsilon\right).
\end{equation}
If $\varepsilon \ge \sigma_{\min}/4$, then
\begin{equation}\label{eq:CaseIb}
\mathbb{P}\left(r - \varepsilon \le \norm{Y} \le r + \varepsilon\right) \le \mathbb{P}\left(\norm{Y} >q r - \varepsilon\right) \le \frac{4\varepsilon(r + 1)}{\sigma_{\min}}\mathbb{P}\left(\norm{Y} >q r - \varepsilon\right).
\end{equation}

In order to verify~\eqref{eq:ToProve} under case $(ii)$, note that for any $z \ge 0,$
\[
Q(q(z)) = \mathbb{P}\left(\norm{Y} \le z\right)\quad\Rightarrow Q'(q(z))q'(z) = p(z),
\]
where $p(z)$ represents the density of $\norm{Y}$. Since $Q$ represents the distribution function of a standard normal random variable, we get
\[
p(z) = \frac{q'(z)}{\sqrt{2\pi}}\exp\left(-\frac{q^2(z)}{2}\right).
\]
So,
\begin{align*}
\mathbb{P}\left(\norm{Y} >q r - \varepsilon\right) &= \int_{r - \varepsilon}^{\infty} \frac{q'(z)}{\sqrt{2\pi}}\exp\left(-\frac{q^2(z)}{2}\right)dz\\
&= \int_{q(r - \varepsilon)}^{\infty} \frac{1}{\sqrt{2\pi}}\exp\left(-\frac{y^2}{2}\right)dy\\
&{\ge} \frac{1}{2\sqrt{2\pi}\,q(r - \varepsilon)}\exp\left(-\frac{q^2(r - \varepsilon)}{2}\right),
\end{align*}
where the last inequality follows from Mill's ratio and the fact that under case $(ii)$, $q(r - \varepsilon) \ge 1$. Since $q(\cdot)$ is increasing,
\[
q(r - \varepsilon) \ge 1\quad\Rightarrow\quad r - \varepsilon \ge q^{-1}(1) \ge q^{-1}(0) = \mu.
\]
Since $q(\cdot)$ is concave, this implies that $q'(r - \varepsilon) \le q'(\mu)$. Thus, for all $z \ge r - \varepsilon$,
\begin{align*}
\frac{p(z)}{q'(\mu)} &= \frac{q'(z)}{\sqrt{2\pi}q'(\mu)}\exp\left(-\frac{q^2(z)}{2}\right)\\ &\le \frac{1}{\sqrt{2\pi}}\exp\left(-\frac{q^2(z)}{2}\right)~\le~ 2q(r - \varepsilon)\mathbb{P}\left(\norm{Y} >q r - \varepsilon\right).
\end{align*}
Summarizing the inequalities above, we obtain
\[
p(z) \le 2q'(\mu)q(r - \varepsilon)\mathbb{P}\left(\norm{Y} >q r - \varepsilon\right).
\]
Using concavity of $q(\cdot)$ and the fact $q(\mu) = 0$, we get $q(z) \le q'(\mu)(z - \mu) \le q'(\mu)z$ and so,
\[
p(z) \le 2\left(q'(\mu)\right)^2z\mathbb{P}\left(\norm{Y} >q r - \varepsilon\right),\quad\mbox{for all}\quad z \ge r - \varepsilon.
\]
Observe now that
\begin{equation}\label{eq:CaseII}
\begin{split}
\mathbb{P}\left(r - \varepsilon \le \norm{Y} \le r + \varepsilon\right) &= \int_{r - \varepsilon}^{r + \varepsilon} p(z) dz\le 2\left(q'(\mu)\right)^2\mathbb{P}\left(\norm{Y} >q r - \varepsilon\right)\left(2r\varepsilon\right)\\ 
&\le 4\left(q'(\mu)\right)^2\varepsilon(r + 1)\mathbb{P}\left(\norm{Y} >q r - \varepsilon\right)
\end{split}
\end{equation}
Combining inequalities~\eqref{eq:CaseIa}, \eqref{eq:CaseIb} and~\eqref{eq:CaseII}, we get for all $r \ge 0$ and $\varepsilon \ge 0$,
\[
\mathbb{P}\left(r - \varepsilon \le \norm{Y} \le r + \varepsilon\right) \le \max\left\{\frac{4K(\mu + 1.5\sigma_{\max})}{\Psi(1)}, \frac{4}{\sigma_{\min}}, 4(q'(\mu))^2\right\}\varepsilon(r + 1)\mathbb{P}\left(\norm{Y} >q r - \varepsilon\right). 
\]
Since $q'(\mu) = \sqrt{2\pi}p(\mu) \le \sqrt{2\pi}K(\mu)$, and using the form of $K(\lambda)$ from Lemma~\ref{lem:AntiConc2} the result follows. \qed
\subsection{Proof of Theorem~\ref{thm:AntiConcentration}}
Note that it is enough to prove
\begin{equation}\label{eq:LimSupStatement}
\limsup_{\varepsilon \to 0}\,\frac{r^m\mathbb{P}(r - \varepsilon \le \|Y\| \le r + \varepsilon)}{\varepsilon} \le \Phi_{AC, m}\quad\mbox{for all}\quad m\ge 0, r \ge 0.
\end{equation}
Take $\varepsilon \le \min\{\sigma_{\min}/2, \mu/4\}$. By Lemma S.1.1, we get
\[
\mathbb{P}(r - \varepsilon \le \|Y\| \le r + \varepsilon) ~\le~ \frac{4\varepsilon}{\sigma_{\min}^2}(2.6\sigma_{\min} + r).
\]
If $r \le 3(\mu + \sigma_{\max})$ then for any $m\ge 0$
\begin{equation}\label{eq:Case1}
\frac{r^m\mathbb{P}(r - \varepsilon \le \|Y\| \le r + \varepsilon)}{\varepsilon} ~\le~ \frac{3^{m+1}(\mu + \sigma_{\max})^m(2\sigma_{\max} + \mu)}{\sigma_{\min}^2} \le \frac{3^{m+1}(\mu + 2\sigma_{\max})^{m+1}}{\sigma_{\min}^2}.
\end{equation}
If $r > 3(\mu + \sigma_{\max})$, then from $\varepsilon \le \min\{\sigma_{\min}/2, \mu/4\}$ we get that
\begin{align*}
r - \varepsilon - \mu &\ge r/3 + 2r/3 - 5\mu/4\\ 
&> r/3 + 2\mu + 2\sigma_{\max} - 5\mu/4 > r/3 + 3(\mu + \sigma_{\max})/4.
\end{align*}
Therefore,
\begin{align*}
\mathbb{P}(\|Y\| > r - \varepsilon) &= \mathbb{P}(\|Y\| - \mu > r - \varepsilon - \mu)\\
&\le \mathbb{P}(\|Y\| - \mu > r/3 + 3(\mu + \sigma_{\max})/4)\\
&\le 2\exp\left(-\frac{(r/3 + 3(\mu + \sigma_{\max})/4)^2}{2\sigma_{\max}^2}\right),
\end{align*}
where the last inequality follows from Lemma 3.1 of~\cite{MR2814399}. We know from Theorem 2.1 part (3) that
\[
\mathbb{P}(r - \varepsilon \le \|Y\| \le r + \varepsilon) \le \Phi_2\varepsilon(r + 1)\mathbb{P}(\|Y\| > r - \varepsilon),
\]
and hence
\begin{align*}
&\frac{r^m\mathbb{P}(r - \varepsilon \le \|Y\| \le r + \varepsilon)}{\varepsilon}\\ 
&\qquad\le r^m\Phi_2(r + 1)\mathbb{P}\left(\|Y\| - \mu > r/3 + 3(\mu + \sigma_{\max})/4\right)\\
&\qquad\le 2\Phi_2r^m(r + 1)\exp\left(-\frac{r^2}{18\sigma_{\max}^2}\right)\exp\left(-\frac{9(\mu + \sigma_{\max})^2}{32\sigma_{\max}^2}\right)\\
&\qquad\le 2\left[\Phi_2\exp\left(-\frac{9(\mu + \sigma_{\max})^2}{32\sigma_{\max}^2}\right)\right]\times\left[r^m(r + 1)\exp\left(-\frac{r^2}{18\sigma_{\max}^2}\right)\right].
\end{align*}
Since $\Phi_2 \le C\max\{(\mu + \sigma_{\max})/\sigma_{\min}^2, (\mu + \sigma_{\max})^2/\sigma_{\min}^4\}$ for some universal constant $0 < C < \infty$, we get that
\[
\Phi_2\exp\left(-\frac{9(\mu + \sigma_{\max})^2}{32\sigma_{\max}^2}\right) \le \frac{C\sigma_{\max}^2(1 + \sigma_{\min})}{\sigma_{\min}^4},
\]
for some other universal constant $C > 0$. Further a similar argument implies
\[
r^m(r + 1)\exp\left(-\frac{r^2}{18\sigma_{\max}^2}\right) \le C_m(\sigma_{\max}^m + \sigma_{\max}^{m+1}),
\]
for some constant $0 < C_m < \infty$ depending only on $m$. Therefore if $r > 3(\mu + \sigma_{\max})$ then
\[
\frac{r^m\mathbb{P}(r - \varepsilon \le \|Y\| \le r + \varepsilon)}{\varepsilon} \le C_m\frac{\sigma_{\max}^{m+2}(1 + \sigma_{\max})^2}{\sigma_{\min}^4},
\]
for some constant $C_m > 0$ depending only on $m$. Combining this inequality with~\eqref{eq:Case1}, we get
\[
\frac{r^m\mathbb{P}(r - \varepsilon \le \|Y\| \le r + \varepsilon)}{\varepsilon} \le \max\left\{\frac{3^{m+1}(\mu + 2\sigma_{\max})^{m+1}}{\sigma_{\min}^2},\,C_m\frac{\sigma_{\max}^{m+2}(1 + \sigma_{\max})^2}{\sigma_{\min}^4}\right\},
\]
for $\varepsilon \le \min\{\sigma_{\min}/2, \mu/4\}$. This completes the proof of~\eqref{eq:LimSupStatement} and of the result.
\section{Proof of Lemma~\ref{lem:SmoothApproximation}}
Define for any $x\in\mathbb{R}^p$, $z_x = (x^{\top}\,:\,-x^{\top})^{\top}$. The softmax function satisfies
\[
\max_j |x(j)| \le F_{\beta}(z_x) \le \max_j |x(j)| + \frac{\log(2p)}{\beta}\quad\mbox{for any}\quad z\in\mathbb{R}^p.
\]
Thus if $|x(j)| \le r$ for all $1\le j\le p$ then
\[
F_{\beta}(z_x - r\mathbf{1}_{2p}) \le \max_{1\le j\le p}\max\{x(j) - r, -x(j) - r\} + \frac{\log(2p)}{\beta} \le \frac{\log(2p)}{\beta} = \frac{\varepsilon}{2},
\]
and hence $F_{\beta}(z_x - r\mathbf{1}_{2p}) - {\varepsilon}/{2} \le 0$ which in turn implies that
\[
\varphi(x) = 1\quad\mbox{since}\quad g_0(t) = 1\quad\mbox{for}\quad t \le 0.
\]
On the other hand if $|x(j)| > r + \varepsilon$ for some $1\le j\le p$ then
\[
F_{\beta}(z_x - r\mathbf{1}_{2p}) \ge \max_{1\le j\le p}\max\{x(j) - r, -x(j) - r\} > \varepsilon\quad\Rightarrow\quad \frac{2}{\varepsilon}\left(F_{\beta}(z_x - r\mathbf{1}_{2p}) - \frac{\varepsilon}{2}\right) \ge 1,
\]
which in turn implies that
\[
\varphi(x) = 0\quad\mbox{since}\quad g_0(t) = 0\quad\mbox{for}\quad t \ge 1.
\]
This completes the proof of first statement. The remaining two statements can be easily verified by direct calculation. See~\citet[Appendix A]{CCK13} for details.
\section{Proofs of Results in Section~\ref{sec:UniformCLT} and~\ref{sec:NonUniformCLT}}
\subsection{Preliminary Results}
\begin{lem}\label{lem:RecursiveIneq}
Suppose $\{\rchi_n\}$ be a sequence such that
\[
\rchi_n \le \kappa\rchi_{n-1} + a_n\quad\mbox{for}\quad n \ge 2,
\]
for some $\kappa < 1$. If $a_{n-1}/a_n \le \bar{C}$ for all $n\ge2$ for a constant $\bar{C}$ such that $\kappa \bar{C} < 1$, then
\[
\rchi_n \le \kappa^{n-1}\rchi_1 + a_n\left(\frac{1}{1 - \kappa \bar{C}}\right).
\] 
\end{lem}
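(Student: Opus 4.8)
The plan is to unroll the recursion once and for all and then bound the resulting partial sum of the $a_j$'s by a convergent geometric series. First I would establish, by a one-line induction on $n$, the closed-form bound
\[
\rchi_n ~\le~ \kappa^{n-1}\rchi_1 + \sum_{j=0}^{n-2}\kappa^{j}\,a_{n-j}, \qquad n\ge 1.
\]
The base case $n=1$ is trivial (the sum is empty and the bound reads $\rchi_1\le\rchi_1$), and for $n\ge 2$ one applies the hypothesis $\rchi_n\le\kappa\rchi_{n-1}+a_n$, then substitutes the induction hypothesis for $\rchi_{n-1}$, and finally reindexes the shifted sum $\kappa\sum_{j=0}^{n-3}\kappa^{j}a_{n-1-j}=\sum_{i=1}^{n-2}\kappa^{i}a_{n-i}$ so that the new term $a_n$ becomes the $i=0$ summand.

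The second step is to replace each $a_{n-j}$ by a multiple of $a_n$. Since the ratio hypothesis gives $a_{m-1}\le \bar C\,a_m$ for every $m\ge 2$, applying it $j$ times yields $a_{n-j}\le \bar C^{\,j}a_n$ for $0\le j\le n-2$. Plugging this into the displayed bound and using $\kappa\bar C<1$ to sum the (finite, hence convergent) geometric series,
\[
\rchi_n ~\le~ \kappa^{n-1}\rchi_1 + a_n\sum_{j=0}^{n-2}(\kappa\bar C)^{j} ~\le~ \kappa^{n-1}\rchi_1 + a_n\sum_{j=0}^{\infty}(\kappa\bar C)^{j} ~=~ \kappa^{n-1}\rchi_1 + \frac{a_n}{1-\kappa\bar C},
\]
which is exactly the claimed inequality.

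There is no genuine obstacle here; the argument is a routine telescoping-plus-geometric-series estimate. The only points deserving a moment of care are the index bookkeeping in the induction (so that the term $\kappa^{n-1}\rchi_1$ is cleanly isolated and the sum runs over $j=0,\dots,n-2$), and the implicit positivity of the $a_n$'s: the hypothesis $a_{n-1}/a_n\le\bar C$ presupposes $a_n\neq 0$, and in the intended application all the $a_n$ are nonnegative, which is what makes the chained inequalities $a_{n-j}\le\bar C^{\,j}a_n$ legitimate. One could alternatively phrase the whole thing without the auxiliary closed form by directly inducting on the \emph{statement} $\rchi_n\le\kappa^{n-1}\rchi_1+a_n/(1-\kappa\bar C)$, using $a_{n-1}\le\bar C a_n$ in the inductive step; I would mention this as a remark but carry out the geometric-series version above since it is the most transparent.
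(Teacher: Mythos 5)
Your proof is correct. The paper's own argument is exactly the ``alternative'' you relegate to a closing remark: it inducts directly on the statement $\rchi_n \le \kappa^{n-1}\rchi_1 + a_n/(1-\kappa\bar C)$, using $\kappa C a_{n-1} + a_n \le (\kappa C\bar C + 1)a_n = Ca_n$ with $C = (1-\kappa\bar C)^{-1}$ in the inductive step. Your main write-up instead unrolls the recursion to $\rchi_n \le \kappa^{n-1}\rchi_1 + \sum_{j=0}^{n-2}\kappa^j a_{n-j}$ and then bounds the sum by a geometric series via $a_{n-j}\le \bar C^{\,j}a_n$; this is a mathematically equivalent repackaging of the same telescoping idea, with the minor extra bookkeeping of the chained ratio bound, and your observation that nonnegativity of the $a_n$ (automatic in the paper's applications) underlies that chaining is a fair point that applies equally, if implicitly, to the paper's version.
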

\begin{proof}
Hypothesize that $\rchi_n \le \rchi_1\kappa^{n-1} + Ca_n$ for some $C = (1 - \kappa \bar{C})^{-1}$. This is trivially true for $n = 1$. Suppose true up to $n-1$. Then for $n$,
\begin{align*}
\rchi_n &\le \kappa(\rchi_1\kappa^{n-2} + Ca_{n-1}) + a_n = \rchi_1\kappa^{n-1} + \kappa Ca_{n-1} + a_n\\ 
&\le \rchi_1\kappa^{n-1} + (\kappa C\bar{C} + 1)a_n = \rchi_1\kappa^{n-1} + a_n(1 + \kappa\bar{C}(1 - \kappa \bar{C})^{-1})\\
&= \rchi_1\kappa^{n-1} + Ca_n.
\end{align*}
This completes the proof.
\end{proof}
\noindent Define
\[
\Delta_{n,m}(r) = \max_{1\le k\le n}\,r^m|\mathbb{P}(\|U_{n,k}\| \le r) - \mathbb{P}(\|U_{n,0}\| \le r)|.
\]
In this section, we prove a general bound for $\Delta_{n,m}(r)$ for a fixed $r$. The uniform and non-uniform CLTs follow by massaging this result.
\begin{lem}\label{lem:GeneralmChern}
Fix $r, \varepsilon > 0$ and $m\ge 0$ such that $r \ge 4\varepsilon$ if $m > 0$. Then
\begin{align*}
\Delta_{n,m}(r) &\le \frac{e^{2\mathfrak{C}}C_3\varepsilon^{-3}\left[{2\Phi_{AC, m}\varepsilon} + {4^{m}\delta_{n,m}}\right]L_n}{n^{1/2}}\\
&\quad+ \frac{e^{\mathfrak{C}}C_3\varepsilon^{-3}r^m}{2n^{3/2}}\sum_{j=1}^n\max_{1\le j_1\le p}\int |x(j_1)|^3|\zeta_j|(dx)\mathbb{P}(\|Y_j\| > n^{1/2}\varepsilon/\log(ep))\\
&\quad+ {C_2\varepsilon^{-2}r^m}M_n(\varepsilon) + \Phi_{AC,m}\varepsilon,
\end{align*}
where $C_2 := C_0\log(ep)$ and $C_3 := C_0\log^2(ep)$.
\end{lem}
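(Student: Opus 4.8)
The plan is to follow closely the Lindeberg-with-stability scheme laid out in Steps A--E of Section~\ref{sec:Outline}, carrying the weight $r^{m}$ through every estimate. Fix $r,\varepsilon>0$, $m\ge0$ (with $r\ge4\varepsilon$ when $m>0$), and an arbitrary $1\le k\le n$; it suffices to bound $r^{m}\Delta_{n,k}(r)$ by the asserted right-hand side, since the final bound will not depend on $k$. First I would apply the smoothing inequality \eqref{eq:Smoothing} with $\varphi_{1}=\varphi_{r,\varepsilon}$ and $\varphi_{2}=\varphi_{r-\varepsilon,\varepsilon}$ from Lemma~\ref{lem:SmoothApproximation}; the anti-concentration remainder $r^{m}\mathbb{P}(r-\varepsilon\le\|U_{n,0}\|\le r+\varepsilon)$ is at most $\Phi_{AC,m}\varepsilon$ by Theorem~\ref{thm:AntiConcentration}, which is the last term of the claim. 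It then remains to bound $r^{m}\max_{\ell=1,2}|\mathbb{E}[\varphi_{\ell}(U_{n,k})-\varphi_{\ell}(U_{n,0})]|$, and I will write $\varphi$ for either $\varphi_{1}$ or $\varphi_{2}$.

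Next I would telescope, $|\mathbb{E}[\varphi(U_{n,k})-\varphi(U_{n,0})]|\le\sum_{j=1}^{k}|\mathbb{E}[\varphi(U_{n,j-1})-\varphi(U_{n,j})]|\le\sum_{j=1}^{n}I_{j}$, where $U_{n,j}=W_{n,j}+n^{-1/2}X_{j}$ and $U_{n,j-1}=W_{n,j}+n^{-1/2}Y_{j}$, a second-order Taylor expansion of $\varphi$ together with the moment identities \eqref{eq:zetazero} leaves only the remainder, and $I_{j}=\int\mathbb{E}[|\mbox{Rem}_{n}(W_{n,j},x)|]\,|\zeta_{j}|(dx)$; summing over the full range $1\le j\le n$ removes the dependence on $k$. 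I would then split $I_{j}=I_{j}^{(1)}+I_{j}^{(2)}$ along $\mathcal{E}=\{\|x\|\le n^{1/2}\varepsilon/\log(ep)\}$ and its complement. On $\mathcal{E}^{c}$, using the second-order bound $|\mbox{Rem}_{n}|\le C_{0}\log(ep)\varepsilon^{-2}\|x\|^{2}/n=C_{2}\varepsilon^{-2}\|x\|^{2}/n$ from \eqref{eq:RemainderBound} (bounding the support indicator by $1$), summing over $j$ and recognizing the definition \eqref{eq:SecondPsuedoMoment} gives $r^{m}\sum_{j}I_{j}^{(2)}\le C_{2}\varepsilon^{-2}r^{m}M_{n}(\varepsilon)$, the third term.

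The substantive work is the bound on $\sum_{j}I_{j}^{(1)}$. On $\mathcal{E}$ one has $\|x\|/\sqrt n\le\varepsilon/\log(ep)$, so I would use the third-order Taylor form, the derivative bound $|\partial_{j_{1}j_{2}j_{3}}\varphi|\le D_{j_{1}j_{2}j_{3}}$ and the ratio-stability property \eqref{eq:StabilityProp} to replace the argument $W_{n,j}+tx/\sqrt n$ by $W_{n,j}$ at the cost of a factor $e^{\mathfrak{C}}$, and then H\"older's inequality, obtaining
\[
I_{j}^{(1)}\le\frac{e^{\mathfrak{C}}}{2n^{3/2}}\,\max_{1\le j_{1}\le p}\int_{\mathcal{E}}|x(j_{1})|^{3}|\zeta_{j}|(dx)\sum_{j_{1},j_{2},j_{3}=1}^{p}\mathbb{E}\Big[D_{j_{1}j_{2}j_{3}}(W_{n,j})\mathbbm{1}\{|\|W_{n,j}\|-r|\le2\varepsilon\}\Big].
\]
I would split the expectation according to whether $\|Y_{j}\|\le n^{1/2}\varepsilon/\log(ep)$. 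On that event, stability again gives $D_{j_{1}j_{2}j_{3}}(W_{n,j})\le e^{\mathfrak{C}}D_{j_{1}j_{2}j_{3}}(W_{n,j}+n^{-1/2}Y_{j})=e^{\mathfrak{C}}D_{j_{1}j_{2}j_{3}}(U_{n,j-1})$ and, since $\|W_{n,j}-U_{n,j-1}\|\le\varepsilon$, the indicator becomes $\mathbbm{1}\{|\|U_{n,j-1}\|-r|\le3\varepsilon\}$; summing over $j_{1},j_{2},j_{3}$ with $\sum D_{j_{1}j_{2}j_{3}}\le C_{0}\log^{2}(ep)\varepsilon^{-3}=C_{3}\varepsilon^{-3}$ produces $e^{\mathfrak{C}}C_{3}\varepsilon^{-3}\mathbb{P}(r-3\varepsilon\le\|U_{n,j-1}\|\le r+3\varepsilon)$. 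Multiplying by $r^{m}$, adding and subtracting $\mathbb{P}(\|U_{n,0}\|\le\cdot)$, invoking the definition \eqref{def:deltanm} of $\delta_{n,m}$, the anti-concentration Theorem~\ref{thm:AntiConcentration}, and the elementary inequality $r/(r-3\varepsilon)\le4$ valid under $r\ge4\varepsilon$, this is at most $e^{\mathfrak{C}}C_{3}\varepsilon^{-3}(2\Phi_{AC,m}\varepsilon+4^{m}\delta_{n,m})$; since $\sum_{j=1}^{n}\max_{j_{1}}\int|x(j_{1})|^{3}|\zeta_{j}|(dx)=nL_{n}$, the prefactor $e^{\mathfrak{C}}/(2n^{3/2})$ then yields the first term. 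On the complementary event $W_{n,j}$ is independent of $Y_{j}$, so the probability factors and $\sum D_{j_{1}j_{2}j_{3}}\le C_{3}\varepsilon^{-3}$ gives exactly the second term. Handling $\varphi_{2}$ is identical up to replacing $\varepsilon$ by a constant multiple of $\varepsilon$ in a few places, absorbed into the universal constants; collecting the four contributions and taking the maximum over $k$ proves the lemma.

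The main obstacle I foresee is precisely the bookkeeping in the last step: to close the estimate in terms of $\delta_{n,m}$ (rather than some larger quantity) one must couple $W_{n,j}$ back to $U_{n,j-1}$ --- which carries the same number of summands as $U_{n,k}$ --- through the stability property \eqref{eq:StabilityProp}, while simultaneously controlling the weight $r^{m}$ against the factors $(r\pm3\varepsilon)^{-m}$; this is what forces the hypothesis $r\ge4\varepsilon$ when $m>0$ and produces the $4^{m}$. The other delicate point is isolating the part where stability is unavailable, namely $\|Y_{j}\|$ large, so that it collapses via independence into the Gaussian tail probability $\mathbb{P}(\|Y_{j}\|>n^{1/2}\varepsilon/\log(ep))$ instead of into a term involving $\delta_{n,m}$.
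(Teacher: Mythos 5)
Your proposal is correct and follows essentially the same route as the paper's proof: smoothing plus anti-concentration, Lindeberg telescoping with the second-order Taylor remainder, splitting at $\|x\|\le n^{1/2}\varepsilon/\log(ep)$, the stability property to pass from $W_{n,j}$ to $U_{n,j-1}$ on $\{Y_j\in\mathcal{E}\}$, relating the resulting band probability to $\delta_{n,m}$ and $\Phi_{AC,m}$ via $r\ge4\varepsilon$ (whence the $4^m$), and bounding the $\{Y_j\in\mathcal{E}^c\}$ piece by $C_3\varepsilon^{-3}\mathbb{P}(\|Y_j\|>n^{1/2}\varepsilon/\log(ep))$ and the $\mathcal{E}^c$ integral by $C_2\varepsilon^{-2}r^mM_n(\varepsilon)$. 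All constants and bookkeeping match the paper's argument up to the same harmless slack in the $\Phi_{AC,m}$ coefficient.
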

\begin{proof}
By the use of smoothing lemma, we get
\begin{equation}\label{eq:SmoothingIneqChern}
\Delta_{n,m}(r) \le \max_{1\le k\le n}\max_{j = 1,2}\,r^m|\mathbb{E}\left[\varphi_{j}(U_{n,k}) - \varphi_j(U_{n,0})\right]| + r^m\mathbb{P}\left(r - \varepsilon \le \|U_{n,0}\| \le r + \varepsilon\right),
\end{equation}
where $\varphi_1(x) = \varphi_{r,\varepsilon}(x)$ and $\varphi_2(x) = \varphi_{r-\varepsilon, \varepsilon}(x)$. From now we write $\varphi$ to represent either $\varphi_1$ or $\varphi_2$. It is clear that for any $1\le k\le n$, we have
\begin{align*}
|\mathbb{E}[\varphi(U_{n,k}) - \varphi(U_{n,0})]| &\le \sum_{j=1}^k \left|\mathbb{E}\left[\varphi(U_{n,j}) - \varphi(U_{n,j-1})\right]\right|\\
&\le \sum_{j=1}^k \left|\mathbb{E}\left[\varphi(W_{n,j} + n^{-1/2}X_j) - \varphi(W_{n,j} + n^{-1/2}Y_j)\right]\right|\\
&\le \sum_{j=1}^k \left|\int \mathbb{E}[\varphi(W_{n,j} + n^{-1/2}x)d\zeta_j(x)]\right|.
\end{align*}
By Taylor series expansion, we get that
\[
\varphi(W_{n,j} + n^{-1/2}x) = \varphi(W_{n,j}) + n^{-1/2}x^{\top}\nabla \varphi(W_{n,j}) + \frac{1}{2n}x^{\top}\nabla_2\varphi(W_{n,j})x + \mbox{Rem}_n(W_{n,j}, x).
\]
Combining above inequalities with Equation~\eqref{eq:zetazero}
\begin{equation}\label{eq:FirstDecompositionChern}
\begin{split}
|\mathbb{E}[\varphi(U_{n,k}) - \varphi(U_{n,0})]| &\le \sum_{j = 1}^k \left|\int \mathbb{E}\left[\mbox{Rem}_n(W_{n,j}, x)\right]d\zeta_j(x)\right|\\
&\le \sum_{j=1}^k \left|\mathbb{E}\int \mathbbm{1}\{x\in\mathcal{E}\}\mbox{Rem}_n(W_{n,j}, x)d\zeta_j(x)\right|\\
&\qquad+ \sum_{j=1}^k \left|\mathbb{E}\int \mathbbm{1}\{x\in\mathcal{E}^c\}\mbox{Rem}_n(W_{n,j}, x)d\zeta_j(x)\right| = \mathbf{I} + \mathbf{II},
\end{split}
\end{equation}
where $\mathcal{E} := \left\{\|x\|_{\infty} \le {n^{1/2}\varepsilon}/{\log(ep)}\right\}.$ To bound $\mathbf{I}$, observe that
\[
\mbox{Rem}_n(W_{n,j}, x) = \frac{1}{2}\sum_{j_1,j_2,j_3 = 1}^p \frac{x(j_1)x(j_2)x(j_3)}{n^{3/2}}\int_0^1 (1 - t)^2\partial_{j_1,j_2,j_3}\varphi(W_{n,j} + tn^{-1/2}x)dt.
\]
Noting the support of $\varphi$ is $\{r - \varepsilon \le \|x\| \le r + \varepsilon\}$, $|\partial_{j_1,j_2,j_3}\varphi(W_{n,j} + tn^{-1/2}x)| \le D_{j_1j_2j_3}(W_{n,j} + tn^{-1/2}x)$ and the stability property of $D_{j_1j_2j_3}$ we get that
\begin{align*}
&|\mbox{Rem}_n(W_{n,j}, x)|\mathbbm{1}\{x\in\mathcal{E}\}\\ 
&\le \frac{1}{2n^{3/2}}\sum_{j_1,j_2,j_3 = 1}^p {|x(j_1)x(j_2)x(j_3)|}\int_0^1 D_{j_1j_2j_3}(W_{n,j} + tn^{-1/2}x)\mathbbm{1}\{|\|W_{n,j} + tn^{-1/2}x\| - r| \le \varepsilon, x\in\mathcal{E}\}dt\\
&\le \frac{e^\mathfrak{C}}{2n^{3/2}}\sum_{j_1,j_2,j_3 = 1}^p {|x(j_1)x(j_2)x(j_3)|}D_{j_1j_2j_3}(W_{n,j})\mathbbm{1}\{|\|W_{n,j}\| - r| \le \varepsilon + \varepsilon/\log(e p), x\in\mathcal{E}\},
\end{align*}
which implies that
\begin{equation}\label{eq:SomeToAllIntegral}
\begin{split}
\mathbf{I} &\le \frac{e^{\mathfrak{C}}}{2n^{3/2}}\sum_{\substack{1\le j\le k,\\1\le j_1,j_2,j_3 \le p}} \int_{x\in\mathcal{E}} |x(j_1)x(j_2)x(j_3)||\zeta_j|(dx)\mathbb{E}\left[D_{j_1j_2j_3}(W_{n,j})\mathbbm{1}\{|\|W_{n,j}\| - r| \le 2\varepsilon\}\right]\\
&\le \frac{e^{\mathfrak{C}}}{2n^{3/2}}\sum_{\substack{1\le j\le k,\\1\le j_1,j_2,j_3 \le p}} \max_{1\le j_4\le p}\int |x(j_4)|^3|\zeta_j|(dx)\mathbb{E}\left[D_{j_1j_2j_3}(W_{n,j})\mathbbm{1}\{|\|W_{n,j}\| - r| \le 2\varepsilon\}\right]\\
&\le \frac{e^{\mathfrak{C}}}{2n^{3/2}}\sum_{\substack{1\le j\le k}} \max_{1\le j_1\le p}\int |x(j_1)|^3|\zeta_j|(dx)\sum_{1\le j_1,j_2,j_3\le p}\mathbb{E}\left[D_{j_1j_2j_3}(W_{n,j})\mathbbm{1}\{|\|W_{n,j}\| - r| \le 2\varepsilon\}\right].
\end{split}
\end{equation}
We will now relate the right hand side in terms of $\Delta_{n,m}(r)$. Observe that for any $1\le j\le k,$
\begin{align*}
\mathbb{E}\left[D_{j_1j_2j_3}(W_{n,j})\mathbbm{1}\{|\|W_{n,j}\| - r| \le 2\varepsilon\}\right] &= \mathbb{E}\left[D_{j_1j_2j_3}(W_{n,j})\mathbbm{1}\{|\|W_{n,j}\| - r| \le 2\varepsilon, Y_j\in\mathcal{E}\}\right]\\
&\qquad+ \mathbb{E}\left[D_{j_1j_2j_3}(W_{n,j})\mathbbm{1}\{|\|W_{n,j}\| - r| \le 2\varepsilon, Y_j\in\mathcal{E}^c\}\right].
\end{align*}
If $Y_j\in\mathcal{E}$ and $r - 2\varepsilon \le \|W_{n,j}\| \le r + 2\varepsilon$ then $r - 3\varepsilon \le \|U_{n,j-1}\| \le r + 3\varepsilon,$
and
\[
e^{-\mathfrak{C}}D_{j_1j_2j_3}(W_{n,j} + n^{-1/2}Y_j) \le D_{j_1j_2j_3}(W_{n,j}) \le e^{\mathfrak{C}}D_{j_1j_2j_3}(W_{n,j} + n^{-1/2}Y_j).
\]
Since $W_{n,j} + n^{-1/2}Y_j = U_{n,j-1}$, we get
\begin{align*}
&\mathbb{E}\left[D_{j_1j_2j_3}(W_{n,j})\mathbbm{1}\{|\|W_{n,j}\| - r| \le 2\varepsilon, Y_j\in\mathcal{E}\}\right]\\ 
&\qquad\le e^{\mathfrak{C}}\mathbb{E}\left[D_{j_1j_2j_3}(U_{n,j-1})\mathbbm{1}\{r - 3\varepsilon \le \|U_{n,j-1}\| \le r + 3\varepsilon\}\right]\\
&\qquad\le e^{\mathfrak{C}}\mathbb{E}\left[D_{j_1j_2j_3}(U_{n,j-1})\mathbbm{1}\{r - 3\varepsilon \le \|U_{n,j-1}\| \le r + 3\varepsilon\}\right]
\end{align*}
Summing over $1\le j_1, j_2, j_3 \le p$ we get
\begin{align*}
&\sum_{1\le j_1,j_2,j_3\le p}\mathbb{E}\left[D_{j_1j_2j_3}(W_{n,j})\mathbbm{1}\{|\|W_{n,j}\| - r| \le 2\varepsilon, Y_j\in\mathcal{E}\}\right]\\
&\quad\le e^{\mathfrak{C}}\sum_{1\le j_1,j_2,j_3 \le p} \mathbb{E}\left[D_{j_1j_2j_3}(U_{n,j-1})\mathbbm{1}\{r - 3\varepsilon \le \|U_{n,j-1}\| \le r + 3\varepsilon\}\right]\\
&\quad\le e^{\mathfrak{C}}\max_{z}\sum_{1\le j_1,j_2,j_3\le p}D_{j_1j_2j_3}(z)\mathbb{P}\left(r - 3\varepsilon \le \|U_{n,j-1}\| \le r + 3\varepsilon\right)\\
&\quad\le e^{\mathfrak{C}}C_3\varepsilon^{-3}\mathbb{P}\left(r - 3\varepsilon \le \|U_{n,j-1}\| \le r + 3\varepsilon\right).
\end{align*}
The probability on the right hand side can be approximated by $\mathbb{P}(r - 3\varepsilon \le \|U_{n,0}\| \le r + 3\varepsilon)$ using the definition of $\delta_{n,m}$:
\begin{align*}
\mathbb{P}(r - 3\varepsilon \le \|U_{n,j-1}\| \le r + 3\varepsilon) &\le \mathbb{P}\left(r - 3\varepsilon\le \|U_{n,0}\| \le r + 3\varepsilon\right)\\ &\qquad+ \delta_{n,m}[(r - 3\varepsilon)^{-m} + (r + 3\varepsilon)^{-m}]\\
&\quad\le \frac{3\Phi_{AC,m}\varepsilon}{r^m} + \frac{2\delta_{n,m}}{(r - 3\varepsilon)^m} ~\le~ \frac{3\Phi_{AC,m}\varepsilon}{r^m} + \frac{4^{m}2\delta_{n,m}}{r^m}.
\end{align*}
To bound $\mathbb{E}[D_{j_1j_2j_3}(W_{n,j})\mathbbm{1}\{|\|W_{n,j}\| - r| \le 2\varepsilon, Y_j\in\mathcal{E}^c\}]$, we note that
\begin{align*}
&\sum_{1\le j_1,j_2,j_3 \le p}\mathbb{E}[D_{j_1j_2j_3}(W_{n,j})\mathbbm{1}\{|\|W_{n,j}\| - r| \le 2\varepsilon, Y_j\in\mathcal{E}^c\}]\\
&\qquad\le \max_z\sum_{1\le j_1,j_2,j_3\le p} D_{j_1j_2j_3}(z)\mathbb{P}(Y_j\in \mathcal{E}^c) ~\le~ C_3\varepsilon^{-3}\mathbb{P}(Y_j\in\mathcal{E}^c). 
\end{align*}
Combining the bounds above, we get
\begin{align*}
\mathbf{I} &\le \frac{e^{2\mathfrak{C}}C_3\varepsilon^{-3}}{n^{3/2}}\left[\frac{2\Phi_{AC, m}\varepsilon}{r^m} + \frac{4^{m}\delta_{n,m}}{r^m}\right]\sum_{1\le j\le k}\max_{1\le j_1\le p}\int|x(j_1)|^3|\zeta_{j}|(dx)\\
&\quad+ \frac{e^{\mathfrak{C}}C_3\varepsilon^{-3}}{2n^{3/2}}\sum_{1\le j\le k}\max_{1\le j_1\le p}\int |x(j_1)|^3|\zeta_j|(dx)\mathbb{P}(Y_j\in\mathcal{E}^c).
\end{align*}
We now bound $\mathbf{II}$ in~\eqref{eq:FirstDecompositionChern}. For this, we use the bound
\[
\left|\mbox{Rem}_n(W_{n,j}, x)\right| \le \frac{C_2\varepsilon^{-2}\|x\|^2}{n},
\]
and get
\begin{align*}
\mathbf{II} &\le \sum_{j=1}^k \mathbb{E}\left[\int \frac{C_2\varepsilon^{-2}\|x\|^2}{n}\mathbbm{1}\{x\in\mathcal{E}^c\}|\zeta_j|(dx)\right]\\
&\le \frac{C_2\varepsilon^{-2}}{n}\sum_{j=1}^k \int \|x\|^2\mathbbm{1}\{x\in\mathcal{E}^c\}|\zeta_j|(dx).
\end{align*}
Collecting the bounds above and recalling the smoothing inequality~\eqref{eq:SmoothingIneqChern}, we get
\begin{align*}
\Delta_{n,m}(r) &\le \frac{e^{2\mathfrak{C}}C_3\varepsilon^{-3}\left[{2\Phi_{AC, m}\varepsilon} + {4^{m}\delta_{n,m}}\right]}{n^{3/2}}\sum_{j=1}^n\max_{1\le j_1\le p}\int|x(j_1)|^3|\zeta_{j}|(dx)\\
&\quad+ \frac{e^{\mathfrak{C}}C_3\varepsilon^{-3}r^m}{2n^{3/2}}\sum_{j=1}^n\max_{1\le j_1\le p}\int |x(j_1)|^3|\zeta_j|(dx)\mathbb{P}(Y_j\in\mathcal{E}^c)\\
&\quad+ \frac{C_2\varepsilon^{-2}r^m}{n}\sum_{j=1}^n \int \|x\|^2\mathbbm{1}\{x\in\mathcal{E}^c\}|\zeta_j|(dx) + \Phi_{AC,m}\varepsilon.
\end{align*}
The result is proved.
\end{proof}
\subsection{Proof of Theorem~\ref{thm:UniformCLT}}
From Lemma~\ref{lem:GeneralmChern}, we get
\begin{align*}
\Delta_{n,m}(r) &\le \frac{e^{2\mathfrak{C}}C_3\varepsilon^{-3}\left[{2\Phi_{AC, 0}\varepsilon} + {\delta_{n,m}}\right]L_n}{n^{1/2}}\\ &\quad+ \frac{e^{\mathfrak{C}}C_3\varepsilon^{-3}}{2n^{3/2}}\sum_{j=1}^n\max_{1\le j_1\le p}\int |x(j_1)|^3|\zeta_j|(dx)\mathbb{P}(\|Y_j\| > n^{1/2}\varepsilon/\log(ep))\\
&\quad+ {C_2\varepsilon^{-2}}M_n(\varepsilon) + \Phi_{AC,0}\varepsilon.
\end{align*}
Since $\delta_{n,m} = \sup_{r \ge 0} \Delta_{n,m}(r)$, we get
\begin{align*}
\delta_{n,m} &\le \frac{e^{2\mathfrak{C}}C_3\varepsilon^{-3}\left[{2\Phi_{AC, 0}\varepsilon} + {\delta_{n,m}}\right]L_n}{n^{1/2}} + {C_2\varepsilon^{-2}}M_n(\varepsilon) + \Phi_{AC,0}\varepsilon\\
&\quad+ \frac{e^{\mathfrak{C}}C_3\varepsilon^{-3}}{2n^{3/2}}\sum_{j=1}^n\max_{1\le j_1\le p}\int |x(j_1)|^3|\zeta_j|(dx)\mathbb{P}(\|Y_j\| > n^{1/2}\varepsilon/\log(ep)).
\end{align*}
Choosing $\varepsilon = \varepsilon_n$ such that
\[
\frac{e^{2\mathfrak{C}}C_3\varepsilon^{-3}_nL_n}{n^{1/2}} = \frac{1}{2}\quad\Leftrightarrow\quad \varepsilon_n = \left(\frac{2e^{2\mathfrak{C}}C_3L_n}{n^{1/2}}\right)^{1/3},
\]
we get
\begin{align*}
\delta_{n,m} &\le \frac{\delta_{n,m}}{2} + \frac{2e^{2\mathfrak{C}}C_3\Phi_{AC,0}\varepsilon_n^{-2}L_n}{n^{1/2}} + C_2\varepsilon^{-2}_nM_n(\varepsilon_n) + \Phi_{AC,0}\varepsilon_n\\ 
&\qquad+ \frac{1}{4e^{\mathfrak{C}}nL_n}\sum_{j=1}^n\max_{1\le j_1\le p}\int |x(j_1)|^3|\zeta_j|(dx)\mathbb{P}\left(\|Y_j\| > n^{1/2}\varepsilon_n/\log(ep)\right)\\
&\le \frac{1}{2}\delta_{n,m} + \Phi_{AC,0}\varepsilon_n + C_2\varepsilon^{-2}_nM_n(\varepsilon_n) + \Phi_{AC,0}\varepsilon_n\\ 
&\qquad+ \frac{1}{4e^{\mathfrak{C}}nL_n}\sum_{j=1}^n\max_{1\le j_1\le p}\int |x(j_1)|^3|\zeta_j|(dx)\mathbb{P}\left(\|Y_j\| > n^{1/2}\varepsilon_n/\log(ep)\right).
\end{align*}
Simplifying this inequality, we get
\begin{equation}\label{eq:DeltanmIneqChern}
\begin{split}
\delta_{n,m} &\le 4\Phi_{AC,0}\varepsilon_n + 2C_2\varepsilon_n^{-2}M_n(\varepsilon_n)\\ 
&\qquad+ \frac{1}{2e^{\mathfrak{C}}nL_n}\sum_{j=1}^n\max_{1\le j_1\le p}\int |x(j_1)|^3|\zeta_j|(dx)\mathbb{P}\left(\|Y_j\| > n^{1/2}\varepsilon_n/\log(ep)\right).
\end{split}
\end{equation}
To bound the last term, consider two cases:
\[
(i)\;\frac{n^{1/2}\varepsilon_n}{\log(ep)} \le 2\mu_j\quad\mbox{and}\quad (ii)\;\frac{n^{1/2}\varepsilon_n}{\log(ep)} > 2\mu_j.
\]
\begin{enumerate}
  \item In case $(i)$, we have
  \[
  1 \le \frac{2\mu_j\log^{1/3}(ep)}{n^{1/3}(2e^{2\mathfrak{C}}C_0L_n)^{1/3}}.
  \]
  This implies that
  \begin{equation}\label{eq:ProbBound}
  \mathbb{P}(\|Y_j\| > n^{1/2}\varepsilon_n/\log(ep)) \le 1 \le \frac{2\mu_j\log^{1/3}(ep)}{n^{1/3}(2e^{2\mathfrak{C}}C_0L_n)^{1/3}}.
  \end{equation}
  \item In case $(ii)$, we have
  \begin{equation}\label{eq:MujIneq}
  \frac{n^{1/2}\varepsilon_n}{\log(ep)} \ge \frac{n^{1/2}\varepsilon_n}{2\log(ep)} + \mu_j,
  \end{equation}
  and so using $e^{x} \ge \sqrt{2x}$ (or equivalently $e^{2x} \ge 2x$), we get
  \begin{align*}
  \mathbb{P}\left(\|Y_j\| > n^{1/2}\varepsilon_n/\log(ep)\right) &\le \mathbb{P}\left(\|Y_j\| \ge \mu_j + \frac{n^{1/2}\varepsilon_n}{2\log(ep)}\right)\\
  &\le \exp\left(-\frac{n\varepsilon_n^2}{8\log^2(ep)\sigma_j^2}\right)\\
  &\le \exp\left(-\frac{n^{2/3}(2e^{2\mathfrak{C}}C_0L_n)^{2/3}}{8\sigma_j^2\log^{2/3}(ep)}\right)\\
  &\le 2^{-1/2}\left(\frac{8\sigma_j^2\log^{2/3}(ep)}{n^{2/3}(2e^{2\mathfrak{C}}C_0L_n)^{2/3}}\right)^{1/2}~\le~ \frac{2\sigma_j\log^{1/3}(ep)}{n^{1/3}(2e^{2\mathfrak{C}}C_0L_n)^{1/3}}.
  \end{align*}
  Here the second inequality follows from Lemma 3.1 of~\cite{MR2814399}.
\end{enumerate}
Combining cases $(i)$ and $(ii)$, we get
\[
\mathbb{P}\left(\|Y_j\| > n^{1/2}\varepsilon_n/\log(ep)\right) ~\le~ \frac{2(\mu_j + \sigma_j)\log^{1/3}(ep)}{n^{1/3}(2e^{2\mathfrak{C}}C_0L_n)^{1/3}}\quad\mbox{for all}\quad 1\le j\le n,
\]
and hence
\begin{align*}
&\frac{1}{2e^{\mathfrak{C}}nL_n}\sum_{j=1}^n\max_{1\le j_1\le p}\int |x(j_1)|^3|\zeta_j|(dx)\mathbb{P}\left(\|Y_j\| > n^{1/2}\varepsilon_n/\log(ep)\right)\\ 
&\quad\le \frac{1}{2e^{\mathfrak{C}}nL_n}\sum_{j=1}^n \max_{1\le j_1\le p}\int |x(j_1)|^3|\zeta_j|(dx)\frac{2(\mu_j + \sigma_j)\log^{1/3}(ep)}{n^{1/3}(2e^{2\mathfrak{C}}C_0L_n)^{1/3}}\\
&\quad\le \frac{\log^{1/3}(ep)}{n^{1/3}L_n(2e^{5\mathfrak{C}}C_0L_n)^{1/3}}\left(\frac{1}{n}\sum_{j=1}^n (\mu_j + \sigma_j)\max_{1\le j_1\le p}\int |x(j_1)|^3|\zeta_j|(dx)\right).
\end{align*}
Substituting this bound in~\eqref{eq:DeltanmIneqChern}, we have
\begin{align*}
\delta_{n,m} &\le 4\Phi_{AC,0}\varepsilon_n + 2C_2\varepsilon_n^{-2}M_n(\varepsilon_n)\\
&\quad+ \frac{\log^{1/3}(ep)}{n^{1/3}L_n(2e^{5\mathfrak{C}}C_0L_n)^{1/3}}\left(\frac{1}{n}\sum_{j=1}^n (\mu_j + \sigma_j)\max_{1\le j_1\le p}\int |x(j_1)|^3|\zeta_j|(dx)\right)
\end{align*}
Thus the result follows.
\subsection{Proof of Theorem~\ref{thm:NonUniformPolyCLT}}
Define
\begin{equation}\label{eq:EpsilonnmChoice}
\varepsilon_{n,m} := \left(\frac{2^{2m+1}e^{2\mathfrak{C}}C_0\log^2(ep)L_n}{n^{1/2}}\right)^{1/3} = 2^{2m/3}\varepsilon_n.
\end{equation}
Fix $\varepsilon, r_{n,m} > 0$. Recall
\[
\delta_{n,m} = \sup_{r\ge 0}\,\Delta_{n,m}(r) = \max\left\{\sup_{r \le 4\varepsilon_{n,m}}\,\Delta_{n,m}(r),\, \sup_{4\varepsilon_{n,m} < r < r_{n,m}}\,\Delta_{n,m}(r),\,\sup_{r \ge r_{n,m}}\,\Delta_{n,m}(r)\right\}.
\]
Define $r^{\star}$ as the maximizing radius, that is, $\delta_{n,m} = \Delta_{n,m}(r^{\star}).$ If $r^{\star} < 4\varepsilon_{n,m}$, then 
\[
\delta_{n,m} = \sup_{0 \le r < 4\varepsilon_{n,m}}\,\Delta_{n,m}(r).
\]
We now prove that for $0 \le r \le 4\varepsilon_{n,m}$
\begin{equation}\label{eq:ToProveChern}
\Delta_{n,m}(r) \le \Delta_{n,m}(4\varepsilon_{n,m}) + 2^{2m+1}\varepsilon_{n,m}^{m+1}\Phi_{AC,0}.
\end{equation}
Firstly note that $\Delta_{n,m}(r) \le 4^m\varepsilon_{n,m}^m\Delta_{n,0}(r)$. Thus it is enough to prove that $\Delta_{n,0}(r) \le \Delta_{n,0}(4\varepsilon_{n,m}) + 2\Phi_{AC,0}\varepsilon_{n,m}$. Recall that
\[
\Delta_{n,0}(r) = \max_{1\le k\le n}|\mathbb{P}(\|U_{n,k}\| \le r) - \mathbb{P}(\|U_{n,0}\| \le r)|.
\]
For any $1\le k\le n$ if $\mathbb{P}(\|U_{n,k}\| \le r) \le \mathbb{P}(\|U_{n,0}\| \le r)$ then by monotonocity of $r\mapsto \mathbb{P}(\|U_{n,0}\| \le r)$ it follows that
\[
\mathbb{P}(\|U_{n,k}\| \le r) \le \mathbb{P}(\|U_{n,0}\| \le r) \le \mathbb{P}(\|U_{n,0}\| \le 4\varepsilon_{n,m}) \le 2\varepsilon_{n,m}\Phi_{AC,0}.
\]
Under this case $\Delta_{n,0}(r) \le 2\varepsilon_{n,m}\Phi_{AC,0}$. If, otherwise, $\mathbb{P}(\|U_{n,k}\| \le r) > \mathbb{P}(\|U_{n,0}\| \le r)$ then
\begin{align*}
\Delta_{n,0}(r) &\le \max_{1\le k\le n}\,\mathbb{P}(\|U_{n,k}\| \le r) - \mathbb{P}(\|U_{n,0}\| \le r)\\
&\le \max_{1\le k\le n}\,\mathbb{P}(\|U_{n,k}\| \le 4\varepsilon_{n,m}) - \mathbb{P}(\|U_{n,0}\| \le 4\varepsilon_{n,m}) + \mathbb{P}(r \le \|U_{n,0}\| \le 4\varepsilon_{n,m})\\
&\le \Delta_{n,0}(4\varepsilon_{n,m}) + 2\varepsilon_{n,m}\Phi_{AC,0}.
\end{align*}
This completes the proof of~\eqref{eq:ToProveChern}. Therefore,
\begin{equation}\label{eq:Case1Deltanm}
\begin{split}
\delta_{n,m} &= \sup_{0 \le r < 4\varepsilon_{n,m}}\,\Delta_{n,m}(r) \le (4\varepsilon_{n,m})^m\max_{0 \le r < 4\varepsilon_{n,m}}\Delta_{n,0}(r)\\
&\le \Delta_{n,m}(4\varepsilon_{n,m}) + 2^{2m+1}\varepsilon_{n,m}^{m+1}\Phi_{AC,0}.
\end{split}
\end{equation}
where $\varepsilon_n$ is the quantity defined in Theorem~\ref{thm:UniformCLT}. If $4\varepsilon_{n,m} \le r^{\star} < r_{n,m}$, then we have 
\[
\delta_{n,m} = \sup_{4\varepsilon_{n,m} \le r < r_{n,m}}\,\Delta_{n,m}(r),
\]
and hence Lemma~\ref{lem:GeneralmChern} implies
\begin{align*}
\delta_{n,m} &\le \frac{e^{2\mathfrak{C}}C_3\varepsilon_{n,m}^{-3}\left[{2\Phi_{AC, m}\varepsilon_{n,m}} + {4^{m}\delta_{n,m}}\right]L_n}{n^{1/2}}\\
&\quad+ \frac{e^{\mathfrak{C}}C_3\varepsilon_{n,m}^{-3}r_{n,m}^m}{2n^{3/2}}\sum_{j=1}^n\max_{1\le j_1\le p}\int |x(j_1)|^3|\zeta_j|(dx)\mathbb{P}(\|Y_j\| > n^{1/2}\varepsilon_{n,m}/\log(ep))\\
&\quad+ {C_2\varepsilon_{n,m}^{-2}r^m_{n,m}}M_n(\varepsilon_{n,m}) + \Phi_{AC,m}\varepsilon_{n,m}.
\end{align*}
The choice of $\varepsilon_{n,m}$ is made so that $e^{2\mathfrak{C}}C_3\varepsilon_{n,m}^{-3}/n^{1/2} = 2^{-2m-1}/L_n$ and hence
\begin{align*}
\delta_{n,m} &\le \frac{2\Phi_{AC,m}\varepsilon_{n,m} + 4^m\delta_{n,m}}{2^{2m+1}}+ C_2\varepsilon_{n,m}^{-2}r_{n,m}^mM_n(\varepsilon_{n,m}) + \Phi_{AC,m}\varepsilon_{n,m}\\
&\quad+ \frac{r_{n,m}^m}{L_n4^{m+1}e^{\mathfrak{C}}}\frac{1}{n}\sum_{j=1}^n \max_{1\le j_1\le p}\int |x(j_1)|^3|\zeta_j|(dx)\mathbb{P}(\|Y_j\| > n^{1/2}\varepsilon_{n,m}/\log(ep)).
\end{align*}
Simplifying this inequality, we get
\begin{align*}
\delta_{n,m} &\le 2(1+4^{-m})\Phi_{AC,m}\varepsilon_{n,m} + 2C_2\varepsilon_{n,m}^{-2}r_{n,m}^mM_n(\varepsilon_{n,m})\\
&\quad+ \frac{2r_{n,m}^m}{L_n 4^{m+1}e^{\mathfrak{C}}}\frac{1}{n}\sum_{j=1}^n \max_{1\le j_1\le p}\int |x(j_1)|^3|\zeta_j|(dx)\mathbb{P}(\|Y_j\| > n^{1/2}\varepsilon_{n,m}/\log(ep)).
\end{align*}
Following the proof of Theorem~\ref{thm:UniformCLT}, we obtain
\begin{align*}
\mathbb{P}(\|Y_j\| > n^{1/2}\varepsilon_{n,m}/\log(ep)) &\le \left(\frac{2\mu_j\log(ep)}{n^{1/2}\varepsilon_{n,m}}\right)^{m+1} + \left(\frac{m+1}{2e}\right)^{(m+1)/2}\left(\frac{8\sigma_j^2\log^2(ep)}{n\varepsilon_{n,m}^2}\right)^{(m+1)/2}\\
&\le \left(\frac{2\mu_j\log(ep)}{n^{1/2}\varepsilon_{n,m}}\right)^{m+1} + \left(\frac{4(m+1)}{e}\right)^{(m+1)/2}\left(\frac{\sigma_j\log(ep)}{n^{1/2}\varepsilon_{n,m}}\right)^{m+1}\\
&\le \left[(2\mu_j)^{m+1} + (\sigma_j\sqrt{4(m+1)/e})^{m+1}\right]\left(\frac{\log(ep)}{n^{1/2}\varepsilon_{n,m}}\right)^{m+1}\\
&\le 2^{m+1}\left[\mu_j^{m+1} + \sigma_j^{m+1}\left({\frac{m+1}{e}}\right)^{(m+1)/2}\right]\left(\frac{\log^{1/3}(ep)}{n^{1/3}(2^{2m+1}e^{2\mathfrak{C}}C_0L_n)^{1/3}}\right)^{m+1}. 
\end{align*}
Therefore, if $4\varepsilon_{n,m} \le r^{\star} \le r_{n,m}$ then
\begin{equation}\label{eq:Case2Deltanm}
\begin{split}
\delta_{n,m} &\le 2(1+4^{-m})\Phi_{AC,m}\varepsilon_{n,m} + 2C_2\varepsilon_{n,m}^{-2}r_{n,m}^mM_n(\varepsilon_{n,m})\\
&\quad+ \frac{r_{n,m}^m\bar{L}_{n,m}}{L_n2^{m}e^{\mathfrak{C}}}\left(\frac{\log(ep)}{n(2^{2m+1}e^{2\mathfrak{C}}C_0L_n)}\right)^{(m+1)/3}
\end{split}
\end{equation}
Finally, if $r^{\star} \ge r_{n,m}$, then
\begin{equation}\label{eq:Case3Deltanm}
\begin{split}
\delta_{n,m} &\le \sup_{r\ge r_{n,m}}\,\max_{1\le k\le n}\,r^m|\mathbb{P}(\|U_{n,k}\| > r) - \mathbb{P}\left(\|U_{n,0}\| > r\right)|\\
&\le \sup_{r\ge r_{n,m}}\,\max_{0\le k\le n} r^m\mathbb{P}(\|U_{n,k}\| > r) .
\end{split}
\end{equation}
Combining the cases~\eqref{eq:Case1Deltanm},~\eqref{eq:Case2Deltanm} and~\eqref{eq:Case3Deltanm}, we get
\begin{align*}
\delta_{n,m} &\le 2^{2m^2/3 + 8m/3 +1}\varepsilon_{n}^{m+1}\Phi_{AC,0} + (2^{2m/3+1} + 2)\Phi_{AC,m}\varepsilon_n\\ 
&\quad+ \frac{2C_0 \log(ep)r_{n,m}^m M_n(2^{2m/3}\varepsilon_n)}{\varepsilon_n^{2}} + \frac{r_{n,m}^m\bar{L}_{n,m}}{L_n 2^{m}e^{\mathfrak{C}}}\left(\frac{\log(ep)}{n(2^{2m+1}e^{2\mathfrak{C}}C_0L_n)}\right)^{(m+1)/3}\\ 
&\quad+ \sup_{r\ge r_{n,m}}\,\max_{0\le k\le n} r^m\mathbb{P}(\|U_{n,k}\| > r).
\end{align*}
\subsection{Proof of Proposition~\ref{prop:BentkusProof}}
Following the smoothing inequality, we need to control
\[
I_j := \left|\mathbb{E}[\varphi(W_{n,j} + n^{-1/2}X_j) - \varphi(W_{n,j} + n^{-1/2}Y_j)]\right|.
\]
Using the equality of mean and variance of $X, Y$, we get
\[
I_j \le I_j^{(1)} + I_j^{(2)} := \int_{\|x\| \le \varepsilon n^{1/2}}|\mathbb{E}[\mbox{Rem}_n(W_{n,j}, x)]||\zeta|(dx) + \int_{\|x\| > \varepsilon n^{1/2}} |\mathbb{E}\mbox{Rem}_n(W_{n,j}, x)||\zeta|(dx).
\]
Note that for all $x$,
\[
|\mbox{Rem}_n(W_{n,j}, x)| \le \frac{C_3\varepsilon^{-3}}{6n^{3/2}}\norm{x}^3\mathbbm{1}\left\{r - \varepsilon - n^{-1/2}\norm{x} \le \norm{W_{n,j}} \le r + \varepsilon + n^{-1/2}\norm{x}\right\},
\]
which implies that
\begin{equation*}
\begin{split}
I_j^{(1)} &\le \frac{C_3\varepsilon^{-3}}{6n^{3/2}}\int_{\norm{x} \le \varepsilon n^{1/2}}\mathbb{P}\left(r - \varepsilon - n^{-1/2}\norm{x} \le \norm{W_{n,j}} \le r + \varepsilon + n^{-1/2}\norm{x}\right)\norm{x}^3|\zeta|(dx).
\end{split}
\end{equation*}
Recall that
\[
W_{n,j} = (1 - 1/n)^{1/2}(n-1)^{-1/2}(X_1 + \ldots + X_{j-1} + Y_{j+1} + \ldots + Y_n),
\]
and this is a constant multiple of scaled average of $n-1$ independent random variables. This implies that with $t_n = (1 - 1/n)^{-1/2}$,
\begin{align*}
&\mathbb{P}(r - \varepsilon - n^{-1/2}\|x\| \le \|W_{n,j}\| \le r + \varepsilon + n^{-1/2}\|x\|)\\ &\qquad\le \mathbb{P}(t_n(r - \varepsilon - n^{-1/2}\|x\|) \le \|Y\| \le t_n(r + \varepsilon + n^{-1/2}\|x\|)) + 2\delta_{n-1,0}\\
&\qquad\le t_n(\varepsilon + n^{-1/2}\|x\|)\Phi_{AC,0} + 2\delta_{n-1,0}.
\end{align*}
Substituting this in the inequality for $I_{j}^{(1)}$ yields
\begin{equation*}
\begin{split}
I_j^{(1)} &\le \frac{C_3\varepsilon^{-3}}{6n^{3/2}}\int_{\norm{x} \le \varepsilon n^{1/2}}\left[t_n\Phi_{AC,0}(\varepsilon + n^{-1/2}\|x\|) + 2\delta_{n-1,0}\right]\norm{x}^3|\zeta|(dx)\\
&\le \frac{C_3\varepsilon^{-3}}{6n^{3/2}}\int_{\norm{x} \le \varepsilon n^{1/2}}\left[2t_n\Phi_{AC,0}\varepsilon + 2\delta_{n-1,0}\right]\norm{x}^3|\zeta|(dx)\\
&\le \frac{C_3\varepsilon^{-3}\left\{2t_n\Phi_{AC,0}\varepsilon + 2\delta_{n-1,0}\right\}}{6n^{3/2}}\int_{\norm{x} \le \varepsilon n^{1/2}}\norm{x}^3|\zeta|(dx). 
\end{split}
\end{equation*}
For $I_j^{(2)}$, note that for all $x$,
\[
|\mbox{Rem}_n(W_{n,j}, x)| \le \frac{C_2\varepsilon^{-2}}{n}\norm{x}^2\mathbbm{1}\left\{r - \varepsilon - n^{-1/2}\norm{x} \le \norm{W_{n,j}} \le r + \varepsilon + n^{-1/2}\norm{x}\right\}.
\]
Thus by the calculation above, we get
\begin{align*}
I_j^{(2)} &\le \frac{C_2\varepsilon^{-2}}{n}\int_{\norm{x} > \varepsilon n^{1/2}}\norm{x}^2\mathbb{P}\left(r - \varepsilon - n^{-1/2}\norm{x} \le \norm{W_{n,j}} \le r + \varepsilon + n^{-1/2}\norm{x}\right)|\zeta|(dx)\\
&\le \frac{C_2\varepsilon^{-2}}{n}\int_{\norm{x} > \varepsilon n^{1/2}}\left[t_n\Phi_{AC,0}(\varepsilon + n^{-1/2}\|x\|) + 2\delta_{n-1,0}\right]\norm{x}^2|\zeta|(dx)\\
&\le \frac{C_2\varepsilon^{-3}\{2t_n\Phi_{AC,0}\varepsilon + 2\delta_{n-1,0}\}}{n^{3/2}}\int_{\|x\| > \varepsilon n^{1/2}}\|x\|^3|\zeta|(dx). 
\end{align*}
Adding the bounds for $I_j^{(1)}$ and $I_j^{(2)}$, we get
\[
I_j \le \frac{(C_2 + C_3)\nu_3^3\varepsilon^{-3}\{2t_n\Phi_{AC,0}\varepsilon + 2\delta_{n-1,0}\}}{n^{3/2}}.
\]
This implies that for $1 \le k\le n$,
\[
I \le \sum_{j = 1}^k I_j \le \frac{(C_2 + C_3)\nu_3^3\varepsilon^{-3}}{n^{1/2}}\{2t_n\Phi_{AC,0}\varepsilon + 2\delta_{n-1,0}\}.
\]
Adding in the $\mathbb{P}(r - \varepsilon \le \|Y\| \le r + \varepsilon) \le \Phi_{AC,0}\varepsilon$ to $I$ yields
\[
\delta_{n,0} \le (C_2 + C_3)\nu_3^3\varepsilon^{-3}n^{-1/2}\{2\sqrt{2}\Phi_{AC,0}\varepsilon + 2\delta_{n-1,0}\} + \Phi_{AC,0}\varepsilon.
\]
Define
\begin{align*}
F(\varepsilon) &:= 2(C_2 + C_3)\nu_3^3\varepsilon^{-3}n^{-1/2},\quad\mbox{and}\quad\Upsilon(\varepsilon) := \Phi_{AC,0}\varepsilon\left[\sqrt{2} F(\varepsilon) + 1\right].
\end{align*}
The recursive inequality can be written as $\delta_{n,0} \le F(\varepsilon)\delta_{n-1,0} + \Upsilon(\varepsilon).$
Take $$\varepsilon = \nu_3\left[\frac{4(C_2+C_3)}{n^{1/2}}\right]^{1/3},$$ so that $F(\varepsilon) = 1/2$. Then, we have:
$$\delta_{n,0} \leq \frac{1}{2}\delta_{n-1,0} + A_n,$$
where $$A_n := \left(\frac{1}{\sqrt{2}}+1\right)\Phi_{AC,0}\nu_3\left[\frac{4(C_2+C_3)}{n^{1/2}}\right]^{1/3}~.$$
Now, we have for all $n \geq 2$:
$$\frac{A_{n-1}}{A_n} = \left(\frac{n}{n-1}\right)^{1/6} \leq 2^{1/6}.$$
Hence, Lemma~\ref{lem:RecursiveIneq} yields:
\begin{align*}
\delta_{n,0} &\leq \frac{\delta_{1,0}}{2^{n-1}} + 2.28A_n\\ &\le \frac{1}{2^{n-1}} + 3.9\Phi_{AC,0}\nu_3\left[\frac{4(C_2+C_3)}{n^{1/2}}\right]^{1/3}\\ &\le \frac{1}{2^{n-1}} + 8\Phi_{AC,0}\nu_3\frac{\left(C_0\log^2(ep)\right)^{1/3}}{n^{1/6}}~. 
\end{align*}

\subsection{Proof of Theorem~\ref{thm:OptimalCLT}}
Following the proof of Proposition~\ref{prop:BentkusProof}, we need to control
\[
I_j := \left|\mathbb{E}[\varphi(W_{n,j} + n^{-1/2}X_j) - \varphi(W_{n,j} + n^{-1/2}Y_j)]\right|.
\]
Define the event $\mathcal{E} := \{\|x\| \le n^{1/2}\varepsilon/\log(ep)\}$. Using the equality of mean and variance of $X, Y$, we get
\[
I_j \le I_j^{(1)} + I_j^{(2)} := \int_{x\in\mathcal{E}}|\mathbb{E}[\mbox{Rem}_n(W_{n,j}, x)]||\zeta|(dx) + \int_{x\in\mathcal{E}^c} |\mathbb{E}\mbox{Rem}_n(W_{n,j}, x)||\zeta|(dx).
\]
To bound $I_j^{(1)}$, observe that
\[
\mbox{Rem}_n(W_{n,j},x) = \frac{1}{2}\sum_{j_1,j_2,j_3 = 1}^p \frac{x(j_1)x(j_2)x(j_3)}{n^{3/2}}\int_0^1 (1 - t)^2\partial_{j_1,j_2,j_3}\varphi(W_{n,j} + tn^{-1/2}x)dt.
\]
Now following the steps from Lemma~\ref{lem:GeneralmChern}, we have
\begin{align*}
&|\mbox{Rem}_n(W_{n,j},x)|\mathbbm{1}\{x\in\mathcal{E}\}\\
&\quad\le \frac{e^{\mathfrak{C}}}{2n^{3/2}}\sum_{j_1,j_2,j_3 = 1}^p |x(j_1)x(j_2)x(j_3)|D_{j_1j_2j_3}(W_{n,j})\mathbbm{1}\{|\|W_{n,j}\| - r| \le \varepsilon(1 + 1/\log(ep)), x\in\mathcal{E}\},
\end{align*}
which implies that
\begin{align*}
\sum_{j=1}^n I_j^{(1)} &\le \frac{e^{\mathfrak{C}}}{2n^{3/2}}\sum_{j=1}^n \max_{1\le j_1\le p}\int |x(j_1)|^3|\zeta|(dx)\mathbb{E}\left[\sum_{j_1,j_2,j_3=1}^pD_{j_1j_2j_3}(W_{n,j})\mathbbm{1}\{|\|W_{n,j}\| - r| \le 2\varepsilon\}\right].
\end{align*}
Since $\sum_{j_1,j_2,j_3 =1}^p D_{j_1j_2j_3}(z) \le C_3\varepsilon^{-3}$ for all $z\in\mathbb{R}^p$, we get
\begin{equation}\label{eq:Ij1Bound}
\sum_{j=1}^n I_j^{(1)} \le \frac{C_3\varepsilon^{-3}e^{\mathfrak{C}}}{2n^{3/2}}\max_{1\le j_1\le p}\int |x(j_1)|^3|\zeta|(dx)\sum_{j=1}^n \mathbb{P}(r - 2\varepsilon \le \|W_{n,j}\| \le r + 2\varepsilon)
\end{equation}
Recall that
\[
W_{n,j} = (1 - 1/n)^{1/2}(n-1)^{-1/2}(X_1 + \ldots + X_{j-1} + Y_{j+1} + \ldots + Y_n),
\]
and this is a constant multiple of scaled average of $n-1$ independent random variables. This implies that with $t_n = (1 - 1/n)^{-1/2}$,
\begin{align*}
\mathbb{P}(r - 2\varepsilon \le \|W_{n,j}\| \le r + 2\varepsilon) &\le \mathbb{P}(t_n(r - 2\varepsilon) \le \|Y\| \le t_n(r + 2\varepsilon)) + \delta_{n-1,0}\\
&\le 2t_n\varepsilon\Phi_{AC,0} + 2\delta_{n-1,0}.
\end{align*}
Substituting this in~\eqref{eq:Ij1Bound} yields
\begin{equation}\label{eq:FinalIj1Bound}
\mathbf{I} := \sum_{j=1}^n I_j^{(1)} \le \frac{C_3\varepsilon^{-3}e^{\mathfrak{C}}L_n}{2n^{1/2}}\{2\sqrt{2}\varepsilon\Phi_{AC,0} + 2\delta_{n-1,0}\}.
\end{equation}
For $I_j^{(2)}$, note that for all $x$,
\[
|\mbox{Rem}_n(W_{n,j}, x)| \le \frac{C_2\varepsilon^{-2}}{n}\norm{x}^2\mathbbm{1}\left\{r - \varepsilon - n^{-1/2}\norm{x} \le \norm{W_{n,j}} \le r + \varepsilon + n^{-1/2}\norm{x}\right\}.
\]
Thus by the calculation above, we get
\begin{align*}
I_j^{(2)} &\le \frac{C_2\varepsilon^{-2}}{n}\int_{x\in\mathcal{E}^c}\norm{x}^2\mathbb{P}\left(r - \varepsilon - n^{-1/2}\norm{x} \le \norm{W_{n,j}} \le r + \varepsilon + n^{-1/2}\norm{x}\right)|\zeta|(dx)\\
&\le \frac{C_2\varepsilon^{-2}}{n}\int_{x\in\mathcal{E}^c}\left[t_n\Phi_{AC,0}(\varepsilon + n^{-1/2}\|x\|) + 2\delta_{n-1,0}\right]\norm{x}^2|\zeta|(dx)
\end{align*}
This implies
\begin{align*}
\mathbf{II} &:= \sum_{j=1}^n I_j^{(2)} \le C_2\varepsilon^{-2}\int_{x\in\mathcal{E}^c} \left[\sqrt{2}\Phi_{AC,0}(\varepsilon + n^{-1/2}\|x\|) + 2\delta_{n-1,0}\right]\|x\|^2|\zeta|(dx)\\
&\le \{2\delta_{n-1,0} + \sqrt{2}\Phi_{AC,0}\varepsilon\}\frac{C_2\log^{\tau}(ep)}{\varepsilon^{2+\tau}n^{\tau/2}}\int \|x\|^{2 + \tau}|\zeta|(dx) + \frac{\sqrt{2}\Phi_{AC,0}C_2(\log(ep))^{\tau - 1}}{\varepsilon^{\tau + 1}n^{\tau/2}}\int {\|x\|^{2 + \tau}}|\zeta|(dx)\\
&\le \{2\delta_{n-1,0} + \sqrt{2}\Phi_{AC,0}\varepsilon\}\frac{C_2\nu_{2 + \tau}^{2 + \tau}\log^{\tau}(ep)}{\varepsilon^{2+\tau}n^{\tau/2}} + \frac{\sqrt{2}\Phi_{AC,0}C_2\nu_{2+\tau}^{2 + \tau}(\log(ep))^{\tau - 1}}{\varepsilon^{\tau + 1}n^{\tau/2}}.
\end{align*}
Adding $\mathbf{I}$ and $\mathbf{II}$, we obtain
\[
\mathbf{I} + \mathbf{II} \le \{2\delta_{n-1,0} + \sqrt{2}\Phi_{AC,0}\varepsilon\}\left[\frac{C_3\varepsilon^{-3}e^{\mathfrak{C}}L_n}{n^{1/2}} + \frac{C_2\nu_{2 + \tau}^{2 + \tau}\log^{\tau}(ep)}{\varepsilon^{2+\tau}n^{\tau/2}}\right] + \frac{\sqrt{2}\Phi_{AC,0}C_2\nu_{2+\tau}^{2 + \tau}(\log(ep))^{\tau - 1}}{\varepsilon^{\tau + 1}n^{\tau/2}}.
\]
Adding in the anti-concentration term, we get
\[
\delta_{n,0} \le \{2\delta_{n-1,0} + \sqrt{2}\Phi_{AC,0}\varepsilon\}\left[\frac{C_3\varepsilon^{-3}e^{\mathfrak{C}}L_n}{n^{1/2}} + \frac{C_2\nu_{2 + \tau}^{2 + \tau}\log^{\tau}(ep)}{\varepsilon^{2+\tau}n^{\tau/2}}\right] + \frac{\sqrt{2}\Phi_{AC,0}C_2\nu_{2+\tau}^{2 + \tau}(\log(ep))^{\tau - 1}}{\varepsilon^{\tau + 1}n^{\tau/2}} + \Phi_{AC,0}\varepsilon.
\]
Define
\begin{align*}
F(\varepsilon) &:= \frac{2C_3\varepsilon^{-3}e^{\mathfrak{C}}L_n}{n^{1/2}} + \frac{2C_2\nu_{2 + \tau}^{2 + \tau}\log^{\tau}(ep)}{\varepsilon^{2+\tau}n^{\tau/2}},\\
\Upsilon(\varepsilon) &:= \frac{1}{\sqrt{2}}\Phi_{AC,0}\varepsilon F(\varepsilon) + \frac{\sqrt{2}\Phi_{AC,0}C_2\nu_{2+\tau}^{2 + \tau}(\log(ep))^{\tau - 1}}{\varepsilon^{\tau + 1}n^{\tau/2}} + \Phi_{AC,0}\varepsilon\\&= \Phi_{AC,0}\varepsilon\left[\frac{1}{\sqrt{2}}F(\varepsilon) + 1\right] + \frac{\sqrt{2}\Phi_{AC,0}C_2\nu_{2+\tau}^{2+\tau}\log^{\tau}(ep)}{\varepsilon^{\tau+2}n^{\tau/2}}\frac{\varepsilon}{\log(ep)}\\
&\le \Phi_{AC,0}\varepsilon\left[\frac{1}{\sqrt{2}}F(\varepsilon) + 1\right] + \frac{\varepsilon F_2(\varepsilon)\Phi_{AC,0}}{\sqrt{2}\log(ep)}\\
&= \varepsilon\left[\frac{1}{\sqrt{2}}\Phi_{AC,0}F(\varepsilon) + \Phi_{AC,0} + \frac{\Phi_{AC, 0}F_2(\varepsilon)}{\sqrt{2}\log(ep)}\right].
\end{align*}
The recursive inequality can be written as $\delta_{n,0} \le F(\varepsilon)\delta_{n-1,0} + \Upsilon(\varepsilon).$ 
Take
\[
\varepsilon = \max\left\{\left(\frac{8C_3e^{\mathfrak{C}}L_n}{n^{1/2}}\right)^{1/3}, \nu_{2+\tau}\left(\frac{8C_2\log^{\tau}(ep)}{n^{\tau/2}}\right)^{1/(2+\tau)}\right\},
\]
so that $\max\{F_1(\varepsilon), F_2(\varepsilon)\} \le 1/4$. This implies that 
\[
\delta_{n,0} \le \frac{1}{2}\delta_{n-1,0} + A_n,
\]
where
\[
A_n := \Phi_{AC, 0}\left(\frac{1}{2\sqrt{2}}+1+\frac{1}{4\sqrt{2}\log(ep)}\right)\left[\left(\frac{8C_3e^{\mathfrak{C}}L_n}{n^{1/2}}\right)^{1/3} + \nu_{2+\tau}\left(\frac{8C_0\log^{\tau+1}(ep)}{n^{\tau/2}}\right)^{1/(2+\tau)}\right]
\]
Note that $A_n$ is decreasing in $n$ and for $n\ge 2$, $n-1\ge n/2$ which implies that $2^{-1/2} \le A_n/A_{n-1} \le 1$. Hence Lemma~\ref{lem:RecursiveIneq} yields
\begin{align*}
\delta_{n,0} &\le \frac{\delta_{1,0}}{2^{n-1}} + \frac{\sqrt{2}}{\sqrt{2} - 1}A_n\\
&\le \frac{\delta_{1,0}}{2^{n-1}} + 6\Phi_{AC, 0}\left[\left(\frac{8C_0\log^2(ep)e^{\mathfrak{C}}L_n}{n^{1/2}}\right)^{1/3} + \nu_{2+\tau}\left(\frac{8C_0\log^{\tau+1}(ep)}{n^{\tau/2}}\right)^{1/(2+\tau)}\right].
\end{align*}
Therefore, we get
\[
\delta_{n,0} \lesssim \frac{1}{2^n} + \Phi_{AC,0}\left[\left(\frac{L_n^2\log^4(ep)}{n}\right)^{1/6} + \frac{\nu_{2+\tau}(\log(ep))^{(\tau+1)/(\tau+2)}}{n^{\tau/(2(2+\tau))}}\right].
\]
\subsection{Proof of Theorem~\ref{thm:OptimalNonUniformCLT}}
Recall $\delta_{n,m} = \sup_{r\ge 0}\Delta_{n,m}(r)$, where
\[
\Delta_{n,m}(r) = \max_{1\le k\le n}\,r^m|\mathbb{P}(\|U_{n,k}\| \le r) - \mathbb{P}(\|U_{n,0}\| \le r)|.
\]
If $r \le 4\varepsilon$ then $\Delta_{n,m}(r) \le (4\varepsilon)^m$ and hence enough to bound $\Delta_{n,m}(r)$ for $r > 4\varepsilon$. Smoothing inequality yields
\begin{equation}\label{eq:MainSmoothingIneq}
\begin{split}
\Delta_{n,m}(r) &\le r^m\max_{1\leq k \leq n}\max_{j=1,2}|\mathbb{E}[\varphi_j(U_{n,k}) - \varphi_j(U_{n,0})]| + r^m\mathbb{P}(r - \varepsilon \le \|U_{n,0}\| \le r + \varepsilon)\\
&\le r^m\max_{1\leq k \leq n}\max_{j=1,2}|\mathbb{E}[\varphi(U_{n,k}) - \varphi(U_{n,0})]| + \Phi_{AC,m}\varepsilon.
\end{split}
\end{equation}
Write $\varphi$ for $\varphi_1, \varphi_2$ both. Observe now that
\[
I = |\mathbb{E}[\varphi(U_{n,k}) - \varphi(U_{n,0})]| \le \sum_{j=1}^k I_j,
\]
where
\begin{align*}
I_j &= |\mathbb{E}[\varphi(W_{n,j} + n^{-1/2}X_j) - \varphi(W_{n,j} + n^{-1/2}Y_j)]|\\
&= \left|\mathbb{E}\int \varphi(W_{n,j} + n^{-1/2}x)\zeta(dx)\right|\\
&= \left|\mathbb{E}\int \mbox{Rem}_n(W_{n,j}, x)\zeta(dx)\right|\\
&= \left|\int_{x\in\mathcal{E}_1} + \int_{x\in\mathcal{E}_2} + \int_{x\in\mathcal{E}_3}\mbox{Rem}_n(W_{n,j}, x)\zeta(dx)\right|,
\end{align*}
where $\mathcal{E}_1 := \{x:\|x\| \le n^{1/2}\varepsilon/\log(ep)\}$, $\mathcal{E}_2 := \{x:n^{1/2}\varepsilon/\log(ep) < \|x\| \le rn^{1/2}/2\}$, and $\mathcal{E}_3 := \{x:\|x\| > rn^{1/2}/2\}.$
Set
\[
I_j^{(\ell)} = \mathbb{E}\int_{x\in\mathcal{E}_{\ell}} |\mbox{Rem}_n(W_{n,j}, x)||\zeta|(dx)\quad\mbox{for}\quad \ell = 1, 2, 3, 1\le j\le n.
\]
To bound $I_j^{(1)}$ note that
\[
\mbox{Rem}_n(W_{n,j}, x) = \frac{1}{2}\sum_{j_1,j_2,j_3 = 1}^p \frac{x(j_1)x(j_2)x(j_3)}{n^{3/2}}\int_0^1 (1 - t)^2\partial_{j_1j_2j_3}\varphi(W_{n,j} + tn^{-1/2}x)dt,
\]
and hence for $x\in\mathcal{E}_1$,
\begin{align*}
&|\mbox{Rem}_n(W_{n,j},x)|\mathbbm{1}\{x\in\mathcal{E}_1\}\\ 
&\quad\le \frac{e^{\mathfrak{C}}}{2n^{3/2}}\sum_{j_1j_2j_3} |x(j_1)x(j_2)x(j_3)|D_{j_1j_2j_3}(W_{n,j})\mathbbm{1}\{|\|W_{n,j}\| - r| \le \varepsilon(1 + 1/\log(ep)), x\in\mathcal{E}_1\}.
\end{align*}
Therefore,
\begin{align*}
I_j^{(1)} &\le \frac{e^{\mathfrak{C}}}{2n^{3/2}}\sum_{j_1j_2j_3}\max_{1\le j_1\le p}\int |x(j_1)|^3|\zeta|(dx)\mathbb{E}[D_{j_1j_2j_3}(W_{n,j})\mathbbm{1}\{r - 2\varepsilon \le \|W_{n,j}\| \le r + 2\varepsilon\}]\\
&\le \frac{C_3\varepsilon^{-3}e^{\mathfrak{C}}L_n}{2n^{3/2}}\mathbb{P}(r - 2\varepsilon \le \|W_{n,j}\| \le r + 2\varepsilon).
\end{align*}
To simplify the right hand side note that $W_{n,j} \overset{d}{=} ((n-1)/n)^{1/2}U_{n-1,j-1}$ and hence
\begin{align*}
&\mathbb{P}(r - 2\varepsilon \le \|W_{n,j}\| \le r + 2\varepsilon)\\ &\quad= \mathbb{P}(\sqrt{n/(n-1)}(r - 2\varepsilon) \le \|U_{n-1,j}\| \le \sqrt{n/(n-1)}(r + 2\varepsilon))\\
&\quad= \mathbb{P}(\sqrt{n/(n-1)}(r - 2\varepsilon) \le \|U_{n,0}\| \le \sqrt{n/(n-1)}(r + 2\varepsilon)) + \frac{2\delta_{n-1,m}}{((r - 2\varepsilon)\sqrt{n/(n-1)})^m}\\
&\quad\le \frac{2\Phi_{AC,m}\varepsilon\sqrt{n/(n-1)}}{(r\sqrt{n/(n-1)})^m} + \frac{2\delta_{n-1,m}}{((r - 2\varepsilon)\sqrt{n/(n-1)})^m}\\
&\quad\le \frac{2\sqrt{2}\Phi_{AC,m}\varepsilon}{r^m} + \frac{2^{m+1}\delta_{n-1,m}}{r^m},
\end{align*}
where the last inequality follows from the fact that $r - 2\varepsilon \ge r/2$ since $r > 4\varepsilon$ and $n\ge 2$. Therefore
\begin{equation}\label{eq:BoundIj1}
\sum_{j=1}^n I_{j}^{(1)} \le \frac{C_3\varepsilon^{-3}e^{\mathfrak{C}}L_n}{2n^{1/2}r^m}\left[2\sqrt{2}\Phi_{AC,m}\varepsilon + 2^{m+1}\delta_{n-1,m}\right].
\end{equation}

We now bound $I_j^{(2)}$. For $x\in\mathcal{E}_2$, we use
\[
|\mbox{Rem}_n(W_{n,j},x)| \le \frac{C_2\varepsilon^{-2}\|x\|^2}{n}\mathbbm{1}\{r - \varepsilon - n^{-1/2}\|x\| \le \|W_{n,j}\| \le r + \varepsilon + n^{-1/2}\|x\|\}.
\]
This implies
\[
I_j^{(2)} \le \int_{x\in\mathcal{E}_2}\frac{C_2\varepsilon^{-2}\|x\|^2}{n}\mathbb{P}(r - \varepsilon - n^{-1/2}\|x\| \le \|W_{n,j}\| \le r + \varepsilon + n^{-1/2}\|x\|)|\zeta|(dx).
\]
We can control the probability on right hand side in the same way done for $I_j^{(1)}$ to get
\begin{align*}
&\mathbb{P}(r - \varepsilon - n^{-1/2}\|x\| \le \|W_{n,j}\| \le r + \varepsilon + n^{-1/2}\|x\|)\\
&\quad\le \frac{\Phi_{AC,m}(\varepsilon + n^{-1/2}\|x\|)\sqrt{n/(n-1)}}{(r\sqrt{n/(n-1)})^m} + \frac{2\delta_{n-1,m}}{((r - \varepsilon - n^{-1/2}\|x\|)\sqrt{n/(n-1)})^m}.
\end{align*}
Since $r > 4\varepsilon$ and $\varepsilon/\log(ep) \le n^{-1/2}\|x\| \le r/2$ for $x\in\mathcal{E}_2$, we get $r - \varepsilon - n^{-1/2}\|x\| > r - r/4 - r/2 = r/4.$ Therefore
\[
\mathbb{P}(r - \varepsilon - n^{-1/2}\|x\| \le \|W_{n,j}\| \le r + \varepsilon + n^{-1/2}\|x\|) \le \frac{\sqrt{2}\Phi_{AC,m}(\varepsilon + n^{-1/2}\|x\|) + 2^{2m+1}\delta_{n-1,m}}{r^m},
\]
and hence
\begin{equation}\label{eq:BoundIj2}
\sum_{j=1}^n I_j^{(2)} \le \int_{x\in \mathcal{E}_2} \frac{C_2\varepsilon^{-2}\|x\|^2}{r^m}\left[\sqrt{2}\Phi_{AC,m}(\varepsilon + n^{-1/2}\|x\|) + 2^{2m+1}\delta_{n-1,m}\right]|\zeta|(dx).
\end{equation}
Finally for $n^{-1/2}\|x\| > r/2$, we use
\[
|\mbox{Rem}_n(W_{n,j},x)| \le \frac{C_2\varepsilon^{-2}\|x\|^2}{n},
\]
which implies
\begin{equation}\label{eq:BoundIj3}
\sum_{j=1}^n I_j^{(3)} \le \int_{x\in\mathcal{E}_3}{C_2\varepsilon^{-2}\|x\|^2}|\zeta|(dx).
\end{equation}
Combining~\eqref{eq:BoundIj1},~\eqref{eq:BoundIj2} and~\eqref{eq:BoundIj3} in~\eqref{eq:MainSmoothingIneq}, we get
\begin{align*}
\delta_{n,m} &\le 4^m\varepsilon^m + \Phi_{AC,m}\varepsilon + \frac{C_3\varepsilon^{-3}e^{\mathfrak{C}}L_n}{n^{1/2}}\left[\sqrt{2}\Phi_{AC,m}\varepsilon + 2^{m}\delta_{n-1,m}\right]\\
&\qquad+ \int_{x\in\mathcal{E}_2}{C_2\varepsilon^{-2}\|x\|^2}\left[\sqrt{2}\Phi_{AC,m}(\varepsilon + n^{-1/2}\|x\|) + 2^{2m+1}\delta_{n-1,m}\right]|\zeta|(dx)\\
&\qquad+ \sup_{r > 4\varepsilon} r^m\int_{x\in\mathcal{E}_3} C_2\varepsilon^{-2}\|x\|^2|\zeta|(dx)\\
&=: \mathbf{I} + \mathbf{II} + \mathbf{III} + \mathbf{IV} + \mathbf{V}.
\end{align*}
We now use the moment condition on $\|X\|$ and bound the terms above. Note that
\begin{align*}
\mathbf{III} + \mathbf{IV} &= \frac{C_3\varepsilon^{-3}e^{\mathfrak{C}}L_n}{n^{1/2}}\left[\sqrt{2}\Phi_{AC,m}\varepsilon + 2^m\delta_{n-1,m}\right]\\ &\qquad+ \int_{x\in\mathcal{E}_2}{C_2\varepsilon^{-2}\|x\|^2}\left[\sqrt{2}\Phi_{AC,m}\varepsilon + 2^{2m+1}\delta_{n-1,m}\right]|\zeta|(dx)\\
&\qquad+ \int_{x\in\mathcal{E}_2} C_2\varepsilon^{-2}\|x\|^2\left(\sqrt{2}\Phi_{AC,m}n^{-1/2}\|x\|\right)|\zeta|(dx)\\
&\le \frac{C_3\varepsilon^{-3}e^{\mathfrak{C}}L_n}{n^{1/2}}\left[\sqrt{2}\Phi_{AC,m}\varepsilon + 2^m\delta_{n-1,m}\right]\\ &\qquad+ C_2\varepsilon^{-2}\left[\sqrt{2}\Phi_{AC,m}\varepsilon + 2^{2m+1}\delta_{n-1,m}\right]\int_{x\in\mathcal{E}_2}\frac{\|x\|^{2+\tau}}{(n^{1/2}\varepsilon/\log(ep))^{\tau}}|\zeta|(dx)\\
&\qquad+ \frac{\sqrt{2}\Phi_{AC,m}C_2\varepsilon^{-2}}{n^{1/2}}\int_{x\in\mathcal{E}_2} \frac{\|x\|^{2+\tau}}{(n^{1/2}\varepsilon/\log(ep))^{\tau - 1}}|\zeta|(dx)\\
&\le \frac{C_3\varepsilon^{-3}e^{\mathfrak{C}}L_n}{n^{1/2}}\left[\sqrt{2}\Phi_{AC,m}\varepsilon + 2^m\delta_{n-1,m}\right]\\ &\qquad+ C_2\varepsilon^{-2}\left[\sqrt{2}\Phi_{AC,m}\varepsilon + 2^{2m+1}\delta_{n-1,m}\right]\frac{\nu_{2+\tau}^{2+\tau}}{(n^{1/2}\varepsilon/\log(ep))^{\tau}}\\
&\qquad+ \frac{\sqrt{2}\Phi_{AC,m}C_2\varepsilon^{-2}}{n^{1/2}}\frac{\nu_{2+\tau}^{2+\tau}}{(n^{1/2}\varepsilon/\log(ep))^{\tau - 1}}\\
&= \delta_{n-1,m}\left[F_1(\varepsilon) + F_2(\varepsilon)\right] + \frac{\sqrt{2}\Phi_{AC,m}\varepsilon}{2^m}\left(F_1(\varepsilon) + \frac{F_2(\varepsilon)}{2^{m+1}} + \frac{F_2(\varepsilon)}{2^{m+1}\log(ep)}\right),
\end{align*}
where
\[
F_1(\varepsilon) = \frac{2^m C_3\varepsilon^{-3}e^{\mathfrak{C}}L_n}{n^{1/2}}\quad\mbox{and}\quad F_2(\varepsilon) = \nu_{2+\tau}^{2+\tau}\frac{2^{2m+1}C_2\varepsilon^{-2}}{(n^{1/2}\varepsilon/\log(ep))^{\tau}}.
\]
To bound $\mathbf{V}$, note that
\[
\int_{x\in\mathcal{E}_3} C_2\varepsilon^{-2}\|x\|^2|\zeta|(dx) \le C_2\varepsilon^{-2}\int \frac{\|x\|^{2 + \tau}|\zeta|(dx)}{(rn^{1/2}/2)^{\tau}} = \frac{C_2\varepsilon^{-2}\nu_{2+\tau}^{2+\tau}2^{\tau}}{r^{\tau}n^{\tau/2}}.
\]
Multiplying by $r^m$ and taking supremum over $r > 4\varepsilon$, we get
\[
\mathbf{V} \le \frac{C_2\nu_{2+\tau}^{2+\tau}2^{\tau}}{n^{\tau/2}(4\varepsilon)^{\tau-m}\varepsilon^2} = \frac{F_2(\varepsilon)\varepsilon^m}{2(2\log(ep))^{\tau}},
\]
since $\tau > m$. Hence
\[
\delta_{n,m} \le [F_1(\varepsilon) + F_2(\varepsilon)]\delta_{n-1,m} + \Upsilon(\varepsilon),
\]
where
\[
\Upsilon(\varepsilon) := 2^{2m}\varepsilon^{m} + \Phi_{AC,m}\varepsilon + \frac{\sqrt{2}\Phi_{AC,m}\varepsilon}{2^m}\left(F_1(\varepsilon) + \frac{F_2(\varepsilon)}{2^m}\right) + \frac{F_2(\varepsilon)\varepsilon^m}{2(2\log(ep))^{\tau}}
\]
Choose $\varepsilon$ so that $\max\{F_1(\varepsilon), F_2(\varepsilon)\} \le 2^{-2-m/2}$, that is,
\[
\varepsilon = \varepsilon_n := \max\left\{\left(\frac{2^{2 + (3m/2)}C_3e^{\mathfrak{C}}L_n}{n^{1/2}}\right)^{1/3},\,\frac{\nu_{2+\tau}\left(2^{3+(5m/2)}C_2\log^{\tau}(ep)\right)^{1/(2+\tau)}}{n^{\tau/(4 + 2\tau)}}\right\}.
\]
This choice implies (using $m\ge1$)
\begin{align*}
\Upsilon(\varepsilon_n) &\le 2^{2m}\varepsilon_n^m + \Phi_{AC,m}\varepsilon_n + \frac{\sqrt{2}\Phi_{AC,m}\varepsilon_n}{2^m}\left(2^{-2-(m/2)} + 2^{-2-(3m/2)}\right) + \frac{\varepsilon_n^m}{2^{3+(m/2)}(2\log(ep))^{\tau}}\\
&\le (2^{2m} + 1/8)\varepsilon_n^m + \Phi_{AC,m}\varepsilon_n\left(1 + 2^{-(3/2)-(3m/2)}+ 2^{-(3/2)-(5m/2)}\right)\\
&\le (2^{2m}+1/8)\varepsilon_n^m + \Phi_{AC,m}\varepsilon_n\left(1+2^{-3}+2^{-4}\right)\\ &= (2^{2m}+1/8)\varepsilon_n^m + 1.1875\Phi_{AC,m}\varepsilon_n=: a_n,
\end{align*}
and $\delta_{n,m} \le 2^{-1-m/2}\delta_{n-1,m} + a_n$. It is also easy to verify that
\[
\frac{a_{n-1}}{a_n} \le \max\left\{\frac{\varepsilon_{n-1}^m}{\varepsilon_n^m}, \frac{\varepsilon_{n-1}}{\varepsilon_n}\right\} \le \max\left\{\left(\frac{n}{n-1}\right)^{m/6}, \left(\frac{n}{n-1}\right)^{m\tau/(4 + 2\tau)}\right\} \le 2^{m/2}.
\]
Hence the recursion on $\delta_{n,m}$ satisfies the assumption of Lemma~\ref{lem:RecursiveIneq} with $\bar{C} = 2^{m/2}$ and $\kappa = 2^{-1-m/2}$ which now implies that
\[
\delta_{n,m} \le \left(\frac{1}{2^{1 + m/2}}\right)^{n-1}\delta_{1,m} + 2a_n.
\]
To control $\delta_{1,m}$, observe that
\[
\delta_{1,m} = \sup_{r\ge 0}\,r^m\left|\int_{\|x\| > r}\zeta(dx)\right| \le \sup_{r \ge 0}\, r^m\int_{\|x\| > r} \frac{\|x\|^m}{r^m}|\zeta|(dx) = \sup_{r\ge 0}\int_{\|x\| > r} \|x\|^m|\zeta|(dx) = \nu_m^m.
\]
Therefore,
\begin{align*}
\delta_{n,m} &\le 2^{m/2}\left(\frac{\nu_m}{2^{n/2}}\right)^m + 2\left[(2^{2m}+1/8)\varepsilon_n^m + 1.1875\Phi_{AC,m}\varepsilon_n\right]\\ 
&= 2^{m/2}\left(\frac{\nu_m}{2^{n/2}}\right)^m + 2^{2 + 2m}\varepsilon_n^{m} + 2.375\Phi_{AC,m}\varepsilon_n.
\end{align*}
\section{Proofs of Corollaries~\ref{cor:UniformCLT} and~\ref{cor:NonUniformPolyCLT}}
The following result provides a dimension-free bound on $\delta_{n,0}$ and shows bounds on $M_n(\varepsilon_n)$.
\begin{prop}\label{propzero}
If the random vectors $X_1, \ldots, X_n\in\mathbb{R}^p$ satisfy 
\[
\frac{1}{n}\sum_{i=1}^n \mathbb{E}\left[\|X_i\|^{q}\right] ~\le~ q^{q/\alpha}B_{p,n}^{q},\quad\mbox{for all}\quad q \geq 2,
\]
for some $0 < \alpha \le 2$ and $0 < B_{p,n} < \infty$, then for
\begin{equation}\label{eq:SampleChernAssump}
n ~\ge~ \left(\frac{2^{3/\alpha-1}e^{3/\alpha - 2\mathfrak{C}}}{C_0L_n}\right)\times \max\{B_{p,n},C_{p,n}\}^{3}\log(ep),
\end{equation}
where $C_{p,n}:= 8\pi e^{1/e}(\mu_{(n)} + \sigma_{(n)})$, $\mu_{(n)}:= \max_{1\leq i\leq n} \mu_i$ and $\sigma_{(n)}:= \max_{1\leq i \leq n} \sigma_i$, we obtain
\begin{eqnarray*}
\delta_{n,0} &\le& 3e^{2\mathfrak{C}/3}C_0^{1/3}(L_n^{1/3} + 6L_n^{-2/3})\frac{\Phi_{AC,0}\log^{2/3}(ep)}{n^{1/6}} + \frac{\log^{1/3}(ep)\bar{L}_n}{n^{1/3}L_n(2e^{5\mathfrak{C}}C_0L_n)^{1/3}}\\&+& \left(\frac{2C_0n}{L_n^2\log(ep)}\right)^{1/3} \Bigg[C_{\alpha}B_{p,n}^2\exp\left(-\frac{(2ne^{2\mathfrak{C}}C_0L_n/\log(ep))^{\alpha/3}}{2B_{p,n}^{\alpha}\alpha e}\right)\\ &+& \mathcal{C}_2 C_{p,n}^2\exp\left(-\frac{(2ne^{2\mathfrak{C}}C_0L_n/\log(ep))^{2/3}}{4C_{p,n}^{2}e}\right)\Bigg] .
\end{eqnarray*}
for some constant $C_\alpha$ depending only on $\alpha$.
\end{prop}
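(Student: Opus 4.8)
The plan is to feed the polynomial moment hypothesis into Theorem~\ref{thm:UniformCLT}, taken with its built‑in choice $\varepsilon_n=(2e^{2\mathfrak{C}}C_0\log^2(ep)L_n)^{1/3}n^{-1/6}$, so that the leading term $4\Phi_{AC,0}\varepsilon_n=4\cdot2^{1/3}e^{2\mathfrak{C}/3}C_0^{1/3}L_n^{1/3}\,\Phi_{AC,0}\log^{2/3}(ep)n^{-1/6}$ and the $\bar{L}_{n,0}$ term are already essentially in the claimed form, and the only real work is to control the middle (Lyapunov) term $2C_0\log(ep)\,\varepsilon_n^{-2}M_n(\varepsilon_n)$. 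First I would record the truncation level $t_n:=n^{1/2}\varepsilon_n/\log(ep)$, for which $t_n^3=2ne^{2\mathfrak{C}}C_0L_n/\log(ep)$, and observe that the sample‑size hypothesis $n\ge(2^{3/\alpha-1}e^{3/\alpha-2\mathfrak{C}}/(C_0L_n))\max\{B_{p,n},C_{p,n}\}^3\log(ep)$ is precisely equivalent to $t_n\ge(2e)^{1/\alpha}\max\{B_{p,n},C_{p,n}\}$; this threshold is exactly what the truncation estimates below need.

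Next, since the variation of a signed difference of laws is dominated by their sum, $|\zeta_i|\le\mathcal{L}(X_i)+\mathcal{L}(Y_i)$, I would split $M_n(\varepsilon_n)\le M_n^{X}+M_n^{Y}$ with $M_n^{X}:=n^{-1}\sum_i\mathbb{E}[\|X_i\|^2\mathbbm{1}\{\|X_i\|\ge t_n\}]$ and $M_n^{Y}$ defined analogously. For $M_n^{X}$, the pointwise inequality $\|x\|^2\mathbbm{1}\{\|x\|\ge t_n\}\le t_n^{2-q}\|x\|^q$ valid for every $q\ge2$, combined with the \emph{averaged} moment hypothesis, gives $M_n^{X}\le t_n^2(q^{1/\alpha}B_{p,n}/t_n)^q$; the true minimiser $q=e^{-1}(t_n/B_{p,n})^\alpha$ satisfies $q\ge2$ exactly when $t_n\ge(2e)^{1/\alpha}B_{p,n}$, so the minimised bound $t_n^2\exp(-(t_n/B_{p,n})^\alpha/(\alpha e))$, relaxed to the displayed $\exp(-(t_n/B_{p,n})^\alpha/(2\alpha e))$, is legitimate. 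Multiplying by $2C_0\log(ep)\varepsilon_n^{-2}$ and using $t_n^2/\varepsilon_n^2=n/\log^2(ep)$ yields the first bracketed term.

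For $M_n^{Y}$ one first needs a Gaussian moment bound. I would prove $n^{-1}\sum_i\mathbb{E}[\|Y_i\|^q]\le q^{q/2}C_{p,n}^q$ for $q\ge2$: since $y\mapsto\max_j|y(j)|$ is $\sigma_i$‑Lipschitz in the standard‑Gaussian coordinatisation of $Y_i$, the Borell--TIS inequality (equivalently Lemma~3.1 of~\cite{MR2814399}, already invoked in the paper) gives $\mathbb{P}(|\|Y_i\|-\mu_i|>s)\le2e^{-s^2/2\sigma_i^2}$, and together with the mean--median comparison $|\mathbb{E}\|Y_i\|-\mu_i|\le\sigma_i$ (Gaussian Poincar\'e plus Jensen) this gives $\mathbb{E}[\|Y_i\|^q]^{1/q}\lesssim\sqrt{q}\,(\mu_i+\sigma_i)$; bounding $\mu_i,\sigma_i$ by $\mu_{(n)},\sigma_{(n)}$ and carrying the universal constant produces $C_{p,n}=8\pi e^{1/e}(\mu_{(n)}+\sigma_{(n)})$. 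Then I repeat the truncation argument verbatim with exponent $2$ in place of $\alpha$; the optimiser $q=e^{-1}(t_n/C_{p,n})^2\ge2$ is ensured because $\alpha\le2$ forces $(2e)^{1/\alpha}\ge(2e)^{1/2}$, so the hypothesis $t_n\ge(2e)^{1/\alpha}C_{p,n}$ suffices. This produces the second bracketed term; substituting both estimates back into Theorem~\ref{thm:UniformCLT} and collecting the $\Phi_{AC,0}\varepsilon_n$ contribution together with the non‑exponential remainders gives the displayed bound.

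The routine parts are the reduction to bounding $M_n(\varepsilon_n)$ and the $X$‑side truncation. The main obstacle is the $Y$‑side: obtaining a clean, fully explicit sub‑Gaussian moment bound for $\max_j|Y_i(j)|$ with the precise proxy $C_{p,n}=8\pi e^{1/e}(\mu_{(n)}+\sigma_{(n)})$ requires care with the median/mean/concentration‑scale bookkeeping, and then all the universal constants — from Theorem~\ref{thm:UniformCLT}, from the two truncations, and from the Gaussian estimates — must be reconciled so that the collected coefficients match the ones stated.
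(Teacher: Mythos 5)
Your proposal is correct and follows essentially the same route as the paper's own proof: split $M_n(\varepsilon_n)$ via $|\zeta_i|\le\mathcal{L}(X_i)+\mathcal{L}(Y_i)$, bound the $X$-part by an optimally tuned Markov/truncation argument at level $t_n=n^{1/2}\varepsilon_n/\log(ep)$ (the sample-size hypothesis being exactly $t_n\ge(2e)^{1/\alpha}\max\{B_{p,n},C_{p,n}\}$), bound the $Y$-part by Gaussian concentration of $\|Y_i\|$ around its median to get $q^{q/2}C_{p,n}^q$ moment growth and repeat the truncation with $\alpha=2$, then feed the resulting $M_n(\varepsilon_n)$ bound into Theorem~\ref{thm:UniformCLT}. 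The only differences are cosmetic (the paper uses $(q+2)$-moments rather than your $t_n^{2-q}\|x\|^q$ rescaling, and works with the median directly so no mean–median comparison is needed), plus the constant bookkeeping you already flagged.
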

\begin{proof}
To start with, note that
\[
M_n(\varepsilon_n) \le \frac{1}{n}\sum_{i=1}^n \mathbb{E}\left[\|X_i\|^2\mathbbm{1}\{\|X_i\| > n^{1/2}\varepsilon_n/\log(ep)\} + \|Y_i\|^2\mathbbm{1}\{\|Y_i\| > n^{1/2}\varepsilon_n/\log(ep)\}\right].
\]
For any $t > 0$ note that for any $q > 0$,
\begin{align*}
\frac{1}{n}\sum_{i=1}^n \mathbb{E}\left[\|X_i\|^2\mathbbm{1}\{\|X_i\| > t\}\right] &\le \frac{1}{n}\sum_{i=1}^n \frac{\mathbb{E}[\|X_i\|^{q+2}]}{t^q}\\
&\le \frac{(q+2)^{(q+2)/\alpha}B_{p,n}^{q+2}}{t^q}\\
&\le \frac{B_{p,n}^2(q + 2)^{(q+2)/\alpha}}{(t/B_{p,n})^{q}}.
\end{align*}
If $t/B_{p,n} \ge (2e)^{1/\alpha}$, then taking $q = (t/B_{p,n})^{\alpha}/e - 2$ we get
\[
\frac{1}{n}\sum_{i=1}^n \mathbb{E}\left[\|X_i\|^2\mathbbm{1}\{\|X_i\| > t\}\right] \le t^2\exp\left(-\frac{(t/B_{p,n})^{\alpha}}{\alpha e}\right) \le C_{\alpha}B_{p,n}^2\exp\left(-\frac{(t/B_{p,n})^{\alpha}}{2\alpha e}\right),
\]
for some constant $C_\alpha$ depending only on $\alpha$.

 Assumption~\eqref{eq:SampleChernAssump} implies that $t = n^{1/2}\varepsilon_n/\log(ep) = (2ne^{2\mathfrak{C}}C_0L_n/\log(ep))^{1/3}$ satisfies $t/B_{p,n} \ge (2e)^{1/\alpha}$ and hence
\begin{equation}\label{eq:xpart}
\frac{1}{n}\sum_{i=1}^n \mathbb{E}\left[\|X_i\|^2\mathbbm{1}\{\|X_i\| > n^{1/2}\varepsilon_n/\log(ep)\}\right] \le C_{\alpha}B_{p,n}^2\exp\left(-\frac{(2ne^{2\mathfrak{C}}C_0L_n/\log(ep))^{\alpha/3}}{2B_{p,n}^{\alpha}\alpha e}\right).
\end{equation}

Next, note that for every $t > 0$, we have from Lemma 3.1 of~\cite{MR2814399}
\[
\mathbb{P}\left(\big|\|Y_i\| - \mu_i\big| \ge t\right) \leq 2\exp\left(-\frac{t^2}{2\sigma_i^2}\right).
\]
Hence, for every $q>0$, we have:
\begin{eqnarray*}
\mathbb{E}\left[\big|\|Y_i\| - \mu_i\big| ^q\right] &\leq& 2\int_0^\infty qt^{q-1} \exp\left(-\frac{t^2}{2\sigma_i^2}\right)~dt\\ &=& q\sigma_i\sqrt{2\pi} ~\mathbb{E} |N(0,\sigma_i^2)|^{q-1}\\&\leq& q\sigma_i\sqrt{2\pi} \left(\sigma_i e^{1/e} \sqrt{q-1}\right)^{q-1}\\&\leq& \sqrt{2\pi}e^{(q-1)/e}\sigma_i^q q^{(q+1)/2}\\&\leq& \sqrt{2\pi}e^{(q-1)/e}(2\sigma_i)^q q^{q/2}.
\end{eqnarray*}
Hence,
\begin{eqnarray}
\frac{1}{n} \sum_{i=1}^n \mathbb{E}\left[\|Y_i\|^q\right] \leq \sqrt{2\pi}2^{2q}e^{q/e}q^{q/2} \frac{1}{n}\sum_{i=1}^n (\mu_i^q + \sigma_i^q)\leq C_{p,n}^q q^{q/2},
\end{eqnarray}
Hence, by assumption~\eqref{eq:SampleChernAssump}, we have
\begin{equation}\label{eq:ycpn}
\frac{1}{n}\sum_{i=1}^n \mathbb{E}\left[\|Y_i\|^2\mathbbm{1}\{\|Y_i\| > n^{1/2}\varepsilon_n/\log(ep)\}\right] \le \mathcal{C}_2 C_{p,n}^2\exp\left(-\frac{(2ne^{2\mathfrak{C}}C_0L_n/\log(ep))^{2/3}}{4C_{p,n}^{2}e}\right)
\end{equation}
for some universal constant $\mathcal{C}_2$ .
Combining \eqref{eq:xpart} and \eqref{eq:ycpn}, we have:
\begin{eqnarray}\label{mnbound}
&&M_n(\varepsilon_n)\\ &\leq& C_{\alpha}B_{p,n}^2\exp\left(-\frac{(2ne^{2\mathfrak{C}}C_0L_n/\log(ep))^{\alpha/3}}{2B_{p,n}^{\alpha}\alpha e}\right) + \mathcal{C}_2 C_{p,n}^2\exp\left(-\frac{(2ne^{2\mathfrak{C}}C_0L_n/\log(ep))^{2/3}}{4C_{p,n}^{2}e}\right).
\end{eqnarray}
Proposition \ref{propzero} now follows from Theorem~\ref{thm:UniformCLT}.
\end{proof}
The following preliminary lemma is required for the proofs.
\begin{lem}\label{orlicz}
Let $X = \left(X(1),\ldots,X(p)\right)$ be an $\mathbb{R}^p$ valued random variable. Suppose that there exists a constant $0 \leq K_p < \infty$, such that 
\begin{equation}\label{condorlicz}
\max\limits_{1\leq j\leq p} \norm{X(j)}_{\psi_\alpha} \leq K_p
\end{equation}
for some $\alpha \geq 0$. 
Then, for all $q \geq 1$, we have:
\begin{equation}\label{part1orlicz}
\mathbb{E}\norm{X}_\infty^q \leq K_p^q \left(2^{1/q} \left(\frac{6q}{e\alpha}\right)^{1/\alpha} + 2^{1/\alpha} (\log p)^{1/\alpha}\right)^q.
\end{equation}
Moreover, for $\alpha \geq 1$, we have
\begin{equation}\label{part2orlicz}
\mathbb{E} \exp\left[\frac{\norm{X}_\infty}{3^{1/\alpha} (\log 2)^{(1-\alpha)/\alpha}K_p \left(1+(\log p)^{1/\alpha}\right)}\right] \leq 2 .
\end{equation}
\end{lem}
\begin{proof}
We prove~\eqref{part2orlicz} first. Fix $1 \leq j \leq p$. Since $\norm{X(j)}_{\psi_\alpha} \leq K_p$ and $\psi_\alpha$ is increasing, we have
\begin{equation}\label{orlicz1}
\mathbb{E} \psi_\alpha \left(\frac{|X(j)|}{K_p}\right) \leq 1.
\end{equation}
Hence, by an application of Markov's inequality, we have for all $t \geq 0$ :
\begin{equation}\label{Markovorlicz}
\mathbb{P}\left(|X(j)| \geq K_p t^{1/\alpha}\right) \leq \frac{\mathbb{E} \psi_\alpha \left(|X(j)|/K_p\right)+1}{\psi_\alpha\left(t^{1/\alpha}\right)+1} \leq 2 e^{-t}.
\end{equation}
It follows from~\eqref{Markovorlicz} and an union bound, that for all $t \geq 0$, 
\begin{equation}
\mathbb{P}\left(\norm{X}_\infty \geq K_p (t+\log p)^{1/\alpha}\right) \leq 2 e^{-t}.
\end{equation}
Since $\alpha \geq 1$, $(t+\log p)^{1/\alpha} \leq t^{1/\alpha} + (\log p)^{1/\alpha}$. Hence, for all $t \geq 0$,
\begin{equation}\label{concaveorlicz}
\mathbb{P}\left(\norm{X}_\infty - K_p (\log p)^{1/\alpha} \geq K_p t^{1/\alpha}\right) \leq 2 e^{-t}.
\end{equation}
Define $$W := \left(\norm{X}_\infty - K_p (\log p)^{1/\alpha}\right)_+ ~.$$ Then it follows from~\eqref{concaveorlicz}, that for all $t \geq 0$,
\begin{equation}\label{Worlicz1}
\mathbb{P}\left(W \geq K_p t^{1/\alpha}\right) \leq 2 e^{-t}.
\end{equation}
Hence, we have from~\eqref{Worlicz1},
\begin{eqnarray}
\mathbb{E}\left[\exp\left(\frac{W^\alpha}{3K_p^\alpha}\right) - 1\right] &=& \int_0^\infty \mathbb{P}\left(\exp\left(\frac{W^\alpha}{3K_p^\alpha}\right) - 1 \geq t\right) dt\\&=& \int_0^\infty \mathbb{P}\left(W \geq K_p \left(3\log(1+t)\right)^{1/\alpha}\right) dt\\&\leq& 2\int_0^\infty (1+t)^{-3} dt = 1.
\end{eqnarray}
Hence, $\norm{W}_{\psi_\alpha} \leq 3^{1/\alpha} K_p .$
Consequently, 
\begin{eqnarray}\label{normpsiorl}
\norm{\norm{X}_\infty}_{\psi_\alpha} &\leq& \norm{W}_{\psi_\alpha} + \norm{K_p(\log p)^{1/\alpha}}_{\psi_{\alpha}}\\&\leq& 3^{1/\alpha} K_p + K_p(\log p)^{1/\alpha} (\log 2)^{-1/\alpha}\\&\leq& 3^{1/\alpha} K_p \left(1+(\log p)^{1/\alpha}\right) .
\end{eqnarray}
Hence, from Problem 5 of Chapter 2.2 of~\cite{VdvW96}, $$\norm{\norm{X}_\infty}_{\psi_1} \leq \norm{\norm{X}_\infty}_{\psi_\alpha} (\log 2)^{(1-\alpha)/\alpha} \leq 3^{1/\alpha} (\log 2)^{(1-\alpha)/\alpha}K_p \left(1+(\log p)^{1/\alpha}\right).$$ This proves~\eqref{part2orlicz}. For proving~\eqref{part1orlicz}, note that we have by a similar argument as before, but using the additional fact that $(t+\log p)^{1/\alpha} \leq 2^{1/\alpha}t^{1/\alpha} + 2^{1/\alpha} (\log p)^{1/\alpha}$ for $\alpha \geq 0$, 
\begin{equation}\label{wtildeorlicz}
\mathbb{P}\left(\tilde{W} \geq 2^{1/\alpha} K_p t^{1/\alpha}\right) \leq 2e^{-t},
\end{equation}
where $\tilde{W} := \left(\norm{X}_\infty - 2^{1/\alpha}K_p (\log p)^{1/\alpha}\right)_+$. Hence, we have:
\begin{equation}\label{wtilde2}
\mathbb{E}\left[\exp\left(\frac{\tilde{W}^\alpha}{6 K_p^\alpha}\right)-1\right] \leq 1,
\end{equation}\label{wtildeprel}
It is easy to see that for all $x \geq 0$ and $v_1, v_2 \geq 0$, we have:
\begin{equation}
x^{v_1}\exp\left(-\frac{x}{v_2}\right) \leq v_1^{v_1} v_2^{v_2} \exp(-v_1).
\end{equation}
Using~\eqref{wtildeprel} with $x = \left(\tilde{W}/(6^{1/\alpha} K_p)\right)^\alpha$, $v_1 = q/\alpha$ and $v_2 = 1$, we have:
\begin{equation}\label{wtilde33}
\frac{\tilde{W}^q}{6^{q/\alpha}K_p^q}\exp\left(-\frac{\tilde{W}^\alpha}{6 K_p^\alpha}\right) \leq \left(\frac{q}{e\alpha}\right)^{q/\alpha}.
\end{equation} 
It follows from~\eqref{wtilde33} and~\eqref{wtilde2}, that 
\begin{equation}\label{wtilde44}
\mathbb{E} \tilde{W}^q \leq 2K_p^q \left(\frac{6q}{e\alpha}\right)^{q/\alpha}.
\end{equation}
Consequently, we have
\begin{eqnarray}
\left(\mathbb{E} \norm{X}_\infty^q\right)^{1/q} &\leq& \left(\mathbb{E} \tilde{W}^q  \right)^{1/q} + 2^{1/\alpha} K_p (\log p)^{1/\alpha}\\&\leq& K_p \left(2^{1/q} \left(\frac{6q}{e\alpha}\right)^{1/\alpha} + 2^{1/\alpha} (\log p)^{1/\alpha}\right).
\end{eqnarray}
This proves~\eqref{part1orlicz} and completes the proof of Lemma \ref{orlicz}.
\end{proof}
\subsection{Proof of Corollary~\ref{cor:NonUniformPolyCLT}}
To begin with, note that:
\begin{equation}\label{orliczbd}
\max_{1\leq i\leq n}\max_{1\leq j\leq p} \max\{\|X_i(j)\|_{\psi_\alpha} , \|Y_i(j)\|_{\psi_\alpha}\} \leq \max\{K_p, \mathcal{W}_\alpha \sigma_{\max}\} =: \mathfrak{J}_{p,\alpha}. 
\end{equation}
Also, let $\Gamma_{n,p} := \max_{1\leq j\leq p} n^{-1} \sum_{i=1}^n \mathbb{E} \left[Y_i^2(j)\right]$. By Theorem 3.4 of~\cite{kuchibhotla2018moving} we have for any $t \geq 0$, with probability at least $1- 3e^{-t}$,
\begin{equation}\label{sg1}
\|U_{n,k}\| \leq 7\sqrt{\Gamma_{n,p}(t+\log p)} + n^{-\frac{1}{2}} \mathfrak{V}_\alpha \mathfrak{J}_{p,\alpha} \log^{1/\alpha}(2n)(t+\log p)^{1/\alpha^*}~,
\end{equation}
where $\alpha^* := \min\{\alpha,1\}$ and $\mathfrak{V}_\alpha > 0$ is some constant depending only on $\alpha$. Clearly, $\sqrt{t + \log p} \leq \sqrt{t} + \sqrt{\log p}$, and since the function $x \mapsto x^{1/\alpha^*}$ is convex, we have:
$$(t+\log p)^{1/\alpha^*} \leq 2^{(1/\alpha^*)-1}\left(t^{1/\alpha^*} + (\log p)^{1/\alpha^*}\right).$$
It thus follows from \eqref{sg1} that with probability at least $1-3e^{-t}$ we have:
\begin{equation}\label{sg12}
\|U_{n,k}\| \leq \mathfrak{S}_{n,p}\sqrt{\log p} + \mathfrak{T}_{p,\alpha} n^{-\frac{1}{2}}\log^{1/\alpha}(2n)(\log p)^{1/\alpha^*} + \mathfrak{S}_{n,p} \sqrt{t} +  \mathfrak{T}_{p,\alpha} n^{-\frac{1}{2}}\log^{1/\alpha}(2n) t^{1/\alpha^*}~,
\end{equation}
where $\mathfrak{S}_{n,p} = 7\sqrt{\Gamma_{n,p}}$ and $\mathfrak{T}_{p,\alpha} = 2^{(1/\alpha^*)-1}\mathfrak{V}_\alpha \mathfrak{J}_{p,\alpha}$. Define:
$$r_{n,m} = 4\sqrt{2} \left(\mathfrak{S}_{n,p}\sqrt{\log (epn)} + \mathfrak{T}_{p,\alpha} n^{-\frac{1}{2}}\log^{1/\alpha}(2n)(\log (epn))^{1/\alpha^*}\right).$$
Then, $r_{n,m} \geq  2\mathfrak{S}_{n,p}\sqrt{\log p} + 2\mathfrak{T}_{p,\alpha} n^{-\frac{1}{2}}\log^{1/\alpha}(2n)(\log p)^{1/\alpha^*}$, and $t = t_{r,n,p,\alpha}$ is such that 
\begin{equation}\label{tgeq}
\mathfrak{S}_{n,p} \sqrt{t} +  \mathfrak{T}_{p,\alpha} n^{-\frac{1}{2}}\log^{1/\alpha}(2n) t^{1/\alpha^*} \leq \frac{r}{2}.
\end{equation}
From now on we write
\[
r_{n,m} = \Theta\sigma_{\max}\sqrt{\log(epn)} + \Theta_{\alpha}K_pn^{-1/2}(\log(2n))^{1/\alpha}(\log(epn))^{1/\alpha^*}.
\]
Under the assumptions
\begin{align*}
\sup_{r \ge r_{n,m}}\max_{0 \le k\le n}\,r^m\mathbb{P}(\|U_{n,k}\| > r) &\le \Theta_m\frac{\sigma_{\max}^m}{epn} + \Theta_m\frac{K_p^{m+2}(\log(2n))^{(m+2)/\alpha}}{n^{(m+2)/2}\log(epn)\sigma_{\max}^2}\\
&\le \Theta_m\frac{\sigma_{\max}^{m}}{epn} + \Theta_m\frac{K_p^{m+2}(\log(2n))^{(m+2)/\alpha - 1}}{n^{(m+2)/2}\sigma_{\max}^2}\\
&\le \Theta_m\frac{K_p^{m+2}}{\sigma_{\max}^2n^{2/3}}.
\end{align*}
Since $\bar{L}_{n,m} \le \Theta_m\sigma_{\max}^{m+1}L_n(\log(ep))^{(m+1)/2}$, we have
\begin{align*}
&\frac{r_{n,m}^m\bar{L}_{n,m}}{L_n}\left(\frac{\log(ep)}{nL_n}\right)^{(m+1)/3}\\ 
&\qquad\le \sigma_{\max}^{2m + 1}(\log(epn))^{m/2}\left(\frac{\log(ep)}{nL_n}\right)^{(m+1)/3}\\
&\qquad\quad+ \sigma_{\max}^{m+1}(\log(ep))^{(m+1)/2}K_p^mn^{-m/2}(\log(2n))^{m/\alpha}(\log(ep))^{m/\alpha^*}\left(\frac{\log(ep)}{nL_n}\right)^{(m+1)/3}\\
&\qquad\le \sigma_{\max}^{2m+1}\left(\frac{(\log(epn))^{(8m+5)/(2m+2)}}{nL_n}\right)^{(m+1)/3}\\
&\qquad\quad+ \sigma_{\max}^{m+1}K_p^m(\log(2n))^{m/\alpha}\left(\frac{(\log(epn))^{(6m + 5m\alpha^* + 5\alpha^*)/(5m\alpha^* + 2\alpha^*)}}{nL_n}\right)^{(5m + 2)/6}\\
&\qquad\le K_p^{m}\sigma_{\max}^{m+1}\left(\frac{(\log(epn))^{1 + 3/\alpha^*}}{nL_n}\right)^{(m+1)/3}.
\end{align*}
Next observe that
\[
\varepsilon_n^{m+1}\Phi_{AC,0} \le \Phi_{AC,0}\left(\frac{L_n^2\log^4(ep)}{n}\right)^{(m+1)/6} \le \Phi_{AC,m}\left(\frac{L_n^2\log^4(ep)}{n}\right)^{1/6} = \Theta\Phi_{AC,m}\varepsilon_{n,m}.
\]
It is now enough to bound $\log(ep)r_{n,m}^m\varepsilon_n^{-2}M_n(2^{2m/3}\varepsilon_n)$. From the definition of $r_{n,m}$, we have
\begin{align*}
\log(ep)r_{n,m}^m\varepsilon_n^{-2}M_n(2^{2m/3}\varepsilon_n) &\le \Theta^m\log(ep)\sigma_{\max}^m(\log(epn))^{m/2}\left[\varepsilon_n^{-2}M_n(\varepsilon_n)\right]\\
&\quad+ \Theta_{\alpha}^m\log(ep)K_p^mn^{-m/2}(\log(2n))^{m/\alpha}(\log(epn))^{m/\alpha^*}\left[\varepsilon_n^{-2}M_n(\varepsilon_n)\right].
\end{align*}
From the assumptions
\[
M_n(\varepsilon_n) \le \Theta_{\alpha,q}K_p^{2 + 3q}(\log(ep))^{(2 + 3q)/\alpha + q}(nL_n)^{-q},
\]
and so for any $q \ge 0$
\begin{align*}
\varepsilon_n^{-2}M_n(\varepsilon_n) &\le \Theta_{\alpha,q}K_p^{2 + 3q}n^{1/3}L_n^{-2/3}(\log(ep))^{-4/3}(\log(ep))^{(2 + 3q)/\alpha + q}(nL_n)^{-q}\\
&= \Theta_{\alpha, q}K_p^{2 + 3q}L_n^{-2/3 - q}(\log(ep))^{(2 + 3q)/\alpha + q - 4/3}n^{-q + 1/3}.
\end{align*}
This implies that
\begin{align*}
&\log(ep)r_{n,m}^m\varepsilon_n^{-2}M_n(2^{2m/3}\varepsilon_n)\\ 
&\le \Theta^m\log(ep)\sigma_{\max}^m(\log(epn))^{m/2}\Theta_{\alpha, q}K_p^{2 + 3q}L_n^{-2/3 - q}(\log(ep))^{(2 + 3q)/\alpha + q - 4/3}n^{-q + 1/3}\\
&\quad+ \Theta_{\alpha}^m\log(ep)K_p^mn^{-m/2}(\log(2n))^{m/\alpha}(\log(epn))^{m/\alpha^*}\Theta_{\alpha, q}K_p^{2 + 3q}L_n^{-2/3 - q}(\log(ep))^{(2 + 3q)/\alpha + q - 4/3}n^{-q + 1/3}\\
&\le \Theta^m\Theta_{\alpha,q}\sigma_{\max}^m(K_p/L_n^{1/3})^{2 + 3q}(\log(epn))^{(2 + 3q)/\alpha + q - 1/3 + m/2}n^{-q + 1/3}\\
&\quad+ \Theta_{\alpha}^m\Theta_{\alpha,q}\Theta_{\alpha,m}K_p^m(K_p/L_n^{1/3})^{2 + 3q}(\log(epn))^{(2 + 3q)/\alpha + q - 1/3 + m/\alpha^*}n^{-q + 1/3}\\
&\le \Theta_{\alpha,q,m}K_p^m(K_p/L_n^{1/3})^{2 + 3q}(\log(epn))^{(2 + 3q)/\alpha + q - 1/3 + m/\alpha^*}n^{-q + 1/3}.
\end{align*}
If $1 < \alpha \le 2$, then take
\[
q = \frac{2 + (m+1)\alpha}{3(\alpha - 1)}\quad\Rightarrow\quad q - \frac{1}{3} = \frac{3 + m\alpha}{3(\alpha - 1)}.
\]
This implies for $1 < \alpha \le 2$
\begin{align*}
\log(ep)r_{n,m}^m\varepsilon_n^{-2}M_n(2^{2m/3}\varepsilon_n) &\le \Theta_{\alpha,q,m}K_p^m(K_p/L_n^{1/3})^{(m+1)\alpha/(\alpha - 1)}\left(\frac{\log^4(epn)}{n}\right)^{(3 + m\alpha)/(3\alpha - 3)}\\
&\le \Theta_{\alpha,q,m}K_p^m(K_p/L_n^{1/3})^{(m+1)\alpha/(\alpha - 1)}\left(\frac{\log^4(epn)}{n}\right)^{1/(\alpha - 1)}.
\end{align*}
If $0 < \alpha \le 1$, then take $q$ such that
\[
q - \frac{1}{3} = \frac{12}{\alpha} + 2m.
\]
This implies for $0 < \alpha \le 1$
\begin{align*}
\log(ep)r_{n,m}^m\varepsilon_n^{-2}M_n(2^{2m/3}\varepsilon_n) \le \Theta_{\alpha,q,m}K_p^m(K_p/L_n^{1/3})^{3 + 6m + 36/\alpha}\left(\frac{(\log(epn))^{5/4 + 3/\alpha}}{n}\right)^{12/\alpha + 2m}.
\end{align*}
Combining these bounds with Theorem~\ref{thm:NonUniformPolyCLT}, we get Corollary~\ref{cor:NonUniformPolyCLT}.
\section{Proof of Proposition~\ref{prop:MAMNonUniform}}
Consider the event
\[
\mathcal{E} := \left\{\|R_n\| \le \delta\right\}.
\]
Observe that for any $r > 0$
\begin{align*}
\mathbb{P}(\sqrt{n}\|\hat{\theta} - \theta_0\| \le r) &= \mathbb{P}\left(\|S_n^{\psi} + R_n\| \le r\right)\\
&= \mathbb{P}\left(\{\|S_n^{\psi} + R_n\| \le r\}\cap\mathcal{E}\right) + \mathbb{P}\left(\{\|S_n^{\psi} + R_n\| \le r\}\cap\mathcal{E}^c\right).
\end{align*}
It now follows that
\[
\mathbb{P}(\|S_n^{\psi}\| \le r - \delta) \le \mathbb{P}\left(\{\|S_n^{\psi} + R_n\| \le r\}\cap\mathcal{E}\right) \le \mathbb{P}(\|S_n^{\psi}\| \le r + \delta),
\]
and
\[
\mathbb{P}\left(\{\|S_n^{\psi} + R_n\| \le r\}\cap\mathcal{E}^c\right) \le \mathbb{P}(\|R_n\| > \delta).
\]
This implies that
\begin{align*}
\left|\mathbb{P}(\sqrt{n}\|\hat{\theta} - \theta_0\| \le r) - \mathbb{P}(\|Y^{\psi}\| \le r)\right| &\le |\mathbb{P}(\|S_n^{\psi}\| \le r - \delta) - \mathbb{P}(\|Y^{\psi}\| \le r - \delta)|\\
&\qquad+ |\mathbb{P}(\|S_n^{\psi}\| \le r + \delta) - \mathbb{P}(\|Y^{\psi}\| \le r + \delta)|\\
&\qquad+ \mathbb{P}(r - \delta \le \|Y^{\psi}\| \le r + \delta)\\
&\qquad+ \mathbb{P}(\|R_n\| > \delta).
\end{align*}
If $r - \delta \le 4\delta$ then following the proof of Theorem~\ref{thm:NonUniformPolyCLT} we obtain
\begin{align*}
|\mathbb{P}(\|S_n^{\psi}\| \le r - \delta) - \mathbb{P}(\|Y^{\psi}\| \le r - \delta)| &\le |\mathbb{P}(\|S_n^{\psi}\| \le 4\delta) - \mathbb{P}(\|Y^{\psi}\| \le 4\delta)| + 2\Phi_{AC,0}^{\psi}\delta\\
&\le \frac{\Delta_{n,m}^{\psi}}{(4\delta)^m} + \frac{\Phi_{AC,0}^{\psi}(4\delta)^{m+1}}{2(4\delta)^{m}}.
\end{align*}
Since $r \le 5\delta$, we have $(4\delta)^{-m} \le (4r/5)^{-m}$ and hence
\begin{align*}
|\mathbb{P}(\|S_n^{\psi}\| \le r - \delta) - \mathbb{P}(\|Y^{\psi}\| \le r - \delta)| &\le \frac{(5/4)^m\Delta_{n,m}^{\psi} + 5^m2\Phi_{AC,0}^{\psi}\delta^{m+1}}{r^m}.
\end{align*}
If $r - \delta > 4\delta$ then $r - \delta > 4r/5$ and hence
\begin{align*}
|\mathbb{P}(\|S_n^{\psi}\| \le r - \delta) - \mathbb{P}(\|Y^{\psi}\| \le r - \delta)| &\le \frac{\Delta_{n,m}^{\psi}}{(r - \delta)^m} \le \frac{(5/4)^m\Delta_{n,m}^{\psi}}{r^m}.
\end{align*}
Thus for all $r \ge 0$,
\[
|\mathbb{P}(\|S_n^{\psi}\| \le r - \delta) - \mathbb{P}(\|Y^{\psi}\| \le r - \delta)| \le \frac{(5/4)^m\Delta_{n,m}^{\psi} + 5^m2\Phi_{AC,0}^{\psi}\delta^{m+1}}{r^m}.
\]
Since $r + \delta > r$, it follows that
\[
|\mathbb{P}(\|S_n^{\psi}\| \le r + \delta) - \mathbb{P}(\|Y^{\psi}\| \le r + \delta)| \le \frac{\Delta_{n,m}^{\psi}}{r^m}.
\]
Combining these inequalities, we get
\begin{align*}
r^m\left|\mathbb{P}(\sqrt{n}\|\hat{\theta} - \theta_0\| \le r) - \mathbb{P}(\|Y^{\psi}\| \le r)\right| &\le 2\left[(5/4)^m\Delta_{n,m}^{\psi} + 5^m\Phi_{AC,0}^{\psi}\delta^{m+1}\right] + \Phi_{AC,m}^{\psi}\delta\\ &\qquad+ r^m\mathbb{P}(\|R_n\| > \delta).
\end{align*}
This completes the proof.
\section{Proof of Theorem \ref{thm:largedeviation}}\label{appsec:Theorem31}
We will require the following notations in the proof of Theorem \ref{thm:largedeviation}.
\begin{equation}\label{eq:DefDeltamIII}
\begin{split}
U_{n,k} &:= n^{-1/2}\left(X_1 + X_2 + \ldots + X_k + Y_{k+1} + Y_{k+2} + \ldots + Y_n\right),\\
W_{n,k} &:= n^{-1/2}\left(X_1 + X_2 + \ldots + X_{k-1} + Y_{k+1} + \ldots + Y_n\right),\\
\Delta_{n,k}(r) &:= \left|\mathbb{P}\left(\norm{U_{n,k}} \le r\right) - \mathbb{P}\left(\norm{Y} \le r\right)\right|.
\end{split}
\end{equation}
Below, we state an elementary fact about Taylor series expansion.
\begin{lem}\label{lem:Taylor}
For any thrice differentiable function $f:\mathbb{R}^p\to\mathbb{R}$, we have
\begin{equation}\label{eq:Taylor}
f(y + xn^{-1/2}) -  f(y) - n^{-1/2}x^{\top}\nabla f(y) - \frac{1}{2n}x^{\top}\nabla_2f(y)x = \mbox{Rem}_n(y, x),
\end{equation}
where
\[
\left|\mbox{Rem}_n(y, x)\right| \le \min\left\{\frac{\norm{x}^3}{6n^{3/2}}\sup_{0 \le \theta \le 1}\norm{D^3f(y + x\theta n^{-1/2})}_1, \frac{\norm{x}^2}{n}\sup_{0\le \theta\le 1}\norm{D^2f(y + x\theta n^{-1/2})}_1\right\}.
\]
\end{lem}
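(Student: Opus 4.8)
The plan is to reduce the multivariate expansion to a one-dimensional one along the segment joining $y$ to $y + xn^{-1/2}$ and then invoke the Lagrange form of Taylor's theorem. Fix $y,x\in\mathbb{R}^p$ and define $g:[0,1]\to\mathbb{R}$ by $g(t) := f(y + txn^{-1/2})$. Since $f$ is thrice differentiable, so is $g$, and the chain rule gives $g'(t) = n^{-1/2}\sum_{j} x(j)\,\partial_j f(y + txn^{-1/2})$, $g''(t) = n^{-1}\sum_{j,k} x(j)x(k)\,\partial_{jk}f(y + txn^{-1/2})$, and $g'''(t) = n^{-3/2}\sum_{j,k,l} x(j)x(k)x(l)\,\partial_{jkl}f(y + txn^{-1/2})$. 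In particular $g'(0) = n^{-1/2}x^{\top}\nabla f(y)$ and $g''(0) = n^{-1}x^{\top}\nabla_2 f(y)x$, so that by the very definition of the left-hand side, $\mbox{Rem}_n(y,x) = g(1) - g(0) - g'(0) - \tfrac12 g''(0)$. Throughout, I interpret $\norm{D^3 f(z)}_1 = \sum_{j,k,l}|\partial_{jkl}f(z)|$ and $\norm{D^2 f(z)}_1 = \sum_{j,k}|\partial_{jk}f(z)|$, consistent with the statement.

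Next I would derive the two bounds separately. For the cubic bound, Taylor's theorem with Lagrange remainder applied to $g$ at order two gives $g(1) = g(0) + g'(0) + \tfrac12 g''(0) + \tfrac16 g'''(\xi)$ for some $\xi\in(0,1)$; the hypotheses hold because $f$ thrice differentiable makes $g, g', g''$ continuous on $[0,1]$ with $g'''$ existing on $(0,1)$. Hence $|\mbox{Rem}_n(y,x)| = \tfrac16|g'''(\xi)|$. Since $\norm{\cdot}$ is the $\ell_\infty$ norm, $|x(j)|\le\norm{x}$ for every $j$, so $|g'''(\xi)| \le n^{-3/2}\norm{x}^3 \sum_{j,k,l}|\partial_{jkl}f(y+\xi xn^{-1/2})| = n^{-3/2}\norm{x}^3\norm{D^3 f(y+\xi xn^{-1/2})}_1 \le n^{-3/2}\norm{x}^3\sup_{0\le\theta\le1}\norm{D^3 f(y+\theta xn^{-1/2})}_1$, which is the first term in the minimum. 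For the quadratic bound, Taylor's theorem at order one gives $g(1) = g(0) + g'(0) + \tfrac12 g''(\eta)$ for some $\eta\in(0,1)$, so $\mbox{Rem}_n(y,x) = \tfrac12\bigl(g''(\eta) - g''(0)\bigr)$ and therefore $|\mbox{Rem}_n(y,x)| \le \tfrac12\bigl(|g''(\eta)| + |g''(0)|\bigr) \le \sup_{0\le s\le1}|g''(s)|$. The same coordinatewise bound gives $|g''(s)| \le n^{-1}\norm{x}^2\norm{D^2 f(y+sxn^{-1/2})}_1$, whence $|\mbox{Rem}_n(y,x)| \le n^{-1}\norm{x}^2\sup_{0\le\theta\le1}\norm{D^2 f(y+\theta xn^{-1/2})}_1$, the second term in the minimum.

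Finally, since $\mbox{Rem}_n(y,x)$ is one fixed number bounded above by each of the two expressions, it is bounded by their minimum, which is exactly~\eqref{eq:Taylor}. The argument is entirely routine; the only points needing a little care are the bookkeeping that passes from the contraction of $D^2 f$ and $D^3 f$ against $x$ to the stated $\ell_1$-norms of the derivative arrays — this is precisely where the $\ell_\infty$ nature of $\norm{\cdot}$ enters — and handling the subtracted term $\tfrac12 g''(0)$ in the quadratic case by comparing $g''(\eta)$ with $g''(0)$ rather than trying to produce a clean single-point remainder. No genuine obstacle is anticipated.
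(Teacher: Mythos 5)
Your proof is correct and is essentially the paper's argument: the paper disposes of this lemma in one line ("follows directly from the mean value theorem and the definition of $D^2, D^3$"), and your restriction to the line segment via $g(t)=f(y+txn^{-1/2})$, with Lagrange-form Taylor expansions at orders two and one plus the $\ell_\infty$–$\ell_1$ bookkeeping, is exactly the intended fleshing-out. No gaps.
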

\begin{proof}
The proof follows directly from the mean value theorem and the definition of $D^2, D^3.$
\end{proof} 
As sketched in Section \ref{sec:Outline}, the proof of Theorem \ref{thm:largedeviation} proceeds through several steps, the details of which are described in this section.
\begin{lem}\label{lem:DeltankIntoUnminus1k}
Set $B = 2(\Phi_1 + 1)/H$ and $f_n(r) = B(r^2 + \log(en))n^{-1/2}$. For any $r,\varepsilon \ge 0$,
\begin{align*}
\Delta_n(r) &\le \mathbb{P}(r - \varepsilon \le \|Y\| \le r + \varepsilon) + \frac{8C_2\varepsilon^{-2}\beta}{\Phi_0H^2n^{1+\Phi_1}}\mathbb{P}(\|Y\| > r)\\
&\qquad+ \frac{C_3\varepsilon^{-3}L_ne^{\mathfrak{C}f_n(r)\log(ep)/\varepsilon}}{6n^{1/2}}\max_{0\le k\le n}\mathbb{P}(a_n(r) \le \|U_{n-1,k}\| \le b_n(r)),
\end{align*}
where $a_n(r) = r - \varepsilon - f_n(r)$ and $b_n(r) = (1 - 1/n)^{-1}(r + \varepsilon + f_n(r))$.
\end{lem}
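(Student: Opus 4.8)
\textbf{Proof plan for Lemma \ref{lem:DeltankIntoUnminus1k}.}
The starting point is the smoothing inequality \eqref{eq:Smoothing} from Step A of the outline, applied with the function $\varphi = \varphi_{r,\varepsilon}$ (and $\varphi_{r-\varepsilon,\varepsilon}$) of Lemma \ref{lem:SmoothApproximation}; this reduces $\Delta_n(r)$ to the anti-concentration term $\mathbb{P}(r-\varepsilon \le \|Y\| \le r+\varepsilon)$ plus $\max_{\ell}|\mathbb{E}[\varphi_\ell(S_n) - \varphi_\ell(Y)]|$. The latter is expanded by the Lindeberg telescoping of Step B, so that it suffices to control each $I_j = \int \mathbb{E}[|\mathrm{Rem}_n(W_{n,j},x)|]\,|\zeta|(dx)$, using $\mathbb{E}[X_j]=\mathbb{E}[Y_j]=0$ and the matching second moments to kill the zeroth-, first- and second-order Taylor terms (equation \eqref{eq:zetazero}).

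Next I would split the integral defining $I_j$ at the threshold $\|x\| = n^{1/2}f_n(r)$ rather than at $n^{1/2}\varepsilon/\log(ep)$ as in the uniform CLT: write $I_j = I_j^{\le} + I_j^{>}$ according to $\{\|x\| \le n^{1/2}f_n(r)\}$ or not. On the tail piece $I_j^{>}$, bound $|\mathrm{Rem}_n(W_{n,j},x)| \le C_2\varepsilon^{-2}\|x\|^2/n$ (the second bound in \eqref{eq:RemainderBound}), then use $\|x\|^2 \le \beta^{-2}e^{H\|x\|}$ together with the exponential moment hypothesis $\int e^{H\|x\|}|\zeta|(dx) \le 4$ and the restriction $\|x\| > n^{1/2}f_n(r)$ to extract a factor $e^{-Hn^{1/2}f_n(r)/2}$; the choice $f_n(r) = B(r^2+\log(en))n^{-1/2}$ with $B = 2(\Phi_1+1)/H$ makes $e^{-Hn^{1/2}f_n(r)/2} = e^{-(\Phi_1+1)(r^2+\log(en))}$, and by part (1) of Theorem \ref{thm:density}, $\mathbb{P}(\|Y\|>r) \ge \tfrac16 e^{-r^2/\sigma_{\max}^2}$, so (after absorbing constants into $\Phi_0$) this is $\le \Phi_0^{-1} n^{-1-\Phi_1} e^{-r^2 \Phi_1}\cdot\ldots$; summing over $j$ gives the middle term $\tfrac{8C_2\varepsilon^{-2}\beta}{\Phi_0 H^2 n^{1+\Phi_1}}\mathbb{P}(\|Y\|>r)$. (Here I am reading off the roles of $\Phi_0,\Phi_1$ from \eqref{eq:remainderlarge}; the proof merely has to verify the constants match.)

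On the bulk piece $I_j^{\le}$, I would use the third-derivative bound $|\mathrm{Rem}_n(W_{n,j},x)| \le C_3\varepsilon^{-3}\|x\|^3/(6n^{3/2})$, multiplied by the support indicator $\mathbbm{1}\{|\|W_{n,j}\|-r| \le \varepsilon + n^{-1/2}\|x\|\}$ coming from the support of $\varphi$. Since $\|x\| \le n^{1/2}f_n(r)$ here, the indicator is dominated by $\mathbbm{1}\{|\|W_{n,j}\|-r| \le \varepsilon + f_n(r)\}$ (no $x$-dependence), and the stability property \eqref{eq:StabilityProp} — applied along the Taylor path $W_{n,j}+t n^{-1/2}x$ with $\|x\|/\sqrt n \le f_n(r)$ — contributes the factor $e^{\mathfrak{C} f_n(r)\log(ep)/\varepsilon}$. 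Bounding $\int_{\|x\|\le n^{1/2}f_n(r)}\|x\|^3|\zeta|(dx) \le \max_{j_1}\int|x(j_1)|^3|\zeta|(dx)$ and summing over $j$ yields $L_n$; finally use $W_{n,j} \stackrel{d}{=} (1-1/n)^{1/2}U_{n-1,j-1}$ to rewrite $\mathbb{P}(|\|W_{n,j}\|-r|\le \varepsilon+f_n(r))$ as $\mathbb{P}(a_n(r) \le \|U_{n-1,k}\| \le b_n(r))$ with $a_n(r) = r-\varepsilon-f_n(r)$ and $b_n(r) = (1-1/n)^{-1}(r+\varepsilon+f_n(r))$ (rescaling both endpoints by $(1-1/n)^{-1/2} \le (1-1/n)^{-1}$), and take the maximum over $0 \le k \le n-1$. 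Collecting the three contributions gives the claimed inequality.

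The main obstacle is the bookkeeping of constants: one must choose $B$ and track $\Phi_0,\Phi_1,\mathfrak{C},C_2,C_3$ so that the tail estimate in \eqref{eq:remainderlarge} comes out exactly with the stated prefactor, and one must be careful that the $x$-independent indicator dominates the true support indicator uniformly (this needs $n^{-1/2}\|x\| \le f_n(r)$ on the bulk region, which holds by construction). The probabilistic content — smoothing, Lindeberg, Taylor with the two-sided remainder bound, exponential Markov on the tail, and the distributional identity for $W_{n,j}$ — is otherwise routine given Lemma \ref{lem:SmoothApproximation} and Theorem \ref{thm:density}.
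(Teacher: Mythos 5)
Your overall architecture is the same as the paper's: smoothing, Lindeberg telescoping, splitting the $x$-integral at $\|x\| = n^{1/2}f_n(r)$, an exponential-Markov bound on the tail piece combined with the Gaussian lower bound $\mathbb{P}(\|Y\|>r)\ge \Phi_0 e^{-\Phi_1 r^2}$ and the choice $B = 2(\Phi_1+1)/H$, and finally the identity $W_{n,j}\stackrel{d}{=}(1-1/n)^{1/2}U_{n-1,j-1}$ to pass to $a_n(r), b_n(r)$. However, your treatment of the bulk piece has a genuine gap. You first collapse the Taylor remainder to the aggregated bound $C_3\varepsilon^{-3}\|x\|^3/(6 n^{3/2})$ times a support indicator, and then claim $\int_{\|x\|\le n^{1/2}f_n(r)}\|x\|^3|\zeta|(dx)\le \max_{j_1}\int |x(j_1)|^3|\zeta|(dx)$. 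That inequality is false: since $\|x\|^3=\max_j|x(j)|^3$, one has $\int\|x\|^3|\zeta|(dx)\ge \max_j\int|x(j)|^3|\zeta|(dx)$, i.e.\ your route only produces the strong moment $\nu_3^3$, not $L_n$ as stated in the lemma. The way to get $L_n$ is to \emph{not} collapse the triple sum: keep $\frac{1}{2n^{3/2}}\sum_{j_1,j_2,j_3}|x(j_1)x(j_2)x(j_3)|\int_0^1(1-t)^2 D_{j_1j_2j_3}(W_{n,j}+tn^{-1/2}x)\,dt$, use the stability property~\eqref{eq:StabilityProp} with $n^{-1/2}\|x\|\le f_n(r)$ to replace $D_{j_1j_2j_3}(W_{n,j}+tn^{-1/2}x)$ by $e^{\mathfrak{C}f_n(r)\log(ep)/\varepsilon}D_{j_1j_2j_3}(W_{n,j})$ (this is precisely what stability is for: it decouples the derivative-evaluation point from $x$, so the $x$-integral and the expectation over $W_{n,j}$ factor), apply H\"older to $|x(j_1)x(j_2)x(j_3)|$ term by term to get $\max_{j_4}\int|x(j_4)|^3|\zeta|(dx)=L_n$, and only then use $\sum_{j_1,j_2,j_3}\mathbb{E}\bigl[D_{j_1j_2j_3}(W_{n,j})\mathbbm{1}\{|\|W_{n,j}\|-r|\le\varepsilon+f_n(r)\}\bigr]\le C_3\varepsilon^{-3}\,\mathbb{P}\bigl(r-\varepsilon-f_n(r)\le\|W_{n,j}\|\le r+\varepsilon+f_n(r)\bigr)$. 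In your version the stability factor is tacked on but does no work, and the $L_n$ you need cannot be recovered.

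Two smaller points. On the tail piece, the pointwise inequality should be $\|x\|^2\le 8H^{-2}e^{H\|x\|/2}$ (from $t^2/2\le e^t$ with $t=H\|x\|/2$), which, combined with $\mathbbm{1}\{\|x\|>n^{1/2}f_n(r)\}\le e^{H(\|x\|-n^{1/2}f_n(r))/2}$ and $\int e^{H\|x\|}|\zeta|(dx)=\beta\le 4$, is exactly where the $8/H^2$ in the stated constant comes from; your written inequality ``$\|x\|^2\le\beta^{-2}e^{H\|x\|}$'' is not correct, although you flagged the constant bookkeeping as deferred. On the rescaling step, be careful that the two endpoints are treated asymmetrically: with $t_n=\sqrt{n/(n-1)}\ge 1$, the event $\{t_n(r-\varepsilon-f_n(r))\le\|U_{n-1,k}\|\le t_n(r+\varepsilon+f_n(r))\}$ is enlarged by lowering the left endpoint to the unscaled $a_n(r)$ and by raising the right endpoint via $t_n\le t_n^2=(1-1/n)^{-1}$; inflating the left endpoint as well would shrink the event, so ``rescaling both endpoints'' is not quite the right description, though your final $a_n(r), b_n(r)$ match the lemma.
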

\begin{proof}
By smoothing lemma,
\[
\Delta_{n,k}(r) \le \sum_{j=1}^k I_j + \mathbb{P}(r - \varepsilon \le \|Y\| \le r + \varepsilon),
\]
where
\begin{align*}
I_j &= |\mathbb{E}[\varphi(W_{n,j} + n^{-1/2}X_j) - \varphi(W_{n,j} + n^{-1/2}Y_j)]|\\
&= \left|\int \mathbb{E}\mbox{Rem}_n(W_{n,j}, x)\zeta(dx)\right|\\
&\le \int_{\|x\| \le n^{1/2}f_n(r)}\mathbb{E}|\mbox{Rem}_n(W_{n,j},x)||\zeta|(dx) + \int_{\|x\| > n^{1/2}f_n(r)} \mathbb{E}|\mbox{Rem}_n(W_{n,j},x)||\zeta|(dx)\\
&=: I_j^{(1)} + I_j^{(2)}.
\end{align*}
To bound $I_j^{(1)}$, we use the stability property of third derivative bound.
\begin{align*}
|\mbox{Rem}_n(W_{n,j},x)| &\le \frac{1}{2}\sum_{j_1,j_2,j_3 = 1}^p \frac{|x(j_1)x(j_2)x(j_3)|}{n^{3/2}}\int_0^1 (1 - t)^2D_{j_1j_2j_3}(W_{n,j} + tn^{-1/2}x)dt\\
&\le \frac{1}{6}\sum_{j_1,j_2,j_3=1}^p \frac{|x(j_1)x(j_2)x(j_3)|}{n^{3/2}}e^{\mathfrak{C}n^{-1/2}\|x\|\log(ep)/\varepsilon}D_{j_1j_2j_3}(W_{n,j}).
\end{align*}
This yields
\begin{align*}
&\int_{\|x\| \le n^{1/2}f_n(r)} \mathbb{E}|\mbox{Rem}_n(W_{n,j},x)||\zeta|(dx)\\ 
&\le \frac{e^{\frac{\mathfrak{C}f_n(r)\log(ep)}{\varepsilon}}}{n^{3/2}}\sum_{j_1,j_2,j_3=1}^p \int |x(j_1)x(j_2)x(j_3)||\zeta|(dx)\mathbb{E}[D_{j_1j_2j_3}(W_{n,j})\mathbbm{1}\{r - \varepsilon - f_n(r) \le \|W_{n,j}\| \le r + \varepsilon + f_n(r)\}]\\
&\le \frac{C_3\varepsilon^{-3}e^{\mathfrak{C}f_n(r)\log(ep)/\varepsilon}L_n}{n^{3/2}} \mathbb{P}\left(r - \varepsilon - f_n(r) \le \|W_{n,j}\| \le r + \varepsilon + f_n(r)\right).
\end{align*}
To bound $I_j^{(2)},$ we use $|\mbox{Rem}_n(W_{n,j},x)| \le C_2\varepsilon^{-2}n^{-1}\|x\|^2$ and get
\begin{align*}
I_j^{(2)} &\le C_2\varepsilon^{-2}n^{-1}\int_{\|x\| > n^{1/2}f_n(r)} \|x\|^2|\zeta|(dx)\\
&\le C_2\varepsilon^{-2}n^{-1}\int_{\|x\| > n^{1/2}f_n(r)}\|x\|^2\frac{\exp(H\|x\|/2)}{\exp(Hn^{1/2}f_n(r)/2)}|\zeta|(dx)\\
&\le \frac{8C_2\varepsilon^{-2}}{nH^2}\int_{\|x\| > n^{1/2}f_n(r)} \frac{\exp(H\|x\|)}{\exp(Hn^{1/2}f_n(r)/2)}|\zeta|(dx)\\
&= \frac{8\beta C_2\varepsilon^{-2}}{nH^2}\exp(-Hn^{1/2}f_n(r)/2) = \frac{8C_2\varepsilon^{-2}\beta}{nH^2}\exp(-(1+\Phi_1)(r^2+\log n))\\ &\le \frac{8C_2\varepsilon^{-2}\beta}{n^{2+\Phi_1}H^2\Phi_0}\mathbb{P}(\|Y\| > r). 
\end{align*}
Combining the bounds on $I_j^{(1)}, I_j^{(2)}$ and summing over $1\le j\le n$, we get
\begin{align*}
\Delta_{n,k}(r) &\le \mathbb{P}(r - \varepsilon \le \|Y\| \le r + \varepsilon)\\
&\quad+ \frac{C_3\varepsilon^{-3}e^{\mathfrak{C}f_n(r)\log(ep)/\varepsilon}L_n}{n^{1/2}}\max_{1\le j\le n}\mathbb{P}(r - \varepsilon - f_n(r) \le \|W_{n,j}\| \le r + \varepsilon + f_n(r))\\
&\quad+ \frac{8C_2\varepsilon^{-2}\beta}{n^{1+\Phi_1}H^2\Phi_0}\mathbb{P}(\|Y\| > r).
\end{align*}
Since $W_{n,j}$ is identically distributed as $U_{n-1,j}\sqrt{1 - 1/n}$, we get
\[
\mathbb{P}(r - \varepsilon - f_n(r) \le \|W_{n,j}\| \le r + \varepsilon + f_n(r)) \le \mathbb{P}(a_n(r) \le \|U_{n-1,j}\| \le (r + \varepsilon + f_n(r))\sqrt{n/(n-1)}).
\]
Further since $\sqrt{n/(n-1)} \le n/(n-1)$ the result follows.
\end{proof}
\noindent For the following lemmas, we use the following notation:
\begin{equation}\label{eq:3p7}
\begin{split}
\mathbb{P}\left(\norm{Y} >q r\right) &\ge \Phi_0\exp\left(-\Phi_1r^2\right)\\
\mathbb{P}\left(r - \varepsilon \le \norm{Y} \le r + \varepsilon\right) &\le \Phi_2\varepsilon(1 + r)\mathbb{P}\left(\norm{Y} >q r - \varepsilon\right)\\
\mathbb{P}\left(\norm{Y} >q r - \varepsilon\right) &\le \Phi_3\exp\left(\Phi_4(r + 1)\varepsilon\right)\mathbb{P}\left(\norm{Y} >q r\right).
\end{split}
\end{equation}
Note that we have proved in bounds on $\Phi_0,\ldots,\Phi_4$ in Theorem~\ref{thm:density}. Now Define
\[
\Pi := \max\left\{1, (4e^{1/2}\Phi_2\Phi_3)^{4/3}, \frac{eC_3\Phi_3L_ne^{\mathfrak{C}}}{3\Phi_4}, \left(\frac{2eC_3\Phi_3L_ne^{\mathfrak{C}}}{3}\right)^{4/7}, \left(\frac{C_2\beta}{\Phi_0\Phi_4^5H^2}\right)^{4/11}\right\}.
\]
\begin{lem}\label{lem:sixtpt}
Let $T_{n,r} = (r+1)^3n^{-1/2}$. Then for all $n\ge1$ and $0\le k\le n$,
\begin{equation}\label{eq:FirstVersionLargeDeviation}
\left|\frac{\mathbb{P}(\|U_{n,k}\| > r)}{\mathbb{P}(\|Y\| > r)} - 1\right| \le \Pi T_{n,r}^{1/4},
\end{equation}
for all $r\in\mathbb{R}$ satisfying
\begin{equation}\label{eq:TnrSatisfying}
T_{n,r} \le \min\left\{\frac{1}{16\Phi_4^4\Pi}, \frac{1}{2\Phi_4B\log(en)}, \frac{\Pi^{1/3}}{(B\log(en)\log(ep))^{4/3}}\right\}.
\end{equation}
\end{lem}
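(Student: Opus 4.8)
The plan is to prove~\eqref{eq:FirstVersionLargeDeviation} by induction on $n$, the recursion being supplied by Lemma~\ref{lem:DeltankIntoUnminus1k} together with the three Gaussian estimates~\eqref{eq:3p7}, whose constants $\Phi_0,\dots,\Phi_4$ are the explicit ones of Theorem~\ref{thm:density}. Setting $\rho_n(s):=\max_{0\le k\le n}\big|\mathbb{P}(\|U_{n,k}\|>s)/\mathbb{P}(\|Y\|>s)-1\big|$, and recalling that $Y\stackrel{d}{=}U_{n,0}$ is the same $N(0,\Sigma)$ for every $n$ in the i.i.d.\ setting, the claim is exactly $\rho_n(r)\le\Pi T_{n,r}^{1/4}$ for all $r$ obeying~\eqref{eq:TnrSatisfying}. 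Since $U_{0,k}=Y$, the base case $n=1$ is the inductive step run with the convention $\rho_0\equiv0$, so it suffices to describe the step from level $n-1$ to level $n$.

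Fix $n$ and an admissible $r$. Using $\Delta_{n,k}(r)=|\mathbb{P}(\|U_{n,k}\|>r)-\mathbb{P}(\|Y\|>r)|$ and dividing the bound of Lemma~\ref{lem:DeltankIntoUnminus1k} by $\mathbb{P}(\|Y\|>r)$, I would control three pieces. First, the anti-concentration piece $\mathbb{P}(r-\varepsilon\le\|Y\|\le r+\varepsilon)/\mathbb{P}(\|Y\|>r)$, bounded by parts (3) then (2) of Theorem~\ref{thm:density} by $\Phi_2\Phi_3\,\varepsilon(1+r)e^{\Phi_4(r+1)\varepsilon}$. Second, the deep-tail piece $8C_2\beta\varepsilon^{-2}/(\Phi_0H^2n^{2+\Phi_1})$. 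Third, the Lindeberg piece: here I would bound $\mathbb{P}(a_n(r)\le\|U_{n-1,k}\|\le b_n(r))\le\mathbb{P}(\|U_{n-1,k}\|>a_n(r))$, invoke the inductive hypothesis at the radius $a_n(r)=r-\varepsilon-f_n(r)$, and then use part (2) of Theorem~\ref{thm:density} to pass from $\mathbb{P}(\|Y\|>a_n(r))$ back to $\mathbb{P}(\|Y\|>r)$, which produces a bound of the form $\big(C_3\varepsilon^{-3}L_n n^{-1/2}\big)\,e^{\mathfrak{C}f_n(r)\log(ep)/\varepsilon}\,\Phi_3\,e^{\Phi_4(r+1)(\varepsilon+f_n(r))}\big(1+\rho_{n-1}(a_n(r))\big)$. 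Using the inductive hypothesis here is legitimate because $a_n(r)\le r$ while $T_{n-1,s}\le\sqrt2\,T_{n,s}$ for $n\ge2$, so each clause of~\eqref{eq:TnrSatisfying} at level $n$ forces the corresponding clause at level $n-1$ for $a_n(r)$ with room to spare; when $a_n(r)$ is small or negative the ratio manipulations degenerate and one instead uses $\mathbb{P}(\cdot)\le1$ with the lower bound $\mathbb{P}(\|Y\|>r)\ge\Phi_0e^{-\Phi_1r^2}$ from part (1).

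The heart of the argument is the choice of the smoothing width $\varepsilon=\varepsilon_{n,r}$ of order $T_{n,r}^{1/4}/(r+1)$ with a precise small constant. With such an $\varepsilon$: $(r+1)\varepsilon\asymp T_{n,r}^{1/4}$ is $O(1)$ by~\eqref{eq:TnrSatisfying}, so the anti-concentration piece is $O(\Phi_2\Phi_3)\,T_{n,r}^{1/4}$; the prefactor $C_3\varepsilon^{-3}L_n n^{-1/2}$ is itself of order $C_3L_n\,T_{n,r}^{1/4}$ (this is why the rate is $T_{n,r}^{1/4}$ and why no contraction is needed); the exponent $\Phi_4(r+1)(\varepsilon+f_n(r))$ is $O(1)$ via the second clause of~\eqref{eq:TnrSatisfying} and $f_n(r)=B(r^2+\log(en))n^{-1/2}$; the exponent $\mathfrak{C}f_n(r)\log(ep)/\varepsilon$ is $O(1)$ via the third clause; and the deep-tail piece $\varepsilon^{-2}n^{-2-\Phi_1}$ is far smaller than $T_{n,r}^{1/4}$. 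Since the first clause of~\eqref{eq:TnrSatisfying} keeps $\Pi T_{n,r}^{1/4}$, hence $\rho_{n-1}(a_n(r))$, bounded, the factor $1+\rho_{n-1}$ is $O(1)$ and the Lindeberg piece is $O(C_3L_n)\,T_{n,r}^{1/4}$. Summing the three pieces, $\Pi$ is simply \emph{defined} to be the resulting combination of the $O(1)$ constants; the fractional exponents $(\cdot)^{4/3},(\cdot)^{4/7},(\cdot)^{4/11}$ in its definition record that the three pieces have different homogeneity in $\varepsilon$ and $n$, so that once the powers of $T_{n,r}$ are equated the raw constants $\Phi_2\Phi_3$, $C_3L_ne^{\mathfrak{C}}$, $C_2\beta/(\Phi_0H^2)$ enter $\Pi$ carrying exactly those powers. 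This yields $\rho_n(r)\le\Pi T_{n,r}^{1/4}$ and closes the induction.

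The main obstacle is precisely this calibration: $\varepsilon$ must be small enough that the anti-concentration piece and every exponential overshoot in~\eqref{eq:3p7} stay below order $\Pi T_{n,r}^{1/4}$, yet large enough that the Lindeberg prefactor $C_3\varepsilon^{-3}L_n n^{-1/2}$ does not swamp everything — a window that is nonempty only when $T_{n,r}\lesssim1$, which is exactly what~\eqref{eq:TnrSatisfying} guarantees, and which closes only because of the specific $r^2+\log(en)$ shape of $f_n(r)$. Verifying that every constant generated along the way — including the $\sqrt2$ from comparing $T_{n-1,\cdot}$ with $T_{n,\cdot}$ and the factor $1+\rho_{n-1}$ — is absorbed into the stated $\Pi$ without circularity, and disposing cleanly of the degenerate range $a_n(r)\le0$, are the genuinely fiddly points. (The later upgrade of this $T_{n,r}^{1/4}=(r+1)^{3/4}n^{-1/8}$ rate to the $n^{-1/6}$ rate of Theorem~\ref{thm:largedeviation} is a separate, more refined induction, not needed here.)
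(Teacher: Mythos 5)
Your overall route is the paper's route: the recursion from Lemma~\ref{lem:DeltankIntoUnminus1k}, the smoothing width $\varepsilon=(T_{n,r}\Pi)^{1/4}/(r+1)$, the three Gaussian estimates of Theorem~\ref{thm:density} to convert every term into a multiple of $\mathbb{P}(\|Y\|>r)$, and an induction in $n$ in which $\Pi$ is defined to absorb the resulting constants (the identities $\varepsilon(r+1)=(T_{n,r}\Pi)^{1/4}$ and $\varepsilon^{-3}n^{-1/2}=T_{n,r}^{1/4}\Pi^{-3/4}$ being exactly why the rate is $T_{n,r}^{1/4}$ with no contraction needed). However, there is a genuine gap at the one point you dismiss in a single line: the claim that, because $a_n(r)\le r$ and $T_{n-1,s}\le\sqrt{2}\,T_{n,s}$, every clause of~\eqref{eq:TnrSatisfying} at level $n$ forces the corresponding clause at level $n-1$ for $a_n(r)$ ``with room to spare.'' This is false. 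The first clause $T\le 1/(16\Phi_4^4\Pi)$ has a right-hand side that does not change with $n$, so the crude bound $T_{n-1,a_n(r)}\le\sqrt{2}\,T_{n,r}$ can exceed it whenever $T_{n,r}$ is near its allowed maximum; and even the second clause fails under this reasoning (e.g.\ for $n=3$ one would need $\log(3e)\ge\sqrt{2}\log(2e)$, which is false). Since the induction hypothesis may only be invoked at radii that actually satisfy the level-$(n-1)$ constraint, your inductive step as written breaks down precisely on the boundary of the admissible region, which is where the lemma has content.

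The repair — and this is how the paper closes the induction — is to exploit the fact that the radius genuinely shrinks, from $r$ to $r-c'\varepsilon$ with $c'=1+B\log(en)T_{n,r}^{3/4}/\Pi^{1/4}\ge 1$, and to verify that this shrinkage offsets the $(1-1/n)^{-1/2}$ inflation exactly: one shows $T_{n-1,r-c'\varepsilon}/T_{n,r}=\bigl(1-c'\varepsilon/(r+1)\bigr)^3(1-1/n)^{-1/2}\le 1$, which reduces (via $1-(1-1/n)^{1/6}\le 1/n$) to checking $c'\varepsilon/(r+1)\ge 1/n$, i.e.\ $(r+1)^{5/4}\le n^{7/8}c'\Pi^{1/4}$; this holds because the constraint already gives $r+1\le n^{1/6}$. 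This computation is not optional bookkeeping — without it the induction hypothesis is simply unavailable at $a_n(r)$. A minor further slip: for $n=1$ you set $U_{0,k}=Y$, but $U_{0,k}$ is the empty (zero) sum, not $Y$; the clean base case is the paper's observation that at $n=1$ the constraint~\eqref{eq:TnrSatisfying} forces $(r+1)^3\le 1/(16\Phi_4^4\Pi)<1$, hence $r<0$, where~\eqref{eq:FirstVersionLargeDeviation} is trivial.
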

\begin{proof}
Inequality~\eqref{eq:FirstVersionLargeDeviation} is trivally true for $r \le 0$ for all $n\ge1$. Hence we only consider the case of $r > 0$. We prove the result by induction.

\noindent {\bf Case $n = 1$:} Since $\Phi_4, \Pi \ge 1$, we get $r\ge 0$ satisfying~\eqref{eq:TnrSatisfying} also satisfies $T_{n,r} \le 1$ which is equivalent (for $n=1$) to $(r+1)^3\le1$ which holds only for $r\le0$ the case in which the result is already proved.

Suppose now that the result~\eqref{eq:FirstVersionLargeDeviation} holds for sample sizes $1, 2, \ldots, n-1$. This implies that for any $u$ satisfying~\eqref{eq:TnrSatisfying} (with $n$ replaced by $n-1$ and $r$ replaced by $u$), we get
\begin{equation}\label{eq:InductionHypo}
\frac{\mathbb{P}(\|U_{n-1,k}\| \le u)}{\mathbb{P}(\|Y\| \ge u)} \le 1 + \Pi T_{n-1,u}^{1/4} \le 1 + \Pi/(2\Phi_4\Pi^{1/4}) \le 1 + \Pi^{3/4}/(2\Phi_4).
\end{equation}
The second inequality above follows from~\eqref{eq:TnrSatisfying}. Proceeding to prove for the case of $n$, fix $r > 0$ satisfying~\eqref{eq:TnrSatisfying} and let $\varepsilon = (T_{n,r}\Pi)^{1/4}(r + 1)^{-1}$. Firstly note that
\begin{align*}
\frac{f_n(r)\log(ep)}{\varepsilon} &= \frac{B(r^2 + \log(en))\log(ep)}{n^{1/2}\varepsilon}\\
&\le \frac{B(r+1)^2\log(en)\log(ep)}{n^{1/2}\varepsilon}\\
&\le \frac{B(r+1)^3\log(en)\log(ep)}{n^{1/2}(T_{n,r}\Pi)^{1/4}} = \frac{BT_{n,r}^{3/4}\log(en)\log(ep)}{\Pi^{1/4}}\\
&\le \frac{B\Pi^{1/4}\log(en)\log(ep)}{\Pi^{1/4}(B\log(en)\log(ep))} = 1.
\end{align*}
The last inequality above follows from the fact $r$ satisfies~\eqref{eq:TnrSatisfying}. Substituting this in Lemma~\ref{lem:DeltankIntoUnminus1k}, we get
\begin{equation}\label{eq:LemmaImplication}
\begin{split}
\Delta_n(r) &\le \mathbb{P}(r - \varepsilon \le \|Y\| \le r + \varepsilon) + \frac{8C_2\varepsilon^{-2}\beta}{\Phi_0H^2n^{1+\Phi_1}}\mathbb{P}(\|Y\| > r)\\
&\quad+ \frac{C_3\varepsilon^{-3}L_ne^{\mathfrak{C}}}{6n^{1/2}}\max_{0\le k\le n}\mathbb{P}(\|U_{n-1,k}\| \ge a_n(r)).
\end{split}
\end{equation}
We will now simplify the right hand side terms as multiples of $\mathbb{P}(\|Y\| > r)$. Firstly note that
\[
\mathbb{P}(r - \varepsilon \le \|Y\| \le r +\varepsilon) \le \Phi_2\Phi_3\varepsilon(r+1)e^{\Phi_4\varepsilon(r+1)}\mathbb{P}(\|Y\| > r).
\]
Secondly, since $f_n(r)/\varepsilon \le B\log(en)T_{n,r}^{3/4}/\Pi^{1/4}$, we get $a_n(r) \ge r - c'\varepsilon$ where $c' = 1 + B\log(en)T_{n,r}^{3/4}/\Pi^{1/4}$. Hence
\[
\mathbb{P}(\|U_{n-1,k}\| \ge a_n(r)) \le \mathbb{P}(\|U_{n-1,k}\| > r - c'\varepsilon).
\]
In order to control the right hand side we use the induction hypothesis. For this we need to verify that $r - c'\varepsilon$ satisfies~\eqref{eq:TnrSatisfying} (with $n$ replaced by $n-1$ and $r$ replaced by $r-c'\varepsilon$) which is verified as follows: 
\begin{equation}\label{eq:MainVerify}
\frac{T_{n-1,r-c'\varepsilon}}{T_{n,r}} = \left(\frac{r - c'\varepsilon + 1}{r+1}\right)^{3}\left(\frac{n}{n-1}\right)^{1/2} = \left(1 - \frac{c'\varepsilon}{r+1}\right)^3\left(1 - \frac{1}{n}\right)^{-1/2}.
\end{equation}
Since $T_{n,r} \le 1/(16\Phi_4^4\Pi)$, proving the right hand side of~\eqref{eq:MainVerify} is bounded by 1 proves $T_{n-1,r-c'\varepsilon} \le 1/(16\Phi_4^4\Pi)$. Since $T_{n,r} \le 1/(2\Phi_4^4B\log(en))$, proving the right hand side of~\eqref{eq:MainVerify} is bounded by $\log(en)/\log(e(n-1))$ proves $T_{n-1,r-c'\varepsilon} \le 1/(2\Phi_4^4B\log(e(n-1)))$. Similarly, proving the right hand side of~\eqref{eq:MainVerify} is bounded by $(\log(en)/\log(e(n-1)))^{4/3}$ proves the last inequality for $T_{n-1,r-c'\varepsilon}$. Hence it is enough to show that
\[
\left(1 - \frac{c'\varepsilon}{r+1}\right)^3\left(1 - \frac{1}{n}\right)^{-1/2} \le 1,
\] 
which is the tightest among the three. Equivalently, we need to verify
\[
\left(1 - \frac{c'\varepsilon}{r+1}\right) \le \left(1 - \frac{1}{n}\right)^{1/6}.
\]
Since $1 - (1-1/n)^{1/6} \le 1/n$ for all $n\ge1$, it suffices to verify $1/n \le c'\varepsilon/(r+1) = c'(r+1)^{3/4}n^{-1/8}\Pi^{1/4}/(r+1)^2$ which is equivalent to $(r+1)^{5/4} \le n^{7/8}c'\Pi^{1/4}$. But we know that $r+1\le n^{1/6}$ which implies $(r+1)^{5/4} \le n^{5/24} \le n^{21/24} \le n^{7/8}c'\Pi^{1/4}$ for all $n\ge1$. This shows that $r - c'\varepsilon$ satisfies~\eqref{eq:TnrSatisfying} and hence from~\eqref{eq:InductionHypo}, we get
\[
\mathbb{P}(\|U_{n-1,k}\| > r - c'\varepsilon) \le 
\mathbb{P}(\|Y\| > r - c'\varepsilon)(1 + \Pi^{3/4}/(2\Phi_4)) \le \Phi_3\exp(\Phi_4\varepsilon(r+1))\mathbb{P}(\|Y\| > r).
\]
Substituting these in~\eqref{eq:LemmaImplication}, we get
\[
\frac{\Delta_{n}(r)}{\mathbb{P}(\|Y\| > r)} \le \Phi_2\Phi_3\varepsilon(r+1)e^{\Phi_4\varepsilon(r+1)} + \Phi_3e^{\Phi_4c'\varepsilon(r+1)}\left(1 + \frac{\Pi^{3/4}}{2\Phi_4}\right)\frac{C_3\varepsilon^{-3}L_ne^{\mathfrak{C}}}{6n^{1/2}} + \frac{8C_2\varepsilon^{-2}\beta}{\Phi_0H^2n^{1+\Phi_1}}.
\]
Observe now that $\varepsilon(r+1) = (T_{n,r}\Pi)^{1/4},$
\[
\frac{\varepsilon^{-3}}{n^{1/2}} = \frac{T_{n,r}^{1/4}}{\Pi^{3/4}},\quad\mbox{and}\quad \frac{\varepsilon^{-2}}{n} \le \frac{(r+1)^2}{(T_{n,r}\Pi)^{1/2}n} = \frac{T_{n,r}^{2/3}}{(T_{n,r}\Pi)^{1/2}n^{2/3}} \le \frac{T_{n,r}^{3/2}}{\Pi^{1/2}} \le \frac{T_{n,r}^{1/4}}{32\Phi_4^5\Pi^{7/4}},
\]
where the last two inequalities follows from $n^{-2/3} \le T_{n,r}^{4/3}$ and $T_{n,r}^{5/4} \le 1/(32\Phi_4^5\Pi^{5/4})$. Using these, we obtain
\begin{align*}
\frac{\Delta_{n}(r)}{\mathbb{P}(\|Y\| > r)} &\le \Phi_2\Phi_3(T_{n,r}\Pi)^{1/4}e^{\Phi_4(T_{n,r}\Pi)^{1/4}} + \Phi_3e^{\Phi_4c'(T_{n,r}\Pi)^{1/4}}\left(1 + \frac{\Pi^{3/4}}{2\Phi_4}\right)\frac{C_3L_ne^{\mathfrak{C}}T_{n,r}^{1/4}}{6\Pi^{3/4}}\\ 
&\qquad+ \frac{8C_2\beta T_{n,r}^{1/4}}{32\Phi_0\Phi_4^5H^2\Pi^{7/4}}.
\end{align*}
Note that since $c' \ge 1$,
\begin{align*}
\Phi_4(T_{n,r}\Pi)^{1/4} \le \Phi_4c'(T_{n,r}\Pi)^{1/4} &\le \Phi_4(T_{n,r}\Pi)^{1/4}\left(1 + \frac{B\log(en)T_{n,r}^{3/4}}{\Pi^{1/4}}\right)\\ 
&= \Phi_4(T_{n,r}\Pi)^{1/4} + B\Phi_4\log(en)T_{n,r} \le \frac{1}{2} + \frac{1}{2} = 1.
\end{align*}
Therefore,
\begin{align*}
\frac{\Delta_{n}(r)}{\mathbb{P}(\|Y\| > r)} &\le T_{n,r}^{1/4}\left[e^{1/2}\Phi_2\Phi_3\Pi^{1/4} + \frac{e\Phi_3(1 + \Pi^{3/4}/(2\Phi_4))C_3L_ne^{\mathfrak{C}}}{6\Pi^{3/4}} + \frac{C_2\beta}{4\Phi_0\Phi_4^5H^2\Pi^{7/4}}\right]\\
&\le \Pi T_{n,r}^{1/4},
\end{align*}
where the last inequality follows from the definition of $\Pi$. Hence the result is proved.
\end{proof}
Before improving $T_{n,r}^{1/4}$ to $T_{n,r}^{1/3}$, we prove the following sharp large deviation for $r \le \mu$.
\begin{lem}\label{lem:rSmallMu}
For all $r \le \mu := \mbox{median}(\|Y\|)$ and $n\ge1$, we have
\[
\max_{0\le k\le n}\left|\frac{\mathbb{P}(\|U_{n,k}\| > r)}{\mathbb{P}(\|Y\| > r)} - 1\right| \le \tilde{\Pi}_nT_{n,r}^{1/3},
\]
where
\[
\tilde{\Pi}_n := 4 + 12\Phi_{AC,0}{(8C_0e^{\mathfrak{C}}L_n\log^2(ep))^{\frac{1}{3}}} + \frac{12\Phi_{AC,0}\log(ep)\beta^{\frac{1}{3}}\log(8C_0n)}{Hn^{1/3}} + \frac{5.1\log(ep)}{C_0n^{5/6}}.
\]
\end{lem}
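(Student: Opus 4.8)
\textbf{Proof proposal for Lemma \ref{lem:rSmallMu}.}

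The plan is to bound $\max_{0\le k\le n}\Delta_{n,k}(r)/\mathbb{P}(\|Y\|>r)$ directly for $r\le\mu$, without the inductive machinery used in Lemma \ref{lem:sixtpt}, by exploiting that in this regime $\mathbb{P}(\|Y\|>r)$ is bounded below by a \emph{constant} ($\ge 1/2$, since $r\le\mu$), so there is no need to track the exponential interplay between $\mathbb{P}(\|Y\|>r-\varepsilon)$ and $\mathbb{P}(\|Y\|>r)$. First I would invoke Lemma \ref{lem:DeltankIntoUnminus1k} (which is really a statement about each $\Delta_{n,k}(r)$, $0\le k\le n$, not just $\Delta_n(r)$, since it bounds each $I_j$ uniformly) with a judicious choice of $\varepsilon$. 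For the first term use the anti-concentration bound $\mathbb{P}(r-\varepsilon\le\|Y\|\le r+\varepsilon)\le 2\Phi_{AC,0}\varepsilon$ from Theorem \ref{thm:AntiConcentration}. For the middle term, $8C_2\varepsilon^{-2}\beta/(\Phi_0 H^2 n^{1+\Phi_1})\,\mathbb{P}(\|Y\|>r)$: here note $f_n(r)=B(r^2+\log(en))n^{-1/2}$, and with $r\le\mu$ this is $O((\mu^2+\log(en))n^{-1/2})$, so after choosing $\varepsilon$ of order $T_{n,r}^{1/3}$ one checks $e^{\mathfrak{C}f_n(r)\log(ep)/\varepsilon}$ is bounded by a constant (using that the relevant $r$'s satisfy the size constraint so that $f_n(r)/\varepsilon$ is small) and that the exponent $\exp(-Hn^{1/2}f_n(r)/2)$ yields the $\log(ep)/(C_0 n^{5/6})$ contribution after writing $\varepsilon^{-2}\le n^{2/3}T_{n,r}^{4/3}$-type bounds. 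For the third term, $C_3\varepsilon^{-3}L_n e^{\mathfrak{C}f_n(r)\log(ep)/\varepsilon}/(6n^{1/2})\cdot\max_k\mathbb{P}(a_n(r)\le\|U_{n-1,k}\|\le b_n(r))$, I would bound the probability crudely by $1$ (no recursion needed because $\mathbb{P}(\|Y\|>r)\ge 1/2$ absorbs everything), giving the term $12\Phi_{AC,0}(8C_0 e^{\mathfrak{C}}L_n\log^2(ep))^{1/3}T_{n,r}^{1/3}$ after substituting $\varepsilon^{-3}n^{-1/2}=T_{n,r}\Pi^{-3/4}$-style identities with $\varepsilon=(8C_0 e^{\mathfrak{C}}L_n\log^2(ep))^{1/3}n^{-1/6}\cdot(r+1)$, chosen so the coefficient works out.

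Concretely, the choice $\varepsilon=\varepsilon_{n,r}:=(8C_3 e^{\mathfrak{C}}L_n)^{1/3}n^{-1/6}$ (constant in $r$) seems cleanest: then $C_3\varepsilon^{-3}L_n e^{\mathfrak{C}}/(6n^{1/2})=1/48$, a pure constant, so the third term is $\le 1/48 + (\text{the }e^{\mathfrak{C}f_n(r)\log(ep)/\varepsilon}-1\text{ correction})$; dividing by $\mathbb{P}(\|Y\|>r)\ge1/2$ doubles these. Then I would reorganize all three contributions so each carries a factor $T_{n,r}^{1/3}=(r+1)^{1}n^{-1/6}$ — for $r\le\mu$ one has $(r+1)\le\mu+1$, and since the statement allows $\tilde\Pi_n$ to depend on $n$ but the bound must be a multiple of $T_{n,r}^{1/3}$, I pull out $T_{n,r}^{1/3}$ from the constant terms by using $1\le \tilde C\cdot T_{n,r}^{1/3}$ whenever $T_{n,r}$ is bounded below (which it need not be — so more carefully, the constant $4$ in $\tilde\Pi_n$ comes from the trivial bound $|\mathbb{P}(\|U_{n,k}\|>r)/\mathbb{P}(\|Y\|>r)-1|\le 1/\mathbb{P}(\|Y\|>r)\le 2$ plus the ratio being nonnegative, i.e. the bound is at most $2$, and then one writes $2\le 4 T_{n,r}^{1/3}$ only when $T_{n,r}\ge 1/8$, while for $T_{n,r}<1/8$ the quantitative estimates above dominate). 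The middle term's $\log(8C_0 n)$ factor arises from the $\exp(-Hn^{1/2}f_n(r)/2)$ decay: writing $\exp(-(1+\Phi_1)\log n)=n^{-1-\Phi_1}$ and the coefficient $8C_2\beta/(\Phi_0 H^2 n)$, then relating $\Phi_1$ to $\Phi_4\cdot$ something, one extracts $\log(8C_0 n)$ after bounding $\beta^{1/3}$ and pulling out $T_{n,r}^{1/3}$; the $H$ in the denominator is the same $H$ from the exponential-tail assumption $\int e^{H\|x\|}|\zeta|(dx)\le4$.

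The main obstacle I anticipate is the careful bookkeeping of the exponential factor $e^{\mathfrak{C}f_n(r)\log(ep)/\varepsilon}$ and verifying it stays bounded (by, say, $2$) for all $r\le\mu$ under the stated hypotheses: $f_n(r)/\varepsilon=B(r^2+\log(en))\log(ep)/(n^{1/2}\varepsilon)$, and with $\varepsilon$ of order $n^{-1/6}$ this is $O((\mu^2+\log(en))\log(ep)n^{-1/3})$, which requires $n\gtrsim (\mu^2\log(ep))^{3}$ or similar — but the lemma as stated has no such hypothesis, so I suspect the intended reading is that this factor is simply \emph{carried along} inside the constant and the $e^{\mathfrak{C}f_n(r)\log(ep)/\varepsilon}$ is bounded using $f_n(r)/\varepsilon\le B\log(en)T_{n,r}^{3/4}/\Pi^{1/4}\le 1$ exactly as in the proof of Lemma \ref{lem:sixtpt}, i.e. the bound is vacuous (the RHS exceeds $2$, hence trivially true) unless $T_{n,r}$ is small enough that $f_n(r)/\varepsilon\le1$ holds. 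So the real content is: (i) split into the regime where the trivial bound $\le 2$ already beats $\tilde\Pi_n T_{n,r}^{1/3}$ and the complementary regime; (ii) in the latter, run the three-term estimate from Lemma \ref{lem:DeltankIntoUnminus1k} with the constant $\varepsilon$ and the probability in the third term bounded by $1$; (iii) collect constants. A secondary subtlety is that Lemma \ref{lem:DeltankIntoUnminus1k} is phrased for $\Delta_n(r)=\Delta_{n,n}(r)$, so I would first note its proof yields the same bound for every $\Delta_{n,k}(r)$, $0\le k\le n$ (replacing the sum $\sum_{j=1}^n$ by $\sum_{j=1}^k$ only decreases it), which is what the $\max_{0\le k\le n}$ in the statement needs.
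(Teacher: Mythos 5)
Your route is genuinely different from the paper's, and it does not close. The paper's proof of Lemma \ref{lem:rSmallMu} does no fresh Lindeberg analysis at all: it takes the uniform bound on $\delta_{n,0}$ established in the proof of Theorem \ref{thm:OptimalCLT}, namely $\delta_{n,0}\le 2\cdot 2^{-n}+6\Phi_{AC,0}\bigl[(8C_0e^{\mathfrak{C}}L_n\log^2(ep)n^{-1/2})^{1/3}+\nu_{2+\tau}(8C_0(\log(ep))^{\tau+1}n^{-\tau/2})^{1/(2+\tau)}\bigr]$, bounds $\nu_{2+\tau}\le e^{-1}(2+\tau)H^{-1}\beta^{1/3}$ using $\int e^{H\|x\|}|\zeta|(dx)\le\beta$, chooses $2+\tau=\log(8C_0n/\log(ep))$ (with the trivial patch $\delta_{n,0}\le1$ when this is $<3$, which produces the $5.1\log(ep)/(C_0n^{5/6})$ term), and then simply uses $T_{n,r}^{-1/3}=(r+1)^{-1}n^{1/6}\le n^{1/6}$ and $\mathbb{P}(\|Y\|>r)\ge 1/2$ for $r\le\mu$, so that the ratio is at most $2\delta_{n,0}n^{1/6}T_{n,r}^{1/3}$. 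Each term of $\tilde\Pi_n$ is the fingerprint of that argument: the $4$ comes from $2\cdot(2/2^n)n^{1/6}$, the $\log(8C_0n)\,\beta^{1/3}/H$ term from the choice of $\tau$ and the moment bound, not from any exponential truncation. Your guesses about where the $4$ and the $\log(8C_0n)$ come from are therefore off, which matters because your construction has to reproduce (or dominate) exactly this $\tilde\Pi_n$.

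The concrete gap is in your fallback dichotomy. Running Lemma \ref{lem:DeltankIntoUnminus1k} with $\varepsilon\asymp(C_3e^{\mathfrak{C}}L_n)^{1/3}n^{-1/6}$ requires $\mathfrak{C}f_n(r)\log(ep)/\varepsilon\lesssim 1$, i.e.\ roughly $B(r^2+\log(en))\log(ep)\lesssim n^{1/2}\varepsilon$, a condition absent from the lemma; you propose to dismiss the complementary regime by the trivial bound $|\,\cdot\,|\le 2\le\tilde\Pi_nT_{n,r}^{1/3}$. But the lemma covers all $r\le\mu$, including $r$ near $0$, where $T_{n,r}^{1/3}=(r+1)n^{-1/6}\le n^{-1/6}$; and since $B\asymp H^{-1}$, the constraint-violating regime is roughly $Hn^{1/3}\lesssim\log(en)\log(ep)\,(C_3L_n)^{-1/3}$, which for small $H$ contains arbitrarily large $n$ while $\tilde\Pi_n$ stays bounded there (its middle term is then of constant order). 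Hence $\tilde\Pi_nT_{n,r}^{1/3}<2$ at such points and the fallback fails; enlarging $\varepsilon$ is no escape, since the anti-concentration term $\Phi_{AC,0}\varepsilon$ must itself be $\lesssim\tilde\Pi_n n^{-1/6}$ at small $r$. A second, quantitative mismatch persists even in the good regime: your middle term is of order $C_2\beta\varepsilon^{-2}/(H^2n^{1+\Phi_1})$, with $H^{-2}$ and $\beta$ to the first power, and there is no general way to dominate this by the stated middle term $12\Phi_{AC,0}\log(ep)\beta^{1/3}\log(8C_0n)H^{-1}n^{-1/3}\cdot T_{n,r}^{1/3}$ at small $r$ uniformly in $H$. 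So the truncation-at-$n^{1/2}f_n(r)$ route cannot deliver the lemma as stated; the missing idea is to invoke the already-proved uniform CLT with a $\log n$-moment choice of $\tau$, which is exactly where the sub-exponential assumption enters without any constraint tying $H$, $L_n$ and $n$ together. Your observations that the bound of Lemma \ref{lem:DeltankIntoUnminus1k} holds for each $k$ and that $\mathbb{P}(\|Y\|>r)\ge1/2$ removes the need for recursion are correct, but they are not enough to rescue the construction.
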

\begin{proof}
From the proof of Theorem~\ref{thm:OptimalCLT}, it follows that 
\[
\delta_{n,0} \le \frac{2}{2^n} + 6\Phi_{AC,0}\left[\left(\frac{8C_0e^{\mathfrak{C}}L_n\log^2(ep)}{n^{1/2}}\right)^{1/3} + \nu_{2+\tau}\left(\frac{8C_0(\log(ep))^{\tau+1}}{n^{\tau/2}}\right)^{1/(2+\tau)}\right].
\]
Since $\beta = \int \exp(H\|x\|)|\zeta|(dx)$, we get
\[
\nu_{2+\tau}^{2+\tau} \le \int \left(\frac{2+\tau}{He}\right)^{2+\tau}\exp(H\|x\|)|\zeta|(dx) \le \beta\left(\frac{2+\tau}{He}\right)^{2+\tau}.
\]
Hence $\nu_{2+\tau} \le e^{-1}(2+\tau)H^{-1}\beta^{1/3}$ for all $\tau\ge1$. We now choose $2+\tau = \log(8C_0n/\log(ep))$ in the bound on $\delta_{n,0}$ and simplifying the bound, we get
\[
\delta_{n,0} \le \frac{2}{2^n} + 6\Phi_{AC,0}\frac{(8C_0e^{\mathfrak{C}}L_n\log^2(ep))^{1/3}}{n^{1/6}} + 6\Phi_{AC,0}\frac{\log(ep)\beta^{1/3}\log(8C_0n/\log(ep))}{Hn^{1/2}}.
\]
The calculation above requires $\log(8C_0n/\log(ep)) \ge 3$. In case $\log(8C_0n/\log(ep)) \le 3$ then $8C_0n/\log(ep) \le e^3$ or equivalently $1 \le e^3\log(ep)/(8C_0n)$. Since $\delta_{n,0} \le 1$, this implies that $\delta_{n,0} \le e^3\log(ep)/(8C_0n)$. Combining both cases, we get
\[
\delta_{n,0}n^{1/6} \le 2 + 6\Phi_{AC,0}{(8C_0e^{\mathfrak{C}}L_n\log^2(ep))^{1/3}} + \frac{6\Phi_{AC,0}\log(ep)\beta^{1/3}\log(8C_0n)}{Hn^{1/3}} + \frac{e^3\log(ep)}{8C_0n^{5/6}}.
\]

Now note that for all $r \le \mu$, $\mathbb{P}(\|Y\| > r) \ge 1/2$ and hence for $r \le \mu$,
\begin{align*}
\frac{\delta_{n,0}T_{n,r}^{-1/3}}{\mathbb{P}(\|Y\| > r)} &\le \frac{2\delta_{n,0}n^{1/6}}{r+1} \le 2\delta_{n,0}n^{1/6}\\ 
&\le 4 + 12\Phi_{AC,0}{(8C_0e^{\mathfrak{C}}L_n\log^2(ep))^{\frac{1}{3}}} + \frac{12\Phi_{AC,0}\log(ep)\beta^{\frac{1}{3}}\log(8C_0n)}{Hn^{1/3}} + \frac{5.1\log(ep)}{C_0n^{5/6}}\\ 
&=: \tilde{\Pi}_n.
\end{align*}
\end{proof}
\noindent Let us now introduce (or recall) a few notations:
\begin{align*}
&\Pi = \max\left\{1, (4e^{1/2}\Phi_2\Phi_3)^{4/3}, \frac{eC_3\Phi_3L_ne^{\mathfrak{C}}}{3\Phi_4}, \left(\frac{2eC_3\Phi_3L_ne^{\mathfrak{C}}}{3}\right)^{4/7}, \left(\frac{C_2\beta}{\Phi_0\Phi_4^5H^2}\right)^{4/11}\right\}\\
&M := \max\left\{\sqrt{2}(\Pi + \tilde{\Pi}_n),\left(112\Phi_2 + 83C_3L_n\right)^{\frac{4}{3}}, \left(\frac{48C_2}{\Phi_4^{16/5}H^2}\right)^{\frac{10}{23}}, 36\left(C_3L_n\Phi_2\right)^{\frac{2}{3}}, \left(\frac{124C_3L_n}{(\mu+1)^{17/16}n^{5/32}}\right)^{2}\right\},\\
&F_n := \min\left\{\frac{1}{16\Phi_4^4\Pi}~,~ \frac{1}{2\Phi_4B\log(en)}, \frac{\Pi^{1/3}}{(B\log(en)\log(ep))^{4/3}}\right\}~,\\
&G_n := \min \left\{\left(\frac{1}{81\Phi_4^4 M}\right)^{4/5},~\left(\frac{M^{1/4}}{6\mathfrak{C}B\log(en)\log(ep)}\right)^{3/2}\right\}~,\\
&\mathfrak{B}_0 := \min\left\{\left(\frac{F_n}{\sqrt{2}}\right)^{1/3}, G_n^{1/3}\right\},\quad \mathfrak{B}_s := \mathfrak{B}_0\left(1 + \frac{17M^{1/4}}{6(\mu+1)^{17/16}n^{5/32}}\right)^{-s},\\
& I_{s,M} := \left(-\infty, -1 + \mathfrak{B}_sn^{1/6}\right),\quad\alpha(s) := \frac{1-4^{-s-1}}{3},\quad\varepsilon_{s,M} := \frac{M^{1/4}T_{n,r}^{\alpha(s)}}{r+1}.
\end{align*}
\begin{lem}\label{sixtph}
  For every $n \geq {4}$, $0\leq s\leq l := \lfloor \log(en)\rfloor$, we have:
  \begin{equation}\label{eq:ResultFinalLargeDev}
  \max_{0\le k\le n-l+s}\left|\frac{\mathbb{P}(\|U_{n-l+s,k}\| > r)}{\mathbb{P}(\|Y\| > r)} - 1\right| ~\leq~ M T_{n,r}^{\alpha(s)}\quad\mbox{for all}\quad r\in I_{s,M}.
  \end{equation}
\end{lem}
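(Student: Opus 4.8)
The plan is to establish \eqref{eq:ResultFinalLargeDev} by induction on $s$, where each increment of $s$ is matched by peeling off one more summand through the one-step Lindeberg estimate of Lemma~\ref{lem:DeltankIntoUnminus1k}; this is exactly the device hinted at after Lemma~\ref{lem:sixtpt} for upgrading the provisional exponent $1/4$ to the near-optimal $1/3$. Observe that the statement is deliberately written so that $T_{n,r}=(r+1)^3n^{-1/2}$ and the window $I_{s,M}$ always refer to the fixed $n$, even though $\|U_{n-l+s,k}\|$ involves only $n-l+s$ summands. Consequently the terminal value $s=l=\lfloor\log(en)\rfloor$ yields a bound for the genuine average $\|U_{n,k}\|$ with exponent $\alpha(l)=(1-4^{-l-1})/3$, which is within $O(4^{-l})$ of $1/3$; combining this with $\mathbb{P}(\|Y\|>r)\ge\Phi_0e^{-\Phi_1r^2}$ and absorbing the gap $1/3-\alpha(l)$ into the factor $\exp(-3M^{1/4}(\mu+1)^{-17/16}\log(en)n^{-5/32})$ then yields Theorem~\ref{thm:largedeviation}.

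\textbf{Base case.} For $s=0$ we have $\alpha(0)=1/4$. When $r\le\mu$, Lemma~\ref{lem:rSmallMu} already gives the ratio at most $\tilde{\Pi}_nT_{n,r}^{1/3}\le\tilde{\Pi}_nT_{n,r}^{1/4}$ (using $T_{n,r}\le1$). When $r>\mu$ and $r\in I_{0,M}$, the constraint $\mathfrak{B}_0\le(F_n/\sqrt2)^{1/3}$ ensures that \eqref{eq:TnrSatisfying} holds for the pair $(n-l,r)$, so Lemma~\ref{lem:sixtpt} applies and bounds the ratio by $\Pi T_{n-l,r}^{1/4}\le\sqrt2\,\Pi\,T_{n,r}^{1/4}$ (using $n-l\ge n/2$ for $n\ge4$). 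Since $M\ge\sqrt2(\Pi+\tilde{\Pi}_n)$ dominates both $\sqrt2\,\Pi$ and $\tilde{\Pi}_n$, both cases give $\le MT_{n,r}^{1/4}=MT_{n,r}^{\alpha(0)}$.

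\textbf{Inductive step.} Assume \eqref{eq:ResultFinalLargeDev} with $s$ replaced by $s-1$ (at sample size $n-l+s-1$, for all $r\in I_{s-1,M}$). Fix $m:=n-l+s$ and $r\in I_{s,M}$, and apply Lemma~\ref{lem:DeltankIntoUnminus1k} with $n$ there replaced by $m$ and $\varepsilon=\varepsilon_{s,M}=M^{1/4}T_{n,r}^{\alpha(s)}/(r+1)$. The anti-concentration term is estimated by \eqref{eq:3p7}; since $\varepsilon(r+1)=M^{1/4}T_{n,r}^{\alpha(s)}$ and $\mathfrak{B}_s\le\mathfrak{B}_0\le G_n^{1/3}$ keeps $\Phi_4\varepsilon(r+1)$ below a fixed constant, this term is $\lesssim\Phi_2\Phi_3M^{1/4}T_{n,r}^{\alpha(s)}\,\mathbb{P}(\|Y\|>r)$. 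The Markov/tail term carries the factor $\varepsilon^{-2}m^{-1-\Phi_1}=n^{1/3}T_{n,r}^{2/3-2\alpha(s)}M^{-1/2}m^{-1-\Phi_1}$; using $m\asymp n$ and $n^{-2/3}\le T_{n,r}^{4/3}$ this is $\lesssim M^{-1/2}T_{n,r}^{2-2\alpha(s)}\le M^{-1/2}T_{n,r}^{\alpha(s)}$, so the term is $\le MT_{n,r}^{\alpha(s)}\,\mathbb{P}(\|Y\|>r)$. The decisive term is $\tfrac{1}{6}C_3\varepsilon^{-3}L_ne^{\mathfrak{C}f_m(r)\log(ep)/\varepsilon}m^{-1/2}\max_k\mathbb{P}(a_m(r)\le\|U_{m-1,k}\|\le b_m(r))$: here $\varepsilon^{-3}m^{-1/2}\asymp M^{-3/4}T_{n,r}^{1-3\alpha(s)}$, the constraint on $\mathfrak{B}_0$ keeps $\mathfrak{C}f_m(r)\log(ep)/\varepsilon$ below a small constant, and the interior probability is bounded — by Theorem~\ref{thm:density}(3) applied to $\|Y\|$ together with the $(s-1)$-step hypothesis applied at the endpoints $a_m(r)<r$ and $b_m(r)$, both lying in $I_{s-1,M}$ because $\mathfrak{B}_s<\mathfrak{B}_{s-1}$ leaves room — by a constant multiple of $\big[\Phi_2M^{1/4}T_{n,r}^{\alpha(s)}+MT_{n,r}^{\alpha(s-1)}\big]\,\mathbb{P}(\|Y\|>r)$. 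Multiplying through, the two resulting contributions carry the powers $T_{n,r}^{1-2\alpha(s)}$ and $T_{n,r}^{1-3\alpha(s)+\alpha(s-1)}$; the first exceeds $\alpha(s)$ since $\alpha(s)\le1/3$, and the second equals $\alpha(s)$ \emph{exactly} because $\alpha(s)$ obeys the recursion $\alpha(s)=(1+\alpha(s-1))/4$ built into its definition, whose fixed point is $1/3$. Collecting all these contributions and using the lower bounds on $M$ from its definition — the entries $(4e^{1/2}\Phi_2\Phi_3)^{4/3}$, $(112\Phi_2+83C_3L_n)^{4/3}$, $36(C_3L_n\Phi_2)^{2/3}$ and $(48C_2/(\Phi_4^{16/5}H^2))^{10/23}$ — to force each coefficient below an appropriate fraction of $M$, we obtain $\max_k\Delta_{m,k}(r)\le MT_{n,r}^{\alpha(s)}\,\mathbb{P}(\|Y\|>r)$, which is \eqref{eq:ResultFinalLargeDev} for $s$.

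\textbf{Main obstacle.} I expect the hard part to be the constant accounting rather than the analytic estimates, which are all instances of \eqref{eq:3p7} together with the designed size of $\varepsilon_{s,M}$. Passing from step $s-1$ to step $s$ unavoidably loses a multiplicative factor $1+O(M^{1/4}T_{n,r}^{\alpha(s)})$ and an additive $\Phi_4c'\varepsilon(r+1)$-type term in the exponent, and over $l\asymp\log n$ iterations these must neither inflate $M$ nor push $a_m(r),b_m(r)$ outside $I_{s-1,M}$. This is precisely why the admissible window is contracted geometrically, $\mathfrak{B}_s=\mathfrak{B}_0\big(1+17M^{1/4}/(6(\mu+1)^{17/16}n^{5/32})\big)^{-s}$, and why $M$ is required to dominate $(124C_3L_n/((\mu+1)^{17/16}n^{5/32}))^2$: the contraction keeps the reduced radius safely inside the previous window, while the exponents $5/32$ and $17/16$ are tuned so that monotonicity inequalities of the type $(r+1)^{5/4}\le n^{7/8}c'\Pi^{1/4}$ — already verified in the proof of Lemma~\ref{lem:sixtpt} — survive the $l$-fold iteration. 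Checking these window and margin conditions at each $s$, and confirming that $\alpha(l)$ is close enough to $1/3$ for the passage to Theorem~\ref{thm:largedeviation}, is the delicate bookkeeping the proof must carry out.
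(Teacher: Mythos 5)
Your proposal follows essentially the same route as the paper's proof: induction on $s$ with one Lindeberg peeling step (Lemma~\ref{lem:DeltankIntoUnminus1k}) per stage, the base case via Lemma~\ref{lem:sixtpt} with $T_{n-l,r}\le\sqrt{2}\,T_{n,r}$, the induction hypothesis applied at the shifted radii $a_m(r),b_m(r)$ inside the geometrically contracted windows $\mathfrak{B}_s$, anti-concentration from Theorem~\ref{thm:density}(3) for the Gaussian band, and the exact exponent identity $1-3\alpha(s)+\alpha(s-1)=\alpha(s)$ with the constants absorbed into the entries of $M$ and $\Pi$. The only presentational point to fix is that the reduction to $r>\mu$ via Lemma~\ref{lem:rSmallMu} (which you invoke only at $s=0$) must be made at every stage $s$, exactly as the paper does, since the window-inclusion estimate for $b_m(r)$ uses $(r+1)^{17/16}\ge(\mu+1)^{17/16}$.
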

\begin{proof}
  We prove~\eqref{eq:ResultFinalLargeDev} by induction on $s$. But first we note the following. Fix $0 \le s \le l$. If $r \le \mu$, then by Lemma~\ref{lem:rSmallMu} we get the result since $M \ge \sqrt{2}\tilde{\Pi}_n$, $\tilde{\Pi}_{n-l+s} \leq 2^{1/3}\tilde{\Pi}_n$ and $T_{n-l+s,r} \le \sqrt{2}\,T_{n,r}$. From now on we freely assume during the induction $r \ge \mu$.
  
  For $s=0$, the claim becomes:
  \[
  \Delta_{n-l} \leq M T_{n,r}^{1/4} \mathbb{P}(\|Y\| > r)\quad\mbox{for all}\quad n\ge3, \,r \leq -1 + \mathfrak{B}_0 n^{1/6}.
  \] 
  Towards showing this, note that $r \leq -1 + \mathfrak{B}_0n^{-1/6}$ implies $T_{n,r} \leq \mathfrak{B}_0^3$, so 
  \[
  T_{n-l,r} = \frac{(r+1)^3}{(n-l)^{1/2}} \leq \frac{(r+1)^3}{\left(n/2\right)^{1/2}} = \sqrt{2}\,T_{n,r} \leq \sqrt{2}\,\mathfrak{B}_0^3 \leq F_n \leq F_{n-l}.
  \]
  In the second inequality above, we used the fact that $l \leq n/2$ for $n \geq 4$. Hence, by Lemma \ref{lem:sixtpt} we have
  $$\frac{\Delta_{n-l}(r)}{\mathbb{P}(\|Y\|> r)} \leq \Pi T_{n-l,r}^{1/4} \leq 2^{1/8}\Pi T_{n,r}^{1/4}\leq M T_{n,r}^{\alpha(0)}~.$$
  The case $s=0$ is now verified.
  
  We now assume that~\eqref{eq:ResultFinalLargeDev} holds for $0,\ldots,s-1$, and aim at proving it for $s$. Take $r \in I_{s,M}$. Let us set $\varepsilon = \varepsilon_{s,M}$ for the rest of the proof. First, note that by Lemma \ref{lem:DeltankIntoUnminus1k}, we have
  \begin{align*}
  \Delta_{n-l+s, k}(r) &\le \mathbb{P}(r - \varepsilon \le \|Y\| \le r + \varepsilon) + \frac{8C_2\varepsilon^{-2}\beta}{\Phi_0H^2(n-l+s)}\mathbb{P}(\|Y\| > r)\\
  &\quad+ \frac{C_3\varepsilon^{-3}L_{n}e^{\mathfrak{C}f_{n-l+s}(r)\log(ep)/\varepsilon}}{6(n-l+s)^{1/2}}\max_{0\le k\le n-l+s-1}\mathbb{P}(a_{n-l+s}(r) \le \|U_{n-l+s-1,k}\| \le b_{n-l+s}(r)),
  \end{align*}
  where $a_{n-l+s}(r) = r-\varepsilon-f_{n-l+s}(r),$ and $b_{n-l+s}(r) = (1-{1}/{(n-l+s)})^{-1}(r+\varepsilon+f_{n-l+s}(r))$. Since $f_{n-l+s}(r) \le 2f_n(r)$, $a_{n-l+s}(r) \ge A_n(r) := r - \varepsilon - 2f_n(r)$ and $b_{n-l+s}(r) \le B_n(r) := (1-2/n)^{-1}(r + \varepsilon + 2f_n(r))$.
  Consequently, we have:
  \begin{align}
  \Delta_{n-l+s,k}(r) &\le \mathbb{P}(r - \varepsilon \le \|Y\| \le r + \varepsilon) + \frac{16C_2\varepsilon^{-2}\beta}{\Phi_0H^2n}\mathbb{P}(\|Y\| > r)\label{deltanls}\\
  &\quad+ \frac{\sqrt{2}C_3\varepsilon^{-3}L_{n}e^{2\mathfrak{C}f_{n}(r)\log(ep)/\varepsilon}}{6n^{1/2}}\max_{0\le k\le n-l+s-1}\mathbb{P}(A_n(r) \le \|U_{n-l+s-1,k}\| \le B_n(r)),
  \end{align}
  To simplify the probability on the right hand side, note that since $r \in I_{s,M}$, we have:
  $$T_{n,r} \leq \mathfrak{B}_0^3 \leq G_n \leq \left(\frac{M^{1/4}}{6\mathfrak{C}B\log(en)\log(ep)}\right)^{3/2}$$
  \begin{equation}\label{eq:FnrbyVarepsilonBound}
  \frac{f_n(r)}{\varepsilon} \le \frac{B\log(en)(r+1)^2}{n^{1/2}\varepsilon} = \frac{B\log(en)(r+1)^3}{n^{1/2}M^{1/4}T_{n,r}^{\alpha(s)}} \le \frac{B\log(en)T_{n,r}^{2/3}}{M^{1/4}} \le \frac{1}{6\mathfrak{C}\log(ep)} \le \frac{1}{6}.
  \end{equation}
  This implies $\varepsilon + 2f_n(r) \le 4\varepsilon/3$ and so, $A_n(r) = r - \varepsilon - 2f_n(r) \ge r - 4\varepsilon/3.$ Further for $n\ge4$,
  \begin{align*}
  B_n(r) &= r + \varepsilon + 2f_n(r) + [(1 - 2/n)^{-1}-1](r+\varepsilon+2f_n(r))\\
  &\le r + \varepsilon + 2f_n(r) + 4(r+\varepsilon+2f_n(r))/n\\
  &\le r+2(\varepsilon+2f_n(r))+\frac{4r}{n} \le r+\frac{8\varepsilon}{3} + \frac{4r}{n\varepsilon}\varepsilon.
  \end{align*}
  We now bound $r/(n\varepsilon)$. Since $r \le r + 1$, we have
  \begin{equation}\label{eq:rbynvarepsilon}
  \frac{r}{n\varepsilon} \le \frac{(r+1)^2}{nM^{1/4}T_{n,r}^{\alpha(s)}} = \frac{(r+1)^{1+4^{-s-1}}}{n^{(5+4^{-s-1})/6}M^{1/4}} \le \frac{(r+1)^{5/4}}{n^{5/6}M^{1/4}} = \frac{T_{n,r}^{5/12}}{n^{5/8}M^{1/4}} \le \frac{T_{n,r}^{5/3}}{M^{1/4}} \le \frac{T_{n,r}^{2/3}}{4M^{1/4}},
  \end{equation}
  where in the last two inequalities we used $n^{-5/8}\le T_{n,r}^{5/4}$ and $T_{n,r} \le \mathfrak{B}_0^3 \le 1/(16\sqrt{2}\Phi_4^4\Pi) \le 1/4.$ Therefore,
  \[
  A_n(r) \ge r - \frac{4\varepsilon}{3}\quad\mbox{and}\quad B_n(r) \le r + \frac{8\varepsilon}{3} + \frac{T_{n,r}^{2/3}}{M^{1/4}}\varepsilon \le r + \varepsilon\left(\frac{8}{3} + \frac{1}{6B\log(en)}\right) \le r + \frac{17\varepsilon}{6},
  \]
  the last inequality above holds from~\eqref{eq:FnrbyVarepsilonBound} since $B \ge 1$. This yields
  \begin{align*}
  \mathbb{P}(A_n(r) \le \|U_{n-l+s-1,k}\| \le B_n(r)) &\le \mathbb{P}\left(r - {4\varepsilon}/{3} \le \|U_{n-l+s-1,k}\| \le r + {17\varepsilon}/{6}\right)\\
  &\le \Delta_{n-l+s-1,k}(r - 4\varepsilon/3) + \Delta_{n-l+s-1,k}(r + 17\varepsilon/6)\\
  &\qquad+ \mathbb{P}(r - 4\varepsilon/3 \le \|Y\| \le r + 17\varepsilon/6).
  \end{align*}
  We now want to bound $\Delta_{n-l+s-1,k}(\cdot)$ terms on the right hand side probability using the induction hypothesis and for this we need to show that $r - 4\varepsilon/3$ and $r + 17\varepsilon/6$ both belong to $I_{s-1,M}$ when $r\in I_{s,M}$. For $r - 4\varepsilon/3$, note that
  \[
  r \le -1 + \mathfrak{B}_sn^{1/6}\quad\Rightarrow\quad r - 4\varepsilon/3 \le -1 + \mathfrak{B}_sn^{1/6} \le -1 + \mathfrak{B}_{s-1}n^{1/6},
  \]
  where the last inequality follows since $\mathfrak{B}_s \le \mathfrak{B}_{s-1}$. This implies $r-4\varepsilon/3 \in I_{s-1,M}$. For $r + 17\varepsilon/6$, note that since $s\ge1$ and $r+1 \le \mathfrak{B}_sn^{1/6}$,
  \begin{align}
  &r + \frac{17\varepsilon}{6} + 1\label{eq:rplusvarepsilonBound}\\ &\qquad= (r+1)\left(1 + \frac{17\varepsilon}{6(r+1)}\right) \le \mathfrak{B}_sn^{1/6}\left(1 + \frac{17M^{1/4}T_{n,r}^{\alpha(s)}}{6(r+1)^2}\right)\\ 
  &\qquad\le \mathfrak{B}_sn^{1/6}\left(1 + \frac{17M^{1/4}}{6(r+1)^{17/16}n^{5/32}}\right) \le \mathfrak{B}_sn^{1/6}\left(1 + \frac{17M^{1/4}}{6(\mu+1)^{17/16}n^{5/32}}\right) = \mathfrak{B}_{s-1}n^{1/6}.
  \end{align}
  This implies $r + 17\varepsilon/6 \in I_{s-1,M}$. Hence from the induction hypothesis, we get
  \begin{equation}\label{eq:InductionApplication}
  \begin{split}
  \mathbb{P}(A_n(r) \le \|U_{n-l+s-1,k}\| \le B_n(r)) &\le MT_{n,r-4\varepsilon/3}^{\alpha(s-1)}\mathbb{P}(\|Y\| > r - 4\varepsilon/3)\\ 
  &\qquad+ MT_{n,r+17\varepsilon/6}^{\alpha(s-1)}\mathbb{P}(\|Y\| > r + 17\varepsilon/6)\\ 
  &\qquad+ \mathbb{P}(r - 4\varepsilon/3 \le \|Y\| \le r + 17\varepsilon/6)
  \end{split}
  \end{equation}
  It is clear that $T_{n,r-4\varepsilon/3}^{\alpha(s-1)} \le T_{n,r}^{\alpha(s-1)}$. To bound $T_{n,r+17\varepsilon/6}^{\alpha(s-1)}$ in terms of $T_{n,r}^{\alpha(s-1)}$, note that from~\eqref{eq:rplusvarepsilonBound},
  \[
  \frac{T_{n,r+17\varepsilon/6}^{\alpha(s-1)}}{T_{n,r}^{\alpha(s-1)}} = \left(1 + \frac{17\varepsilon}{6(r+1)}\right)^{3\alpha(s-1)} \le \left(1 + \frac{17M^{1/4}}{6(\mu+1)^{17/16}n^{5/32}}\right).
  \]
  Using~\eqref{eq:3p7}, we get
  \begin{align*}
  \mathbb{P}(r - 17\varepsilon/6 \le \|Y\| \le r + 17\varepsilon/6) &\le \frac{17\Phi_2\Phi_3\varepsilon(r+1)e^{17\Phi_4(r+1)\varepsilon/6}}{6}\mathbb{P}(\|Y\| > r).
  \end{align*}
  Substituting these bounds in~\eqref{eq:InductionApplication}, we get
  \begin{align*}
  \mathbb{P}(A_n(r) \le \|U_{n-l+s-1,k}\| \le B_n(r)) &\le MT_{n,r}^{\alpha(s-1)}\left(2 + \frac{17M^{1/4}}{6(\mu+1)^{17/16}n^{5/32}}\right)\mathbb{P}(\|Y\| > r - 4\varepsilon/3)\\ &\qquad+ 3\Phi_2\Phi_3\varepsilon(r+1)e^{3\Phi_4\varepsilon(r+1)}\mathbb{P}(\|Y\| > r),\\
  \frac{\mathbb{P}(A_n(r) \le \|U_{n-l+s-1,k}\| \le B_n(r))}{\mathbb{P}(\|Y\| > r)} &\le MT_{n,r}^{\alpha(s-1)}\left(2 + \frac{17M^{1/4}}{6(\mu+1)^{17/16}n^{5/32}}\right)\Phi_3e^{4\Phi_4\varepsilon(r+1)/3}\\ &\qquad+ 3\Phi_2\Phi_3\varepsilon(r+1)e^{3\Phi_4\varepsilon(r+1)}.
  \end{align*}
  Combining this with~\eqref{deltanls} yields
  \begin{align*}
  \frac{\Delta_{n-l+s,k}(r)}{\mathbb{P}(\|Y\| > r)} &\le \Phi_2\Phi_3\varepsilon(r+1)e^{\Phi_4\varepsilon(r+1)} + \frac{16C_2\varepsilon^{-2}\beta}{\Phi_0H^2n}\\
  &\quad+ \frac{{2}^{1/2}C_3\varepsilon^{-3}L_ne^{2\mathfrak{C}f_n(r)\log(ep)/\varepsilon}\Phi_3e^{4\Phi_4\varepsilon(r+1)/3}MT_{n,r}^{\alpha(s-1)}}{6n^{1/2}}\left(2 + \frac{3M^{1/4}}{(\mu+1)^{17/16}n^{5/32}}\right)\\
  &\quad+ \frac{3\sqrt{2}C_3\varepsilon^{-3}L_ne^{2\mathfrak{C}f_n(r)\log(ep)/\varepsilon}}{6n^{1/2}}\Phi_2\Phi_3\varepsilon(r+1)e^{3\Phi_4\varepsilon(r+1)}.
  \end{align*}
  To simplify the right hand side, we use $\Phi_4\varepsilon(r+1) = \Phi_4M^{1/4}T_{n,r}^{\alpha(s)} \le \Phi_4M^{1/4}T_{n,r}^{5/16}$ (since $s\ge1$). Also, since $r\in I_{s,M}$, we have $(r+1)^3 \le \mathfrak{B}_s^3n^{1/2} \le \mathfrak{B}_0^3n^{1/2}$ implying that $T_{n,r} \le \mathfrak{B}_0^3 \le G_n \le 1/(81\Phi_4^4M)^{4/5}$. Hence $\Phi_4\varepsilon(r+1) \le 1/3$. Further from~\eqref{eq:FnrbyVarepsilonBound}, $2\mathfrak{C}f_n(r)\log(ep)/\varepsilon \le 1/3$. Therefore,
  \begin{align*}
  \frac{\Delta_{n-l+s,k}(r)}{\mathbb{P}(\|Y\| > r)} &\le \Phi_2\Phi_3\varepsilon(r+1)e^{1/3} + \frac{16C_2\varepsilon^{-2}\beta}{\Phi_0H^2n} + \frac{3\sqrt{2}C_3\varepsilon^{-3}L_ne^{1/3}}{6n^{1/2}}\Phi_2\Phi_3\varepsilon(r+1)e\\
  &\quad+ \frac{{2}^{1/2}C_3\varepsilon^{-3}L_ne^{1/3 + 4/9}\Phi_3MT_{n,r}^{\alpha(s-1)}}{6n^{1/2}}\left(2 + \frac{3M^{1/4}}{(\mu+1)^{17/16}n^{5/32}}\right).
  \end{align*}
  Since $\varepsilon(r+1) = M^{1/4}T_{n,r}^{\alpha(s)}$, the above inequality is equivalent to 
  \begin{equation}\label{eq:AlmostFinalBound}
  \begin{split}
  \frac{\Delta_{n-l+s,k}(r)}{T_{n,r}^{\alpha(s)}\mathbb{P}(\|Y\| > r)} &\le \Phi_2\Phi_3e^{1/3}M^{1/4} + \frac{16C_2\varepsilon^{-2}\beta}{\Phi_0H^2nT_{n,r}^{\alpha(s)}}+ \frac{e^{4/3}C_3\varepsilon^{-3}L_n}{\sqrt{2}n^{1/2}}\Phi_2\Phi_3M^{1/4}\\
  &\quad+ \frac{{2}^{1/2}e^{7/9}C_3\varepsilon^{-3}L_n\Phi_3M}{6n^{1/2}T_{n,r}^{1-3\alpha(s)}}\left(2 + \frac{3M^{1/4}}{(\mu+1)^{17/16}n^{5/32}}\right).
  \end{split}
  \end{equation}
  We now bound each of the terms on the right hand side.
  
  \noindent {\bf Second Term of~\eqref{eq:AlmostFinalBound}:}
  \begin{equation}\label{inter11}
  \begin{split}
  \frac{16C_2\varepsilon^{-2}\beta}{\Phi_0 H^2 n T_{n,r}^{\alpha(s)}} &= \frac{16C_2(r+1)^2\beta}{\Phi_0H^2nT_{n,r}^{3\alpha(s)}M^{1/2}}= \frac{16C_2T_{n,r}^{2/3}n^{1/3}\beta}{\Phi_0H^2nT_{n,r}^{3\alpha(s)}M^{1/2}}= \frac{16C_2\beta T_{n,r}^{4^{-s-1}}}{\Phi_0 H^2 n^{2/3}T_{n,r}^{1/3}M^{1/2}}\\&\le \frac{16C_2\beta }{\Phi_0 H^2 n^{2/3}T_{n,r}^{1/3}M^{1/2}} \le \frac{16C_2\beta T_{n,r}}{\Phi_0 H^2 M^{1/2}} \le \frac{16C_2\beta}{(81)^{4/5}\Phi_0 \Phi_4^{16/5} H^2 M^{13/10}}.  
  \end{split}
  \end{equation}  
  In the last two inequalities we used $n^{-2/3} = \left(n^{-1/2}\right)^{4/3} \le T_{n,r}^{4/3}$ and $T_{n,r} \leq 1/(81 \Phi_4^4 M)^{4/5}$.
  \noindent {\bf Third and Fourth Terms of~\eqref{eq:AlmostFinalBound}:} Note from the definition of $\varepsilon$ that 
  \begin{equation}\label{useful1}
  \frac{\varepsilon^{-3}}{n^{1/2}} = \frac{M^{-3/4}T_{n,r}^{-3\alpha(s)}}{n^{1/2}(r+1)^{-3}} = M^{-3/4}T_{n,r}^{1-3\alpha(s)}.
  \end{equation}
  This yields
  \begin{equation}\label{inter22}
  \begin{split}
  \frac{e^{4/3}C_3\varepsilon^{-3}L_n\Phi_2\Phi_3M^{1/4}}{\sqrt{2}n^{1/2}} = \frac{e^{4/3}C_3L_n\Phi_2\Phi_3T_{n,r}^{1-3\alpha(s)}}{\sqrt{2}M^{1/2}} \le \frac{e^{4/3}C_3L_n\Phi_2\Phi_3}{\sqrt{2}M^{1/2}},
  \end{split}
  \end{equation} 
  and
  \begin{equation}\label{eq:FourthTerm}
  \begin{split}
  \frac{2^{1/2}e^{7/9}C_3\varepsilon^{-3}L_n\Phi_3M}{6n^{1/2}T_{n,r}^{1-3\alpha(s)}} = \frac{2^{1/2}e^{7/9}C_3L_n\Phi_3M^{1/4}}{6}.
  \end{split}
  \end{equation}
  Now combining the bounds for all the terms in~\eqref{eq:AlmostFinalBound}, we get
  \begin{align*}
  \frac{\Delta_{n-l+s,k}(r)}{T_{n,r}^{\alpha(s)}\mathbb{P}(\|Y\| > r)} &\le \Phi_2\Phi_3e^{1/3}M^{1/4} + \frac{16C_2\beta}{(81)^{4/5}\Phi_0 \Phi_4^{16/5} H^2 M^{13/10}} + \frac{e^{4/3}C_3L_n\Phi_2\Phi_3}{\sqrt{2}M^{1/2}}\\ &\qquad+ \frac{2^{3/2}e^{7/9}}{6}C_3L_n\Phi_3M^{1/4} + \frac{e^{7/9}C_3L_n\Phi_3M^{1/2}}{\sqrt{2}(\mu+1)^{17/16}n^{5/32}}.
  \end{align*}
 The right hand side is bounded by $M$ from the definition of $M$.
%
  %
  %
  %
  %
  %
  %
  %
  %
  %
  %
\end{proof}
We are now ready to prove the final large deviation result.
\begin{thm}
For all $n\ge4$, we have
\[
\left|\frac{\mathbb{P}(\|S_n\| > r)}{\mathbb{P}(\|Y\| > r)} - 1\right| \le 1.02MT_{n,r}^{1/3}\]
for all $r$ such that
\begin{equation}\label{finalcond3}
  T_{n,r} \le \mathfrak{B}_0\exp\left(-\frac{17M^{1/4}\log(en)}{6(\mu+1)^{17/16}n^{5/32}}\right). 
\end{equation}
\end{thm}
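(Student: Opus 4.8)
The plan is to obtain the theorem as the $s=l$ specialization of Lemma~\ref{sixtph}. Set $l:=\lfloor\log(en)\rfloor$. For that choice the virtual sample size in~\eqref{eq:ResultFinalLargeDev} is $n-l+s=n$, and since $U_{n,n}=n^{-1/2}(X_1+\cdots+X_n)=S_n$, taking $k=n$ there gives, for every $n\ge4$,
\[
\left|\frac{\mathbb{P}(\|S_n\|>r)}{\mathbb{P}(\|Y\|>r)}-1\right|\;\le\;M\,T_{n,r}^{\alpha(l)}\qquad\text{for all }r\in I_{l,M},
\]
with $\alpha(l)=(1-4^{-l-1})/3$ and $I_{l,M}=(-\infty,\,-1+\mathfrak{B}_l n^{1/6})$. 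Everything that remains is bookkeeping on the admissible range of $r$ and on the exponent.

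First I would check that the hypothesis~\eqref{finalcond3} puts $r$ inside $I_{l,M}$. With $c:=\tfrac{17M^{1/4}}{6(\mu+1)^{17/16}n^{5/32}}$, the inequality $1+c\le e^{c}$ together with $l\le\log(en)$ gives
\[
\mathfrak{B}_l=\mathfrak{B}_0(1+c)^{-l}\;\ge\;\mathfrak{B}_0\,e^{-c\log(en)}=\mathfrak{B}_0\exp\!\left(-\tfrac{17M^{1/4}\log(en)}{6(\mu+1)^{17/16}n^{5/32}}\right),
\]
so~\eqref{finalcond3}, after recalling $(r+1)n^{-1/6}=T_{n,r}^{1/3}$ for $r\ge-1$ (and noting $r<-1$ lies in $I_{l,M}$ trivially), forces $r+1<\mathfrak{B}_l n^{1/6}$, i.e.\ $r\in I_{l,M}$.

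Next I would trade $\alpha(l)$ for $1/3$. Write $T_{n,r}^{\alpha(l)}=T_{n,r}^{1/3}\,T_{n,r}^{-4^{-l-1}/3}$. For $r<0$ the left side of the theorem is $0$ and there is nothing to prove, so assume $r\ge0$; then $T_{n,r}=(r+1)^3 n^{-1/2}\in[n^{-1/2},1)$, whence $\log(1/T_{n,r})\le\tfrac12\log n$ and
\[
T_{n,r}^{-4^{-l-1}/3}=\exp\!\left(\tfrac{4^{-l-1}}{3}\log\tfrac{1}{T_{n,r}}\right)\;\le\;\exp\!\left(\tfrac{4^{-l-1}}{6}\log n\right).
\]
A direct numerical check shows $4^{-\lfloor\log(en)\rfloor-1}\log n<0.031$ for every integer $n\ge4$, so the last quantity is $<e^{0.006}<1.02$. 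Plugging this into the first display yields $|\mathbb{P}(\|S_n\|>r)/\mathbb{P}(\|Y\|>r)-1|\le1.02\,M\,T_{n,r}^{1/3}$, as claimed.

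The main obstacle is therefore not in this last assembly at all — it has been entirely absorbed into Lemma~\ref{sixtph}, which is the real engine. Its induction on $s$ must simultaneously contract the admissible $r$-window from $I_{s-1,M}$ to $I_{s,M}$, run the Lindeberg remainder split of Lemma~\ref{lem:DeltankIntoUnminus1k} (threshold $n^{1/2}f_n(r)$ with $f_n(r)\asymp(r^2+\log(en))n^{-1/2}$) so that the far-tail piece is absorbed into $\mathbb{P}(\|Y\|>r)$ via the exponential-moment bound $\int e^{H\|x\|}|\zeta|(dx)\le4$ while the bulk piece recycles the step-$(s-1)$ estimate, and keep the aggregate of the four terms in~\eqref{eq:AlmostFinalBound} below $M$ — which is exactly what the definition of $M$ is engineered to do, with the cruder $T^{1/4}$ bound of Lemma~\ref{lem:sixtpt} seeding the base case $s=0$ and the sharp small-$r$ estimate of Lemma~\ref{lem:rSmallMu} handling $r\le\mu$ inside the induction. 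Given Lemma~\ref{sixtph}, the theorem is the one-line specialization above.
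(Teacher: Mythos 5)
Your proof is correct and follows essentially the same route as the paper's: specialize Lemma~\ref{sixtph} at $s=l=\lfloor\log(en)\rfloor$ (so that $U_{n,n}=S_n$), use $\mathfrak{B}_l\ge\mathfrak{B}_0\exp\left(-\tfrac{17M^{1/4}\log(en)}{6(\mu+1)^{17/16}n^{5/32}}\right)$ to place $r$ in $I_{l,M}$, and bound $T_{n,r}^{\alpha(l)-1/3}\le 1.02$ via $T_{n,r}\ge n^{-1/2}$ for $r\ge0$. The only caveat, shared with the paper's own two-line argument, is that when deducing $r+1<\mathfrak{B}_l n^{1/6}$ you implicitly read condition~\eqref{finalcond3} as a bound on $T_{n,r}^{1/3}=(r+1)n^{-1/6}$ (as in the main-text Theorem~\ref{thm:largedeviation}) rather than on $T_{n,r}$ itself.
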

\begin{proof}
From Lemma~\ref{sixtph} with $s = l:=[\log(en)]$ it follows that
\[
\Delta_n(r) \le MT_{n,r}^{\alpha(l)}\mathbb{P}(\|Y\| > r)\quad\mbox{for}\quad r\in I_{l,M}.
\]
To prove the result, is enough to show that $MT_{n,r}^{\alpha(l)} \le 1.02MT_{n,r}^{1/3}$ and that~\eqref{finalcond3} implies $r \in I_{l,M}$. Note that for all $n\ge4$
\[
T_{n,r}^{-1/3}T_{n,r}^{\alpha(l)} = T_{n,r}^{-4^{-l-1}/3} \le T_{n,r}^{-4^{-\log(en)}/3} \le n^{4^{-\log(en)}/6} = \exp\left(\frac{\log n}{24n^{\log(4)}}\right) \le 1.02.
\]
Now observe that
\[
\mathfrak{B}_l \ge \mathfrak{B}_0\exp\left(-\frac{17M^{1/4}\log(en)}{6(\mu+1)^{17/16}n^{5/32}}\right).
\]
This completes the proof.
\end{proof}
\section{Proof of Corollary \ref{cor:Theorem31}}\label{appsec:corollary41}
The notation $\Theta$ from now on denotes a constant depending only on $\sigma_{\min}$ and $\sigma_{\max}$ and can be different in different lines. Note that the result holds for any $\Phi_0,\ldots,\Phi_4$ satisfying~\eqref{eq:3p7} and in particular, we can take $\Phi_2 = \Theta\log(ep), \Phi_4 = \Theta\log^2(ep)$. Note that $H \ge \Theta K_p^{-1} (\log p)^{-1/\alpha}$, and $C_j = C_0 (\log p)^{j-1}, j=1,2,3.$ This implies that
\begin{align*}
\tilde{\Pi} &\le \Theta(\log(ep))^{7/6},\\
\Pi &\le \Theta\max\left\{(\log(ep))^{7/6}, (\log(ep))^{4/3}, \frac{\log^2(ep)}{\log^2(ep)}, (\log^2(ep))^{4/7}, \frac{\log(ep)(\log(ep))^{2/\alpha}}{\log^{10}(ep)}\right\} = \Theta(\log(ep))^{4/3},\\
M &\le \Theta\max\left\{(\log(ep))^{4/3}, (\log(ep) + \log^2(ep))^{4/3}, \left(\frac{(\log(ep))^{1+2/\alpha}}{(\log(ep))^{32/5}}\right)^{\frac{10}{23}}, (\log(ep))^{2}, \frac{\log^{4}(ep)}{(\mu+1)^{17/8}n^{5/16}}\right\}\\
&= \Theta\max\left\{(\log(ep))^{8/3}, \frac{\log^4(ep)}{(\mu+1)^{17/8}n^{5/16}}\right\} = \Theta(\log(ep))^{8/3}\max\left\{1, \frac{(\log(ep))^{4/3}}{(\mu+1)^{17/8}n^{5/16}}\right\}. 
\end{align*}
Hence $n \ge (\log(ep))^{4(16/15)}(\mu+1)^{-34/5}$ implies $M \le \Theta(\log(ep))^{8/3}$. We now bound $\mathfrak{B}_0$. 
\begin{align*}
\mathfrak{B}_0 &\ge \Theta\min\left\{\frac{1}{(\log^8(ep)\log^{4/3}(ep))^{1/3}}, \frac{1}{(\log^8(ep)\log^{8/3}(ep))^{4/15}},\frac{1}{(\log^2(ep)\log^{1/\alpha}(ep)\log(en))^{1/3}},\right.\\
&\qquad\left.\frac{(\log^{4/3}(ep))^{1/9}}{(\log^{1/\alpha}(ep)\log(ep)\log(en))^{4/9}}, \frac{(\log(ep))^{1/3}}{(\log(ep)\log^{1/\alpha}(ep)\log(en))^{1/2}}\right\}\\
&= \Theta{(\log(e(p\vee n)))^{-28/9}}.
\end{align*}
Substituting these in Theorems \ref{thm:largedeviation} the results follow. Observe that
\[
\exp(-3M^{1/4}(\mu+1)^{-17/16}\log(en)n^{-5/32}) \ge \exp(-\Theta(\log(ep))^{2/3}(\mu+1)^{-17/16}\log(en)n^{-5/32}),
\]
which is bouded below by constant since $n \ge (\log(ep))^{64/15}(\log(en))^{32/5}(\mu+1)^{-34/5}$.
\noeqref{eq:SomeToAllIntegral}
\end{document}